\DeclareMathAlphabet{\mathlcal}{U}{dutchcal}{m}{n} % for lowercase mathlcal
\newtheorem{theorem}{Theorem}[section]
\newtheorem*{theorem*}{Theorem}
\newtheorem{proposition}[theorem]{Proposition}
\newtheorem{corollary}[theorem]{Corollary}
\newtheorem{lemma}[theorem]{Lemma}
\theoremstyle{remark}
\newtheorem{remark}[theorem]{Remark}
\theoremstyle{definition}
\newtheorem{assumption}[theorem]{Assumption}
\newtheorem*{duplicate*}{{\hypersetup{hidelinks}Assumption~\ref{ass}}}
\newtheorem*{nota*}{Notation}
\numberwithin{equation}{section}
\renewcommand{\a}{\mathsf{a}}
\newcommand{\A}{\mathsf{A}}
\renewcommand{\b}{\mathsf{b}}
\newcommand{\C}{\mathbb{C}}
\renewcommand{\d}{{\rm d}}
\newcommand{\E}{\mathbb{E}}
\newcommand{\eps}{\varepsilon}
\newcommand{\G}{\Gamma^*}
\newcommand{\Gtilde}{\widetilde{\Gamma}^*}
\newcommand{\id}{{\rm id}}
\newcommand{\kay}{\mathlcal{k}}
\renewcommand{\l}{\ell}
\newcommand{\length}[1]{| #1 |_{\prec}}
\newcommand{\m}{{\bf m}}
\newcommand{\n}{{\bf n}}
\newcommand{\N}{\mathbb{N}}
\newcommand{\p}{\mathsf{p}}
\newcommand{\pp}{\mathrm{p.p.}}
\renewcommand{\preceq}{\preccurlyeq}
\newcommand{\R}{\mathbb{R}}
\newcommand{\RR}{\R[[\a_k,\b_\ell,\p_\n]]}
\newcommand{\s}{\mathfrak{s}}
\newcommand{\T}{\mathsf{T}}
\newcommand{\z}{\mathsf{z}}
\newcommand{\Z}{\mathbb{Z}}
\newcommand{\0}{{\bf 0}}
\newcommand{\+}{\hspace{-.5ex}+\hspace{-.5ex}}
\renewcommand{\-}{\hspace{-.25ex}-\hspace{-.25ex}}
\DeclarePairedDelimiter\abss{\lvert}{\rvert_{\s}}
\DeclareMathAlphabet{\mathup}{OT1}{\familydefault}{m}{n}
\newcommand{\dx}[1]{\mathup{d} #1 \,}
\title[Stochastic estimates for the thin-film equation]{Stochastic estimates for the thin-film equation\\ with thermal noise}
\author[R. S. Gvalani]{Rishabh S. Gvalani$^1$}
\address{$^1$University of Edinburgh, United Kingdom}
\author[M. Tempelmayr]{Markus Tempelmayr$^2$}
\address{$^2$EPFL, Lausanne, Switzerland and Universität Münster, Germany.}
\date{\textcolor{Maroon}{\today}}
\keywords{Quasilinear singular SPDEs, 
Regularity Structures, 
BPHZ renormalisation, 
viscous thin-films.
}
\subjclass[2020]{Primary 60H17; Secondary 60L30} 
\begin{document}

%%%%%%%%%%%%%%%%%%%%%%%%%%%%%%%%%%%%%%%%%%%%%%%%%%%%%%

\begin{abstract}
We construct and derive uniform stochastic estimates on the renormalised model for a class of fourth-order conservative quasilinear singular SPDEs in arbitrary dimension $d\geq 1 $ and in the full subcritical regime of noise regularity. The prototype of the class of equations we study is the so-called thin-film equation with thermal noise, also commonly referred to in the literature as the stochastic thin-film equation. We derive an explicit expression for the form of the counterterm as a function of the film mobility which is in surprising agreement with the form conjectured in~\cite[Remark 9.1]{GGKO22}. 
\end{abstract}

%%%%%%%%%%%%%%%%%%%%%%%%%%%%%%%%%%%%%%%%%%%

% \begin{msc} 
% 60H17, % Singular stochastic partial differential equations
% 60L30, % Regularity structures
% \end{msc}

%%%%%%%%%%%%%%%%%%%%%%%%%%%%%%%%%%%%%%%%%%%
\maketitle
\tableofcontents

\section{\texorpdfstring{\bf Introduction}{Introduction}}
\noindent
We would like to study the following class of quasilinear singular stochastic partial differential equations (SPDEs) posed on $\mathbb{R}^{1+d}$
		\begin{equation}
			L u =  \nabla \cdot(a(u) \nabla \Delta u)  + \nabla \cdot( b(u) \xi) \, ,  
		\label{eq:spde-intro}
		\end{equation}
	where $L$ is given by the following fourth-order parabolic operator
		\begin{equation}
			L:= \partial_0 + \Delta^2, \qquad \Delta := \sum_{i=1}^d \partial_i^2 \, ,
		\end{equation}
	with the gradient $\nabla$ and divergence $\nabla \cdot$ also defined with respect to $x_i$ for $i= 1,\dots ,d$, and where $a,b$ are sufficiently smooth prescribed scalar-valued nonlinearities such that the equation is uniformly parabolic and $\xi$ is some $\R^d$-valued rough, random forcing. We denote by $x= (x_0, \dots,x_d)$ a typical element of $\R^{1+d}$ with $x_0$ denoting the time-like coordinate and $x_i$ for $i =1,\dots,d$ the space-like coordinates. One can check that $L$ satisfies some natural scaling invariance with respect to the scaling $\mathfrak{s} = (4, 1, \dots ,1)\in \N^{1+d}$, i.e.~given some Schwartz function $f$, we have for any $\eps>0$
		\begin{equation}
			(L f^\eps) (x) = \eps^{|L|}(L f) (\hat x) \, ,
			\label{eq:L-invariance}
		\end{equation}
	where $|L|=4$ is the order of the operator $L$, 
		\begin{equation}\label{xhat}
			\hat x = 
			(\hat x_0, \dots, \hat x_{d}) := (\eps^{\mathfrak{s}_0}x_0,\dots,\eps^{\mathfrak{s}_{d}}x_{d}) \, ,
		\end{equation}
	and $f^{\eps}(x):=f(\hat x)$. 
 One should think of $\xi$ as being some ensemble of tempered distributions with a prescribed law. We are specifically interested in the case in which the solution $v$ of 
		\begin{equation}
			L v = \nabla \cdot \xi \, ,
			\label{eq:lspde-intro}
		\end{equation}
	is almost surely $C^\alpha$ with $\alpha \in (0,1)$. Since  $L$ is a fourth-order operator, by Schauder theory, this is the case if $\nabla \cdot \xi\in C^{\alpha -4}$, which in turn is true if $\xi \in C^{\alpha-3}$. Note that we measure H\"older regularity with respect to the appropriately $\mathfrak{s}$-scaled Carnot--Carath\'eodory metric on $\R^{1+d}$ associated to the operator $L$, see \eqref{metric}. We also impose that the ensemble $\xi$ satisfies the following scaling invariance in law
		\begin{equation}
			\hat\xi(x):= \eps^{3-\alpha}\xi(\hat x) \sim \xi(x) \, ,
			\label{eq:noise-invariance}
		\end{equation}
		where $\sim$ denotes equality in law.
	The equation~\eqref{eq:spde-intro} is singular in the sense that given some $u \in C^{\alpha}$ with $\alpha <3/2$ the products $a(u)\nabla \Delta u$ and $b(u)\xi$ cannot be defined in a canonical manner; 
 to avoid case distinctions we will restrict ourselves to the more singular case $\alpha<1$. What gives us some hope is the fact that~\eqref{eq:spde-intro} is locally \emph{subcritical} for $\alpha>0$. To be more precise, consider $u^\eps(x)= \eps^{-\alpha} u(\hat x)$. Then, using~\eqref{eq:L-invariance} and~\eqref{eq:noise-invariance}, we formally compute
		\begin{align}
			(L u^{\eps})(x) = \eps^{4-\alpha}(L u)(\hat x) &= \nabla \cdot (a(\eps^\alpha u^\eps) \nabla \Delta u^\eps)(x)  + \nabla \cdot (b(\eps^{\alpha} u^\eps) \hat\xi)(x) \label{sinv1} \, .
		\end{align}
	From the above rough calculation, we can see that, if $a$ vanishes and $b$ is order one for small $u$, then as $\eps \to0$, $u^{\eps}$, the \emph{fine-scale} version of $u$, solves the the linear equation~\eqref{eq:lspde-intro}.  

\medskip

There has been a flurry of research activity in recent years in the study of subcritical singular SPDEs starting with the seminal works of Hairer~\cite{Hai14}, who developed the theory of regularity structures to treat such equations and Gubinelli, Imkeller, and Perkowski~\cite{GIP15} who studied these equations using the approach of paracontrolled calculus. 
Focusing on regularity structures, by now there exists a more or less automated machinery to deal with any semilinear and subcritical singular SPDE with its different aspects contained in the works by Hairer~\cite{Hai14}, Bruned, Hairer, and Zambotti~\cite{BHZ19}, Bruned, Chandra, Chevyrev, and Hairer~\cite{BCCH21}, and Chandra and Hairer~\cite{CH16}. 
The contents of this paper are focused on obtaining uniform estimates for the so-called model which is covered in the semilinear case, using a Feynman-diagrammatic approach, by~\cite{CH16} and, using the spectral gap inequality and Malliavin calculus, by Hairer and Steele~\cite{HS23} and by Bailleul and Hoshino \cite{BH23}. We also mention the work by Kunick and Tsatsoulis \cite{KT22} which, to our knowledge, is the first paper to use a spectral gap-based approach to derive these stochastic estimates in the tree-based setting, albeit in the very specific case of the dynamical $\varphi^4_2$-model.
The spectral gap inequality has furthermore been used to establish stochastic estimates for the one dimensional generalised KPZ equation by Bailleul and Bruned \cite{BB24}, and in the context of rough paths by Gassiat and Klose \cite{GK24}.

In the direction of quasilinear singular SPDEs, the first results go back to Otto and Weber~\cite{OW19} who used a rough-paths based approach to treat certain quasilinear singular SPDEs, 
and to Bailleul, Debussche, and Hofmanov\'a~\cite{BDH19}, and Furlan and Gubinelli~\cite{FG19},
which are both based on paracontrolled calculus. 
These works are limited to a small range of the subcritical regime, 
which has been successively extended by Gerencs\'er and Hairer~\cite{GH19}, 
Otto, Sauer, Smith, and Weber~\cite{OSSW18},
and Bailleul and Mouzard~\cite{BM23}. 
All of these works share the drawback of not being able to treat the full subcritical regime of regularity and often (as in~\cite{BDH19,GH19}) rely on transforming the quasilinear SPDE to a semilinear one and using the, by now well-developed, semilinear theory. 
Actually \cite{GH19} treats the full subcritical regime, but due to the performed transformation, the obtained counterterm can only be shown to be local (as expected) in a smaller regime. 
The follow-up work by Gerencs\'er~\cite{Ger20}, and the work by Bruned, Gerencs\'er, and Nadeem~\cite{BGN24} show that the counterterm is actually local in a larger regime. 
After the first submission of this manuscript, Bruned and Dotsenko~\cite{BD24} extended this to the full subcritical regime using tools from operad theory and homological algebra.
We also refer the reader to the recent work by Bailleul, Hoshino and Kusuoka \cite{BHK22} who study the solution theory for quasilinear SPDEs by working in a non-translation invariant setting. 
This has the drawback that the stochastic estimates are not known to be true in the full subcritical regime, and the expected translation-invariant form of the counterterm is only recovered in a subset of the full subcritical regime.

The first works to treat a quasilinear SPDE, namely the equation
\begin{equation}
\partial_t u + a(u) \partial_x^2 u = \xi \, , 
\label{eq:qSPDE}
\end{equation} 
in the full subcritical regime $\alpha \in (0,1)$ and without transforming the SPDE are contained in the four papers by Otto, Sauer, Smith, and Weber~\cite{OSSW21},  the second author together with Linares and  Otto~\cite{LOT23} and Linares, Otto, and Tsatsoulis~\cite{LOTT21}, respectively, and by the second author~\cite{Tem23}. The first of them derives a priori estimates for~\eqref{eq:qSPDE} in the full subcritical regime. The second one constructs the necessary algebraic objects, specifically the so-called structure group, and connects them to the ones introduced in~\cite{BHZ19} (see also the recent work of Bruned and Linares \cite{BL23} for an extension of \cite{LOT23} to general semilinear equations). The third one derives uniform stochastic estimates on the associated model, using Malliavin calculus based tools and the spectral gap inequality, and the last one studies convergence and universality of the renormalised model.
It is still an open problem to combine the uniform stochastic estimates and convergence of the model with the a priori estimate to construct a solution of the equation. 
This has been carried out after the first submission of this manuscript by Broux, Otto, and Steele~\cite{BOS25} for a semilinear equation based on a continuity method, 
and it is expected that this method can be adapted to the quasilinear case.

In \cite{OSSW21, LOT23, LOTT21, Tem23}, the authors work with a regularity structure indexed by multiindices unlike the tree-based one used in~\cite{Hai14,BHZ19,BCCH21,CH16,HS23}. 
We point the reader to the lecture notes~\cite{LO22,OST23} for an introduction to various aspects of the multiindex-based approach to regularity structures. 
This is also the setting we will adopt in this paper, 
and a first contribution can be thought of as generalising \cite{LOTT21} to a larger class of singular SPDEs, demonstrating the robustness of the method, see Subsection~\ref{ss:lott} below for a more detailed comparison.

\medskip

We now briefly discuss the main example of equation that we cover in the general class of equations of the form~\eqref{eq:spde-intro}. Consider the so-called thin-film equation with thermal noise\footnote{Also known in the literature as the stochastic thin-film equation.}, i.e. 
		\begin{equation}
			\partial_0 u = -\nabla \cdot (M(u) \nabla \Delta u ) +\nabla \cdot(M^{\frac{1}{2}}(u)\xi) \, ,
			\label{eq:tfe-intro}
			\tag{TFE}
		\end{equation}
	where $M$ is some sufficiently nice function of $u$ and $\xi$ is some rough, random forcing, typically space-time white noise. This equation governs the evolution of the height $u$ of a thin, viscous film driven by capillarity, limited by viscosity, and forced by thermal fluctuations. It can be formally derived either in an ad-hoc manner by applying a fluctuation-dissipation ansatz to the deterministic thin-film equation (see~\cite{GGKO22,DMS05}) or from first principles by considering an appropriate rescaling of the equations of fluctuating hydrodynamics (see~\cite{GMR06}). Here the function $M$ is referred to as the mobility of the film and is typically chosen to be a power law, i.e.~$M(u)=u^m$, $m \geq 1$ or more precisely $M(u)= u^3 + \lambda^{3-m} u^m$, with the most physically interesting cases corresponding to $0 < m < 3$. The choice $\lambda=0$ corresponds to imposing the no-slip boundary condition at the liquid-solid interface, while the choice $\lambda>0$ and $0<m <3$ corresponds to the Navier-slip boundary condition at the interface (see, for example, the discussion in~\cite{GO02}). One can check that~\eqref{eq:tfe-intro} can be cast into the form of~\eqref{eq:spde-intro} choosing $a(u)=1-M(u)$ and $b(u)=M^{\frac12}(u)$. 
	However, our results only apply to the non-degenerate setting, i.e.~to $a=1-M<1-\delta$ for some $\delta>0$. Furthermore, for a solution theory to be expected to apply, sufficient regularity of $M$ and $M^\frac12$ is required.

The thin-film equation with thermal noise has received a lot of attention in the recent years not only from the mathematics community but from the condensed and soft matter physics community, see \cite{DMS05, MR05, ZLLS22, LZLS23, RSEHDS24} and references therein. However, all previous literature on this equation from the mathematics as well as from the physics community has not argued for the existence of a counterterm as the equation was only studied 
    outside the singular regime, i.e.~with the noise $\xi$ regular enough so that all products on the right hand side of~\eqref{eq:tfe-intro} can be defined in a canonical manner. For $d=1$, the first construction of non-negative martingale solutions to~\eqref{eq:tfe-intro} in the regular regime with It\^o noise and in the presence of an interface potential (compensating the degeneracy of the equation) was given by Fischer and Gr\"un in~\cite{FG18}.  Gess and Gnann~\cite{GG20} constructed non-negative martingale solutions for~\eqref{eq:tfe-intro} with regular Stratonovich noise, quadratic mobility ($m=2$), and no interface potential. The question of existence for regular Stratonovich noise and $m=3$ was settled in~\cite{DGGG21} by Dareiotis, Gess, Gnann, and Gr\"un. We refer the reader to~\cite{MG22,Sau21} for constructions in higher dimensions with regular noise, to~\cite{GK22,DGGS23} for the study of solutions with compactly supported initial data, and to~\cite{GMR06,DOGKP19,GGKO22} for the study of numerical discretisations of this equation. To our knowledge, this work is the first to rigorously study the~\eqref{eq:tfe-intro} in the physically interesting singular, but non-degenerate regime. 
In this setting we derive upper bounds of the renormalisation constants, 
as well as matching lower bounds to rule out any hidden cancellations. 
Our result thus gives the first rigorous evidence of the existence of a counterterm for this equation,
which can be seen as a second main contribution of the present work.
The subtle dependence on the choice of mollifier makes it necessary to carry out some computations by brute force.
Moreover, in the quasilinear setting the choice of mollifier influences the
analytic function showing up as a prefactor in the counterterm (unlike in the semilinear setting, where it only influences a constant prefactor). 
Our computations also exhibit how exactly the mollifier influences this functional dependence of the counterterm which may be of independent interest. 
We believe that our results open the door to studying the physical meaning of the counterterm which shows up in the thin-film equation.
Regarding the physical relevance of this term, we choose to focus on the case $\alpha=1/2$ in Subsection~\ref{sec:structure}, since this choice of noise corresponds to the physically relevant regime in one spatial dimension where the noise arises from microscopic thermal fluctuations which influence the height of the film.
We also mention that since the first submission of this manuscript, some progress has been made for the simpler stochastic porous medium equation, see \eqref{eq:PME} below, in a singular and degenerate case \cite{TW25}. However, the methods used there do not seem to be applicable to the fourth order \eqref{eq:tfe-intro}.

		\begin{remark}\label{rem:comp}
			We note that we can just as easily consider a more general form of nonlinearity, for example, by assuming $a$ and $b$ to be matrix-valued. As will become clear in the later sections, the main arguments for the model estimates are insensitive to the exact form of the right hand side of~\eqref{eq:spde-intro}. What would be affected would be the exact form of the counterterm which is sensitive to the symmetries of the equation (see the discussion in~\cref{sec:counterterm}). Thus, for the sake of both brevity and notational convenience, we work with scalar-valued nonlinearities.

			Another rather straightforward generalisation is to replace our choice of $L$ with an arbitrary parabolic operator. Choosing $L$ as
				\begin{equation}
					L:= \partial_0 -\Delta \, ,
				\end{equation} 
            and replacing accordingly the nonlinear term $a(u)\nabla\Delta u$ by $a(u) \nabla u$ allows us to consider stochastic porous medium type equations of the form 
            \begin{equation}\label{eq:PME}\tag{PME}
                (\partial_0-\Delta)u = \nabla\cdot(a(u)\nabla u + b(u)\xi) \, .
            \end{equation}
			 In this case, all the proofs carry over \emph{mutatis mutandis}. 

			We could also drop the divergence on the right hand side of~\eqref{eq:spde-intro} and replace $a(u) \nabla u$ by $a(u)\Delta u$ which would allow us to treat the quasilinear multiplicative stochastic heat equation 
				\begin{equation}
					(\partial_0 -\Delta)u = a(u) \Delta u  + b(u) \xi
					\tag{qSHE}
					\label{eq:qSHE}
				\end{equation}
			or, by choosing $a\equiv0$, the so-called generalised parabolic Anderson model (gPAM). Again, the form of the estimates does not change but the form of the counterterm does. 
            In the later sections of the paper, we will remark, whenever possible, on the modifications in our arguments necessary to treat these other equations.
		\end{remark}

\subsection{Comparison to \texorpdfstring{\cite{LOTT21}}{[LOTT24]}}
\label{ss:lott}
While in \cite{LOTT21} many aspects were specifically set up for a one dimensional equation with the heat operator, a simple quasilinear nonlinearity, additive noise, and negative regularity of the Cameron--Martin space, 
all these restrictions have been resolved in the present work.
We still choose as a main working example the relatively simple 
\eqref{eq:spde-intro},
on the one hand to keep the already involved notation as simple as possible, 
and on the other hand because it is structurally close to the thin-film equation we are ultimately interested in and which we believe to be physically interesting.
To emphasize the flexibility of the approach and its implementation in the present work, several other equations are considered in parallel throughout the article (see~\cref{rem:comp,rem:oe1,rem:oe2,rem:oe3}).
In the following we briefly remark on the main technical differences between this work and \cite{LOTT21}.

The overall strategy of the proof outlined in Subsection~\ref{sec:strategy} 
is the same in both works. 
Minor differences are that, as mentioned previously,
$\partial_t+\Delta^2$ can be replaced by any differential operator $L$ satisfying a scaling invariance like~\eqref{eq:L-invariance} and such that $LL^*$ is elliptic, 
and that the spectral gap assumption is formulated for vector valued functionals (cf.~Assumption~\ref{ass}~(iii) and the discussion before). 
Related to this assumption, the restrictions on and relations between $\alpha$, $D$, and $s$ are made systematic (cf.~Remark~\ref{rem:essentialAssumption} and \eqref{kappa}). 

The algebraic framework is extended to handle several nonlinearities and arbitrary dimension.
This includes the algebraic definitions \eqref{piminus} of $\Pi_x^-$, \eqref{D0} and \eqref{Dn} of $D^{(\0)}$ and $D^{(\n)}$, and \eqref{exp} of $\Gamma^*$, the population conditions \eqref{pop1} and \eqref{pop2}, and the definition of the homogeneity in \eqref{homogeneity}. 
Heuristic arguments are given throughout which are robust and yield appropriate definitions for other equations. 

The inductive structure of the proof outlined in Subsection~\ref{sec:inductive_proof} 
is adapted to this extended algebraic setup. 
In particular, the ordering relation $\prec$ defined via \eqref{order} is adapted to any degree of modelledness obtained via $\mathrm{d}\Gamma^*$ defined in \eqref{dGamma}, 
as explained in Remark~\ref{rem:general_order}. 
Furthermore, the dependence of $\Pi^-_{x\beta}$ on $c_{\beta'}$ is more involved than in \cite{LOTT21} due to divergence form of the equation, 
which is taken care of in Lemma~\ref{lem:tri1}~(i).

The BPHZ-choice of the counterterm explained in Subsection~\ref{sec:bphz} requires a  further important innovation compared to \cite{LOTT21}, again due to the divergence form of the equation.
Although it may seem to be a minor difference, the divergence form makes the top-down postulate of the counterterm much more involved as 
the counterterm could a priori be a higher order tensor. 
That it is actually scalar valued (as desired) relies on the fact that the noise is invariant under orthogonal transformations. Proving this is subtle and relies on non-trivial invariance properties of model components contained in~\cref{prop:symmetries}~(4). 
We also refer the reader to the discussion in~\cref{rem:divcount} for more details on how the divergence form structure of the equation affects the choice of counterterm.
Similar symmetry arguments have been used e.g.~in \cite[Sections 4 and 5.1]{CCHS24} to reduce the form of the counterterm to a mass renormalisation in the context of Yang--Mills--Higgs in three spatial dimensions.

Also the annealed Schauder theory of Subsection~\ref{sec:schauder} is affected by the divergence form of the equation. 
More precisely, the power counting in the obtained estimates differs from \cite{LOTT21}. 
Since working on the full space requires sufficiently strong estimates on large scales for the solution formulas \eqref{eq:solutionformula} and \eqref{eq:representation} to hold, we carry out all arguments to ensure that this is still the case. 

It is for the reconstruction arguments of Subsection~\ref{sec:recon}, 
that the modelled distribution $\mathrm{d}\Gamma^*$ is such that the Malliavin derivative of the model is automatically modelled to the correct (high enough) order, 
as mentioned in the paragraph on Subsection~\ref{sec:inductive_proof} above. 
This manifests itself in the identity~\eqref{magic},
which here also takes care of several structurally different nonlinearities, 
and suitable definitions of the weights $\bar w$ and $w$ defined in~\cref{sec:application_SG} allowing for this high degree of modelledness. 
Furthermore, the definitions of the weights are made so that they hold for any degree of regularity of the Cameron--Martin space, 
whereas they are restricted to negative regularity in \cite{LOTT21}.
A final rather minor difference is that we choose to incorporate the projection $Q$ into the definition of $\mathrm{d}\Gamma^*$ (cf.~\eqref{dGamma}), which slightly simplifies the proof of Lemma~\ref{rec2} (Reconstruction II).

The price to pay is that the same projection $Q$ makes the proof of Lemma~\ref{alg3} (Algebraic argument III) in Subsection~\ref{sec:algebraic} 
more involved than in \cite{LOTT21}. 
Since this argument is far less complex than reconstruction, we think that this is a good tradeoff. 
The rest of the section is identical to \cite{LOTT21}, however we provide a proof of Lemma~\ref{alg1} (Algebraic argument I) as it was left to the reader in \cite{LOTT21}.

Some of the three-point arguments presented in Subsection~\ref{sec:3point} 
slightly change, again due to the projection $Q$, and due to the multidimensionality of the problem. The respective proofs which require an adaptation are given.

Averaging of Subsection~\ref{sec:average} potentially differes from \cite{LOTT21} as the weights differ. However, the presented proof only depends on the moment bounds \eqref{average_psi_w} and \eqref{average_psi_wx} and thus carries over from \cite{LOTT21}.

\subsection{Outline of the paper}
As already hinted at in the introduction, in this paper we focus on deriving uniform stochastic estimates on the model of equations of the form~\eqref{eq:spde-intro} in the framework of multiindex-based regularity structures as introduced in~\cite{OSSW21} and studied in~\cite{LOT23,LOTT21}. In~\cref{sec:setup}, we introduce numerous objects needed to define the model associated to~\eqref{eq:spde-intro}, starting by imposing a form for the counterterm based only on symmetries of the equation in~\cref{sec:counterterm}. Once we have introduced the model, which can be realised as an infinite hierarchy of linear PDEs (see~\eqref{piminuscomponents}), we present the main results of the paper in~\cref{thm:main} which contains the uniform estimates on the model. As a consequence of our model estimates, we also obtain as a corollary (using the results of~\cite{Tem23}) in~\cref{cor:uniqconvscal}, convergence of the model as the mollification parameter goes to $0$ and uniqueness of the limiting model. In~\cref{sec:divergingbounds}, we carefully study the counterterm  associated to~\eqref{eq:spde-intro} and provide diverging lower bounds on the renormalisation constants in~\cref{prop:scaling_c}. As a consequence, we show in~\cref{sec:structure} that, under appropriate conditions, the form of the counterterm agrees with the one conjectured in~\cite[Remark 9.1]{GGKO22}. 

\cref{sec:proof} is dedicated to the proof of the main result. We start by providing a bird's eye view of the structure of the proof in~\cref{sec:strategy} by introducing the numerous intermediate objects we need to derive the estimates on the model. Since the model is represented by an infinite hierarchy of linear PDEs, we need to derive our estimates inductively. To this end, in~\cref{sec:inductive_proof}, we introduce the ordering with respect to which we perform induction, while in~\cref{sec:bphz} we explain how we choose our renormalisation constants in a manner which is consistent with this ordering. 
\cref{sec:schauder} is dedicated to the proofs of the various integration arguments which involve inverting the linear operator $L$, while~\cref{sec:recon} provides proofs of the reconstruction arguments needed to make sense of the, a priori, singular products. We conclude with~\cref{sec:algebraic,sec:3point,sec:average} where we provide the algebraic, three-point, and averaging arguments needed for the proof. 

In \cref{sec:explicit} we provide a proof of the form of the counterterm stated in \cref{prop:scaling_c}. 

\cref{app:smooth,app:analytic} contain auxiliary results which are essential for the main result, but are mainly technical and distract for the main ideas of the proof.

%%%%%%%%%%%%%%%%%%%%%%%%%%%%%%%%%%%%%%%%%%%%%%%%%%%%%%
\medskip
\paragraph{\bf Acknowledgements}
The authors would like to thank Lucas Broux, Benjamin Gess, Florian Kunick, and Felix Otto for many useful discussions during the course of this work. 
MT acknowledges funding from the Deutsche Forschungsgemeinschaft (DFG, German Research Foundation) under Germany's Excellence Strategy EXC 2044–390685587, Mathematics Münster: Dynamics–Geometry–Structure, 
and from the European Research Council (ERC) under the European Union’s Horizon 2020 research and innovation programme (Grant agreement No.~101045082). 
This work was partially written while both authors held positions at the Max–Planck Institute for Mathematics in the Sciences; funding and working conditions are gratefully acknowledged.
%%%%%%%%%%%%%%%%%%%%%%%%%%%%%%%%%%%%%%%%%%%%%%%%%%%%%%

\section{\texorpdfstring{\bf Set up and main result}{Set up and main result}}\label{sec:setup}
\subsection{Ansatz for the counterterm}\label{sec:counterterm}
Since equation \eqref{eq:spde-intro} is expected to be in need of a renormalisation, we a priori postulate a counterterm 
on the level of the equation. To this end, we proceed as in \cite{LOTT21}:
we start from a general form of the counterterm,
and successively reduce the number of degrees of freedom by imposing suitable and natural postulates on the solution. The difficulty lies then in showing that what remains after such a reduction is rich enough to allow us to obtain uniform (in a mollification parameter) stochastic estimates.

\medskip

We will adhere to the following guiding principles: firstly, we aim for a deterministic counterterm that only depends on the law of the noise.
Secondly, since equation \eqref{eq:spde-intro} is local and in conservative form, 
it is desirable to obtain a counterterm that is a local function of the solution $u$ and conservative.
As we expect a solution $u$ to have H\"older regularity $\alpha\in (0,1)$, the counterterm can be a function of the solution $u$ and the space-time point $x$, and a polynomial in its derivatives. Moreover, a meaningful counterterm should be of lower order.
In particular we do not allow for derivatives $\partial_0$.  The most general counterterm of this form is the following
\begin{equation}
    \nabla\cdot \Big(\sum_{\beta} h_\beta(u,x) \bigotimes_{k\in\N} (\nabla^k u)^{\otimes\beta(k)}\Big),
\end{equation}
where $\beta$ is a multiindex over $k\in\N$ restricted to 
\begin{equation}
    \sum_{k\in\N} k\beta(k)<|L|-1=3, 
\end{equation}
$h_\beta$ is a $(1+\sum_k k\beta(k))$-tensor applied to a $(\sum_k k\beta(k))$-tensor, 
and $\nabla^k$ denotes the $k$-tensor $(\partial_{i_1}\cdots\partial_{i_k} u)_{i_1,\dots,i_k=1}^d$. 
In our setting the application of an $m$-tensor $H$ to an $n$-tensor $U$ with $m\geq n$, 
results in an $(m-n)$-tensor  given by 
\[
\Big(\sum_{i_1,\dots,i_n=1}^d H_{i_1,\dots,i_m} U_{i_1,\dots,i_n} \Big)_{i_{n+1},\dots,i_m=1}^d \, .
\]
To restrict the counterterm to fewer degrees of freedom, we will now take symmetries of the law of the noise into account. As $L$ is a constant coefficient operator, a solution $u$ of \eqref{eq:spde-intro} satisfies for all $v\in\R^{1+d}$ 
\begin{equation}
    \xi\sim\xi(\cdot+v) \quad\implies\quad u\sim u(\cdot+v) \, , 
\end{equation}
where we recall that $\sim$ denotes equality in law.
For a solution of the renormalised equation to preserve this property,
we would need to  restrict to functions $h_\beta$ with no explicit space-time dependence. We now turn to reflection invariance. Observe that for the spatial reflection $Rx:=(x_0,-x_1,\dots,-x_{d})$, any solution $u$ of \eqref{eq:spde-intro} has the property
\begin{equation}
    \xi\sim-\xi(R\cdot) \quad\implies\quad u\sim u(R\cdot).
\end{equation}
For a solution of the renormalised equation to preserve this property,
we need to restrict the counterterm to multiindices $\beta$ such that
\begin{equation}\label{eq:refl2}
    \sum_{k\in\N} k \beta(k) \quad\textnormal{is odd.}
\end{equation}
Putting these two properties together, this leads to the reduced form of the counterterm
\begin{equation}
    \nabla\cdot (h(u) \nabla u)
\end{equation}
for a matrix-valued function $h$. 
Another invariance of solutions $u$ of \eqref{eq:spde-intro} is the following generalisation of the previous spatial reflection: 
consider space-like orthogonal transformations of $\R^{1+d}$ of the form 
$Ox:=(x_0,\bar{O}(x_1,\dots,x_d))$ for an orthogonal matrix $\bar{O}\in\R^{d\times d}$. 
Since $a$ and $b$ are scalar-valued, any solution $u$ of \eqref{eq:spde-intro} has the property
\begin{equation}
    \xi\sim \bar{O}^T \xi(O\cdot) \quad\implies\quad u\sim u(O\cdot).
\end{equation}
This is preserved on the level of the renormalised equation, provided $h=\bar{O}^T h \bar{O}$. 
Since $\bar{O}$ is an arbitrary orthogonal matrix, $h$ has to be scalar-valued which we therefore assume. The last and most crucial postulate connects the counterterm with the nonlinearities of the equation. To do so, we no longer fix a pair of nonlinearities $a,b$, 
but consider all nonlinearities simultaneously.
This point of view allows for the following invariance of equation \eqref{eq:spde-intro}: 
for any shift $v\in\R$,
\begin{equation}
    (u,a,b) \textnormal{ satisfies } \eqref{eq:spde-intro}
    \quad\implies\quad
    (u-v,a(\cdot +v),b(\cdot +v)) \textnormal{ satisfies } \eqref{eq:spde-intro}.
    \label{shiftcovariance}
\end{equation}
By looking at all nonlinearities at once, 
the counterterm inherits a functional dependence on $a, b$, i.e.~$h[a,b](u)$. 
Preserving the above invariance on the level of the renormalised equation 
is guaranteed by postulating the following shift covariance:
for any shift $v\in\R$,
\begin{equation}
    h[a,b](u) = h[a(\cdot+v),b(\cdot+v)](u-v).
\end{equation}
This is equivalent to the fact that the counterterm $h$ coincides with a functional $c$ of the nonlinearities $a,b$ only, i.e. 
\begin{equation}\label{ct}
    h[a,b](u) = c[a(\cdot+u),b(\cdot+u)].
\end{equation}
Informally speaking, this expresses the idea that the form of the counterterm should not depend on the choice of origin in $u$-space. Finally, since the deterministic dynamics of~\eqref{eq:tfe-intro} are locally well-posed, it is natural to ask that the equation has no counterterm if we set $b \equiv 0$. Given the shift covariance~\eqref{ct}, this is tantamount to setting $c[a,0]=0$.

\begin{remark}[Form of the counterterm for other equations]
For the quasilinear stochastic heat equation \eqref{eq:qSHE} in $1+d$ dimensions, we have $L=(\partial_0-\Delta)$, $|L|=2$ and $\mathfrak{s}=(2,1,\dots,1)$. The restriction that the counterterm should be deterministic, local in $u$, and of lower order, leads to the following form
\begin{equation}
    \sum_{\beta} h_\beta(u,x) \bigotimes_{k\in\N} (\nabla^k u)^{\otimes\beta(k)},
\end{equation}
where $h_\beta$ is a $\sum_k k\beta(k)$-tensor and $\beta$ is restricted to $\sum_k k\beta(k)<|L|=2$. 
To exploit the reflection invariance of the noise, we observe that $u\sim u(R\cdot)$ provided $\xi\sim\xi(R\cdot)$.
Preserving this restricts to $\sum_{k} k\beta(k)$ being even, 
hence together with stationarity the counterterm reduces to exactly $h(u)$. 

Similarly, for the stochastic porous medium equation \eqref{eq:PME} 
in $1+d$ dimensions, 
we have $L=(\partial_0-\Delta)$, $|L|=2$, and $\mathfrak{s}=(2,1,\dots,1)$. 
The restriction that the counterterm should be deterministic, local in $u$, conservative, and of lower order leads to the following general form,
\begin{equation}
    \nabla\cdot \Big(\sum_{\beta} h_\beta(u,x) \bigotimes_{k\in\N} (\nabla^k u)^{\otimes\beta(k)}\Big),
\end{equation}
where we must have
\begin{equation}
    \sum_{k\in\N} k\beta(k)<|L|-1=1, 
\end{equation}
Imposing the same symmetries as for~\eqref{eq:tfe-intro}, leads us to the conclusion that the counterterm must be $0$.
\label{rem:oe1}
\end{remark}

\medskip

The above discussion motivates the following assumption on the ensemble $\xi$.
\begin{assumption}[Part I]\label{ass}
The law $\E$ of the tempered distribution $\xi$ satisfies 
\begin{enumerate}
\item[(i)] $\xi(\cdot)\sim\xi(\cdot+v)$ for all $v\in\R$,
\item[(ii)] $\xi(\cdot)\sim \bar{O}^T \xi(O\cdot)$ for any space-like orthogonal transformation \[
O x = (x_0, \bar{O}(x_1,\dots,x_d)) \, ,\] 
for some orthogonal $\bar{O} \in \R^{d\times d}$.
\end{enumerate}
\end{assumption}
\begin{remark}\label{rem:centered}
Combining (i) and (ii) with $\bar{O}=-{\rm id}$ of Assumption~\ref{ass} we obtain $\xi\sim-\xi$, in particular $\mathbb{E}\xi=0$. 
\end{remark}
Note that if an ensemble $\xi$ satisfies Assumption~\ref{ass}, 
then $\xi*\rho$ still satisfies the assumption, 
provided 
$\rho=\rho(O\cdot)$. 

\medskip

We can summarise the renormalisation problem as follows. 
We consider a mollified noise $\xi_\tau:=\xi*\psi_\tau$ for a suitably\footnote{we will choose a specific $\psi$ in \eqref{kernel}} rescaled mollifier $\psi$ satisfying Assumption~\ref{ass}, 
and we aim to find a scalar-valued function $h$ (depending on $\tau$ and the choice of the mollifier $\psi$) satisfying \eqref{ct} such that 
the solution manifold of the renormalised equation 
\begin{equation}\label{spde}
    L u =  \nabla \cdot(a(u) \nabla \Delta u)  + \nabla \cdot( b(u) \xi_\tau) - 
    \nabla\cdot (h(u) \nabla u) 
\end{equation} 
stays under (quantitative) control as the mollification parameter $\tau$ tends to $0$ \footnote{$u$ clearly depends on the mollification parameter $\tau>0$, but we suppress this dependence for the sake of notational convenience}. 
We will make more precise what we mean by controlling the solution manifold in \cref{sec:main}, see in particular Theorem~\ref{thm:main}. 

\medskip

To obtain this quantitative control on the solution manifold we will need an appropriate mixing assumption on the noise ensemble, 
which takes the form of a spectral gap (SG) inequality. 
We follow the discussion in \cite[Section 2.1]{LOTT21} and recall the main objects involved, for a more in-depth discussion we refer the reader to the aforementioned reference. We measure distances with the parabolic Carnot--Carath\'eodory distance
\begin{equation}\label{metric}
    \abss{x-y}:=\sum_{i=0}^d |x_i-y_i|^{\frac{1}{\mathfrak{s}_i}}, 
\end{equation}
where $\mathfrak{s}\in\N^{1+d}$ is the scaling associated to the operator $L$. Equipped with this notion of distance, the effective dimension $D$ of $\R^{1+d}$ is given by
\begin{equation}\label{dim}
    D=\sum_{i=0}^d \mathfrak{s}_i, 
    \label{eq:effectiveD}
\end{equation}
and we may define (parabolic) H\"older spaces with respect to this distance in the usual manner. Additionally, we can define anisotropic versions of Sobolev norms $\| \cdot\|_{\dot H^s}$ for $s\in\R$,
with the help of the space-time elliptic operator $LL^*$, as follows
\begin{align}
    \| G \|_{\dot H^{s}}:&= \left( \int_{\R^{1+d}} \dx{x} \, \big| (LL^*)^{\frac{s}{2|L|}}G(x)\big|^2 \right)^{\frac{1}{2}} \, .
    \label{eq:Sobolev}
\end{align}
Here, $G$ is allowed to be vector-valued (or even matrix-valued, cf. \eqref{sg}). Furthermore, we define cylindrical functionals 
\begin{equation}
    F[\xi] = f(\langle\xi,\zeta_1\rangle,\dots,\langle\xi,\zeta_N\rangle)
\end{equation}
for some $f\in C^\infty(\R^N;\R^n)$ and $\R^d$-valued Schwartz functions $\zeta_1,\dots,\zeta_N$, where $\langle\cdot,\cdot\rangle$ denotes the pairing between a tempered distribution and a Schwartz function. 
For such cylindrical functionals we may define
\begin{equation}\label{derivative}
    \frac{\partial F}{\partial\xi}[\xi] = \sum_{i=1}^N \partial_i f(\langle\xi,\zeta_1\rangle,\dots,\langle\xi,\zeta_N\rangle) \otimes \zeta_i 
\end{equation}
as a map from $\R^{1+d}$ to $\R^{n\times d}$, and for suitable $\delta\xi:\R^{1+d}\to\R^d$ 
\begin{equation}
    \R^n\ni\delta F(\delta\xi):= \Big\langle\frac{\partial F}{\partial\xi}[\xi],\delta\xi \Big\rangle
    := \int_{\R^{1+d}}\dx{x} \, \frac{\partial F}{\partial\xi}[\xi](x) \, \delta\xi(x).
\end{equation}
As will become clear in the later sections, we will only consider $n\in\{1,d\}$. Having introduced this notion of derivative, we are in a position to formulate our final assumption on the ensemble $\xi$.

\begin{duplicate*}[Part II]
The law $\E$ of the tempered distribution $\xi$ satisfies
\begin{enumerate}
\item[(iii)] for $\alpha\in(\max\{0,\frac32-\frac{D}{4}\},1)\setminus\mathbb{Q}$ and $s:= \alpha -3 +D/2$
the spectral gap inequality 
\begin{equation}\label{sg}
    \E\left|F-\E F\right|^2 \leq \E\left\|\frac{\partial F}{\partial\xi}\right\|_{\dot H^{-s}}^2,
\end{equation}
for all integrable cylindrical functionals $F$. 
In addition, we assume that the operator \eqref{derivative}, which is defined on cylindrical functions, is closable with respect to the topologies of $\E^\frac{1}{2}|\cdot|^2$ and $\E^\frac{1}{2}\|\cdot\|_{\dot H^{-s}}^2$.
\end{enumerate}
\end{duplicate*}
\smallskip

Assuming that the constant in \eqref{sg} is equal to $1$ is no restriction by a suitable rescaling of space-time.
Note that if an ensemble $\xi$ satisfies \eqref{sg} with constant $1$, 
then $\xi*\rho$ satisfies \eqref{sg} with constant $\|\rho\|_{L^1}$. 

\begin{remark}
The spectral gap inequality \eqref{sg} implies the corresponding $p$-version 
$\E|F-\E F|^p \lesssim_p 
\E \|\frac{\partial F}{\partial\xi}\|_{\dot H^{-s}}^p$
for any $p\geq2$, 
which we will frequently use in form of 
\begin{equation}\label{sgp}
    \E^\frac{1}{p}\left|F\right|^p 
    \lesssim_p |\E F| + 
    \E^\frac{1}{p} \left\|\frac{\partial F}{\partial\xi}\right\|_{\dot H^{-s}}^p .
\end{equation}
Indeed, the $p$-version follows formally by applying \eqref{sg} to $F^{p/2}$ and using the chain rule; 
for a rigorous proof see e.g.~\cite[Proposition~5.1]{IORT23}.
Furthermore, the closability of \eqref{derivative} extends to the topologies of $\E^\frac{1}{p}|\cdot|^p$ and $\E^\frac{1}{p}\|\cdot\|_{\dot H^{-s}}^p$.
\end{remark}

\begin{remark}
Let $\xi_t(y):=\xi*\psi_t(y)$ with $\psi_t$ defined in \eqref{kernel}. 
Then $\xi_t$ is a cylindrical functional with derivative $\psi_t(y-\cdot)$, 
which is centered by \cref{rem:centered}, 
and an application of \eqref{sgp} yields 
\begin{equation}\label{bound_xi}
\E^\frac{1}{p}|\xi_t(y)|^p\lesssim_p \|\psi_t\|_{\dot H^{-s}} \lesssim (\sqrt[8]{t})^{\alpha-3}.
\end{equation}
Hence, the Kolmogorov's continuity theorem tells us that indeed $\xi$ has a modification which has H\"older continuous realisations for any exponent less than $\alpha-3$. Thus, this motivates our choice of $s=\alpha-3+D/2$ in the spectral gap inequality \eqref{sg}.
\label{rem:reg}
\end{remark}

\begin{remark}\label{rem:essentialAssumption}
The only assumptions which are essential for our proof are ~\cref{ass} (i) and (iii). ~\cref{ass}~(ii) is made mainly for the sake of convenience to reduce the complexity of the counterterm. 
A careful inspection of our proof shows that the more complex setting can also be treated in this framework. 
Let us briefly comment on the restriction of $\alpha$ in Assumption~\ref{ass}~(iii): 
$\alpha>0$ is dictated by subcriticality and used several times in the proof, 
$\alpha>3/2-D/4$ is necessary for reconstruction, see \eqref{kappa}, and
$\alpha\not\in\mathbb{Q}$ is related to the failure of Schauder theory for integer exponents, see \cref{lem:int1} and \cref{rem:failureSchauder}, 
while $\alpha<1$ is assumed just for convenience to simplify the norms we work with and avoid case distinctions. 
\end{remark}

\begin{remark}[Spectral gap assumption for other equations]
For the stochastic porous medium equation \eqref{eq:PME} we would choose 
$s=\alpha-1+D/2$ in the spectral gap assumption \eqref{sg}. 
By a similar argument to the one in~\cref{rem:reg}, the resulting H\"older regularity of the noise would be arbitrarily close to but less than $\alpha-1$. 
Similarly, for the quasilinear stochastic heat equation~\eqref{eq:qSHE}, we would choose $s=\alpha-2+D/2$ and obtain noise of H\"older regularity arbitrarily close to but less than $\alpha -2$. 
In the former case, $\alpha$ would be restricted to 
$\alpha\in(0,1)\setminus\mathbb{Q}$, 
while in the latter it would be restricted to 
$\alpha\in(\max\{ 0, 1-\frac{D}{4} \},1)\setminus\mathbb{Q}$; 
in both cases we have $D=2+d$.\footnote{
As in \cref{rem:essentialAssumption}, the restriction $\alpha>0$ is for subcriticality, 
$\alpha<1$ to avoid case distinctions, and 
$\alpha\not\in\mathbb{Q}$ due to the failure of Schauder theory;
the analogous consideration that leads to \eqref{kappa} yields 
$2\alpha>1-D/2$ for the former case 
(which is weaker than $\alpha>0$ due to $D=2+d\geq3$), 
and $2\alpha>2-D/2$ for the latter case 
(which is more stringent than $\alpha>0$ in $D=2+d=2+1$).}
\label{rem:oe2}
\end{remark}

%%%%%%%%%%%%%%%%%%%%%%%%%%%%%%%%%%%%%%%%%%%%%%%%%%%%%%
\subsection{The centered model}\label{sec:model}

In this section, we are after a parameterisation of the whole solution manifold. This is tantamount to defining the so-called centered model in the language of regularity structures.
Much of what is discussed in this section is not justified rigorously, and should rather be seen as providing some intuition. For the reader's convenience, we have marked these not fully rigorous arguments starting with ``Heuristic argument of...'', and ending with the symbol  $/\!\!/$. The reader familiar with the multiindex-based approach to regularity structures can continue directly with Section~\ref{sec:main}, where the objects informally introduced in this section are rigorously constructed and uniformly (in the mollification parameter $\tau$) estimated.

Again, we will closely follow the strategy of \cite[Section 2.2]{LOTT21} and start with an informal discussion.
If $a$ and $b$ are constant, we obtain from \eqref{ct} that the corresponding counterterm $h$ is constant. 
As we shall see in Lemma~\ref{lem:int1}, 
for fixed $x\in\R^{1+d}$ and under a suitable growth condition there is a unique solution $v$ of the linear equation \eqref{eq:lspde-intro} satisfying $v(x)=0$. Therefore, a canonical parameterisation for solutions $u$ of \eqref{spde} for $a=0$, $b=1$ is given by 
$u=v+p$, where $p$ satisfies $L p = 0$. 
Such $p$ are analytic, and it will be convenient to actually consider all analytic $p$. 
This can be done at the expense of relaxing \eqref{spde} to hold only modulo analytic functions.
The hope is (and this has been made rigorous in case of a semilinear equation in \cite{BOS25}), 
that this parameterisation persists for sufficiently smooth $a$ and $b$ that are sufficiently close to $0$ and $1$, respectively. 
By the shift covariance \eqref{shiftcovariance} and the corresponding property of the counterterm, it is even expected that $p$ with $p(x)=0$ for fixed $x\in\R^{1+d}$ provide a sufficiently rich parameterisation.
A choice of coordinates on this space is given by
\begin{equation}\label{p_n}
    \p_\n[p]:=\frac{1}{\n!}\,\frac{\partial^\n p}{\partial x^\n}(0), 
    \quad
    \n\in\N_0^{1+d}\setminus\{\0\}, 
\end{equation}
which together with
\begin{equation}
    \a_k[a]:=\frac{1}{k!}\,\frac{\d^k a}{\d u^k} (0)
    \quad\textnormal{and}\quad 
    \b_\l[b]:=\frac{1}{\l!}\,\frac{\d^\l b}{\d u^\l} (0),
    \quad
    k,\l\in\N_0,
    \label{coordinates}
\end{equation}
is expected to provide a complete parameterisation of the above mentioned solution manifold.
From now on we shall always assume $k,\l\in\N_0$ and $\n\in\N_0^{1+d}$ and we will usually refrain from writing the corresponding set. Additionally, we will write $\n \neq \0$ for $\n \in \N_0^{1+d}\setminus \{\0\}$. 

\medskip

To approach the centered model $\Pi_x$, the previous discussion suggests, still on an informal level, to make the ansatz
\begin{align}
    &u(y)-u(x) \\
    &= 
    \sum_\beta \Pi_{x\beta}(y)
    \prod_k \Big(\frac{1}{k!}\,\frac{\d^k a}{\d u^k}(u(x))\Big)^{\beta_a(k)}
    \prod_\l \Big(\frac{1}{\l!}\,\frac{\d^\l b}{\d u^\l}(u(x))\Big)^{\beta_b(\l)}
    \prod_{\n\neq\0} \Big(\frac{1}{\n!}\,\frac{\partial^\n p}{\partial x^\n}(x) \Big)^{\beta_p(\n)} \, , 
\end{align}
where we sum over multiindices\footnote{where for functions $f,g$, $f\cup g$ denotes their set-theoretic union.} $\beta=\beta_a\cup\beta_b\cup\beta_p: \N_0 \,\dot\cup\, \N_0 \,\dot\cup\, (\N_0^{1+d}\setminus\{\0\}) \to \N_0$, i.e.~$\beta(k)=\beta_a(k)$ for $k$ in the first copy of $\N_0$, $\beta(\ell)=\beta_b(\ell)$ for $\ell$ in the second copy of $\N_0$, and $\beta(\n)=\beta_p(\n)$ for $\n\in\N_0^{1+d}\setminus\{\0\}$.

The idea is now to separate the construction of the coefficients $\Pi_{x\beta}$ of the above ansatz, 
from the construction of an actual solution $u$ of \eqref{spde}.
The former is based on probabilistic tools and is the content of this work, 
while the latter is based on analytic tools and is yet to be done for \eqref{spde} (as mentioned earlier this has been carried out for a semilinear equation in \cite{BOS25}).
We therefore need to get more information on the coefficients $\Pi_{x\beta}$ of the above ansatz, which we continue with after the following notational remark.

\smallskip
\begin{nota*} 
We remark here that we shall denote by $k$ an element of the first copy of $\N_0$ corresponding to the nonlinearity $a$, and by $\l$ an element of the second copy of $\N_0$ corresponding to the nonlinearity $b$.
The unit vectors in directions $k,\l,\n$ are denoted by $e_k, f_\l, g_\n$, respectively, 
i.e.~$e_{k}: \N_0 \,\dot\cup\, \N_0 \,\dot\cup\, (\N_0^{1+d}\setminus\{\0\}) \to \N_0$ satisfies 
$e_k(k')=\delta_k^{k'}$ for $k'$ in the first copy of $\N_0$, 
$e_k(\l')=0$ for $\l'$ in the second copy of $\N_0$, 
and $e_k(\n')=0$ for $\n'\in\N_0^{1+d}\setminus\{\0\}$.
\end{nota*}
\smallskip

In the case $a=0$ and $b=1$ the above ansatz is consistent with $u-u(x)=v+p$, provided we choose $\Pi_{x f_0} = v$ and 
\begin{equation}\label{polypart:poly} 
\Pi_{x g_\n} = (\cdot-x)^\n \, .
\end{equation}
To get our hands on $\Pi_{x\beta}$ for all other $\beta$ we proceed as follows.
By using the monomials
\begin{equation}
    \z^\beta:=\prod_k \a_k^{\beta_a(k)} \prod_\l \b_\l^{\beta_b(\l)} \prod_{\n\neq\0} \p_\n^{\beta_p(\n)}, 
\end{equation}
the above power series ansatz can be more compactly written as
\begin{equation}\label{ansatz}
    u(y)-u(x) = 
    \sum_\beta \Pi_{x\beta}(y) \, \z^\beta[a(\cdot+u(x)),b(\cdot+u(x)),p(\cdot+x)]. 
\end{equation}
This allows to work with the space of formal power series $\R[[(\a_k)_k,(\b_\l)_\l,(\p_\n)_{\n\neq\0}]]$, 
and define $\Pi_x=\sum_\beta \Pi_{x\beta} \z^\beta$. 
Recall that this space comes with the following algebra structure: for $\pi'=\sum_\beta\pi'_\beta\z^\beta$ and $\pi''=\sum_\beta\pi''_\beta\z^\beta$ it holds $\pi'\pi''=\sum_\beta(\pi'\pi'')_\beta\z^\beta$ with $(\pi'\pi'')_\beta=\sum_{\beta'+\beta''=\beta}\pi'_{\beta'}\pi''_{\beta''}$.
Also $c$ from \eqref{ct} as a functional of $a,b$ can be (formally) identified with a power series $c=\sum_\beta c_\beta \z^\beta$. 
From the equation \eqref{spde} (relaxed to hold only modulo analytic functions), one can then (formally!) derive the following hierarchy of PDEs for the coefficients $\Pi_{x\beta}$:
\begin{subequations}
\label{modelhierarchy}
\begin{align}
    L\Pi_{x\beta} &\, = \nabla\cdot\Pi^-_{x\beta} \quad\textnormal{mod analytic functions, where} \label{hierarchy} \\
    \Pi^-_{x\beta} &:= \Big(
    \sum_k \a_k \Pi_x^k \nabla\Delta\Pi_x 
    + \sum_\l \b_\l \Pi_x^\l \xi_\tau - 
    \sum_{m\in\N_0} \tfrac{1}{m!} \Pi_x^m\nabla\Pi_x (D^{(\0)})^m c \Big)_\beta\, . \label{piminus}
\end{align}  
\end{subequations}
Here $D^{(\0)}$ is given by the derivation 
\begin{equation}\label{D0}
    D^{(\0)}\coloneqq \sum_k (k+1) \a_{k+1} \partial_{\a_k} 
    + \sum_\l (\l+1) \b_{\l+1} \partial_{\b_\l}.
\end{equation}
Note that its matrix components $(D^{(\0)})_\beta^\gamma$, defined by 
\begin{equation}
    D^{(\0)} \z^\gamma = \sum_\beta (D^{(\0)})_\beta^\gamma \, \z^\beta ,
\end{equation}
are given by 
\begin{equation}\label{D0comp}
    (D^{(\0)})_\beta^\gamma = \sum_k (k+1) \gamma_a(k) \delta_\beta^{\gamma-e_k+e_{k+1}} 
    + \sum_\ell (\ell+1) \gamma_b(\ell) \delta_\beta^{\gamma-f_\ell+f_{\ell+1}},
\end{equation}
and that the sums over $k,\ell$ are finite for fixed $\beta$. 
Furthermore, for fixed $\beta$ there are only finitely many $\gamma$ with $(D^{(\0)})_\beta^\gamma\neq0$, 
hence \eqref{D0} 
is well defined on $\RR$.
For later use, we mention the following consequence of \eqref{D0comp}
\begin{equation}\label{D0props}
    (D^{(\0)})_{\beta}^{\gamma}\neq 0  
    \hspace{1ex}\implies\hspace{1ex}\left\{
    \begin{array}{l}
    \sum_k \beta_a(k) = \sum_k \gamma_a(k), \\
    \sum_\ell \beta_b(\ell)=\sum_\ell \gamma_b(\ell), \\
    \sum_k k\beta_a(k)+\sum_\ell \ell\beta_b(\ell)
    =1+\sum_k k\gamma_a(k)+\sum_\ell \ell\gamma_b(\ell),\\
    \beta_p(\n)= \gamma_p(\n) \text{ for all }\n\neq\0.
    \end{array}\right.
\end{equation}

\medskip

\noindent

\textit{Heuristic argument for \eqref{modelhierarchy}.}
We first note that, with the shorthand notation $a':=a(\cdot+u(x))$, $b':=b(\cdot+u(x))$ and $p':=p(\cdot+x)$,  the above ansatz~\eqref{ansatz} can be rewritten as $u(y)-u(x)=\Pi_x[a',b',p'](y)$. Then, clearly the left hand side of \eqref{spde} equals $L\Pi_x[a',b',p']$. 
For the first term on the right hand side of \eqref{spde}, we note that 
$a(u)=a'(u-u(x))=a'(\Pi_x[a',b',p'])$, which by \eqref{coordinates}, 
yields $a(u)=(\sum_k \a_k\Pi_x^k)[a',b',p']$. Hence, this term can be written as  
\[
(\nabla\cdot\sum_k \a_k\Pi_x^k\nabla\Delta\Pi_x)[a',b',p'] \, .
\]
For the second term on the right hand side of \eqref{spde}, we proceed in a similar manner to obtain that it equals
\[
(\nabla\cdot\sum_\ell \b_\ell\Pi_x^\ell\xi)[a',b',p'] \, .
\]
For the last term on the right hand side of \eqref{spde}, we have to work a little bit harder.  Using~\eqref{ct}, we know that the counterterm is of the form $c[a(\cdot+u),b(\cdot+u)]$. 
To express this as a functional of $a$ and $b$ 
we first identify $D^{(\0)}$ with the infinitesimal generator of $u$-shift on $(a,b)$-space given by  
\[
%(D^{(\0)}c)[a,b]=
\frac{\d}{\d v}\Big|_{v=0} c[a(\cdot+v),b(\cdot+v)] \, .
\]
To see this identification, observe that the infinitesimal generator maps by \eqref{coordinates} $\a_k \mapsto (k+1)\a_{k+1}$ and $\b_\ell \mapsto (\ell+1)\b_{\ell+1}$. 
Moreover, it maps $\p_\n\mapsto0$ and it is a derivation, 
on $\R[\a_k,\b_\ell,\p_\n]$ it therefore has to coincide with the derivation $D^{(\0)}$.

By iteration we obtain
\[((D^{(\0)})^m c)[a,b]=\frac{\d^m}{\d v^m}\Big|_{v=0} c[a(\cdot+v),b(\cdot+v)] \, ,\] 
and hence by Taylor's theorem
\begin{equation}
    c[a(\cdot+v),b(\cdot+v)]=\Big(\sum_{m\in\N_0} \tfrac{1}{m!} v^m (D^{(\0)})^m c\Big)[a,b].
\end{equation}
Since $h[a,b](u)=c[a(\cdot+u),b(\cdot+u)]=c[a'(\cdot+\Pi_x[a',b',p']),b'(\cdot+\Pi_x[a',b',p'])]$, we obtain
\begin{equation}
    h[a,b](u) = \Big(\sum_{m\in\N_0} \tfrac{1}{m!} \Pi_x^m (D^{(\0)})^m c\Big)[a',b',p'],
\end{equation}
which finally tells us that the last term on the right hand side of \eqref{spde} equals
\begin{equation}
    \Big(\nabla\cdot\sum_{m\in\N_0} \tfrac{1}{m!} \Pi_x^m\nabla\Pi_x (D^{(\0)})^m c\Big)[a',b',p'].
\end{equation}
Since $a,b,p$ were arbitrary, this concludes the argument for \eqref{modelhierarchy}.
\hfill /\!\!/

\medskip

\begin{remark}\label{rem:hierarchy}
Let us point out that for fixed $\beta$ the sums over $k,\ell,m$ in \eqref{piminus} 
are finite sums and are thus well-defined. 
Although \eqref{modelhierarchy} looks like a nonlinear equation, it is, in fact, an infinite hierarchy of linear equations, 
\begin{align}
    L\Pi_{x\beta} &\, = \nabla\cdot\Pi^-_{x\beta} \quad\textnormal{mod analytic functions,} \\
    \Pi^-_{x\beta} &= \sum_k\sum_{e_k+\beta_1+\cdots+\beta_{k+1}=\beta}
    \Pi_{x\beta_1}\cdots\Pi_{x\beta_k}\nabla\Delta\Pi_{x\beta_{k+1}} \label{piminuscomponents} \\
    &\ + \sum_\ell\sum_{f_\ell+\beta_1+\cdots+\beta_\ell=\beta} 
    \Pi_{x\beta_1}\cdots\Pi_{x\beta_\ell}\xi_\tau \\
    &\ - \sum_m\tfrac{1}{m!}\sum_{\beta_1+\cdots+\beta_{m+2}=\beta} 
    \Pi_{x\beta_1}\cdots\Pi_{x\beta_m}\nabla\Pi_{x\beta_{m+1}}
    ((D^{(\0)})^m c)_{\beta_{m+2}}. 
\end{align}
As follows from Lemma~\ref{lem:tri1}~(i), this is indeed a hierarchy. 
To illustrate the complexity of this hierarchy, 
we enumerate a few examples\footnote{we list those components that are relevant for $\alpha>1/2$, see \eqref{badMultiindices}} of the equations solved by components $\Pi_{x\beta}$: 
\begin{align}
    L\Pi_{x f_0} &= \nabla\cdot \xi_\tau, \\
    L\Pi_{x f_0+f_1} &=\nabla\cdot \big(\Pi_{x f_0}\xi_\tau
    \textcolor{Maroon}{ \ 
    - \, \nabla\Pi_{xf_0} c_{f_1} }
    \big) , \\
    % L\Pi_{x 2f_0+f_2} &=\nabla\cdot (\Pi_{x f_0}^2\xi), \\
    L\Pi_{x f_1+g_\n} &= \nabla\cdot \big((\cdot-x)^\n\xi_\tau
    \textcolor{Maroon}{ \ 
    - \, \nabla (\cdot-x)^\n c_{f_1} }
    \big) , \\
    L\Pi_{x e_1+2f_0} 
    &=\nabla\cdot \big(\Pi_{x f_0}\nabla\Delta\Pi_{x f_0}
    \textcolor{Maroon}{ \ 
    - \, \nabla\Pi_{xf_0} c_{e_1+f_0} }
    \big) , \\
    % L\Pi_{x e_1+2f_0+f_1} &= \nabla\cdot (\Pi_{x f_0}\nabla\Delta\Pi_{x f_0+f_1} + \Pi_{x f_0+f_1}\nabla\Delta\Pi_{x f_0} + \Pi_{x e_1+2f_0}\xi), \\
    L\Pi_{x e_1+f_0+g_\n} 
    &= \nabla\cdot \big(\Pi_{x f_0}\nabla\Delta (\cdot-x)^\n 
    + (\cdot-x)^\n\nabla\Delta\Pi_{x f_0} 
    \textcolor{Maroon}{ \ 
    - \, \nabla(\cdot-x)^\n c_{e_1+f_0} }
    \big) , \\[1.5ex]
    L\Pi_{x 2f_1+g_\n} &= \nabla\cdot \big(\Pi_{x f_1+g_\n} \xi_\tau
    \textcolor{Maroon}{ \ 
    - \, \nabla (\cdot-x)^\n c_{2f_1}
    - \nabla\Pi_{x f_1+g_\n} c_{f_1} }
    \big) , \\
    L\Pi_{x f_0+f_2+g_\n} 
    &= \nabla\cdot \big(2(\cdot-x)^\n \Pi_{x f_0} \xi_\tau \textcolor{Maroon}{ \ 
    - \, \nabla (\cdot-x)^\n c_{f_0+f_2} } \\
    &\textcolor{Maroon}{ \hphantom{=\nabla\cdot\big(}
    - \, \Pi_{xf_0} \nabla(\cdot-x)^\n \underbrace{(D^{(\0)} c)_{f_2}}_{=2c_{f_1}}
    - \, (\cdot-x)^\n \nabla\Pi_{xf_0} \underbrace{(D^{(\0)} c)_{f_2}}_{=2c_{f_1}} }
    \big) , \\
    L\Pi_{x e_2+2f_0+g_\n} 
    &= \nabla\cdot \big(\Pi_{x f_0}^2\nabla\Delta (\cdot-x)^\n 
    + 2(\cdot-x)^\n\Pi_{x f_0} \nabla\Delta \Pi_{x f_0}
    \textcolor{Maroon}{ \ 
    - \, \nabla (\cdot-x)^\n c_{e_2+2f_0} } \\
    &\textcolor{Maroon}{ \hphantom{=\nabla\cdot\big(}
    - \, \Pi_{xf_0}\nabla(\cdot-x)^\n 
    \underbrace{(D^{(\0)}c)_{e_2+f_0}}_{=2c_{e_1+f_0}} 
    - \, (\cdot-x)^\n \nabla\Pi_{xf_0} 
    \underbrace{(D^{(\0)}c)_{e_2+f_0}}_{=2c_{e_1+f_0}} }
    \big) , \\
    L\Pi_{x 2e_1+2f_0+g_\n} 
    &= \nabla\cdot \big(\Pi_{x e_1+2f_0}\nabla\Delta (\cdot-x)^\n 
    + (\cdot-x)^\n\nabla\Delta\Pi_{x e_1+2f_0} \\
    &\hphantom{=\nabla\cdot\big(} 
    + \Pi_{x e_1+f_0+g_\n}\nabla\Delta\Pi_{x f_0} 
    + \Pi_{x f_0}\nabla\Delta\Pi_{x e_1+f_0+g_\n} \\
    &\textcolor{Maroon}{\hphantom{=\nabla\cdot\big(}
    - \, \nabla(\cdot-x)^\n c_{2e_1+2f_0} 
    - \nabla\Pi_{xe_1+f_0+g_\n} c_{e_1+f_0} } \\
    &\textcolor{Maroon}{\hphantom{=\nabla\cdot\big(}
    - \, \Pi_{xf_0}\nabla(\cdot-x)^\n 
    \underbrace{(D^{(\0)}c)_{2e_1+f_0}}_{=c_{e_0+e_1+f_0}}
    - \, (\cdot-x)^\n \nabla\Pi_{xf_0} 
    \underbrace{(D^{(\0)}c)_{2e_1+f_0}}_{=c_{e_0+e_1+f_0}} }
    \big) , \\
    L\Pi_{x e_1+f_0+f_1+g_\n} 
    &= \nabla\cdot \big(\Pi_{x f_0+f_1} \nabla\Delta (\cdot-x)^\n 
    + (\cdot-x)^\n \nabla\Delta\Pi_{x f_0+f_1} \\
    &\hphantom{=\nabla\cdot\big(} 
    + \Pi_{x f_0}\nabla\Delta\Pi_{x f_1+g_\n} 
    + \Pi_{x f_1+g_\n}\nabla\Delta\Pi_{x f_0} 
    + \Pi_{x e_1+f_0+g_\n}\xi_\tau \\
    &\textcolor{Maroon}{\hphantom{=\nabla\cdot\big(}
    - \, \nabla(\cdot-x)^\n c_{e_1+f_0+f_1} 
    - \, \nabla\Pi_{xf_1+g_\n} c_{e_1+f_0}
    - \, \nabla\Pi_{xe_1+f_0+g_\n} c_{f_1} } \\
    &\textcolor{Maroon}{\hphantom{=\nabla\cdot\big(}
    - \, \Pi_{xf_0}\nabla(\cdot-x)^\n 
    \underbrace{(D^{(\0)}c)_{e_1+f_1}}_{=c_{e_0+f_1}+c_{e_1+f_0}}
    - \, (\cdot-x)^\n \nabla\Pi_{xf_0} 
    \underbrace{(D^{(\0)}c)_{e_1+f_1}}_{=c_{e_0+f_1}+c_{e_1+f_0}} } 
    \big) .
\end{align}
\end{remark}

From Remark~\ref{rem:hierarchy}, we can already notice that for some multiindices $\beta$ we expect $\Pi_{x\beta}=0$, for example $\beta\in\{0,2f_0,\dots\}$.  This motivates the following definition. We call a multiindex \emph{populated}, if and only if 
\begin{equation}\label{pop1}
\begin{split}
    1 + \sum_k k\beta_a(k) + \sum_\l \l \beta_b(\l) = \sum_\l \beta_b(l) + \sum_{\n\neq\0} \beta_p(\n) \quad\text{and} \hspace{12.5ex}\\
    \Big( \beta \text{ is \emph{purely polynomial}, i.e. } \beta=g_\n \text{ for some }\n\neq\0,
    \quad\text{or}\quad
    \sum_\l \beta_b(\l)>0 \Big).
\end{split}
\end{equation}
We can motivate the above condition through the following formal scaling argument. 

\medskip

\noindent
\textit{Heuristic argument for \eqref{pop1}.}
Consider~\eqref{eq:tfe-intro} with some smooth ensemble $\xi$ and define $u_{\lambda}= \lambda u$ for some $\lambda>0$. Then, it is easy to check that $u_\lambda$ solves the same equation as $u$ but with nonlinearities $a_\lambda =a(\lambda^{-1}\cdot)$ and $b_\lambda= \lambda b(\lambda^{-1}\cdot)$ and  parameterisation $p_\lambda = \lambda p$. Thus, using the formal power series expansion~\eqref{ansatz} for the solution, we have
\begin{align}
&\lambda(u(y)-u(x))
=u_\lambda(y)-u_\lambda(x) \\ 
&= \sum_{\beta}\Pi_{x\beta}(y)\mathsf{z}^\beta[a_\lambda(\cdot + u_\lambda(x)) ,b_\lambda(\cdot + u_{\lambda}(x)),p_\lambda(\cdot + x)] \\
&= \sum_{\beta} 
\lambda^{-\sum\limits_{k}k\beta_a(k)
-\sum\limits_{\ell}(\ell-1)\beta_b(l) 
+\sum\limits_{\mathbf{n}\neq \0} \beta_p(\mathbf{n})} 
\Pi_{x\beta}(y)\mathsf{z}^\beta[a(\cdot + u(x)) ,b(\cdot + u(x)),p(\cdot + x)] \, .
\end{align}
The first part of the population condition~\eqref{pop1} follows from equating the powers of $\lambda$ of the above expression and \eqref{ansatz} multiplied by $\lambda$. 

For the second part of~\eqref{pop1}, we impose that $\Pi_{x\beta}$ is a multilinear map of the noise of rank at least $1$, 
unless $\beta$ is purely polynomial. 
Let $u_\lambda$ denote the solution obtained by choosing the noise $\lambda \xi$ for some $\lambda>0$. Clearly, this is the same as considering the solution obtained by choosing the nonlinearity $b_\lambda = \lambda b$. Using the power series expansion of the solution, we have
\begin{align}
\sum_{\beta}\Pi_{x\beta}[\lambda\xi](y)\mathsf{z}^\beta[a,b,p]
=\sum_{\beta}\Pi_{x\beta}[\xi](y)\mathsf{z}^\beta[a,b_\lambda,p] 
= \sum_{\beta} \lambda^{\sum\limits_{\ell}\beta_b(\ell)} \Pi_{x\beta}[\xi](y)\mathsf{z}^\beta[a,b,p] .
\end{align}
From the above expression, clearly $\sum_{\ell}\beta_b(\ell)>0$ for all $\beta$ not purely polynomial since, otherwise, the associated $\Pi_{x\beta}$ is not multilinear with rank at least $1$.
\hfill /\!\!/

\medskip

Analogous, we will restrict $c\in\R[[\a_k,\b_\ell]]$ a priori by the following population condition
\begin{equation}\label{pop2}
    c_\beta\neq 0
    \quad\implies\quad
    \sum_k k\beta_a(k)+\sum_\ell \ell\beta_b(\ell)=\sum_\ell \beta_b(\ell)
    \quad\text{and}\quad \sum_\ell\beta_b(\ell)>0.
\end{equation}
We will see in \cref{sec:bphz} that $c$-components violating this condition will not play any role in renormalisation. 
One can also motivate this population constraint using the same scaling argument as for $\Pi_{x\beta}$. 

\medskip

\noindent
\textit{Heuristic argument for \eqref{pop2}.}
If we insist that, even in the presence of the counterterm, $u_\lambda$ as defined earlier is a solution 
(with $a_\lambda,b_\lambda,p_\lambda$), then we must have 
\begin{equation}
h[a_\lambda,b_\lambda](u_\lambda)= h[a,b](u)  \, .
\end{equation}
The first part of condition~\eqref{pop2} then follows by using the power series expansion for the counterterm and enforcing the above identity. 
For the second part of~\eqref{pop2}, we consider the $c[a,b_\lambda]$ for $b_\lambda=\lambda b$ with $\lambda>0$. 
Then, using the power series expansion of $c$, we have
\begin{equation}
c[a,b_\lambda] = \sum_{\beta} c_\beta \mathsf{z}^\beta[a,b_\lambda ]= \sum_{\beta}\lambda^{\sum\limits_{\ell}\beta_b(\ell) } c_\beta \mathsf{z}^\beta[a,b ] \, .
\end{equation}
Since by assumption $c[a,0]=0$, each component of the above power series 
for which $c_\beta\neq0$ must converge to $0$ as $\lambda \to 0$. Thus, we must have $\sum\limits_{\ell}\beta_b(\ell)>0$.
\hfill /\!\!/

Summarising, we therefore postulate 
\begin{equation}\label{pop3}
    \Pi_{x\beta}\neq0\quad\implies\quad\beta\textnormal{ populated},
\end{equation}
and consider $\Pi_x$ as taking values in 
\begin{equation}
    \T^*:=\big\{\pi\in\RR \,\big|\, \pi_\beta\neq0\implies\beta\textnormal{ populated}\big\}.
\end{equation}
For later use, we introduce the polynomial part $\bar{\T}^*$ of $\T^*$ by 
\begin{equation}
    \bar{\T}^*:=\big\{\pi\in\RR \,\big|\, \pi_\beta\neq0\implies\beta\textnormal{ purely polynomial}\big\}.
\end{equation}
This induces the decomposition of $\T^*$ into 
\begin{equation}
\T^*=\bar{\T}^*\oplus\widetilde{\T}^*.
\end{equation}
Analogous to $\Pi_x$, we want to consider $\Pi^-_x$ as a $\T^*$ valued map, where we note the following:
For $\pi,\pi'\in \T^*$, one can check that $\sum_\ell \b_\ell\pi^\ell$ is again in $\T^*$, 
and the same holds true for $\sum_m \pi^m\pi'(D^{(\0)})^m c$ due to the population constraint \eqref{pop2} of $c$ 
and the mapping properties \eqref{D0props} of $D^{(\0)}$.
Moreover, due to the presence of the factors $\b_\ell$ and $c$, 
these products belong in fact to $\widetilde{\T}^*$.
As opposed to that, $\sum_k \a_k \pi^k \pi'$ is in general not\footnote{consider e.g.~$\p_{\n_1},\p_{\n_2}\in \T^*$, then $\a_1 \p_{\n_1}\p_{\n_2}\not\in \T^*$} an element of $\T^*$.
However, in case it is an element of $\T^*$, then due to the presence of the factor $\a_k$ it is automatically contained in $\widetilde{\T}^*$.
We therefore introduce the projection $P$ from $\RR$ to $\widetilde{\T}^*$ in the definition of $\Pi^-_x$, 
to obtain the $\widetilde{\T}^*$ valued map 
\begin{equation}
\Pi^-_x = 
P\sum_k \a_k \Pi_x^k \nabla\Delta\Pi_x 
+ \sum_\l \b_\l \Pi_x^\l \xi_\tau 
- \sum_{m\in\N_0} \tfrac{1}{m!} \Pi_x^m\nabla\Pi_x (D^{(\0)})^m c \, , 
\label{eq:piminusttilde}
\end{equation}
which is consistent with \eqref{piminus}.

\medskip

We turn to the \emph{homogeneity} $|\beta|$ of a multiindex $\beta$ which we define as follows
\begin{equation}\label{homogeneity}
    |\beta|:=\alpha(1+[\beta])+|\beta|_p,
\end{equation}
where 
\begin{equation}\label{homogeneity2}
    [\beta]:=\sum_k k\beta_a(k)+\!\sum_\ell \ell\beta_b(\ell)-\!\sum_{\n\neq\0}\beta_p(\n),
    \ \
    |\beta|_p:=\sum_{\n\neq\0} |\n|\beta_p(\n),
    \ \
    |\n|:=\sum_{i=0}^d \mathfrak{s}_i \n_i.
\end{equation}
The appearance of the homogeneity is best seen from the following formal\footnote{but it can be made rigorous, see Corollary~\ref{coro}~3.d) below} scaling argument.

\medskip

\noindent
\textit{Heuristic argument for \eqref{homogeneity}.}
Recall from \eqref{sinv1} that if $u$ is a solution to \eqref{eq:spde-intro}, 
then $u^\eps$ is a solution to \eqref{eq:spde-intro} provided $a$, $b$ and $\xi$ are replaced by $\hat a:=a(\eps^\alpha\cdot)$, 
$\hat b:=b(\eps^\alpha\cdot)$ and $\hat\xi$ given by \eqref{eq:noise-invariance}.
Notice that this persists for the renormalised equation, provided $h$ is replaced by $\hat h:=\eps^2 h(\eps^\alpha\cdot)$. 
On the parameterisation $p$, we now impose the same scaling as on $u$, i.e.~$\hat p(x):=\eps^{-\alpha}p(\hat x)$ with $\hat x$ given in \eqref{xhat}. 
From this we obtain $\eps^{-\alpha}u[a,b,p,\xi](\hat y)=u^\eps(y) = u[\hat a,\hat b,\hat p, \hat\xi](y)$. 
Using $\eps^\alpha \z^\beta[\hat a,\hat b,\hat p] = \eps^{|\beta|} \z^\beta[a,b,p]$ in \eqref{ansatz}, we read off
\begin{equation}
    \Pi_{\hat x \beta}[\xi](\hat y) = \eps^{|\beta|} \Pi_{x\beta}[\hat\xi](y) \, . 
\end{equation}
\hfill /\!\!/

\medskip

    We denote the set of all homogeneities by 
    \begin{equation}
        \A:=\{|\beta| \,\big|\, \beta \text{ populated}\}.
    \end{equation}
    As a subset of $\alpha\N_0+\N_0$ this set is bounded from below and locally finite.
    Furthermore, by $\alpha\not\in\mathbb{Q}$ from Assumption~\ref{ass}~(iii), we have
    \begin{equation}\label{homogeneity_integer_polynomial}
    |\beta|\in \A\cap\N_0 \quad\implies\quad \beta \textnormal{ is purely polynomial.}
    \end{equation}

\begin{remark}
Via the hierarchy \eqref{hierarchy} we can associate $\Pi_{x\beta}$ to trees, 
as is done in the theory of regularity structures developed in \cite{Hai14}. 
Neglecting the counterterm, 
$\beta_a(k)$ equals the number of nodes without decoration 
and with $k+1$ outgoing edges, 
one of them with $\nabla\Delta$-decoration,
$\beta_b(\ell)$ equals the number of nodes with a noise decoration 
and with $\ell$ outgoing edges, 
and $\beta_p(\n)$ equals the number of nodes with an $\n$-th monomial decoration
and without children. 
All edges not connecting to a monomial node carry an additional $L^{-1}\nabla\cdot$ decoration.
Hence the total number of nodes is given by 
$\sum_k \beta_a(k) + \sum_\ell \beta_b(\ell) + \sum_{\n\neq\0} \beta_p(\n)$, 
while the number of edges is given by 
$\sum_k (k+1)\beta_a(k) + \sum_\ell \ell\beta_b(\ell) + \sum_{\n\neq\0}0\beta_p(\n)$.
The population condition 
$1+\sum_k k\beta_a(k)+\sum_\ell \ell\beta_b(\ell) 
= \sum_\ell \beta_b(\ell)+\sum_{\n\neq\0}\beta_p(\n)$ 
is then equivalent to saying that the number of edges 
differs from the number of nodes by $1$, 
i.e.~$\beta$ corresponds to a tree, 
and $\Pi_{x\beta}$ equals the linear combination of all trees 
with this given configuration. 
E.g.~the multiindices $e_1+2f_0$ and $2e_1+2f_0+g_\n$ are associated to 
\begin{equation}
\begin{istgame}
\xtdistance{3mm}{3mm}
\setistgrowdirection{north}
\istroot(0)[null node]
\istb*[double] \istb* \endist
\end{istgame}
\quad\textnormal{and} \quad
\begin{istgame}
\xtdistance{3mm}{3mm}
\setistgrowdirection{north}
\istroot(0)[null node]
\istb[dotted]{\mbox{\tiny$\nabla\!\Delta\! X^{\mathbf{n}}$}}[ar] \istb \endist
\istroot(a)(0-2)[null node] \istb*[double] \istb* \endist
\end{istgame}
+
\begin{istgame}
\xtdistance{3mm}{3mm}
\setistgrowdirection{north}
\istroot(0)[null node]
\istb*[double] \istb \endist
\istroot(a)(0-2)[null node]
\istb[dotted]{\mbox{\tiny$\nabla\!\Delta\! X^{\mathbf{n}}$}}[ar] \istb* \endist
\end{istgame}
+
\begin{istgame}
\xtdistance{3mm}{3mm}
\setistgrowdirection{north}
\istroot(0)[null node]
\istb*[double] \istb \endist
\istroot(a)(0-2)[null node]
\istb*[double] \istb[dotted]{\mbox{\tiny$X^{\mathbf{n}}$}\!}[al] \endist
\end{istgame}
+
\begin{istgame}
\xtdistance{3mm}{3mm}
\setistgrowdirection{north}
\istroot(0)[null node]
\istb[double] \istb* \endist
\istroot(a)(0-1)[null node]
\istb[dotted]{\mbox{\tiny$\nabla\!\Delta\! X^{\mathbf{n}}$}}[ar] \istb* \endist
\end{istgame}
+
\begin{istgame}
\xtdistance{3mm}{3mm}
\setistgrowdirection{north}
\istroot(0)[null node]
\istb[double] \istb* \endist
\istroot(a)(0-1)[null node]
\istb*[double] \istb[dotted]{\mbox{\tiny$X^{\mathbf{n}}$}\!}[al] \endist
\end{istgame}
+
\begin{istgame}
\xtdistance{3mm}{3mm}
\setistgrowdirection{north}
\istroot(0)[null node]
\istb[double] \istb[dotted]{\mbox{\tiny$X^{\mathbf{n}}$}\!}[al] \endist
\istroot(a)(0-1)[null node]
\istb*[double] \istb* \endist
\end{istgame}
,
\end{equation}
where a dot represents the noise decoration, 
a single solid line represents the integration decoration (i.e., $L^{-1}\nabla\cdot$), 
a double line represents the $\nabla\Delta$-decoration combined with the integration decoration, 
and a dotted line is just to visualise to which node the monomial decoration belongs.

It can also be checked, that the notion of homogeneity defined in \eqref{homogeneity} is consistent with the one defined in the tree-based setting of regularity structures. 
For more details and proofs we refer the reader to \cite[Section~7]{LOT23}.
\end{remark}

\begin{remark}
Note that by \eqref{pop1} and \eqref{homogeneity2} we have for populated multiindices $1+[\beta]=\sum_\ell \beta_b(\ell)>0$, 
in particular $[\beta]\geq0$. This is exactly the population condition in \cite[(2.23)]{LOTT21}. 
\end{remark}

\begin{remark}[Hierarchy, population and homogeneity for other equations]
For the quasilinear heat equation \eqref{eq:qSHE}, 
\eqref{polypart:poly} still holds true.
Similarly, one can obtain the hierarchy
\begin{equation}
    (\partial_0-\Delta)\Pi_x = \Pi^-_x = 
    P \sum_k \a_k \Pi_x^k \Delta\Pi_x 
    + \sum_\l \b_\l \Pi_x^\l \xi_\tau - 
    \sum_m \tfrac{1}{m!} \Pi_x^m (D^{(\0)})^m c.
\end{equation}
The population condition \eqref{pop1} is the same, 
however \eqref{pop2} changes to $c_\beta=0$ unless $\beta$ is populated.
The reason is that there is no additional term $\nabla\Pi_x$ multiplying $c$ on the right hand side of the hierarchy of equations. 
Also the homogeneity given by \eqref{homogeneity} stays the same. 
For the generalised porous medium equation \eqref{eq:PME}, 
\eqref{polypart:poly} stays the same, and the hierarchy is given by 
\begin{equation}
(\partial_0-\Delta)\Pi_x
= \nabla\cdot\Pi^-_x 
= \nabla\cdot\big(P \sum_k \a_k \Pi_x^k \nabla\Pi_x
+ \sum_\ell \b_\ell \Pi_x^\ell \xi_\tau\big).
\end{equation}
The population condition \eqref{pop1} as well as the homogeneity \eqref{homogeneity} persist.
\label{rem:oe3}
\end{remark}

\medskip

Before stating the main theorem, we introduce the recentering maps $\G_{xy}$. 
Since the following is not equation dependent at all, 
we just collect the main properties needed from \cite[Section 2.2.6]{LOTT21}. 
In Section~\ref{sec:group} we construct a group $\mathsf{G}^*$ that contains these maps $\G_{xy}$.
Let us also mention that it is possible to find a space $\T$ and a group $\mathsf{G}$, 
such that $(\A,\T,\mathsf{G})$ is a regularity structure in the sense of \cite[Definition~2.1]{Hai14}, 
such that $\T^*$ is the algebraic dual of $\T$, 
and such that $\G_{xy}$ is dual to some $\Gamma_{yx}\in \mathsf{G}$.
A detailed discussion of this can be found in \cite[Section~5.3]{LOT23} and \cite[Section~2.6]{LOTT21}.

\medskip

We aim for linear maps $\G_{xy}\in{\rm End}(\T^*)$ that recenter the model in the sense of
\begin{equation}
    \Pi_x = \G_{xy}\Pi_y + \Pi_x(y), \label{recenter}
\end{equation}
and satisfy
\begin{equation}
    \G_{xy} = \G_{xz}\G_{zy} 
    \quad\textnormal{and}\quad
    \G_{xx} = \id. \label{transitive}
\end{equation}
Moreover, we impose triangularity with respect to the homogeneity
\begin{equation}
    (\G_{xy}-\id)_\beta^\gamma \neq 0 
    \quad\implies\quad 
    |\gamma|<|\beta|, \label{Gtri}
\end{equation}
and for purely polynomial multiindices
\begin{equation}
    (\G_{xy})_{g_\n}^\gamma = \left\{
    \begin{array}{cl}
        \tbinom{\n}{\m} (y-x)^{\n-\m} & \textnormal{if }\gamma=g_\m \textnormal{ for some } \0\neq\m\leq\n, \\
        0 & \textnormal{otherwise},
    \end{array}\right. \label{recenterPoly} 
\end{equation}
where $\m\leq\n$ has to be understood componentwise.

\subsection{Main result}\label{sec:main}

The main result~\cref{thm:main} establishes the existence of $\Pi_x$ and $\G_{xy}$ 
that satisfy, along with all the postulates from Section~\ref{sec:model}, suitable stochastic estimates which are uniform in the mollification parameter $\tau$.
For convenience, we choose to mollify by $\xi_\tau:=\xi*\psi_\tau$ with the semigroup $\psi_\tau$ defined in \eqref{kernel}, 
however no substantial changes occur when choosing a different kernel $\rho$, 
as long as $\rho$ satisfies 
$\rho=\rho(O\cdot)$ with $O$ given in \cref{ass}.

Analogous to \cite{OSSW21}, we expect that this provides exactly the right construction to feed into an a priori estimate and develop a solution theory for \eqref{eq:spde-intro}, which we aim to address in future work. 

\begin{theorem}\label{thm:main}
Under Assumption~\ref{ass} (i)--(iii) the following holds for every $\tau>0$ 
and $\xi_\tau:=\xi*\psi_\tau$ with $\psi_\tau$ defined in \eqref{kernel}. 

\medskip

There exists a deterministic $c\in\R[[\a_k,\b_\ell]]$ satisfying \eqref{pop2} and 
\begin{equation}\label{pop4}
    c_\beta\neq0 \quad\implies\quad 
    |\beta|<2+\alpha \quad\textnormal{and}\quad
    [\beta] \textnormal{ is even}, 
\end{equation}
such that for every populated $\beta$ 
and for every $x\in\R^{1+d}$ there exists a random $\Pi_{x\beta}\in C^4(\R^{1+d})$ such that almost surely
\begin{equation}
    L\Pi_{x\beta} = \nabla\cdot\Pi^-_{x\beta} \quad\textnormal{unless $\beta$ is purely polynomial},
    \label{eq:PDEPibeta}
\end{equation}
with $\Pi^-_{x}$ defined in \eqref{eq:piminusttilde}, and which is given by \eqref{polypart:poly} for $\beta$ purely polynomial.

\medskip

Moreover, for every $x,y\in\R^{1+d}$ there exists a random $\G_{xy}\in{\rm End}(\T^*)$ such that almost surely we have \eqref{recenter},  \eqref{transitive}, \eqref{Gtri} and \eqref{recenterPoly}. 

\medskip

Finally, we have for all $p<\infty$
\begin{align}
    \E^\frac{1}{p} |\Pi_{x\beta}(y)|^p &\lesssim |x-y|_\s^{|\beta|}, \label{estPi}\\
    \E^\frac{1}{p} |(\G_{xy})_\beta^\gamma|^p &\lesssim |x-y|_\s^{|\beta|-|\gamma|} \label{estGamma},
\end{align}
where here and in the sequel, $\lesssim$ means $\leq C$ with a constant $C$ only depending on $\alpha,$ $\beta$, $p$ and\footnote{$\psi$ is introduced in Section~\ref{sec:schauder}} $\|\psi\|_{L^1}$, 
but being independent of $x,y$ and $\tau>0$.
\end{theorem}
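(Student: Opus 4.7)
The plan is to prove all items of Theorem~\ref{thm:main} simultaneously by induction over populated multiindices $\beta$ along a well-founded ordering which is compatible with the triangular structure of the hierarchy \eqref{piminuscomponents}: when I treat $\beta$, every $\Pi_{x\gamma}$, $\Pi^-_{x\gamma}$, component $(\G_{xy})_\gamma^{\gamma'}$ and renormalisation constant $c_\gamma$ with $\gamma\prec\beta$ is already under control, both in expectation and as a functional of $\xi$ through its Malliavin derivative. The base case is given by purely polynomial $\beta$, where $\Pi_{x\beta}$ is prescribed by \eqref{polypart:poly}, $\G_{xy}$ is given by \eqref{recenterPoly}, and the bounds \eqref{estPi}--\eqref{estGamma} are elementary.

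For the inductive step at a generic populated $\beta$ I would proceed in five stages. \emph{(i) Choice of $c_\beta$.} Make a BPHZ-type prescription that forces $\E\Pi^-_{x\beta}$ to cancel at $x$ to the correct order; the admissible indices are exactly those compatible with \eqref{pop2} and \eqref{pop4}, the parity condition on $[\beta]$ being inherited from the reflection invariance of Assumption~\ref{ass}(ii) via \eqref{eq:refl2}, and $|\beta|<2+\alpha$ being the threshold above which the integration step no longer produces a divergence. \emph{(ii) Estimate on $\Pi^-_{x\beta}$.} Apply the $p$-version \eqref{sgp} of the spectral gap inequality to $\Pi^-_{x\beta}(y)$. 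The expectation term is controlled by stage (i), so everything reduces to bounding $\frac{\partial\Pi^-_{x\beta}}{\partial\xi}$ in $\dot H^{-s}$. Differentiating \eqref{piminuscomponents} in $\xi$ produces a linear equation whose right-hand side is built entirely from objects with $\gamma\prec\beta$, which are controlled by the inductive hypothesis; recentering those factors via $\G_{xy}$ generates the target power $|x-y|_\s^{|\beta|}$. \emph{(iii) Schauder integration.} Invert the fourth-order parabolic operator $L$ in $\nabla\cdot\Pi^-_{x\beta}$ and subtract the appropriate analytic polynomial at $x$, so that $\Pi_{x\beta}$ vanishes at $x$ with the correct order; this is where $\alpha\notin\mathbb{Q}$ (Assumption~\ref{ass}(iii)) is used, to avoid resonances between $|\beta|$ and integer exponents. \emph{(iv) Construction and estimates for $\G_{xy}$.} Determine $(\G_{xy})_\beta^\gamma$ by matching coefficients in \eqref{recenter}, with triangularity \eqref{Gtri} enforced along the induction, and prove \eqref{estGamma} by a three-point argument that exploits \eqref{transitive} together with the estimate on $\Pi_{x\beta}-\Pi_{y\beta}$ at a well-chosen intermediate point. \emph{(v) Averaging.} Convert the integrated bound delivered by the spectral gap into the pointwise estimate \eqref{estPi} through a mollification/reconstruction argument at scale $|x-y|_\s$, matching the negative-H\"older regularity $\alpha-3$ of $\xi$ against the target homogeneity.

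The main obstacle is stage (ii), and it is where the fourth-order quasilinear geometry diverges from the semilinear second-order case. The quasilinear term $\sum_k\a_k\Pi_x^k\nabla\Delta\Pi_x$ places three spatial derivatives on one factor of $\Pi_x$, so after recentering that factor lives in a substantially negative regularity class; one has to match this loss precisely against the $\dot H^{-s}$ budget of the Malliavin derivative and the homogeneity count of the surrounding product, which forces the subcriticality restriction $\alpha>\max\{0,3/2-D/4\}$ from Assumption~\ref{ass}(iii) to be used in a sharp way. On top of that, the counterterm piece $\sum_m\frac{1}{m!}\Pi_x^m\nabla\Pi_x\,(D^{(\0)})^m c$ contains unboundedly many $u$-derivatives of $c$, and the action of $D^{(\0)}$ in \eqref{D0comp} generates a combinatorial mixing of multiindices; one must verify that every such contribution still respects the inductive ordering, that \eqref{pop2} and \eqref{D0props} are enough to keep $\Pi^-_{x\beta}\in\widetilde{\T}^*$, and that the sum over $m$ does not spoil the uniform bounds. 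Reconciling these two features with the averaging step (v) is, in my view, the real heart of the proof.
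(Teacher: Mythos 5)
Your overall architecture — induction along a triangular ordering, BPHZ choice fixing the expectation, spectral gap inequality to reduce to the Malliavin derivative, Schauder integration, and a three-point argument for $\G_{xy}$ — does match the paper's proof (Section~3). However, the central technical difficulty, which you correctly locate in your stage~(ii), is not actually resolved by the mechanism you describe, and this is a genuine gap.

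You claim that after differentiating \eqref{piminuscomponents} in $\xi$, the right-hand side consists of objects controlled by the inductive hypothesis, and that ``recentering those factors via $\G_{xy}$ generates the target power.'' This does not close. The term $\sum_k\a_k\Pi_x^k\nabla\Delta\delta\Pi_x$ is a singular product whenever $|\beta|<3$, and the inductive estimate on $\delta\Pi_{x\beta'}$ is $\E^{1/q'}|\delta\Pi_{x\beta'}(y)|^{q'}\lesssim |x-y|_\s^{|\beta'|}\bar w$, which carries exactly the \emph{same} homogeneity as $\Pi_{x\beta'}$ — there is no pointwise gain of regularity to feed a reconstruction argument. The gain of $D/2$ in regularity of $\delta\xi$ over $\xi$ (Remark~\ref{rem:reg}) does not propagate to $\delta\Pi_x$ as a pointwise regularity improvement, because $\Pi_{x\beta}$ is multilinear in the noise. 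What does improve is the \emph{modeledness}: the increment $\delta\Pi_x-\delta\Pi_x(z)-\d\G_{xz}\Pi_z$ vanishes to higher order $\kappa+\alpha$ around a secondary base point $z$, where $\d\G_{xz}$ is a genuinely new recentering operator (see \eqref{dGamma}) distinct from the Malliavin derivative $\delta\G_{xz}$ of the structure group — in particular it fails the triangularity \eqref{Gtri} in the homogeneity and requires the relaxed population condition \eqref{choice_dpi}. This is the content of the identity \eqref{magic}, which is what allows Reconstruction~II to make sense of $\delta\Pi_x^--\d\G_{xz}\Pi_z^-$ at all. One then needs the $L^2$-in-$z$ weights $\bar w$, $w(z)$ and $w_x(z)$ (which behave well under averaging, \eqref{average_psi_wx}), and a final averaging over $z$ to return to an estimate on $\delta\Pi_x^-$ itself. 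Your stage~(v) ``averaging'' refers to a different operation (passing from integrated to pointwise bounds) and does not substitute for this. Without the secondary base point, the operator $\d\G$, and the weighted norms, the spectral gap step in your stage~(ii) does not produce the power $|x-y|_\s^{|\beta|}$; it is precisely the dual estimate \eqref{eq:rhsweakbounddPi}, obtained via \eqref{eq:lhsstrongboundincrement} and \eqref{eq:rhsweakboundincrement}, that closes the gap.

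A smaller inaccuracy: the BPHZ prescription in the paper is a \emph{large-scale} cancellation $\lim_{t\to\infty}\E\Pi^-_{x\beta t}(x)=0$ (\eqref{eq:BPHZchoice}), not a small-scale cancellation ``at $x$ to the correct order''; the equivalence of these two points of view is itself the content of Proposition~\ref{prop:expectation} and relies on the triangularity of $\G_{xy}-\id$. Also, the threshold $|\beta|<2+\alpha$ in \eqref{pop4} is not the integration threshold but comes from the fact that $\Pi_{x\beta}^-$ can only see $c_{\beta'}$ via $\nabla\Pi_{x\beta_1}\cdot c_{\beta'}$ terms where $\nabla\Pi_{x\beta_1}(x)=0$ once $|\beta_1|>1$.
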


As a consequence of the results and techniques in~\cite{Tem23} (see in particular~\cite[Remark 1.6]{Tem23}), the estimates of~\cref{thm:main} imply the following result.
\begin{corollary}[Uniqueness, convergence, and invariance]\label{coro}
We have the following:
\begin{enumerate}
\item[\emph{1.}] Existence and uniqueness: Given a noise $\xi$ which satisfies~\cref{ass}, there exists a unique model $(\Pi,\Gamma^*)$ for~\eqref{eq:tfe-intro} in the sense of~\cite[Definition 1.1]{Tem23}.
\item[\emph{2.}] Convergence and universality: Given a sequence of noises $\xi_n$ which satisfy~\cref{ass} uniformly in $n$ and that converge in law (resp. in $L^p$, almost surely) to $\xi$, the corresponding models $(\Pi_n,\G_n)$ converge component-wise in law (resp. in $L^p$, almost surely) to $(\Pi,\G)$, the unique limiting model associated to $\xi$. \label{conv}
\item[\emph{3.}] Invariance: Given a noise $\xi$ which satisfies~\cref{ass}, the corresponding model satisfies almost surely the following natural invariances for all populated $\beta$:
\begin{enumerate}
\item[\emph{a.}] $\Pi_{x}[\xi(\cdot +h)](y)=\Pi_{x+h}[\xi](y+h)$,
\item[\emph{b.}] $\Pi_{x\beta}[-\xi(R\cdot)](y)=(-1)^{|\beta|_p}\Pi_{Rx\beta}[\xi](Ry)$,
\item[\emph{c.}] $\Pi_{x\beta}[-\xi](y)=(-1)^{\sum_{\ell}\beta_b(\ell)}\Pi_{x\beta}[\xi](y)$,
\item[\emph{d.}] $\Pi_{x\beta}[\hat{\xi}](y) = \eps^{-|\beta|}\Pi_{\hat{x}\beta}[\xi](\hat{y})$ and 
$(\Gamma^*_{ x y}[\hat\xi])^\gamma_\beta 
= \eps^{-|\beta| + |\gamma|}(\Gamma^*_{\hat x \hat y}[\xi])^\gamma_\beta$, for all $\eps>0$, where $\hat{x}, \hat{y}$ and $\hat{\xi}$ are defined in~\eqref{xhat} and ~\eqref{eq:noise-invariance}, respectively.
\end{enumerate}
\end{enumerate}
\label{cor:uniqconvscal}
\end{corollary}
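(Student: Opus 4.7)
The plan is to derive all three claims from the uniform estimates of Theorem~\ref{thm:main} via the abstract uniqueness result of~\cite[Remark~1.6]{Tem23}. The key observation is that, once the counterterm $c$ is fixed, any model satisfying the PDE hierarchy~\eqref{eq:PDEPibeta}, the algebraic recentering and triangularity conditions~\eqref{recenter}--\eqref{recenterPoly}, together with the stochastic bounds~\eqref{estPi}--\eqref{estGamma}, is uniquely determined. Theorem~\ref{thm:main} furnishes such a model, so claim~1 reduces to matching our setting to the framework of~\cite[Definition~1.1]{Tem23} and quoting the uniqueness statement there.

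For claim~2, given $\xi_n\to\xi$ in law (respectively in $L^p$ or almost surely), the uniform (in $n$ and $\tau$) estimates of Theorem~\ref{thm:main} yield tightness of the joint laws of $(\Pi^n,\Gamma^{*,n})$ in the product topology indexed by populated multiindices. Any subsequential limit can be identified as a model for $\xi$ by passing to the limit in~\eqref{piminuscomponents}, where the uniform $L^p$ bounds justify interchanging limits and products; the algebraic conditions~\eqref{recenter}--\eqref{recenterPoly} and the bounds~\eqref{estPi}--\eqref{estGamma} are stable under such limits. Uniqueness from claim~1 then forces convergence of the full sequence; the $L^p$ and almost-sure versions follow by dominated convergence using the uniform $L^p$ envelope.

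For claim~3, I proceed case by case: for each transformation I define a candidate $(\widetilde{\Pi},\widetilde{\Gamma}^*)$ by the right-hand side of the claimed identity and verify that it again solves the PDE hierarchy driven by $\xi$, obeys~\eqref{transitive}--\eqref{recenterPoly}, and satisfies~\eqref{estPi}--\eqref{estGamma}; uniqueness from claim~1 then identifies the candidate with $(\Pi,\Gamma^*)$. Invariance 3a is immediate from translation invariance of $L$; 3b and 3c use the reflection and sign invariance of the noise law (cf.\ Assumption~\ref{ass}(i)--(ii) and Remark~\ref{rem:centered}), together with the parity condition on $c$ encoded in~\eqref{pop4}; 3d uses the scaling invariance~\eqref{eq:L-invariance} together with the homogeneity~\eqref{homogeneity} and the observation that $\Gamma^*$ inherits its scaling from that of $\Pi$ via~\eqref{recenter}.

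The principal obstacle is the bookkeeping for the counterterm in 3b and 3d. The condition that $[\beta]$ is even in~\eqref{pop4} is exactly what is required so that the candidate model obtained after reflection still solves the same hierarchy with the same $c$: without this parity, the sign picked up by $\nabla\cdot$ and by $\xi(R\cdot)$ would produce a mismatched counterterm and the transformed object would fail to be a model. Likewise, for 3d one must check that the specific form of $c$ determined by our BPHZ choice (Section~\ref{sec:bphz}) rescales compatibly with the anisotropic dilation~\eqref{xhat}, using~\eqref{pop4} and~\eqref{pop2} to track the powers of $\eps$. Once these parity and scaling identities are in hand, claim~3 follows as indicated.
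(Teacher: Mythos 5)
Your proposal is correct and takes essentially the same route as the paper: both derive all three claims by feeding the uniform estimates of Theorem~\ref{thm:main} into the abstract uniqueness, convergence and invariance machinery of~\cite[Remark~1.6]{Tem23}. The paper simply cites that reference, while you add a plausible sketch of the tightness/identification and candidate-model arguments, but the underlying strategy is identical.
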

The notion of convergence for Item~2 of~\cref{cor:uniqconvscal} is described more precisely in~\cite[Theorem 1.4]{Tem23}.

\begin{remark}[Qualitative smoothness]\label{rem:smoothness}
We stress once more that the estimates \eqref{estPi} of $\Pi_x$ and \eqref{estGamma} of $\G_{xy}$ in Theorem~\ref{thm:main}
are uniform in the mollification scale $\tau>0$ from \eqref{spde}, 
and even carry over to the limiting model, cf. \cref{coro}. 
As long as $\tau>0$, we have additional qualitative smoothness properties 
that degenerate as $\tau\to0$, 
but which are useful to prove Theorem~\ref{thm:main}. 
More precisely, the counterterm $c$ is bounded by\footnote{the presence of $\sqrt[8]{\cdot}$ is due to the scaling of the specific choice of mollifier $\psi_\tau$} 
\begin{equation}\label{boundedness_c}
|c_\beta|\lesssim (\sqrt[8]{\tau})^{|\beta|-\alpha-2}, 
\end{equation}
which matches the lower bound obtained in \cref{prop:scaling_c}
in the case of $d=1$, $\alpha\in(\frac{1}{2},1)$ and for a special choice of mollifier.
In line with this, we have boundedness of up to fourth-order derivatives of $\Pi_x$,
\begin{equation}\label{boundedness_Pi}
\E^\frac{1}{p}|\partial^\n \Pi_{x\beta}(y)|^p
\lesssim (\sqrt[8]{\tau})^{\alpha-|\n|}
(\sqrt[8]{\tau}+|x-y|_\s)^{|\beta|-\alpha}
\quad\textnormal{for all }1\leq|\n|\leq4.
\end{equation}
Furthermore, we have the following annealed and weighted $C^{4,\alpha}$-estimate on $\Pi_{x}$, 
\begin{align}\label{smoothness_Pi}
&\E^\frac{1}{p}|\partial^\n \Pi_{x\beta}(y)-\partial^\n\Pi_{x\beta}(z)|^p \\
&\lesssim (\sqrt[8]{\tau})^{-|\n|}
(\sqrt[8]{\tau}+|x-y|_\s+|x-z|_\s)^{|\beta|-\alpha}
|y-z|_\s^{\alpha}
\quad\textnormal{for all } |\n|\leq 4,
\end{align}
and the analogous annealed and weighted $C^{1,\alpha}$-estimate on $\Pi_x^-$, 
\begin{align}\label{smoothness_Pi-}
&\E^\frac{1}{p}|\partial^\n\Pi^-_{x\beta}(y)-\partial^\n\Pi^-_{x\beta}(z)|^p \\
&\lesssim (\sqrt[8]{\tau})^{-3-|\n|}
(\sqrt[8]{\tau}+|x-y|_\s+|x-z|_\s)^{|\beta|-\alpha}
|y-z|_\s^{\alpha}
\quad\textnormal{for all }|\n|\leq 1.
\end{align}
The former yields by an application of Kolmogorov's continuity theorem the in Theorem~\ref{thm:main} claimed regularity $\Pi_{x\beta}\in C^4(\R^{1+d})$ almost surely.
\end{remark}

The proof of Remark~\ref{rem:smoothness} is a generalisation of the one of 
\cite[Remark~2.3]{LOTT21}; 
for completeness we provide the proof in Appendix~\ref{app:smooth}.

%%%%%%%%%%%%%%%%%%%%%%%%%%%%%%%%%%%%%%%%%%%%%%%%%%%%%%%%%%%%%%%%%%%%%%%%

\begin{remark}[Analyticity in $a_0$]
\label{rem:analyticity}
The constants $c_\beta$ from \cref{thm:main} give via \eqref{ct} and \eqref{coordinates} back the counterterm $h$, 
\begin{equation}
h(u(x)) = \sum_\beta c_\beta \Big(\prod_{k\geq0} \frac{1}{k!}\frac{\d^k a}{\d u^k}(u(x))\Big)^{\beta_a(k)}\Big(\prod_{\l\geq0} \frac{1}{\l!}\frac{\d^\l b}{\d u^\l}(u(x))\Big)^{\beta_b(\l)},
\end{equation}
where due to \eqref{pop4} the sum is restricted to multiindices $|\beta|<2+\alpha$. 
Despite this restriction, some care has to be taken in this expression:
for fixed $\beta$, the products $\prod_{k\geq0}$ and $\prod_{\l\geq0}$ are effectively finite and thus well defined, 
since $\beta$ vanishes for all but finitely many $k,\l$;
however the sum over $\beta$ is infinite due to the degeneracy of $[\cdot]$ (and hence $|\cdot|$) and the degeneracy of the population constraint \eqref{pop2} in $e_0$. 
By a simple resummation, we observe
\begin{equation}
h(u(x)) = \!
\sum_{\hat\beta: \hat\beta_a(k=0)=0}
\sum_{\hat k\geq0} c_{\hat\beta +\hat k e_0} a(u(x))^{\hat k} \!
\Big(\!\prod_{k\geq1} \frac{1}{k!}\frac{\d^k a}{\d u^k}(u(x))\!\Big)^{\beta_a(k)}\Big(\!\prod_{\l\geq0} \frac{1}{\l!}\frac{\d^\l b}{\d u^\l}(u(x))\!\Big)^{\beta_b(\l)}\!,
\end{equation}
where $\hat\beta$ is again restricted to $|\hat\beta|<2+\alpha$ and the sum over $\hat\beta$ is thus finite. 
It is therefore left to argue why the sum over $\hat k$ is convergent, 
which we do in the following.

\medskip

Instead of deriving the model $\Pi_x$ from the renormalised equation \eqref{spde}, 
we consider 
\begin{equation}
(\partial_0 + (1-a_0)\Delta^2) u 
= \nabla\cdot\big( (a(u)-a_0)\nabla\Delta u
+ b(u)\xi_\tau - h(u)\nabla u \big)
\end{equation}
with $a_0=a(0)$. 
In the power series ansatz \eqref{ansatz}, 
this amounts to restricting to multiindices $\hat\beta$ satisfying $\hat\beta_a(k=0)=0$, 
and the coefficients $\hat\Pi_{x\hat\beta}$ inherit a dependence on $a_0$ through
\begin{equation}
(\partial_0+(1-a_0)\Delta^2) \hat\Pi_{x\hat\beta}
= \nabla \cdot \hat\Pi_{x\hat\beta}^- \, ,
\label{eq:pibetaanalytic}
\end{equation}
where $\hat\Pi_{x\hat\beta}^-$ is defined as in \eqref{piminus} with the difference that the sum over $k$ starts from $k=1$ 
and $D^{(\0)}$ is replaced by 
$\hat D^{(\0)} 
:= \a_1\partial_{a_0} 
+\sum_{k\geq1}(k+1)\a_{k+1}\partial_{\a_k}
+\sum_{\ell\geq0}(\ell+1)\b_{\ell+1}\partial_{\b_\ell}$.
Hence for all the $\hat{\cdot}\,$-objects the coordinate functional $\a_0$ 
is replaced by an additional parameter $a_0<1$ through the differential operator in \eqref{eq:pibetaanalytic}. 
We now show that this dependence of $\hat\Pi_{x\hat\beta}$ (and 
$\hat c_{\hat\beta}$) on $a_0$ is analytic as long as $a_0<1$. 
For this, it is convenient to allow for complex $a_0\in\C$ and show differentiability in the parameter $a_0$ in the half plane $\mathrm{Re}(a_0) <1$. 
Furthermore, we introduce yet another model $\bar\Pi$: 
it is defined in complete analogy with the model $\Pi$
(thus containing an $\a_0$ component), 
with the only difference that $\bar\Pi$ and $\bar\Pi^-$ are related by the same differential operator as are $\hat\Pi$ and $\hat\Pi^-$, i.e.
\begin{equation}\label{eq:barPi}
(\partial_0+(1-a_0)\Delta^2)\bar\Pi_{x\beta} = \nabla\cdot\bar\Pi^-_{x\beta}.
\end{equation}
The reason to introduce this further model is, 
that on the one hand we clearly have
\begin{equation}\label{mt04}
\bar\Pi_{x}(a_0=0) = \Pi_{x}, 
\quad
\bar c(a_0=0) = c, 
\end{equation}
and on the other hand, as we shall argue below, it relates to the model $\hat\Pi$ by 
\begin{subnumcases}
{
\tfrac{1}{\hat k !}\partial_{a_0}^{\hat k} \hat\pi_{\hat\beta} 
= \bar\pi_{\hat\beta+\hat k e_0}
\quad \textnormal{for} \quad 
\pi=
\label{eq:analytic}
}
c \, , \label{eq:analytic_c} \\ 
\Pi_x \, , \label{eq:analytic_Pi} 
\end{subnumcases}
for all $\hat k \in\N_0$ , 
where here and in the following we understand \eqref{eq:analytic_Pi} with respect to the norm 
\begin{equation}\label{eq:norm_Pi_analytic}
\sup_{y: y\neq x} |x-y|_\s^{-|\hat\beta|} \E^\frac{1}{p} |\Pi_{x\hat\beta}(y)|^p \, .
\end{equation}
Hence, the $\hat\cdot\,$-objects are indeed analytic in $a_0$ by \eqref{eq:analytic}, 
and the combination of \eqref{eq:analytic} and 
\eqref{mt04} shows that the above mentioned sum over $\hat k\geq0$ is indeed convergent, and moreover 
\begin{equation}
\sum_{\hat k \geq0} c_{\hat\beta+\hat k e_0} a(u(x))^{\hat k}
= \hat c_{\hat\beta} \big(a(u(x))\big).
\end{equation}
The proof of \eqref{eq:analytic} is again a generalisation of the one of \cite[Remark~2.7]{LOTT21}; 
for completeness we provide the proof in Appendix~\ref{app:analytic}. 
\end{remark}

\subsection{Analysis of the counterterm}\label{sec:divergingbounds}
In this section, we will perform a more careful analysis of the counterterm needed to renormalise the thin-film equation~\eqref{eq:tfe-intro}. For the sake of simplicity, we focus on the case $d=1$ and $\alpha \in (\frac12,1)$. As we shall see later in this section, the leading order structure of the counterterm remains the same in any dimension $d \geq 1$ and for any $\alpha \in(0,1)$. Additionally, we work with the alternative model $\hat\Pi$ described in~\cref{rem:analyticity} such that our multiindices have no $e_0$ component but $\hat\Pi_{x\beta}$ is an analytic function of $a_0$. As mentioned above, the corresponding hierarchy of linear PDEs can then be written as in~\eqref{eq:pibetaanalytic} as
\begin{align}
\hat{L} \hat{\Pi}_{x\beta} = \nabla \cdot \hat{\Pi}_{x\beta}^{-} \, ,
\end{align}
where the operator $\hat{L}:=(\partial_0 + (1-a_0)\Delta^2)$ depends on $a_0$. 
To shorten some expressions below we define $m_0:= 1- a_0$. We will show in this section that the counterterms in~\eqref{eq:tfe-intro} behave like $(\sqrt[8]{\tau})^{2\alpha-2}$  to leading order, when the noise $\xi$ is regularised to $\xi_\tau$, by mollifying with some smooth $\varphi$ (to be chosen in \cref{prop:scaling_c}) at length scale $\sqrt[8]{\tau}>0$. 

\medskip

Note that by stationarity \cref{ass}~(i), 
the law $\E$ of the tempered distribution $\xi$ is spatially homogeneous, 
i.e.~there exists a tempered distribution $C$ such that for any Schwartz functions $f,g$
\begin{equation}
\mathbb{E}[\langle \xi,f\rangle \langle \xi,g\rangle] 
= \langle C* f,g \rangle \, .
\end{equation}
In particular,
\begin{equation}
\E [\xi_\tau(x) \xi_\tau(y)] = F(x-y):= \langle C(x-y + \cdot)*\varphi_\tau,\varphi_\tau \rangle \, ,
\end{equation}
where $F$ is a Schwartz function which is even in space. We now state the main result of this section in which we will provide diverging lower bounds on the renormalisation constants of certain multiindices.  
\begin{theorem}\label{prop:scaling_c}
Let~\cref{ass} be satisfied with $\alpha \in (\frac12,1)\setminus \mathbb{Q}$, let $d=1$, 
and let $m_0=1-a_0\in[\lambda,\Lambda]$ for some $0<\lambda\leq\Lambda<\infty$.
Then, we have 
\begin{align}
\hat{c}_{e_1 + f_0 + f_1} (a_0)
= \int_{\R^2} \dx{k} \frac{(2\pi k_1)^4}{(2 \pi k_0)^2 + m_0^2(2 \pi k_1)^8} \left(\frac{4 m_0^2(2\pi k_1)^8}{(2 \pi k_0)^2 + m_0^2(2 \pi k_1)^8}-2 \right) \mathcal{F}F(k) \,  ,
\label{eq:value}
\end{align}
\begin{align}
\hat{c}_{2f_1} (a_0)
= \int_{\R^2} \dx{k} \frac{k_1 }{(2 \pi k_0)^2 + m_0^2(2 \pi k_1)^8} \left(-2\pi i k_0 + m_0 (2\pi k_1)^4\right) \partial_{k_1}\mathcal{F}F(k) \, ,
\label{eq:value2}
\end{align}
and
\begin{align}
\hat{c}_{2e_1 + 2f_0} (a_0)
= -3    \int_{\R^2} \dx{k} \frac{m_0(2 \pi k_1)^{12}}{((2\pi k_0)^2 + m_0^2 (2\pi k_1)^8)^2} \mathcal{F}F(k) \, ,
\label{eq:value3}
\end{align}
where we denote the operation of taking the Fourier transform by $\mathcal{F}$. 
All other renormalisation constants are zero. Assume furthermore that
\begin{equation}
 \mathcal{F}C(k)= \frac{1}{((2\pi k_0)^2 + (2\pi k_1)^8)^{\frac{\alpha -\frac12}{4}}} \, 
 \end{equation} 
and that $\varphi_\tau$ is such that 
\begin{equation}
|\mathcal{F}\varphi_\tau (k)|^2 = e^{- \tau (2\pi k_1)^8 - \tau^\eta (2\pi k_0)^2 } 
\label{eq:mollifier2}
\end{equation}
for some $\eta\geq 1$. Then, if $\eta>1$
\begin{equation}
\hat{c}_{e_1 + f_0 + f_1}(a_0)
= C_{\alpha, 1}(m_0) m_0^{-1}(\sqrt[8]{\tau})^{2\alpha -2}  
+  O\left((\sqrt[8]{\tau})^{2\alpha -2 +  (\eta-1) (3 + 2\alpha)}\right) \, ,
 \end{equation}
 where, $C_{\alpha,1}(m_0)$ is independent of $\tau$ and satisfies 
 \begin{equation}
 \lim_{\alpha \downarrow \frac12} C_{\alpha,1}(m_0) = 0 \, .
 \label{eq:exactvalue2}
 \end{equation}
 Similarly, we have that
 \begin{equation}
\hat{c}_{2f_1}(a_0)
= C_{\alpha,2}(m_0) (\sqrt[8]{\tau})^{2\alpha -2}  
+ O\left( m_0 (\sqrt[8]{\tau})^{2\alpha -2 +  (\eta-1) (3 + 2\alpha)}\right) \, ,
 \end{equation}
 where $C_{\alpha,2}(m_0)$ is independent of $\tau$ and satisfies 
 \begin{equation}
  \lim_{\alpha \downarrow \frac12} C_{\alpha,2}(m_0) 
  = - \frac{\Gamma\left(\frac98\right)}{2\pi} \, ,
 \label{eq:exactvalue22}
 \end{equation}
 and 
  \begin{equation}
\hat{c}_{2e_1 + 2f_0}(a_0)
= C_{\alpha,3}(m_0) m_0^{-2}(\sqrt[8]{\tau})^{2\alpha -2}  
+ O\left( m_0 (\sqrt[8]{\tau})^{2\alpha-2+(\eta-1)(11+2\alpha) }\right) \, ,
 \end{equation}
  where $C_{\alpha,3}(m_0)$ is independent of $\tau$ and satisfies
 \begin{equation}
  \lim_{\alpha \downarrow \frac12} C_{\alpha,3}(m_0) 
  = - \frac{3 \Gamma \left(\frac{9}{8}\right)}{4 \pi} \, .
 \label{eq:exactvalue23}
 \end{equation}
\end{theorem}
We relegate the proof of the above theorem to~\cref{sec:explicit}.
\begin{remark}
In the specific case in which the ensemble $\xi$ is Gaussian, the choice of $C$ in the above theorem amounts to specifying the corresponding Cameron--Martin space as $\dot{H}^{-s}$ for $s= \alpha-1/2$,
where $\dot{H}^s$ are the $L$-dependent anisotropic Sobolev spaces defined in~\eqref{eq:Sobolev}.
\end{remark}
\begin{remark}\label{rem:spatial}
The choice of mollifier made in~\eqref{eq:mollifier2} may seem odd at first sight but it is quite natural considering the effect we are trying to capture. Setting $\eta=1$ would correspond to natural anisotropic parabolic scaling between space and time which would mean that the mollifier treats space and time on equal footing when acting on a given distribution. However if $\eta>1$, as we have chosen in~\eqref{eq:mollifier2}, the mollifier smooths out more in space than in time. Thus, this choice of mollifier mimics a spatial discretisation of the SPDE~\eqref{eq:spde-intro}. We will see that it will play a role in the next subsection.
\end{remark}

\subsubsection{Structure of the counterterm}\label{sec:structure}
In this subsection, we will discuss the form of the counterterm that arises from the choice of renormalisation constants we have obtained in~\cref{prop:scaling_c}. 
We know from the discussion in~\cref{rem:analyticity} that the function $h(\cdot)$ can be expressed as 
\begin{align}
h(u(x)) 
&= \hat{c}_{e_1+ f_0 +f_1}\big(a(u(x))\big) a'(u(x)) b(u(x)) b'(u(x)) 
+ \hat{c}_{2 f_1}\big(a(u(x))\big) (b'(u(x)))^2  \\& \,+ \hat{c}_{2 e_1 + 2 f_0}\big(a(u(x))\big) (a'(u(x)))^2 (b(u(x)))^2\, ,
\end{align}
where we have applied~\cref{prop:scaling_c}. For the specific case of the the thin-film equation, we have $a(u)=1-M(u)$ and $b(u)=M^{\frac12}(u)$  which leads us to
\begin{align}
 h(u)=&-\frac12 \hat{c}_{e_1+ f_0 +f_1}(a(u)) (M'(u))^2  +\frac{1}{4} \hat{c}_{2f_1}(a(u)) \frac{(M'(u))^2}{M(u)} \\
 &+ \hat{c}_{2 e_1 + 2 f_0}(a(u)) M(u) (M'(u))^2 \, .
\end{align} 
For the choice of mollifier with $|\mathcal{F}{\varphi}_\tau(k)|^2=e^{-\tau(2\pi k_1)^8 -\tau^\eta (2\pi k_0)^2}$ in~\eqref{eq:mollifier2} and $\eta>1$ (see the discussion in~\cref{rem:spatial}), we know from~\cref{prop:scaling_c} that 
\begin{align}
 h(u) &=- (\sqrt[8]{\tau})^{2 \alpha -2}\frac{C_{\alpha,1}(M(u))}{2} (M(u))^{-1}   (M'(u))^2 \\
 &\,+ (\sqrt[8]{\tau})^{2 \alpha -2}\frac{C_{\alpha,2}(M(u))}{4}    \frac{(M'(u))^2}{M(u)}\\ 
 & \, + (\sqrt[8]{\tau})^{2 \alpha -2} C_{\alpha,3}(M(u))  (M(u))^{-2} M(u) (M'(u))^2   \\ &\,+ O \left(\sqrt[8]{\tau}^{2\alpha -2 +(\eta-1)(3 + 2\alpha )}\right) (M'(u))^2  \\ 
 &\,+ O \left(\sqrt[8]{\tau}^{2\alpha -2 +(\eta-1)(11 + 2\alpha )}\right) (M(u))^2(M'(u))^2    \, .
\end{align}
Thus, to leading order, the counterterm is of the form
\begin{equation}
(\sqrt[8]{\tau})^{2 \alpha -2}\left(\frac{C_{\alpha,2}(M(u))}{4} 
+ C_{\alpha,3}(M(u)) 
-\frac{C_{\alpha,1}(M(u))}{2} \right)\partial_x\big((M(u))^{-1} (M'(u))^2 \partial_x u \big) \, .
 \end{equation} 
 Even though we cannot derive uniform estimates on the model as in~\cref{thm:main} for the case $\alpha=\frac12$ (see~\cref{rem:failureSchauder}), we can formally write down the leading order form of the counterterm in this case as 
 \begin{equation}
   -\frac{7\,\Gamma\left(\frac98\right)}{8\pi}(\sqrt[8]{\tau})^{-1} \partial_x\left( \frac{(M'(u))^2}{M(u)} \partial_x u \right)\,.
 \end{equation} 
In the case $M(u)=u^m, m \geq 0$ (which is only covered by Theorem~\ref{prop:scaling_c} provided one knew that $u$ is bounded and bounded away from zero) reduces up to an order one constant to
 \begin{equation}
  -(\sqrt[8]{\tau})^{-1}\partial_x( u^{m-2} \partial_x u ) \, .
  \label{eq:counter}
 \end{equation}
 We note that the above term shows up with a ``\emph{good}'' sign in the  renormalised SPDE, i.e.~it shows up as $(\sqrt[8]{\tau})^{-1}\partial_x( u^{m-2} \partial_x u )$ on the right hand side of the equation. This implies that it has a smoothing effect (at least for strictly positive $u$) which blows up as the regularisation parameter $\tau$ goes to $0$. For the case $m=2$, as can be seen from the expression in~\eqref{eq:counter}, the term takes an even simpler linear form and the counterterm can be formally thought of as $\infty \times \partial_x^2 u$.

Interestingly, the above term agrees exactly with the form of a correction term that shows up in the discretisation discussed in~\cite{GGKO22}. In~\cite{GGKO22}, the authors derive a spatial discretisation for the SPDE~\eqref{eq:spde-intro} for $\alpha=\frac12$, based on its formal gradient flow structure, which leaves invariant a discrete version of the thermodynamically correct invariant measure, the so-called conservative Brownian excursion. Representing the discretisation as an SDE leads to a correction term whose formal limit as $N$ (the number of lattice points) tends to $\infty$ is exactly of the form~\eqref{eq:counter}, at least for power mobilities $M(u)=u^m$.    We refer the reader to~\cite[Remark 9.1]{GGKO22} where the origin of this correction term and its formal limit are discussed in more detail. We also remark here that $\alpha=\frac12$ is the only physically relevant choice of noise in one spatial dimension. Indeed, the noise arises naturally as the limiting noise through a long wave/lubrication approximation of the Navier--Stokes equations driven by a fluctuating stress tensor which itself arises from thermal fluctuations in the fluid (see \cite{GMR06}). 

  The fact that the form of the counterterm seems to agree with the form of the correction term in~\cite{GGKO22} lends credence to the hypothesis that the discretisation has the counterterm ``\emph{built in}''. Indeed, numerical experiments suggest that the discretisation in~\cite{GGKO22} converges to a nontrivial limit as $N\to \infty$ (see, in particular,~\cite[Section 11.4]{GGKO22}).
In addition to showing up in the discretisation studied in \cite{GGKO22}, the counterterm we conjecture has shown up as an ad-hoc regularisation term in some works that study the equation with colored noise \cite{GK22}.

%%%%%%%%%%%%%%%%%%%%%%%%%%%%%%%%%%%%%%%%%%%%%%%%%%%%%%%%%%%%%%
\section{\texorpdfstring{\bf Proof of~\cref{thm:main}}{Proof of the stochastic estimates}}\label{sec:proof}
\subsection{Strategy of the proof}\label{sec:strategy}
In this section, we give an overview of the proof of \cref{thm:main} 
and discuss the main steps involved, 
which are integration, reconstruction, algebraic-, and three-point arguments. 
We refer the reader to \cref{sec:inductive_proof} for the precise logical order in which we go through these steps 
in the inductive proof. We remind the reader that our approach follows closely the approach in~\cite{LOTT21}. Some of the intermediate arguments carry over mutatis mutandis to our setting in which case we omit the proofs and cite the original arguments from~\cite{LOTT21}.

\medskip

\subsubsection{Integration and semigroup convolution}
We start with a discussion on the estimate \eqref{estPi} on $\Pi_{x\beta}$. 
This will be a consequence of the corresponding estimate on $\Pi^-_{x\beta}$ via a Schauder-type argument, see \cref{cor:intI} (Integration~I), 
which we refer to as integration argument in the sequel. 
Since we expect $\Pi^-_{x\beta}$ to be a tempered distribution in the absence of any mollification of the noise, 
we test against a test function in order to be able to obtain a stable estimate as the mollification is removed.
It is convenient to express this weak estimate by testing against a semigroup $\psi_t$; 
more precisely, we choose $\psi_t$ to be the Green's function 
associated to the symmetric and uniformly elliptic operator 
$LL^*=-\partial_0^2 + \Delta^4$, i.e.~$\psi_t$ is the unique solution of 
\begin{equation}\label{kernel}
	\partial_t \psi_t + LL^* \psi_t =0 \, ,
\end{equation}
such that $\psi_{t=0} = \delta_{x=0}$. It is straightforward to check that $\psi_t$ is a Schwartz function and satisfies the following natural scaling invariance
\begin{equation}\label{eq:psiscaling}
	\psi_t(x) = \frac{1}{(\sqrt[8]{t})^D}
    \psi_1\left(\frac{x_0}{(\sqrt[8]{t})^{\mathfrak{s}_0 }}, \dots, 
    \frac{x_d}{(\sqrt[8]{t})^{\mathfrak{s}_d }}\right) \, ,
\end{equation} 
where $\mathfrak{s} \in \N^{1+d}$ is the scaling associated to $L$ and $D$ is the effective dimension, see \eqref{dim}. 
As $\psi_{t=1}$ is a Schwartz function, the following bound holds for all $\theta \in \R$
\begin{align}
\int \dx{z} |\partial^{\n}\psi_{1}(y-z)| (1 + \abss{x-y} + \abss{y-z})^{\theta}  \lesssim (1+ \abss{x-y})^\theta \, ,
\end{align} 
which by the scaling invariance of $\psi_t$ from~\eqref{eq:psiscaling}
implies the moment bound
\begin{align}
\int \dx{z} |\partial^{\n}\psi_{t}(y-z)| (\sqrt[8]{t} + \abss{x-y} + \abss{y-z})^{\theta}  
\lesssim (\sqrt[8]{t})^{-|\n|}(\sqrt[8]{t}+ \abss{x-y})^\theta \, . 
\label{eq:kerbound}
\end{align}
One can also check that $\psi_t$ satisfies the following semigroup property
\begin{equation}\label{eq:semi}
	\psi_t * \psi_s = \psi_{t+s} \, ,
\end{equation}
for all $s,t \geq 0$. 
Finally, given a tempered distribution $f$, we define
\begin{equation}
	f_t:= \psi_t *f  \,.
\end{equation}
With this notation in hand, the estimate on $\Pi^-_{x\beta}$ we aim for is 
\begin{equation}\label{estPiminus}
    \E^\frac{1}{p}| \Pi^-_{x\beta t}(y) |^p \lesssim (\sqrt[8]{t})^{\alpha-3} (\sqrt[8]{t} + |x-y|_\s)^{|\beta|-\alpha}.
\end{equation}
Note that the appearance of $\sqrt[8]{\cdot}$ in \eqref{estPiminus} 
is dictated by the scaling \eqref{eq:psiscaling}.

\medskip

\subsubsection{Reconstruction}
Estimating $\Pi^-_{x\beta}$ before $\Pi_{x\beta}$ is at the core of an inductive argument, 
as this allows to use estimates on $\Pi_{x\beta'}$ 
for $\beta'$ ``smaller''\footnote{in a sense to be made precise in \cref{sec:inductive_proof}} than $\beta$ to estimate $\Pi^-_{x\beta}$ 
via the hierarchy \eqref{piminuscomponents}. 
In case of $|\beta|>3$, this is indeed a rather straightforward task, 
and is carried out in \cref{rec1} (Reconstruction~I). 

\medskip

\subsubsection{Malliavin derivative and dualisation}\label{sec:application_SG}
The situation is much more complex in the case of $|\beta|<3$. It is here that we will leverage an improvement at the level of the Malliavin derivative as we shall explain now. For these multiindices, we apply the $p$-version of the spectral gap inequality \eqref{sgp} to $F=\Pi^-_{x\beta t}(y)$, which results in
\begin{equation}
\E^\frac{1}{p}| \Pi^-_{x\beta t}(y) |^p
\lesssim |\E\Pi^-_{x\beta t}(y)|
+ \E^\frac{1}{p}\Big\| \frac{\partial \Pi^-_{x\beta t}(y)}{\partial \xi} \Big\|^p_{\dot H^{-s}} \, .
\end{equation}
Although $\Pi^-_{x\beta t}(y)$ is not a cylindrical function, 
it can be approximated by such objects and so the application of \eqref{sgp} is justified; 
for a precise version of this approximation argument we refer to \cite[Section~7]{LOTT21}. 

\medskip

To estimate the first term on the right hand side, 
we will fix the counterterm $c$ by the so-called BPHZ-choice of renormalisation. 
We give a detailed account of the choice of $c$ and how to use it to estimate $\E\Pi^-_{x\beta t}(y)$ by the right hand side of \eqref{estPiminus} in \cref{sec:bphz}, 
see in particular \eqref{eq:expbound}. 

\medskip

To estimate the Malliavin derivative, we actually establish the stronger 
\begin{equation}\label{eq:rhsweakbounddPi}
\E^{\frac{1}{q'}} |\delta \Pi_{x \beta t}^-(y)|^{q'} 
\lesssim (\sqrt[8]{t})^{\alpha-3} 
(\sqrt[8]{t} + \abss{x-y})^{|\beta|-\alpha} \bar w 
\end{equation} 
for all $1 < q' < q \leq 2$,  where we have introduced the notation 
\begin{equation}\label{w_bar} 
\bar w := \Big(\int_{\R^{1+d}} \dx{z}\, \E^\frac{2}{q}\big| 
(LL^*)^{\frac{s}{2|L|}} \, \delta\xi(z) \big|^q\Big)^{\frac{1}{2}} \, . 
\end{equation}
Note that by $q\leq2$ we can appeal to Minkowski's inequality to see
\begin{equation}
\bar w \leq \E^\frac{1}{q} \| \delta\xi \|^q_{\dot H^s} \, ,
\end{equation}
which together with 
$|\E\delta\Pi^-_{x\beta t}(y)| 
\leq \E^\frac{1}{q'}|\delta\Pi^-_{x\beta t}(y)|^{q'}$ 
shows that \eqref{eq:rhsweakbounddPi} is by duality indeed a strengthening of 
\begin{equation}
\E^\frac{1}{p}\Big\| \frac{\partial \Pi^-_{x\beta t}(y)}{\partial \xi} \Big\|^p_{\dot H^{-s}} 
\lesssim (\sqrt[8]{t})^{\alpha-3} 
(\sqrt[8]{t} + \abss{x-y})^{|\beta|-\alpha} \, ,
\end{equation}
with $p\geq2$ being the H\"older-conjugate exponent of $q\leq2$. 
The reason for introducing $1<q'$ is that we will appeal to H\"older's 
inequality within the proof of \eqref{eq:rhsweakbounddPi}, 
where one factor will involve a Malliavin derivative $\delta$, 
and the other factor(s) are controlled in probabilistic $L^p$-norms for $p<\infty$. 
Thus, the implicit constants in estimates like \eqref{eq:rhsweakbounddPi} 
on Malliavin derivatives depend in addition to $\alpha$, $\beta$, $p$, and $\|\psi\|_{L^1}$, also on $1<q'<q$. 

\begin{remark}[Qualitative smoothness II]\label{rem:smoothness_Mall}
Analogous to \cref{rem:smoothness} we have qualitative smoothness of the Malliavin derivative of $\Pi_x$ and $\Pi^-_x$. 
More precisely, we have boundedness of up to fourth-order derivatives of $\delta\Pi_x$, 
\begin{equation}\label{boundedness_deltaPi}
\E^\frac{1}{p}|\partial^\n \delta\Pi_{x\beta}(y)|^p
\lesssim (\sqrt[8]{\tau})^{\alpha-|\n|}
(\sqrt[8]{\tau}+|x-y|_\s)^{|\beta|-\alpha} \bar w
\quad\textnormal{for all }1\leq|\n|\leq4 ,
\end{equation}
the following annealed and weighted $C^{4,\alpha}$-estimate on $\delta\Pi_{x}$, 
\begin{align}\label{smoothness_deltaPi}
&\E^\frac{1}{p}|\partial^\n \delta\Pi_{x\beta}(y)-\partial^\n\delta\Pi_{x\beta}(z)|^p \\
&\lesssim (\sqrt[8]{\tau})^{-|\n|}
(\sqrt[8]{\tau}+|x-y|_\s+|x-z|_\s)^{|\beta|-\alpha}
|y-z|_\s^{\alpha} \bar w
\quad\textnormal{for all } |\n|\leq 4,
\end{align}
and the analogous annealed and weighted $C^{1,\alpha}$-estimate on $\delta\Pi_x^-$, 
\begin{align}\label{smoothness_deltaPi-}
&\E^\frac{1}{p}|\partial^\n\delta\Pi^-_{x\beta}(y)-\partial^\n\delta\Pi^-_{x\beta}(z)|^p \\
&\lesssim (\sqrt[8]{\tau})^{-3-|\n|}
(\sqrt[8]{\tau}+|x-y|_\s+|x-z|_\s)^{|\beta|-\alpha}
|y-z|_\s^{\alpha} \bar w
\quad\textnormal{for all }|\n|\leq 1.
\end{align}
By an application of Kolmogorov's continuity theorem, 
this justifies pointwise evaluation of derivatives of 
$\delta\Pi_x$ and $\delta\Pi^-_x$. 
The proof of these estimates follows the one of \cref{rem:smoothness}, 
which we therefore omit.
\end{remark}

\medskip

\subsubsection{Improved modeledness}
We now outline the proof of \eqref{eq:rhsweakbounddPi}. 
Note that when we pass from $\xi$ to the direction $\delta \xi$ 
we obtain a gain in regularity from $\alpha-3$ to $s=\alpha-3+D/2$,
cf.~\cref{rem:reg}. 
One may ask if a similar gain in regularity can be expected at the level of $\delta \Pi^-_{x\beta}$ for arbitrary $|\beta|<3$. 
This, however, is unreasonable as $\Pi_{x\beta}$ 
(and hence $\Pi^-_{x\beta}$) is multilinear in the noise $\xi$. 
What is reasonable, is a gain in modeledness of $\delta \Pi^-_{x\beta}$ 
of order $D/2$ around a secondary base point $z$, 
after it has been appropriately recentered by some $\d\G_{xz}$.
In fact, we will only track a gain of regularity of order $\kappa<D/2$, 
and claim that there exists a $\d\G_{xz}\in{\rm End}(\T^*)$ such that 
\begin{align}\label{eq:lhsstrongboundincrement}
&\mathbb{E}^\frac{1}{q'} 
|(\delta\Pi_{x}-\delta\Pi_x(z)-{\rm d}\Gamma^*_{xz}\Pi_{z})_{\beta}(y)|^{q'}
\\
&\lesssim \abss{y-z}^{\kappa+\alpha} 
(\abss{y-z}+\abss{x-z})^{|\beta|-\alpha} 
(w_x(y)+w_x(z)) \, ,
\end{align}
and the analogous estimate for $\delta\Pi^-_x$, 
\begin{align}\label{eq:rhsweakboundincrement}
&\mathbb{E}^{\frac{1}{q'}} 
|(\delta \Pi_x^- - \dx{\Gamma_{xz}^*} \Pi_z^-)_{\beta t} (y)|^{q'} \\
&\lesssim (\sqrt[8]{t})^{\alpha -3} 
(\sqrt[8]{t} + \abss{y-z})^{\kappa} 
(\sqrt[8]{t} + \abss{y-z} + \abss{x-z})^{|\beta|-\alpha} 
(w_x(y) + w_x(z)) \, .
\end{align} 
We choose to work with $L^\infty$-based norms, 
as they behave well under multiplication, 
whereas the gain of regularity we observe at the level of $\delta\xi$ 
is on $L^2$-based norms. 
The price to pay is to include the weights 
\begin{equation}\label{weight}
w_x(z):=|x-z|_\s^{-\kappa}\bar w + w(z) \, ,
\end{equation}
where
\begin{equation}\label{eq:weightednorm}
w(z) := \Big( \int_{\R^{1+d}} \dx{y}\, |y-z|_\s^{-2\kappa} \, 
\E^\frac{2}{q}\big| 
(LL^*)^{\frac{s}{2|L|}} \, \delta\xi(y) 
\big|^q\Big)^{\frac{1}{2}} \, .
\end{equation} 
Importantly, $w(z)$ behaves well under (square) averaging, 
\begin{align}
\Big(\int_{\R^{1+d}} \dx{z}\, |\psi_t(y-z)| \, w^2(z) \Big)^\frac{1}{2}
&\lesssim \min \big( w(y), (\sqrt[4]{t})^{-\kappa} \bar{w} \big) \, ,
\label{average_psi_w}
\end{align}
which is a consequence of the moment bound 
\begin{equation}
\int_{\R^{1+d}} \dx{z}\, |\psi_t(y-z)| |x-z|_\s^{-2\kappa} 
\lesssim (\sqrt[8]{t}+|x-y|_\s)^{-2\kappa} 
\end{equation}
and relies on $\kappa<D/2$. 
Furthermore, as a consequence of \eqref{average_psi_w} and the bound of negative moments we have 
\begin{equation}\label{average_psi_wx}
\Big(\int_{\R^{1+d}}\dx{z}\, |\psi_t(y-z)|\, w_x^2(z) \Big)^\frac{1}{2}
\lesssim \min \big( w_x(y), (\sqrt[8]{t})^{-\kappa} \bar w \big) \, .
\end{equation}
These weights could be avoided by working with Besov norms, 
e.g.~as done in \cite{HS23} and \cite{BOTT23}. 

\medskip

By averaging in the secondary base point and 
using \eqref{average_psi_wx}, 
we show in \cref{lem:averaging} that \eqref{eq:rhsweakboundincrement}
implies \eqref{eq:rhsweakbounddPi}. 
This involves estimates on $\d\G_{xz}$, 
which we shall establish along the way, 
along with estimates on $\G_{xy}$ and $\delta\G_{xy}$, 
that are also used in several other places. 
A discussion of $\G$, $\delta\G$, and ${\rm d}\G$ will follow in \cref{sec:group,sec:dGamma}.
Before that, we shall explain how we derive the estimates \eqref{eq:lhsstrongboundincrement} and \eqref{eq:rhsweakboundincrement}.

\subsubsection{Integration and Reconstruction for increments}

As earlier for $\Pi_x$ and $\Pi^-_x$, 
we will first establish \eqref{eq:rhsweakboundincrement} and 
obtain \eqref{eq:lhsstrongboundincrement} from a Schauder-type argument based on 
\begin{equation}
L (\delta\Pi_x-\delta\Pi_x(z)-\d\G_{xz}\Pi_z)_\beta 
= \nabla \cdot (\delta\Pi^-_x - \d\G_{xz}\Pi^-_z)_\beta \, ,
\label{eq:prerepresentation}
\end{equation}
see \cref{lem:integrationIII} (Integration~III).

\medskip

Estimating increments of $\delta\Pi^-_x$ before increments of $\delta\Pi_x$ 
allows again for an inductive argument, 
where the hierarchy \eqref{piminuscomponents} used in the case $|\beta|>3$ 
is now replaced by the identity 
\begin{align}\label{magic}
    Q(\delta\Pi^-_x-\d\G_{xz}\Pi^-_z)(z) 
    &= Q\sum_k \a_k\Pi_x^k(z)\nabla\Delta(\delta\Pi_x-\d\G_{xz}\Pi_z)(z) \\
    &\,+ Q\sum_\ell \b_\ell \Pi_x^\ell(z) \delta\xi_\tau(z) \, . 
\end{align}
Here, $Q$ denotes the projection of a power series $\sum_\beta\pi_\beta\z^\beta$ to $\sum_{|\beta|<3}\pi_\beta\z^\beta$, 
meaning that in \eqref{magic} we are only interested in $\beta$-components 
with $|\beta|<3$. 
On the one hand, the right hand side of \eqref{magic}
involves only $\beta'$ components of $\Pi_x$ and $\delta\Pi_x-\d\G_{xz}\Pi_z$ for $\beta'$ ``smaller'' than $\beta$.
On the other hand, the improved vanishing \eqref{eq:lhsstrongboundincrement}
at the secondary base point $z$ 
and the improved regularity of $\delta\xi$ 
allow for a reconstruction argument, 
which requires $\alpha+(\kappa+\alpha-3)>0$.
This is carried out in \cref{rec2} (Reconstruction~II), 
establishing \eqref{eq:rhsweakboundincrement}.

At this point, we mention two further (artificial) restrictions on $\kappa$. 
To avoid case distinctions, 
it is convenient to not recenter to unnecessarily high order, 
and we will therefore assume $\kappa+\alpha<3$. 
Similarly, to simplify some of the estimates later on, 
it is convenient to also assume $\kappa+2\alpha<\min \A\cap(3,\infty)$. 
Altogether, this imposes
\begin{subnumcases}
{
    3 < \kappa+2\alpha < \label{kappa}
}
    \tfrac{D}{2}+2\alpha, \label{kappa1} \\
%    3+\alpha, \label{kappa2} \\
    \min \A\cap(3,\infty) \label{kappa3}.
\end{subnumcases}
By the restriction $\alpha>\frac{3}{2}-\frac{D}{4}$ in Assumption~\ref{ass}~(iii), 
it is possible to choose $\kappa$ satisfying \eqref{kappa1}, 
while since $\A$ is locally finite it is also possible to choose $\kappa$ satisfying at the same time (the artificial) \eqref{kappa3}.
Since $3<3+\alpha\in \A$, \eqref{kappa3} implies $\kappa+\alpha<3$.

\medskip

\subsubsection{The structure group}\label{sec:group}
We turn to a discussion of $\G_{xy}$. 
To simplify some of the proofs, 
it will be convenient to strengthen $\G_{xy}\in{\rm End}(\T^*)$ 
as stated in \cref{thm:main} 
to $\G_{xy}\in{\rm Alg}(\RR)\cap{\rm End}(\T^*)$. 
By this we mean that $\G_{xy}$ is a well-defined linear map from $\RR$ to itself, 
compatible with its algebra structure in the sense that for $\pi,\pi'\in\RR$
\begin{equation}\label{mult}
    \G_{xy}(\pi\pi')=(\G_{xy}\pi)(\G_{xy}\pi'), 
\end{equation}
and it preserves $\T^*\subset\RR$ in the sense that 
\begin{equation}\label{GammaPreservesT}
\G_{xy} \T^*\subset \T^*.
\end{equation}
This deviates slightly from \cite[Section~2.5]{LOTT21} where $\G_{xy}$ is only defined\footnote{
at least, it is not mentioned that it actually is well-defined on the larger $\RR$} on the smaller $\T^*$. 
The reason for defining it on the larger $\RR$ is, that this allows to apply $\G_{xy}$ to $c$ which, due to the constraint \eqref{pop2}, is not an element of $\T^*$. 

\medskip

As in \cite[Section~2.5]{LOTT21} we start from a purely algebraic map $\{\pi^{(\n)}\}_\n\mapsto\G$ given by 
\begin{equation}\label{exp}
    \G = \sum_{j\geq0}\tfrac{1}{j!}\sum_{\n_1,\dots,\n_j}\pi^{(\n_1)}\cdots\pi^{(\n_j)}D^{(\n_1)}\cdots D^{(\n_j)},
\end{equation}
where $D^{(\0)}$ is given by \eqref{D0} and $D^{(\n)}$ for $\n\neq\0$ is the derivation on $\RR$ defined by 
\begin{equation}\label{Dn}
    D^{(\n)}:=\partial_{\p_\n}.
\end{equation}
For later use we note that 
\begin{equation}\label{Dncomp}
    (D^{(\n)})_\beta^\gamma = \gamma(\n)\delta_\beta^{\gamma-g_\n} \quad\text{for }\n\neq\0,
\end{equation}
hence
\begin{equation}\label{Dnprops}
    (D^{(\n)})_{\beta}^{\gamma}\neq 0  
    \hspace{1ex}\implies\hspace{1ex}\left\{
    \begin{array}{l}
    \beta_a(k) = \gamma_a(k) \text{ for all } k, \\
    \beta_b(\ell) \,= \gamma_b(\ell) \, \text{ for all } \ell, \\
    \beta_p(\n') = \gamma_p(\n')-\delta^\n_{\n'} \text{ for all }\n'\neq\0.
    \end{array}\right.
\end{equation}
Despite the two infinite sums in \eqref{exp}, the following lemma shows that $\G$ is well-defined 
for a suitable choice of $\{\pi^{(\n)}\}_\n$. 
\begin{lemma}\label{lem:defineGamma}
    Let $\{\pi^{(\n)}\}_\n \subset \T^*$ satisfying
    \begin{equation}\label{pin}
        \pi^{(\n)}_\beta\neq 0 \quad\implies\quad |\beta|>|\n|.
    \end{equation}
    Then \eqref{exp} defines $\G\in{\rm Alg}(\RR)\cap{\rm End}(\T^*)$.
\end{lemma}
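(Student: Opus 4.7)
The plan has three parts: well-definedness of $\G$ as a linear map on $\RR$, multiplicativity $\G(\pi\pi')=\G\pi\cdot\G\pi'$, and preservation of $\T^*$. The bulk of the work is in the first part.

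\textbf{Well-definedness.} The strategy is to bound, for fixed multiindices $\gamma$ and $\beta$, the number of terms in \eqref{exp} that contribute to $(\G\z^\gamma)_\beta$. A single application of $\pi^{(\n)}D^{(\n)}$ to a monomial produces components of the form $\z^{\tilde\gamma'+\delta}$ with $\delta$ populated and $|\delta|>|\n|$. After $j$ such compositions one obtains the two balances $\sum_\l\beta(\l)-\sum_\l\gamma(\l)=\sum_i\sum_\l\delta_i(\l)$ and $|\beta|_p-|\gamma|_p=\sum_i(|\delta_i|_p-|\n_i|)$. The crucial elementary estimate is
\begin{equation}\label{plankey}
|\delta|_p-|\n|\;\geq\;1-\textstyle\sum_\l \delta(\l)
\end{equation}
for every populated $\delta$ and every $\n$ with $|\delta|>|\n|$; this follows by writing $|\delta|=\alpha\sum_\l\delta(\l)+|\delta|_p$, deducing $|\delta|_p-|\n|>-\alpha\sum_\l\delta(\l)$, and invoking $\alpha\in(0,1)\setminus\mathbb{Q}$ together with the fact that $|\delta|_p-|\n|\in\mathbb{Z}$. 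Summed over $i$, \eqref{plankey} bounds $j\leq(|\beta|_p-|\gamma|_p)+(\sum_\l\beta(\l)-\sum_\l\gamma(\l))$. Combined with the observation that every admissible $\n_i\neq\0$ lies in the finite set $\mathrm{supp}_g(\gamma)$ and that the $\delta_i$ have bounded noise count and polynomial weight, this forces the sum over $(\n_i,\delta_i)$ to be finite. To pass from $\z^\gamma$ to general $\pi\in\RR$, a parallel structural analysis based on the factorisation $\G\z^\gamma=\prod_k(\G\a_k)^{\gamma(k)}\prod_\l(\G\b_\l)^{\gamma(\l)}\prod_\n(\G\p_\n)^{\gamma(\n)}$ together with the explicit formulas $\G\a_k=\sum_{j\geq 0}\binom{k+j}{j}(\pi^{(\0)})^j\a_{k+j}$ (and analogous for $\G\b_\l$), $\G\p_\n=\p_\n+\pi^{(\n)}$ shows that for each fixed $\beta$ only finitely many $\gamma$ give $(\G\z^\gamma)_\beta\neq 0$.

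\textbf{Multiplicativity and preservation of $\T^*$.} Since each $D^{(\n)}$ is a derivation of $\RR$, iterated Leibniz gives $D^{(\n_1)}\cdots D^{(\n_j)}(\pi\pi')=\sum_{S\subset[j]}\bigl(\prod_{i\in S}D^{(\n_i)}\pi\bigr)\bigl(\prod_{i\notin S}D^{(\n_i)}\pi'\bigr)$. Substituting into \eqref{exp}, splitting the subset sum by $j_1:=|S|$ and $j_2:=j-j_1$ (using $\binom{j}{j_1}/j!=1/(j_1!\,j_2!)$), and separating the $\n$-variables accordingly yields $\G(\pi\pi')=\G\pi\cdot\G\pi'$. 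For preservation, introduce the defect $B(\beta):=1+\sum_k k\beta(k)+\sum_\l\l\beta(\l)-\sum_\l\beta(\l)-\sum_{\n\neq\0}\beta(\n)$, so that the first clause of \eqref{pop1} reads $B(\beta)=0$. A direct check shows $D^{(\n)}$ raises $B$ by one while multiplication by any populated $\delta$ lowers $B$ by one, so $\pi^{(\n)}D^{(\n)}$ preserves $B$, and hence so does $\G$. The second clause of \eqref{pop1} is handled by splitting: populated $\gamma$ with $\sum_\l\gamma(\l)>0$ gives $\sum_\l\beta(\l)\geq\sum_\l\gamma(\l)>0$, while the purely polynomial case $\gamma=g_\n$ collapses \eqref{exp} to $\G\p_\n=\p_\n+\pi^{(\n)}$ whose components are populated by the assumption $\pi^{(\n)}\in\T^*$.

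The main obstacle is the combinatorial bookkeeping for well-definedness, in particular the verification of \eqref{plankey} and the passage from monomials to general power series; the remaining parts reduce to the standard Leibniz manipulation and the $\G$-invariance of $B$.
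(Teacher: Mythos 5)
Your argument tracks the paper's strategy closely, and several steps are clean repackagings: the integer inequality $|\delta|_p-|\n|\geq 1-\sum_\l\delta(\l)$ you derive from irrationality of $\alpha$ is equivalent to the paper's observation (around \eqref{rg04}) that $|\cdot|-\alpha$ is additive and $[\cdot]+|\cdot|_p\geq0$, both routes yielding the $\beta$-dependent bound on $j$; the invariant $B$ is precisely $1+[\cdot]-\sum_\ell(\cdot)(\ell)$, and the computation that it is preserved is exactly what the paper unpacks component-wise in the last paragraph of its proof; and the Leibniz splitting you write out for multiplicativity is the content of the cited \cite[Lemma~3.12~(v)]{LO22}.

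The step that needs attention is the finiteness of $\{\gamma : (\G)_\beta^\gamma\neq0\}$ for fixed $\beta$. As written, you invoke the factorisation $\G\z^\gamma=\prod_k(\G\a_k)^{\gamma(k)}\prod_\l(\G\b_\l)^{\gamma(\l)}\prod_{\n\neq\0}(\G\p_\n)^{\gamma(\n)}$ inside the well-definedness part, but this factorisation is an instance of multiplicativity, which you present afterwards; the ordering must be swapped (which is possible, since multiplicativity is a statement about coefficients and does not presuppose that $\G$ is a bona fide endomorphism, but it is not how you have arranged things). More substantively, even granting the factorisation, the homogeneity bookkeeping behind your bound on $j$ leaves $\gamma(k=0)$ and $\gamma(\ell=0)$ uncontrolled: since $|e_0|-\alpha=|f_0|-\alpha=0$, arbitrarily many $\G\a_0$ or $\G\b_0$ factors are homogeneity-neutral, and this degenerate case, which your sketch does not address, needs a separate counting argument (e.g.~each $\G\a_0$ or $\G\b_0$ factor contributes at least one $\a$- or $\b$-index to $\beta$). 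The paper avoids the detour entirely by arguing directly from \eqref{coeffGamma}: by \eqref{D0comp} and \eqref{Dnprops}, non-vanishing of $(D^{(\n_1)}\cdots D^{(\n_j)})_{\beta_{j+1}}^\gamma$ forces $\gamma(k=0)\leq\beta(k=0)+j$ and $\gamma(\ell=0)\leq\beta(\ell=0)+j$, which together with the bound on $j$ finishes the count without ever invoking multiplicativity. With the reorder and the degenerate cases spelled out, your proof would be complete.
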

\begin{proof}
We start by arguing that the matrix coefficients 
\begin{equation}\label{coeffGamma}
(\G)_\beta^\gamma = \sum_{j\geq0} \sum_{\n_1,\dots,\n_j} \sum_{\beta_1+\cdots+\beta_{j+1}=\beta}
\pi^{(\n_1)}_{\beta_1}\cdots\pi^{(\n_j)}_{\beta_j} (D^{(\n_1)}\cdots D^{(\n_j)})_{\beta_{j+1}}^\gamma
\end{equation}
are well-defined for all $\beta,\gamma$.
From \eqref{D0props}, \eqref{Dnprops} and \eqref{pin} we see that if a summand is non-vanishing, then 
\begin{equation}\label{rg05}
    [\gamma]=[\beta_{j+1}]-j,\quad
    |\gamma|_p = |\beta_{j+1}|_p+\sum_{i=1}^j|\n_i|,\quad\text{and}\quad
    |\beta_i|>|\n_i| \text{ for } i=1,\dots,j.
\end{equation}
This implies $[\gamma]+|\gamma|_p \leq [\beta_{j+1}]-j+|\beta_{j+1}|_p+\sum_{i=1}^j|\beta_i|$, 
and since $\beta_1+\dots+\beta_{j+1}=\beta$ and $|\cdot|-\alpha$ is additive
\begin{equation}\label{rg04}
    [\gamma]+|\gamma|_p \leq [\beta_{j+1}]-j+|\beta_{j+1}|_p+|\beta|-|\beta_{j+1}|+j\alpha.
\end{equation}
As $\beta$ is fixed, we obtain for a $\beta$-dependent constant $C$ that $[\gamma]+|\gamma|_p\leq C - j(1-\alpha)$. 
By $0\leq[\cdot]+|\cdot|_p$, which follows from the definition \eqref{homogeneity2}, 
and by $1-\alpha>0$, we conclude that $j$ is bounded.
Hence the sum over $j\geq0$ in \eqref{coeffGamma} is finite, 
and by $|\n_i|<|\beta_i|$ also the sum over $\n_1,\dots,\n_j$ is finite. 
Thus the coefficient $(\G)_\beta^\gamma$ is well-defined. 

\medskip

To guarantee that these coefficients $\{(\G)_\beta^\gamma\}_{\beta,\gamma}$ define a linear map from $\RR$ to itself, 
we have to show that for fixed $\beta$ there are only finitely many $\gamma$ with $(\G)_\beta^\gamma\neq0$.
In case $(\G)_\beta^\gamma\neq0$, we learn from \eqref{rg04} that $[\gamma]+|\gamma|_p$ is bounded. 
This forces $\gamma$ to assign only finitely many values to $k\neq0$, $\ell\neq0$, $\n\neq\0$, 
and to vanish for all but finitely many $k$, $\ell$, $\n\neq\0$.
It remains to argue that also $\gamma_a(k=0)$ and $\gamma_b(\ell=0)$ can take only finitely many values. 
This follows from $(D^{(\n_1)}\cdots D^{(\n_j)})_{\beta_{j+1}}^\gamma\neq0$, which by \eqref{D0comp} and \eqref{Dnprops} implies
$\gamma_a(k=0)\leq\beta_{a,j+1}(k=0)+j\leq\beta_a(k=0)+j$ and 
$\gamma_b(\ell=0)\leq\beta_{b,j+1}(\ell=0)+j\leq\beta_b(\ell=0)+j$.

\medskip

The proof of multiplicativity of $\G$ follows from the derivation property of $D^{(\n)}$ and does not rely on the domain of $\G$ at all. We therefore refer to \cite[Lemma~3.12~(v)]{LO22} for a proof. 

\medskip

We finally show that $\G$ preserves $\T^*$. 
For this we shall argue that if $\gamma$ is populated and $(\G)_\beta^\gamma\neq0$, then $\beta$ is populated. 
For purely polynomial $\gamma=g_\n$ we observe that \eqref{exp} immediately yields
\begin{equation}\label{Gamma_pn}
    \G \p_\n = \p_\n + \pi^{(\n)},
\end{equation}
hence $(\G)_\beta^{g_\n} = \delta_\beta^{g_\n} + \pi^{(\n)}_\beta$. 
Since $\pi^{(\n)}\in \T^*$, this is only non-vanishing for populated $\beta$. 
We turn to multiindices $\gamma$ that are populated and not purely polynomial. 
Recall from \eqref{D0props} and \eqref{Dnprops} that $(D^{(\n_1)}\cdots D^{(\n_j)})_{\beta_{j+1}}^\gamma\neq0$ implies
$\sum_\ell \gamma_b(\ell)=\sum_\ell \beta_{j+1,b}(\ell)$.
Since by assumption $\sum_\ell\gamma_b(\ell)>0$, we obtain $\sum_\ell\beta_b(\ell)\geq\sum_\ell\beta_{j+1,b}(\ell)>0$. 
Similarly, since $\gamma$ is populated, we obtain from the first item of \eqref{rg05} that 
$[\beta_{j+1}] = [\gamma] + j = \sum_\ell \gamma_b(\ell) -1+j = \sum_\ell \beta_{j+1,b}(\ell) -1+j$.
Hence $(\G)_\beta^\gamma\neq0$ yields 
\begin{align}
    [\beta] 
    = [\beta_1]+\cdots+[\beta_{j+1}]
    = \sum_\ell \beta_{1,b}(\ell)+\cdots+\beta_{j+1,b}(\ell)) -1
    = \sum_\ell \beta_b(\ell)-1,
\end{align}
where we used that $\beta_1,\dots,\beta_j$ in \eqref{coeffGamma} are populated. 
\end{proof}

From the proof of Lemma~\ref{lem:defineGamma} we obtain in addition
\begin{equation}\label{GammaPreservesTildeT}
    \G_{xy} \widetilde{\T}^* \subset \widetilde{\T}^*,
\end{equation}
which we shall use in the sequel.

\begin{remark}
We note that $\G$ defined here coincides\footnote{up to the fact that there are no $\b_\ell$ components in \cite{OST23}} with the one constructed in \cite[Lemma~3]{OST23}, 
since both maps are multiplicative and coincide on the coordinates $\a_k,\b_\ell,\p_\n$. 
Therefore, $\mathsf{G}^*:=\{\G\text{ as in Lemma~\ref{lem:defineGamma}}\}$ 
is a group (with respect to ~composition) and there exists a group $\mathsf{G}$, 
called the structure group, 
such that $\mathsf{G}^*$ is the pointwise dual of $\mathsf{G}$, cf.~\cite[Lemma~4]{OST23}. 
\end{remark}

\medskip

In \cref{sec:induction_proper} (item (4) of the case $|\beta|>3$) we shall argue that there is a choice of $\{\pi^{(\n)}_{xy}\}_\n$ such that the associated $\G_{xy}$ (see~\eqref{exp}) satisfies \eqref{recenter}, \eqref{transitive}, \eqref{Gtri}, and \eqref{recenterPoly}. 
To estimate $\G_{xy}$, 
we will mainly appeal to the exponential formula \eqref{exp}, 
see \cref{alg1} (Algebraic argument~I). 
This makes use of estimates on $\pi^{(\n)}_{xy}$ 
that we obtain in \cref{3pt1} (Three-point argument~I), 
based on \eqref{recenter} involving two base points and one active point. 
To estimate $\delta\G_{xy}$, 
which is the directional derivative of $\G_{xy}$ in the direction $\delta\xi$, 
we proceed similarly, see \cref{alg2} (Algebraic argument~II). 
It is based on estimates on $\delta\pi^{(\n)}_{xy}$, 
which is the directional derivative of $\pi^{(\n)}_{xy}$ in the direction $\delta\xi$, 
that we establish in \cref{3pt2} (Three-point argument~II). 

%%%%%%%%%%%%%%%%%%%%%%%%%%%%%%%%%%%%%%%%%%%%%%%%%%%%%%

\subsubsection{Ansatz for \texorpdfstring{$\d\G$}{dGamma}}\label{sec:dGamma}
We now discuss $\d\G_{xz}$ and start by motivating an ansatz.  By $\alpha<1$, we infer from \eqref{kappa} that $\kappa+\alpha>2$, and hence \eqref{eq:lhsstrongboundincrement} implies on a purely qualitative level 
\begin{equation}
    \E^\frac{1}{q'} | (\delta\Pi_x-\delta\Pi_x(z)-\d\G_{xz}\Pi_z)_{\beta}(y)|^{q'} = o(|y-z|_\s^2).
\end{equation}
Since $\partial^\n\Pi_{x\beta}$ and $\partial^\n\delta\Pi_{x\beta}$ are continuous functions for $|\n|\leq2$, 
see \cref{rem:smoothness} and \cref{rem:smoothness_Mall}, 
this amounts to 
\begin{equation}\label{qualitativeIncrement}
    \partial^\n(\delta\Pi_x-\delta\Pi_x(z)-\d\G_{xz}\Pi_z)_\beta (z) = 0 \quad\text{for}\quad |\n|\leq2.
\end{equation}
Note that for $\n=\0$ this is automatically satisfied by 
\begin{equation}\label{Pixx}
\Pi_x(x)=0 \, ,
\end{equation}
which is a consequence of the estimate \eqref{estPi} since $|\cdot|\geq\alpha>0$. 
A first ansatz to obtain \eqref{qualitativeIncrement} for $|\n|=1,2$ as well could be $\d\G_{xz}=\delta\G_{xz}$. 
However, $\delta\G_{xz}$ is not rich enough: 
to achieve second order vanishing around $z$ we expect to need to recenter $\delta\Pi_x-\delta\Pi_x(z)$ by $(\cdot-z)^\n$ for $|\n|=1,2$.
By \eqref{polypart:poly}, this is only possible if $(\delta\G_{xz})_\beta^{g_\n}$ does not vanish for $|\n|=1,2$. 
As we will see in Lemma~\ref{lem:tri1}, $\delta\G_{xz}$ is triangular with respect to ~the homogeneity $|\cdot|$, meaning that 
$(\delta\G_{xz})_\beta^{g_\n}\neq0$ implies $|g_\n|<|\beta|$. 
Hence $\delta\G_{xz}$ only allows for the appropriate recentering for multiindices $|\beta|>2$. 
To achieve the recentering  for multiindices $|\beta|\leq2$ as well, we have to relax the population condition and give up the triangularity of $\d\G_{xz}$ with respect to ~the homogeneity, cf.~\eqref{tri6}. 
We therefore make the ansatz\footnote{note the structural similarity to $\delta\G_{xz}=\sum_\n \delta\pi^{(\n)}_{xz}\G_{xz}D^{(\n)}$}
\begin{equation}\label{dGamma}
    \d\G_{xz} = \sum_{|\n|\leq2} \d\pi^{(\n)}_{xz}\G_{xz}D^{(\n)} Q,
\end{equation}
where 
\begin{equation}\label{choice_dpi}
\d\pi^{(\0)}_{xz}:=Q\delta\Pi_x(z)\in Q\widetilde{\T}^* \quad\text{and}\quad 
\d\pi^{(\n)}_{xz}\in Q\widetilde{\T}^* \text{ for }|\n|=1,2 \text{ to be chosen}. 
\end{equation}
Recall, that $Q$ denotes the projection of a power series 
$\sum_\beta\pi_\beta\z^\beta$ to $\sum_{|\beta|<3}\pi_\beta\z^\beta$. 
The reason for including $Q$ in the definition of $\d\G$ will become clear in the proof of~\cref{lem:integrationIII}.
Using the population constraint \eqref{choice_dpi}, one can check that 
\begin{equation}\label{mapping_dGamma}
\d\G_{xz} \T^*\subset \widetilde{\T}^* \, ,
\end{equation}
the proof of which follows the same lines as the one of \cref{lem:defineGamma}.

\medskip

We will argue in \cref{sec:induction_proper} (item (10) of the case $|\beta|<3$)
that \eqref{qualitativeIncrement} 
indeed determines $\d\pi^{(\n)}_{xz}$ for $|\n|=1,2$. 
The estimate on $\d\G_{xz}$ is based on \eqref{dGamma}, 
see \cref{alg4} (Algebraic argument IV), 
which is based on estimates on $\d\pi^{(\n)}_{xz}$ that we establish in 
\cref{3pt4} (Three-point argument IV).
In addition to the plain estimate on $\d\G_{xz}$, 
when obtaining the improved vanishing \eqref{eq:rhsweakboundincrement} of increments of $\delta\Pi^-$ 
we will make use of an estimate on the increment 
$\d\G_{xy}-\d\G_{xz}\G_{zy}$. 
This estimate on the increment is obtained in \cref{alg3} (Algebraic argument III), 
based on the corresponding estimate on 
$\d\pi^{(\n)}_{xy}-\d\pi^{(\n)}_{xz}-\d\G_{xz}\pi^{(\n)}_{zy}$
obtained in \cref{3pt3} (Three-point argument III).

\medskip

We next argue that the ansatz \eqref{dGamma} 
allows for the crucial identity \eqref{magic} to hold true. 

\begin{lemma}
Let $\d\G_{xz}$ be given by \eqref{dGamma} 
with $\d\pi^{(\n)}_{xz}$ satisfying \eqref{choice_dpi}, 
and such that \eqref{qualitativeIncrement} holds true.
Then, \eqref{magic} holds true.
\end{lemma}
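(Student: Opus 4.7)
The plan is to compute both sides of \eqref{magic} directly from the defining equation \eqref{eq:piminusttilde} of $\Pi^-$, exploiting that the Malliavin derivative $\delta$ is a derivation, each $D^{(\n)}$ is a derivation on $\RR$, and $\G_{xz}$ is multiplicative.

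First I would apply $\delta$ to \eqref{eq:piminusttilde}. Using the Leibniz rule together with $\delta c = 0$ (since $c$ is deterministic), one obtains six explicit terms featuring $\delta\Pi_x$, $\nabla\delta\Pi_x$, $\nabla\Delta\delta\Pi_x$, and $\delta\xi_\tau$. Next I would compute $\d\G_{xz}\Pi^-_z = \sum_{|\n|\leq2}\d\pi^{(\n)}_{xz}\,\G_{xz}\,D^{(\n)}\,Q\,\Pi^-_z$ by substituting the analogous equation for $\Pi^-_z$, expanding each $D^{(\n)}$ via Leibniz, and pulling the multiplicative $\G_{xz}$ through the products. The recentering identity $\G_{xz}\Pi_z(y) = \Pi_x(y) - \Pi_x(z)$ together with its spatial derivatives $\G_{xz}\partial^{\m}\Pi_z(y) = \partial^{\m}\Pi_x(y)$ for $|\m|\geq1$ brings every factor back to an $x$-centered object. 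Crucially, since $c\in\R[[\a_k,\b_\ell]]$ has no $\p_\n$-dependence, $D^{(\n)}c = 0$ for $\n\neq\0$, so the exponential formula \eqref{exp} collapses to $\G_{xz}c = \sum_m\tfrac{1}{m!}\Pi_x(z)^m(D^{(\0)})^m c$, which is the algebraic manifestation of the shift covariance \eqref{ct}.

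Subtracting and evaluating at $z$, I would exploit \eqref{qualitativeIncrement} to produce the required cancellations. The $\n = \0$ case gives $\d\G_{xz}\Pi_z(z) = 0$, which annihilates every term featuring an undifferentiated $\d\G_{xz}\Pi_z(z)$ factor; the $|\n|=1,2$ cases give $\partial^\n\delta\Pi_x(z) = \partial^\n(\d\G_{xz}\Pi_z)(z)$, so the first- and second-order derivative contributions cancel pairwise. The counterterm contributions from both sides cancel thanks to the identification $\G_{xz}c = \sum_m\tfrac{1}{m!}\Pi_x(z)^m(D^{(\0)})^m c$ together with $\d\G_{xz}\Pi_z(z) = 0$ and $\nabla\d\G_{xz}\Pi_z(z) = \nabla\delta\Pi_x(z)$. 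What survives is precisely the noise term $Q\sum_\ell\b_\ell\Pi_x^\ell(z)\delta\xi_\tau(z)$ together with the residual $Q\sum_k\a_k\Pi_x^k(z)\nabla\Delta(\delta\Pi_x - \d\G_{xz}\Pi_z)(z)$, which cannot cancel because $\nabla\Delta$ is of third order while \eqref{qualitativeIncrement} only enforces matching up to order two.

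The main obstacle is the algebraic bookkeeping of the counterterm cancellation: one must carefully track how $D^{(\n)}$ acts on the product $\Pi_z^m\nabla\Pi_z(D^{(\0)})^m c$ for both $\n = \0$ (which shifts the $\a_k,\b_\ell$ indices via \eqref{D0}) and $\n\neq\0$ (which differentiates in $\p_\n$), and verify that after applying $\G_{xz}$ and evaluating at $z$, the double sum reorganises into precisely the Malliavin-derivative counterterm. A secondary subtlety is the interaction of the outer projection $Q$ with $D^{(\n)}$ and $\G_{xz}$: these do not commute in general, but the specific placement of $Q$ in the ansatz \eqref{dGamma}, together with the fact that \eqref{qualitativeIncrement} pins down $\d\pi^{(\n)}_{xz}$ only for $|\n|\leq2$ (matching the homogeneity range $|\beta|<3$ projected out by $Q$), is precisely what makes \eqref{magic} hold after applying $Q$.
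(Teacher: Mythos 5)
Your overall strategy — apply $\delta$ to \eqref{eq:piminusttilde}, compute $\d\G_{xz}\Pi^-_z$ from the ansatz \eqref{dGamma}, subtract, evaluate at $z$, and invoke \eqref{recenter} and \eqref{qualitativeIncrement} to kill the unwanted contributions — is the right one, and you correctly identify which terms should survive and why (the $\nabla\Delta$ term involves a third derivative while \eqref{qualitativeIncrement} only enforces matching up to order two). However, there is a genuine gap in the execution: you plan to expand $\d\G_{xz}\Pi^-_z$ via ``Leibniz for $D^{(\n)}$ and multiplicativity of $\G_{xz}$'' on the \emph{full} right-hand side of \eqref{eq:piminusttilde}, but you yourself flag that $Q$ does not commute with products. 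This is not a ``secondary subtlety'' — it is the crux. As written, $\d\G_{xz}=\sum_{|\n|\leq2}\d\pi^{(\n)}_{xz}\G_{xz}D^{(\n)}Q$ is \emph{not} a derivation, and a naive Leibniz expansion of $\d\G_{xz}(\a_k\Pi_z^k\nabla\Delta\Pi_z)$ or of $\d\G_{xz}(\Pi_z^m\nabla\Pi_z\,(D^{(\0)})^m c)$ requires a justification that you do not supply.

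The paper sidesteps this entirely by a preliminary simplification you are missing: since $\Pi_z(z)=0$ (a consequence of $|\cdot|\geq\alpha>0$, see \eqref{Pixx}), one first evaluates $\Pi^-_z$ at $z$, which collapses the sums to
\begin{equation}
\Pi^-_z(z)=\a_0\nabla\Delta\Pi_z(z)+\b_0\xi_\tau(z)-\nabla\Pi_z(z)\,c \, ,
\end{equation}
i.e.~only the $k=0$, $\ell=0$, $m=0$ terms survive. Only \emph{then} is $\d\G_{xz}$ applied, and for each of the two remaining products one verifies separately that $Q$ factors through: $Q(\a_0\nabla\Delta\Pi_z)=\a_0\nabla\Delta Q\Pi_z$ because $|e_0+\beta|=|\beta|$, and $Q(\nabla\Pi_z(z)c)=\nabla\Pi_z(z)c$ because $\nabla\Pi_{z\beta}(z)$ vanishes for $|\beta|>1$ while $c_\beta$ vanishes for $|\beta|\geq 2+\alpha$. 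These are the specific facts that unlock the Leibniz-type formula $\d\G_{xz}(\pi\pi')=(\d\G_{xz}\pi)(\G_{xz}\pi')+(\G_{xz}\pi)(\d\G_{xz}\pi')$ for those particular $\pi,\pi'$. Without the preliminary evaluation at $z$, one would have to handle arbitrary products $\a_k\Pi_z^k\nabla\Delta\Pi_z$ and the full double sum in the counterterm (which you yourself call the ``main obstacle''), and you have not supplied the $Q$-factorization argument for those. I would advise you to restructure: invoke $\Pi_z(z)=0$ up front, reduce $\Pi^-_z(z)$ to three terms, and then carry out the $\d\G_{xz}$ expansion and the cancellation against $\delta\Pi^-_x(z)$ on that much smaller set of terms.
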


\begin{proof}
By \eqref{Pixx} we read off from \eqref{piminus} that 
\begin{equation}
    \Pi^-_z(z) = \a_0\nabla\Delta\Pi_z(z) + \b_0\xi_\tau(z)-\nabla\Pi_z(z) c.
\end{equation}
Since $|e_0+\beta|=|\beta|$, cf.~\eqref{homogeneity}, 
we have $Q(\a_0\nabla\Delta\Pi_z)=\a_0\nabla\Delta Q\Pi_z$, 
and by the derivation property of $D^{(\n)}$ and multiplicativity of $\G_{xz}$, this yields 
\begin{equation}
\d\G_{xz}(\a_0\nabla\Delta\Pi_z)
= (\d\G_{xz} \a_0)\nabla\Delta \G_{xz} Q\Pi_z 
+ (\G_{xz} \a_0)\nabla\Delta \d\G_{xz} \Pi_z \, .
\end{equation}
Furthermore, from the estimate \eqref{estPi} of $\Pi$ we learn 
\begin{equation}
\E^\frac{1}{p}|\nabla\Pi_{z\beta t}(z)|^p
\leq \int \, \dx{y} |\nabla\psi_t(y)| 
\E^\frac{1}{p}|\Pi_{z\beta}(z-y)|^p 
\lesssim (\sqrt[8]{t})^{|\beta|-1}, 
\end{equation}
where we have used the moment bound \eqref{eq:kerbound} in the last inequality,
which implies in particular $\nabla\Pi_{z\beta}(z)=0$ a.s.~for $|\beta|>1$. 
Together with the fact that $c_\beta$ is only non vanishing for $|\beta|<2+\alpha$, see \eqref{pop4}, 
we obtain $Q(\nabla\Pi_z(z) c)=\nabla\Pi_z(z) c$ and hence
\begin{equation}
\d\G_{xz}(\nabla\Pi_z(z) c)
= (\d\G_{xz}\nabla\Pi_z(z)) \G_{xz} c 
+ (\G_{xz}\nabla\Pi_z(z)) \d\G_{xz} c.
\end{equation}
Plugging into the exponential formula \eqref{exp} 
the definition \eqref{D0} of $D^{(\0)}$ 
and the choice $\pi^{(\0)}_{xz}=\Pi_x(z)$, see \eqref{pi0}, 
we see 
\begin{equation}
\G_{xz}\a_{k'} 
= \sum_{k\geq0} \tbinom{k+k'}{k} \Pi_x^k(z) \a_{k+k'} 
\quad\textnormal{and}\quad
\G_{xz}\b_{\l'} 
= \sum_{\l\geq0} \tbinom{\l+\l'}{\l} \Pi_x^\l(z) \b_{\l+\l'} \, .
\end{equation}
Using this, we can read off from the ansatz \eqref{dGamma} of $\d\G$ 
and the chain rule for the Malliavin derivative 
\begin{equation}
\d\G_{xz} \a_0 = \sum_{k\geq0} \a_k \delta(\Pi_x^k(z)) 
\quad\textnormal{and}\quad
\d\G_{xz} \b_0 = \sum_{\l\geq0} \b_\l \delta(\Pi_x^\l(z)) \, .
\end{equation}
Furthermore, since $c\in\R[[\a_k,\b_\l]]$, see \cref{thm:main}, 
the same arguments yield 
\begin{equation}
\G_{xz} c = \sum_{m} \tfrac{1}{m!} \Pi_x^m(z)(D^{(\0)})^m c
\quad\textnormal{and}\quad
\d\G_{xz} c = \sum_{m} \tfrac{1}{m!} \delta(\Pi_x^m(z))(D^{(\0)})^m c \, .
\end{equation}
Altogether we obtain 
\begin{align}
    \d\G_{xz}\Pi^-_z(z) 
    &= \sum_k \a_k \delta(\Pi_x^k(z)) \nabla\Delta \G_{xz} Q\Pi_z(z) 
     + \sum_k \a_k \Pi_x^k(z) \nabla\Delta \d\G_{xz} \Pi_z(z) \\
    &\,+ \sum_\ell \b_\ell \delta(\Pi_x^\ell(z)) \xi_\tau(z) 
    - (\d\G_{xz}\nabla\Pi_z(z)) \sum_m \tfrac{1}{m!} \Pi_x^m(z) (D^{(\0)})^m c \\
    &\,- (\G_{xz}\nabla\Pi_z(z)) \sum_m \tfrac{1}{m!} \delta(\Pi_x^m(z)) (D^{(\0)})^m c. 
\end{align}
On the other hand, applying the Malliavin derivative to \eqref{piminus} we get

\begin{align}
    \delta\Pi^-_x 
    &= \sum_k \a_k \delta(\Pi_x^k)\nabla\Delta\Pi_x
    + \sum_k \a_k \Pi_x^k\nabla\Delta\delta\Pi_x \\
    &\,+ \sum_\ell \b_\ell \delta(\Pi_x^\ell)\xi_\tau
    + \sum_\ell \b_\ell \Pi_x^\ell \delta\xi_\tau  \\
    &\,- \sum_m \tfrac{1}{m!} \delta(\Pi_x^m)\nabla\Pi_x (D^{(\0)})^m c
    - \sum_m \tfrac{1}{m!} \Pi_x^m \nabla\delta\Pi_x (D^{(\0)})^m c.
\end{align}
Thus
\begin{align}
    Q(\delta\Pi^-_x -\d\G_{xz}\Pi^-_z) (z) 
    &= Q \sum_k \a_k \delta(\Pi_x^k(z))\nabla\Delta(\Pi_x-\G_{xz} Q\Pi_z)(z) \\
    &\,+ Q  \sum_k \a_k \Pi_x^k(z)\nabla\Delta(\delta\Pi_x-\d\G_{xz} \Pi_z)(z) \\
    &\,+ Q \sum_\ell \b_\ell \Pi_x^\ell(z) \delta\xi_\tau(z)  \\
    &\,- Q \sum_m \tfrac{1}{m!} \delta(\Pi_x^m(z)) \nabla (\Pi_x-\G_{xz}\Pi_z)(z) (D^{(\0)})^m c \\
    &\,- Q \sum_m \tfrac{1}{m!} \Pi_x^m(z) \nabla(\delta\Pi_x-\d\G_{xz}\Pi_z)(z) (D^{(\0)})^m c. 
\end{align}
We shall argue that the first, fourth, and last right hand side term vanish.
For the first term we use that $Q(\a_k\pi_1\cdots\pi_{k+1})=Q(\a_k(Q\pi)\cdots(Q\pi_{k+1}))$, 
which follows from 
\begin{equation}
    e_k+\beta_1+\cdots+\beta_{k+1}=\beta \quad\implies\quad |\beta_1|+\cdots+|\beta_{k+1}|=|\beta|
\end{equation}
and non-negativity of the homogeneity $|\cdot|$, 
and that $Q\G_{xz}Q=Q\G_{xz}$, which follows from the triangularity \eqref{Gtri} of $\G_{xz}$ with respect to ~the homogeneity. 
Thus, by \eqref{recenter} the first right hand side term vanishes.
The fourth right hand side term vanishes by \eqref{recenter} as well. 
The last right hand side term vanishes by \eqref{qualitativeIncrement}, 
which finishes the proof of \eqref{magic}. 
\end{proof}

%%%%%%%%%%%%%%%%%%%%%%%%%%%%%%%%%%%%%%%%%%%%%%%%%%%%%%

\subsection{Inductive structure of the proof}\label{sec:inductive_proof}
The whole argument outlined above is carried out inductively. A natural choice for the ordering needed for induction is the length of a multiindex $\beta$. We consider instead the following weighted length 
\begin{align}\label{order}
\length{\beta}:= \sum_k \beta_a(k) + \sum_\ell \beta_b(\ell) + \lambda \sum_{\n\neq\0} |\n|\beta_p(\n) 
\end{align}
with $0<\lambda<1/2$. For ease of notation, we introduce 
\begin{align}
\gamma&\prec\beta \quad \Longleftrightarrow \quad \length{\gamma}<\length{\beta}, \\
\gamma&\preccurlyeq\beta \quad \Longleftrightarrow \quad \gamma\prec\beta \textnormal{ or } \gamma=\beta.
\end{align}
\begin{remark}\label{rem:general_order}
The weight $\lambda$ is necessary for $\d\G$ to be triangular with respect to  this length, see \eqref{tri6}. 
More generally, if the sum in the definition \eqref{dGamma} of $\d\G$ is restricted to $|\n|\leq C$, 
then the upcoming Lemma~\ref{lem:tri1} remains true, provided $\lambda$ is restricted by $0<\lambda<1/C$
and $2$ is replaced by $C$ in the last item of \eqref{tri6}. 
\end{remark}
\begin{remark}
The weight $|\n|$ allows for the following finiteness property, 
which makes $\prec$ suitable for an inductive argument: For all $\beta$
\begin{equation}\label{coercive}
    \#\{\gamma \text{ populated}\,|\,\gamma\prec\beta\}<\infty. 
\end{equation}
Indeed, if $\length{\gamma}$ is bounded, then the term $\sum_{\n\neq\0}|\n|\gamma_p(\n)$ forces $\gamma$ 
to assign only finitely many values to $\n\neq\0$, and to vanish for all but finitely many $\n\neq\0$. 
In particular, there are finitely many purely polynomial $\gamma$. 
If $\gamma$ is populated and not purely polynomial, then by \eqref{pop1}
$$
\sum_k (k+1)\gamma_a(k)+\sum_\ell (\ell+1)\gamma_b(\ell)=-1+\sum_k\gamma_a(k)+2\sum_\ell \gamma_b(\ell) + \sum_{\n\neq\0}\gamma_p(\n).
$$
The right hand side of this expression is bounded by assumption, 
forcing $\gamma$ also to assign only finitely many values to $k,\ell$, and to vanish for all but finitely many $k,\ell$.
\end{remark}

Together with \eqref{coercive}, the following lemma provides all triangular dependencies that allow for an inductive proof.
\begin{lemma}[Triangularity]\label{lem:tri1} 
\begin{itemize}
    \item[] 
    \item[(i)] $\Pi_{x\beta}^-$ given by \eqref{piminuscomponents} does not depend on $\Pi_{x\beta'}$ unless $\beta'\prec\beta$.  Furthermore, if $\Pi_{x\beta}^{-}$ depends on $c_{\beta'}$, then we must have $\beta' +g_{\mathbf{n}^i} \prec \beta$, for all $1 \leq i \leq d$, or $\beta'+ g_{\mathbf{n}^i} = \beta$, for some $1 \leq i \leq d$, 
    where\footnote{Mind the difference of notation between subscripts $\n_1,\dots,\n_j$ used for enumeration and superscripts $\n^i$ denoting the unit vectors of $\mathbb{N}_0^{1+d}$.} $\n^i$ is the unit vector in the $i$-th direction. \label{prop:choosing}

    \item[(ii)] For $\G$ defined in \eqref{exp} and all $\gamma$ (not necessarily populated), 
    \begin{align}
    (\G)_\beta^\gamma &\text{ does not depend on } \pi_{\beta'}^{(\n)} \text{ unless } \beta'\preceq\beta, \label{tri2} \\
    \text{if }\sum_\ell\gamma_b(\ell)>0, \text{ then } (\G)_\beta^{\gamma} &\text{ does not depend on } \pi_{\beta'}^{(\n)} \text{ unless } \beta'\prec\beta. \label{tri1}
    \end{align}
    Moreover, 
    \begin{align}
    (\G-\id)_\beta^\gamma\neq0 &\quad\implies\quad \gamma\prec\beta \quad\text{and}\quad |\gamma|<|\beta|, \label{tri3}\\
    (\delta\G)_\beta^\gamma\neq0 &\quad\implies\quad \gamma\prec\beta \quad\text{and}\quad |\gamma|<|\beta|. \label{tri4}
    \end{align}
    \item[(iii)] For $\d\G$ defined in \eqref{dGamma} and $\gamma$ populated, 
    % we don't use population in proof 
    \begin{equation}
    (\d\G)_\beta^{\gamma\neq\pp} \text{ does not depend on } \d\pi^{(\n)}_{\beta'}, \G_{\beta'} \text{ unless } \beta'\prec\beta. \label{tri5}
    \end{equation}
    Moreover,
    \begin{align}
    (\d\G)_\beta^{\gamma\neq\pp}\neq0 \quad&\implies\quad |\beta|\geq2\alpha, \label{tri7}\\
    (\d\G)_\beta^\gamma\neq0 \quad&\implies\quad \gamma\prec\beta \quad\text{and}\quad |\gamma|\leq|\beta|+2-\alpha. \label{tri6}
    \end{align}
\end{itemize}
\end{lemma}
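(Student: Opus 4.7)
The plan is to prove each triangularity statement by direct inspection of the defining identities and careful bookkeeping of how the length $\length{\cdot}$ and the homogeneity $|\cdot|$ change under multiplication of monomials and under the action of the derivations $D^{(\n)}$. Two facts will be used throughout: first, $\length{\cdot}$ is additive under multiplication, while $|\cdot|$ satisfies $|\beta_1+\beta_2|=|\beta_1|+|\beta_2|-\alpha$, since $[\cdot]$ and $|\cdot|_p$ are linear whereas the $\alpha(1+[\cdot])$ prefactor loses one factor of $\alpha$; second, by the mapping properties \eqref{D0props}, \eqref{Dnprops}, $D^{(\0)}$ preserves $\sum_k\beta(k)+\sum_\ell\beta(\ell)$ (hence preserves $\length{\cdot}$ and $|\cdot|_p$) while shifting $[\cdot]$ by $+1$, whereas $D^{(\n)}$ for $\n\neq\0$ acts as $\partial_{\p_\n}$, subtracting $g_\n$ and therefore decreasing the length by $\lambda|\n|$ and the homogeneity by $|\n|-\alpha$.

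For Part~(i), the defining formula \eqref{piminuscomponents} exhibits $\Pi_{x\beta'}$-dependencies only through decompositions such as $e_k+\beta_1+\cdots+\beta_{k+1}=\beta$, so additivity together with $\length{e_k},\length{f_\ell}\geq 1$ yields $\length{\beta_j}\leq\length{\beta}-1$. For the dependence on $c_{\beta'}$, the mapping properties of $D^{(\0)}$ together with the population constraint \eqref{pop2} on $c$ (which forces $c_{\beta'}$-components to have no polynomial part) give $\length{\beta_{m+2}}=\length{\beta'}$. Comparing with $\length{\beta}=\sum_j\length{\beta_j}$ and noting $\length{\beta_{m+1}}\geq\lambda$ with equality iff $\beta_{m+1}=g_{\n^i}$ for some $i$, I obtain $\length{\beta'+g_{\n^i}}\leq\length{\beta}$, and equality forces $m=0$ and $\beta_{m+1}=g_{\n^i}$, whence $\beta=\beta'+g_{\n^i}$.

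For Part~(ii), I would work directly from \eqref{coeffGamma}. Statement \eqref{tri2} follows from additivity of the length and non-negativity; the sharper \eqref{tri1} uses that $\sum_\ell\gamma(\ell)>0$ together with the mapping properties of $D^{(\n)}$ forces $\sum_\ell\beta_{j+1}(\ell)=\sum_\ell\gamma(\ell)>0$, hence $\length{\beta_{j+1}}\geq 1$. For \eqref{tri3}, non-vanishing requires $j\geq 1$, and the key step is to verify $\length{\beta_1}>\lambda|\n_1|$ starting from \eqref{pin}: this is a short case analysis separating purely polynomial $\beta_1=g_\m$ (where $\length{\beta_1}=\lambda|\m|=\lambda|\beta_1|>\lambda|\n_1|$) from non-purely-polynomial populated $\beta_1$ (where the populated constraint together with $\lambda<1/2$ yields the desired inequality). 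Statement \eqref{tri4} then follows from the identity $\delta\G=\sum_\n\delta\pi^{(\n)}\G D^{(\n)}$, which is a consequence of \eqref{exp}, the product rule, and the commutation of all $D^{(\n)}$ (they act on disjoint sets of variables), combined with the same bookkeeping applied to $\delta\pi^{(\n)}$ (which inherits the population constraint \eqref{pin}).

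For Part~(iii), I would unpack \eqref{dGamma} to write $(\d\G)_\beta^\gamma$ as a sum over $\beta_1+\beta_2=\beta$ and an intermediate $\gamma_2$ of terms $\d\pi^{(\n)}_{\beta_1}(\G)_{\beta_2}^{\gamma_2}(D^{(\n)})_{\gamma_2}^\gamma$ with $|\n|\leq 2$, and reduce all of (iii) to this identity. The dependency \eqref{tri5} then follows from the length analysis together with the population constraint $\d\pi^{(\n)}\in Q\widetilde{\T}^*$, which forces $\length{\beta_1}\geq 1$. The main obstacle, and subtle point of the proof, is \eqref{tri7}: for $\n=\0$, the intermediate index $\gamma_2=\gamma-e_k+e_{k+1}$ (or the $f_\ell$-analogue) is \emph{not} populated, yet a direct computation using $[\gamma_2]=[\gamma]+1$ yields $|\gamma_2|=|\gamma|+\alpha$, so the $\G$-triangularity \eqref{Gtri} gives $|\beta_2|\geq|\gamma_2|=|\gamma|+\alpha\geq 2\alpha$ (using that $\gamma$ populated and not purely polynomial forces $|\gamma|\geq\alpha$), and combined with $|\beta_1|\geq\alpha$ this yields $|\beta|\geq 2\alpha$. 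For $\n\neq\0$ the non-vanishing of $(D^{(\n)})_{\gamma_2}^\gamma$ forces $\gamma(\n)\geq 1$, giving the stronger $|\gamma|\geq 2\alpha+|\n|$ and hence $|\beta|\geq 3\alpha$. The statement \eqref{tri6} is obtained by the same analysis, with the length bound $\length{\gamma}<\length{\beta}$ following from $\length{\beta_1}>\lambda|\n|$ (from the analogue of \eqref{pin} for $\d\pi^{(\n)}$ in the cases $|\n|=1,2$, and from the population of $\d\pi^{(\0)}$ otherwise), and the homogeneity bound $|\gamma|\leq|\beta|+2-\alpha$ incorporating the possible loss of up to $|\n|-\alpha\leq 2-\alpha$ in homogeneity for $\n\neq\0$.
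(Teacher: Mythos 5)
Your treatment of Part~(i) handles only the \emph{direct} occurrence of $c_{\beta'}$ through the factor $((D^{(\0)})^m c)_{\beta_{m+2}}$ in \eqref{piminuscomponents}, but the statement concerns the total dependence of $\Pi_{x\beta}^-$ on $c_{\beta'}$, which also includes the \emph{indirect} dependence inherited from the factors $\Pi_{x\beta_j}$ (whose defining equations $L\Pi_{x\beta_j}=\nabla\cdot\Pi^-_{x\beta_j}$ themselves involve $c$). The paper closes this by an induction on $\prec$: assuming the result for all $\beta_j\prec\beta$, the indirect contributions satisfy $\beta'+g_{\n^i}\preceq\beta_j\prec\beta$ for all $i$, so only the direct term can saturate the dichotomy. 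Without invoking this induction the proof is incomplete. (Your handling of the direct term itself is correct and is in fact more streamlined than the paper's three-way case analysis on $|\beta|_p$: the observation that $\length{\beta_{m+1}}\geq\lambda$ with equality iff $\beta_{m+1}=g_{\n^i}$, combined with $\length{\beta_{m+2}}=\length{\beta'}$ and the population of the remaining $\beta_j$, gives the dichotomy directly.)

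Part~(iii) has a second gap and a slip. For \eqref{tri5}, your length count (via $\d\pi^{(\n)}\in Q\widetilde{\T}^*$ forcing $\length{\beta_1}\geq1$) yields $\beta_2\prec\beta$, i.e.\ the triangularity in $\G_{\beta'}$; but \eqref{tri5} also asserts $\beta_1\prec\beta$ for the $\d\pi^{(\n)}_{\beta'}$-dependence, which requires $\beta_2\neq0$, and this does not follow from length-counting alone. The paper argues: if $\beta_2=0$, then by \eqref{tri3} the intermediate index is $0$, so $(D^{(\n)})_0^\gamma\neq0$ forces $\gamma=g_\n$ purely polynomial, contradicting $\sum_\ell\gamma(\ell)>0$. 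Separately, in \eqref{tri7} for $\n\neq\0$ the claimed bound $|\gamma|\geq2\alpha+|\n|$ is false: from $\gamma$ populated, not purely polynomial, and $\gamma(\n)\geq1$ one gets only $\sum_\ell\gamma(\ell)\geq1$ and $|\gamma|_p\geq|\n|$, hence $|\gamma|\geq\alpha+|\n|$ (e.g.\ $\gamma=e_1+f_0+g_\n$ attains $|\gamma|=\alpha+|\n|$). Your stated consequence $|\beta|\geq3\alpha$ therefore does not follow; the correct bound $|\gamma|-|\n|\geq\alpha$ still gives $|\beta|\geq|\beta_1|+|\gamma|-|\n|\geq2\alpha$, which is all that \eqref{tri7} claims, so the error is in the intermediate assertion rather than the final one. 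Your identity $\delta\G=\sum_\n\delta\pi^{(\n)}\G D^{(\n)}$ for \eqref{tri4} is a valid (and mildly more explicit) alternative to the paper's appeal to \eqref{tri3}.
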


We provide the proof of \cref{lem:tri1} at the end of this section. 
In the following two subsections we outline the logical order of the induction. 

\subsubsection{Purely polynomial multiindices}\label{ind:pp}
Before we come to the induction proper, we construct and estimate
all purely polynomial components of all objects involved in the proof of \cref{thm:main}.
For such $\beta=g_\n$, the estimate \eqref{estPi} of $\Pi_{x g_\n}$ is satisfied trivially, 
since $\Pi_{x g_\n}$ is according to \eqref{polypart:poly} defined by $(\cdot-x)^\n$.
Because $\Pi^-_{xg_\n}=0$ by \eqref{eq:piminusttilde}, 
the estimate \eqref{estPiminus} of $\Pi^-_{x g_\n}$ is also true. 

Similarly, the estimate \eqref{estGamma} of $(\G_{xy})_{g_\n}^\gamma$ also holds:
By \eqref{GammaPreservesTildeT} we know that $(\G_{xy})_{g_\n}^\gamma$ is only non-vanishing if $\gamma=g_\m$ for some $\m\neq\0$, 
in which case it is, according to \eqref{recenterPoly}, defined by $\binom{\n}{\m}(y-x)^{\n-\m}$, 
with the implicit understanding that $\binom{\n}{\m}=0$ if the componentwise $\m\leq\n$ is violated. 
The estimate \eqref{estpin} of $\pi^{(\m)}_{xy g_\n}$ follows analogously, 
since the exponential formula \eqref{exp} yields 
$(\G_{xy})_{g_\n}^{g_\m}=(\p_\m+\pi^{(\m)}_{xy})_{g_\n}$, 
which because of the previous argument leads us to define 
$\pi^{(\m)}_{xyg_\n} = \binom{\n}{\m}(y-x)^{\n-\m}$ for\footnote{by $\m<\n$ we understand $\m\neq\n$ and componentwise $\m\leq\n$} $\m<\n$. Note that this choice is consistent with the population constraint \eqref{pin}.

The estimates \eqref{estdeltaPi}, \eqref{eq:rhsweakbounddPi}, \eqref{estdeltaGamma}, and \eqref{estdeltapin} on
$\delta\Pi_{x g_\n}$, $\delta\Pi^-_{x g_\n}$, $(\delta\G_{xy})_{g_\n}^\gamma$, and $\delta\pi^{(\m)}_{xy g_\n}$, respectively, 
hold true, since all these objects vanish as they are deterministic by the previous arguments.

From the mapping property \eqref{mapping_dGamma} of $\d\G_{xz}$, 
we know that $(\d\G_{xz})_{g_\n}^\gamma$ vanishes for populated $\gamma$, 
and so does $\d\pi^{(\m)}_{xz g_\n}$ since $\d\pi^{(\m)}_{xz}$ is an element of $\widetilde{\T}^*$ by \eqref{dGamma}. 
Thus, the estimates \eqref{eq:gammabound} and \eqref{estdpin} on $(\d\G_{xz})_{g_\n}^\gamma$ and $\d\pi^{(\m)}_{xz g_\n}$ also hold true trivially. 

Finally, the estimates \eqref{eq:lhsstrongboundincrement}, \eqref{eq:rhsweakboundincrement}, \eqref{estdGammaincrement}, and \eqref{estdpinincrement} on increments of $\delta\Pi_{g_\n}$, $\delta\Pi^-_{g_\n}$, $\d\G_{g_\n}$, and $\d\pi^{(\m)}_{g_\n}$ are trivially satisfied, as we have just argued that all these objects vanish. 

\subsubsection{Induction proper}\label{sec:induction_proper}

We turn to the proper induction, where we treat populated and not purely polynomial multiindices $\beta$. 
From \eqref{pop1} and the definition \eqref{order} of $\length{\cdot}$, we 
see that $\beta=g_\n$ with $|\n|=1$ can serve as the base case. 

In the induction step, we fix a populated and not purely polynomial $\beta$ and assume for all $\beta'\prec\beta$ that the estimates 
\eqref{estPi}, \eqref{estPiminus}, \eqref{estGamma}, and \eqref{estpin} on 
$\Pi_{\beta'}$, $\Pi^-_{\beta'}$, $\G_{\beta'}$, and $\pi^{(\n)}_{\beta'}$, 
 the estimates 
\eqref{estdeltaPi}, \eqref{eq:rhsweakbounddPi}, \eqref{estdeltaGamma}, and \eqref{estdeltapin} on 
$\delta\Pi_{\beta'}$, $\delta\Pi^-_{\beta'}$, $\delta\G_{\beta'}$ and $\delta\pi^{(\n)}_{\beta'}$, 
 the estimates
\eqref{eq:gammabound} and \eqref{estdpin} on 
$\d\G_{\beta'}$ and $\d\pi^{(\n)}_{\beta'}$, 
and the estimates 
\eqref{eq:lhsstrongboundincrement}, \eqref{eq:rhsweakboundincrement}, \eqref{estdGammaincrement}, and \eqref{estdpinincrement} on increments of $\delta\Pi_{\beta'}$, $\delta\Pi^-_{\beta'}$, $\d\G_{\beta'}$, and $\d\pi^{(\m)}_{\beta'}$ 
hold true, with the understanding that all these objects have been constructed. 
Furthermore, we assume that $c_{\beta'}$ has been constructed for all $\beta'+g_{\n^i}\prec\beta$. 
The aim is to construct and estimate the corresponding $\beta$-components, 
except for $c$ where we construct the $c_{\beta-g_{\n^i}}$ component. 
In the induction, we distinguish the case $|\beta|<3$ from $|\beta|>3$, 
and start by explaining the simpler case $|\beta|>3$. 
Note that by \eqref{homogeneity_integer_polynomial} the case $|\beta|=3$ has been dealt with in the previous subsection on purely polynomial multiindices. 
\begin{enumerate}
\item By the triangular property \eqref{tri1}, we construct and estimate 
$(\G)_\beta^{\gamma}$ for $\gamma$ not purely polynomial 
in \cref{alg1} (Algebraic argument~I). 
\item By the triangular property of \cref{lem:tri1}~(i), 
we define $\Pi^-_\beta$ by \eqref{piminuscomponents}, 
where we set $c_{\beta-g_{\n^i}}=0$. 
Furthermore, we estimate $\Pi^-_\beta$ in \cref{rec1} (Reconstruction~I). 
\item Based on a Liouville principle, we construct and estimate $\Pi_\beta$ in \cref{cor:intI} (Integration~I). 
\item We construct and estimate $\pi^{(\n)}_\beta$, 
which by \eqref{Gamma_pn} yields together with Item~(1) the construction of and estimates on $(\G)_\beta^\gamma$ for all populated $\gamma$. 
The only equation dependent ingredient in the construction of $\pi^{(\n)}_\beta$ 
is a Liouville principle, which we provide in \cref{lem:int1}; 
we therefore refer for the construction to \cite[Section~5.3]{LOTT21}. 
We only note for later that the choice
\begin{equation}\label{pi0}
\pi^{(\0)}_{xy\beta} = \Pi_{x\beta}(y)
\end{equation}
has to be made, 
and that the construction respects \eqref{pin} and yields that the 
$\beta$-components of \eqref{recenter} and \eqref{transitive} hold. 
Recall that also \eqref{Gtri} and \eqref{recenterPoly} hold, 
the former by \cref{lem:tri1} 
and the latter by \eqref{GammaPreservesTildeT} and the choice we made for purely polynomial multiindices in the previous subsection. 
The estimate on $\pi^{(\n)}_\beta$ is provided in \cref{3pt1} (Three-point argument~I). 
\end{enumerate}

In the case $|\beta|<3$ we proceed as follows. 
\begin{enumerate}
\item By the triangular properties \eqref{tri1} and \eqref{tri5}, 
we construct and estimate 
\begin{enumerate}
    \item $(\G)_\beta^{\gamma\neq\pp}$ in \cref{alg1} (Algebraic argument~I), 
    \item $(\delta\G)_\beta^{\gamma\neq\pp}$ in \cref{alg2} (Algebraic argument~II), 
    \item $(\d\G-\d\G\G)_\beta^{\gamma\neq\pp}$ in \cref{alg3} (Algebraic argument~III), 
    \item $(\d\G)_\beta^{\gamma\neq\pp}$ in \cref{alg4} (Algebraic argument~IV).
\end{enumerate}
\item By the triangular property of \cref{lem:tri1}~(i), 
we define $\Pi^-_\beta$ by \eqref{piminuscomponents}, 
where we define $c_{\beta-g_{\n^i}}$ according to the BPHZ-choice \eqref{eq:BPHZchoice}. 
If $\beta-g_{\n^i}$ is not a multiindex then there is no $c$-component to choose, 
but \eqref{eq:BPHZchoice} is still satisfied by the symmetry properties of \cref{prop:symmetries} (for more details see \cref{sec:bphz}). 
We show in \cref{prop:expectation} that this choice allows to estimate $\E\Pi^-_\beta$. 
\item Based on \eqref{magic}, we estimate $(\delta\Pi^- -\d\G\Pi^-)_\beta$ in \cref{rec2} (Reconstruction~II). 
\item Equipped with the estimates of Item~(1d) and Item~(3), 
we estimate $\delta\Pi^-_\beta$ in \cref{lem:averaging} (Averaging). 
\item As explained in \cref{sec:application_SG}, we estimate $\Pi^-_\beta$ by an application of the spectral gap inequality, 
based on the estimates of Items~(2) and (4). 
\item Exactly as in Item~(3) of the case $|\beta|>3$, we construct and estimate $\Pi_\beta$ in \cref{cor:intI} (Integration~I). 
\item Exactly as in Item~(4) of the case $|\beta|>3$ we construct $\pi^{(\n)}_\beta$ 
and provide its estimate in \cref{3pt1} (Three-point argument~I). 
As before, this finishes together with Item~(1a) 
the construction and estimate on $(\G)_\beta^\gamma$ for all populated $\gamma$. 
\end{enumerate}

\begin{figure}[h!]
\begin{center}
\begin{tikzpicture}[scale=.75]
\draw[-stealth, thick] (8.5,3.5) arc (45:-37:5cm) 
node [pos=.55, below, rotate=-87, yshift=-0pt] () 
{\scriptsize (3)}
node [pos=.5, right, rotate=-90, yshift=5pt, xshift=-30pt] () 
{\footnotesize Reconstruction II};
\draw[-stealth, thick] (7.8,0) arc (39:-2:4.5cm) 
node [pos=.5, below, sloped, yshift=-0pt] () 
{\scriptsize (3)}; 
\node[rectangle, draw, fill=white, align=center, inner sep = 2pt] () at (5,0)
{$\delta\Pi_x-\delta\Pi_{x}(z)-{\rm d}\Gamma^*_{xz}\Pi_{z}$};
\node[rectangle, draw, fill=white, text width = 1cm, align=center, inner sep = 3pt] () at (8.5,3.5)
{$\Pi_x$};
\node[rectangle, draw, fill=white, align=center, inner sep = 2pt] () at (8.5,-3.5)
{$\delta\Pi_x^- -{\rm d}\Gamma^*_{xz}\Pi_z^-$};

\draw[-stealth, thick] (8.5,-3.5) arc (-45:-127:5cm) 
node [pos=.5, below, yshift=-0pt] () 
{\footnotesize Averaging} 
node [pos=.5, above, yshift=0pt, xshift=-3pt] () 
{\scriptsize (4)};
\draw[-stealth, thick] (7.3,-3.5) arc (-90:-157:5cm) 
node [pos=.5, below, sloped, yshift=-0pt, xshift=2pt] ()
{\footnotesize Integration III} 
node [pos=.5, below, above, sloped, yshift=0pt] () 
{\scriptsize (11)};

\node[rectangle, draw, fill=white, align=center, inner sep = 2pt] () at (8.5,-3.5)
{$\delta\Pi_x^- -{\rm d}\Gamma^*_{xz}\Pi_z^-$};
\node[rectangle, draw, fill=white, text width = 1cm, align=center, inner sep = 2pt] () at (1.5,-3.5)
{$\delta\Pi_x^-$};
\draw[-stealth, thick] (1.5,-3.5) arc (-135:-217:5cm) 
node [pos=.55, above, sloped, yshift=0pt] () 
{\footnotesize BPHZ \& (SG)} 
node [pos=.55, below, sloped, yshift=-0pt] () 
{\scriptsize (2) \& (5)};
\node[rectangle, draw, fill=white, text width = 1cm, align=center, inner sep = 2pt] () at (1.5,-3.5)
{$\delta\Pi_x^-$};
\node[rectangle, draw, fill=white, text width = 1cm, align=center, inner sep = 2pt] () at (1.5,3.5)
{$\Pi_x^-$};
\draw[-stealth, thick] (1.5,3.5) arc (135:53:5cm) 
node [pos=.55, above, sloped, yshift=0pt] () 
{\footnotesize Integration I} 
node [pos=.54, below, sloped, yshift=-0pt] () 
{\scriptsize (6)};
\node[rectangle, draw, fill=white, text width = 1cm, align=center, inner sep = 2pt] () at (1.5,3.5)
{$\Pi_x^-$};
\end{tikzpicture}
\end{center}
\caption{Visualization of the main steps of the inductive structure of the proof for multiindices $|\beta|<3$.}
\end{figure}
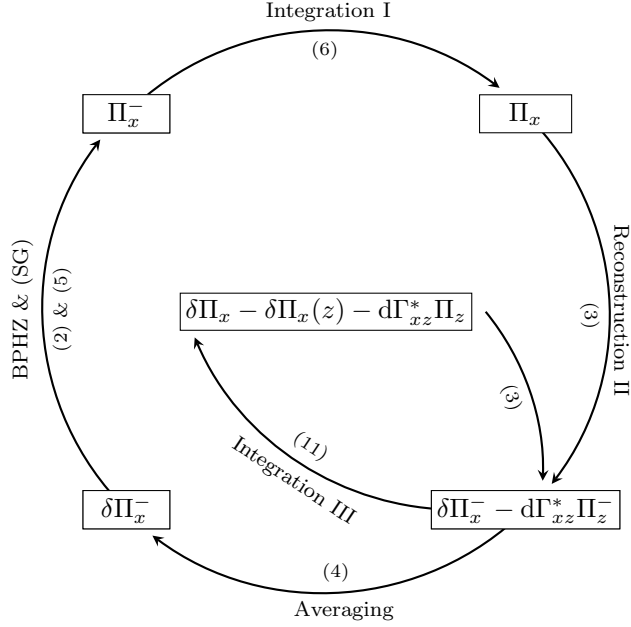

This finishes the construction and estimates on the $\beta$-components 
of all objects stated in \cref{thm:main}.
However, for later induction steps we have to construct and estimate a few more objects which we have made use of. 
\begin{enumerate}
\setcounter{enumi}{7}
\item Analogous to $\Pi_\beta$, we estimate its Malliavin derivative 
$\delta\Pi_\beta$ in \cref{cor:intII} (Integration~II) based on a Liouville principle.
\item Analogous to $\pi^{(\n)}_\beta$, we estimate its Malliavin derivative 
$\delta\pi^{(\n)}_\beta$ in \cref{3pt2} (Three-point argument~II). 
Applying $\delta$ to \eqref{Gamma_pn}, we see that this provides together with Item~(1b) 
the estimate on $(\delta\G)_\beta^\gamma$ for all populated $\gamma$. 
\item We construct $\d\pi^{(\n)}_\beta$ for $|\n|=1,2$ as follows:
By $D^{(\n)}\p_\n=1$ (see \eqref{Dn}) and $\G_{xz}1=1$ (see \eqref{exp}), 
we obtain from the ansatz \eqref{dGamma} that $\d\G_{xz}\p_\n=\d\pi^{(\n)}_{xz}$ for $|\n|=1,2$. 
Furthermore, \eqref{polypart:poly} implies $\tfrac{1}{\n!}\partial^\n ((1-P)\Pi_z) (z) = \p_\n$, 
hence \eqref{qualitativeIncrement} yields for $|\n|=1,2$ 
\begin{equation}
    \d\pi^{(\n)}_{xz} = \tfrac{1}{\n!} \partial^\n(\delta\Pi_x-\d\G_{xz}P\Pi_z)(z).
    	\allowdisplaybreaks
\end{equation}
By the triangular structure \eqref{tri6}, 
this serves as an inductive definition of $\d\pi^{(\n)}$ 
provided we are given $\delta\Pi_\beta$, $(\d\G)_\beta^{\gamma\neq\pp}$,
and $\Pi_{\prec\beta}$.
By $\d\G\p_\n=\d\pi^{(\n)}$, this serves, 
together with Item~(1d), as a construction of 
$(\d\G)_\beta^\gamma$ for all populated $\gamma$. 
\item Once more based on a Liouville principle, we estimate 
$(\delta\Pi-\delta\Pi-\d\G\Pi)_\beta$ in \cref{lem:integrationIII} (Integration~III). 
\item We estimate $(\d\pi^{(\n)}-\d\pi^{(\n)}-\d\G\pi^{(\n)})_\beta$ in \cref{3pt3} (Three-point argument~III), 
which by \eqref{Gamma_pn} and $\d\G_{xz}\p_\n=\d\pi^{(\n)}_{xz}$ provides together with Item~(1c) the estimate on $(\d\G-\d\G\G)_\beta^\gamma$ for all populated $\gamma$. 
\item Finally we estimate $\d\pi^{(\n)}_\beta$ in \cref{3pt4} (Three-point argument~IV), 
which by $\d\G_{xz}\p_\n=\d\pi^{(\n)}_{xz}$ provides, together with 
Item~(1d), the estimate on $(\d\G)_\beta^\gamma$ for all populated $\gamma$. 
\end{enumerate} 

\begin{proof}[Proof of Lemma~\ref{lem:tri1}]
We start with the proof of (i). 
For the first two sums in \eqref{piminuscomponents} it is enough to establish 
\begin{align}
e_{k}+\beta_1+\cdots+\beta_{k+1}=\beta
\quad&\implies\quad
\beta_1,\dots,\beta_{k+1}\prec\beta, \label{ord1} \\ 
f_\ell+\beta_1+\cdots+\beta_{\ell}=\beta
\quad&\implies\quad
\beta_1,\dots,\beta_{\ell}\prec\beta. \label{ord2} 
\end{align}
Since $\length{\cdot}$ is additive and non negative, this is an immediate consequence of $\length{e_k}=\length{f_\ell}=1$.
The last sum in \eqref{piminuscomponents} is a linear combination of terms of the form  
$$
\Pi_{x\beta_1}\cdots\Pi_{x\beta_m}\nabla\Pi_{x\beta_{m+1}} \sum_\gamma ((D^{(\0)})^m)_{\beta_{m+2}}^\gamma c_\gamma
$$
for $m\geq0$ and $\beta_1+\dots+\beta_{m+2}=\beta$, which completes the proof of the first part of (i). We now move on to the proof of the second part of (i). Note that \eqref{D0props} implies that for all $\beta',\gamma'$
\begin{equation}\label{triD0}
    (D^{(\0)})_{\beta'}^{\gamma'}\neq0\quad\implies\quad \length{\beta'}=\length{\gamma'}.
\end{equation}
By iteration, the same property carries over to $((D^{(\0)})^m)_{\beta'}^{\gamma'}$.

We know now that the expression for $\Pi_{x\beta}^-$ given in~\eqref{piminuscomponents} consists only of $\Pi_{x\beta'}$ such that $\beta' \prec \beta$. We assume by induction that the result of (i) holds true for all such $\Pi_{x\beta'}$. Thus, we necessarily have that the first two terms on the right hand side of~\eqref{piminuscomponents} depend only on $c_{\beta'}$ such that $\beta' + g_{n_i} \prec \beta$. Thus, for the proof of this proposition, we only have to consider the last sum on the right hand side of~\eqref{piminuscomponents} which consists of terms of the form
\begin{equation}
\Pi_{x\beta_1}\cdots\Pi_{x\beta_m}\nabla\Pi_{x\beta_{m+1}} \sum_\gamma ((D^{(\0)})^m)_{\beta_{m+2}}^\gamma c_\gamma \, ,
\label{eq:sumterm}
\end{equation}
such that $\beta_1 + \dots + \beta_{m+2}=\beta$ and $m \geq 0$. As in the proof of~\cref{lem:tri1}, we observe that
\begin{equation}
((D^{(\0)})^{m})^{\gamma'}_{\beta'} \neq 0 \implies  |\beta'|_\prec = |\gamma'|_\prec\, .
\end{equation}
It follows that if $\Pi^-_{x\beta}$ contains $c_{\beta'}$, then there must be a term of the form~\eqref{eq:sumterm} such that $|\beta_{m+2}|_{\prec}=|\beta'|_{\prec}$ and $\beta_1+ \dots + \beta_{m+1}+ \beta_{m+2}=\beta$. Consider first the case in which $\beta$ has no polynomial component. Then, by the additivity of the ordering~\eqref{order}, we have
\begin{equation}
|\beta_1|_\prec + \dots + |\beta_{m+1}|_{\prec} + |\beta'|_{\prec}=|\beta|_{\prec}\, . 
\end{equation} 
Since none of $\beta_1,\dots,\beta_{m+1}$ can contain a polynomial component and $m \geq 0$, we must have that $|\beta|_{\prec} \geq |\beta'|_{\prec} +1 $. Furthermore, since $\lambda \in (0,1/2)$, it follows that $\beta' + g_{\mathbf{n}^i} \prec \beta $, for all $1 \leq i \leq d$. 

Consider now the case in which $|\beta|_p \geq 2$. Again, we have
\begin{equation}
|\beta_1|_\prec + \dots + |\beta_{m+1}|_\prec + |\beta'|_{\prec}=|\beta|_{\prec} \, .
\end{equation}
We already know that $\beta'$ has no polynomial component. It follows that
\begin{equation}
|\beta_1|_\prec + \dots + |\beta_{m+1}|_\prec \geq  \lambda |\beta|_p  \geq 2 \lambda\, .
\end{equation} 
It thus follows that 
\begin{equation}
|\beta|_{\prec} 
\geq |\beta'|_{\prec} + 2 \lambda 
\geq |\beta' + g_{\mathbf{n}^i}|_{\prec}  + \lambda \, ,
\end{equation}
for all $1 \leq i \leq d$. 

We are now left to treat the final case, $|\beta|_p=1$.  We first treat the case in which  $\beta_1,\dots,\beta_{m+1}$ are not all purely polynomial. In this case,  we must have
\[
|\beta_1|_\prec + \dots + |\beta_{m+1}|_\prec \geq 1 + \lambda \, .
\]
It follows that $\beta'+ g_{\mathbf{n}^i} \prec \beta$ for all $1 \leq i \leq d$. If $\beta_1, \dots,\beta_{m+1}$ are purely polynomial, since $|\beta|_p=1$, we must have $m=0$ and $\beta_1 = g_{\mathbf{n}^i}$ for some $1 \leq i \leq d$. Thus, $\beta_{m+2}=\beta_2= \beta-g_{\mathbf{n}^i}$. Furthermore, since $m=0$, we must have $\beta_{2}=\beta'$ and so $\beta' + g_{\mathbf{n}^i}=\beta$. This completes the proof of (i).

\medskip

We turn to (ii). 
Recall from \eqref{coeffGamma} that $(\G)_\beta^\gamma$ is a linear combination of terms of the form 
\begin{equation}\label{rg01}
    \pi_{\beta_1}^{(\n_1)}\cdots\pi_{\beta_j}^{(\n_j)} (D^{(\n_1)}\cdots D^{(\n_j)})_{\beta_{j+1}}^\gamma, 
\end{equation}
where $j\geq0$, $\n_1,\dots,\n_j\in\N_0^{1+d}$, $\beta_1+\cdots+\beta_{j+1}=\beta$ and $|\beta_i|>|\n_i|$ for $i=1,\dots,j$. 
Clearly, $\beta_1,\dots,\beta_j\preceq\beta$, which establishes \eqref{tri2}. 
For \eqref{tri1} it is enough to argue that $\beta_{j+1}\neq0$. 
By assumption we have $\sum_\ell\gamma_b(\ell)>0$, and by \eqref{D0props} and \eqref{Dnprops} we learn that if \eqref{rg01} is not vanishing then $\sum_\ell\gamma_b(\ell)=\sum_\ell\beta_{j+1,b}(\ell)$, hence $\beta_{j+1}\neq0$.
We turn to \eqref{tri3} and note that also $(\G-\id)_\beta^\gamma$ is a linear combination of terms of the form \eqref{rg01}, with the difference to above that here $j$ is restricted to $j\geq1$. 
We observe that \eqref{Dnprops} implies for $\n\neq\0$ and for all $\beta',\gamma'$
\begin{equation}\label{triDn}
    (D^{(\n)})_{\beta'}^{\gamma'}\neq0\quad\implies\quad \length{\beta'}=\length{\gamma'}-\lambda|\n|.
\end{equation}
Together with \eqref{triD0} we obtain 
\begin{equation}
    (D^{(\n_1)}\cdots D^{(\n_j)})_{\beta_{j+1}}^\gamma\neq0\quad\implies\quad 
    \length{\beta_{j+1}}=\length{\gamma}-\lambda(|\n_1|+\cdots+|\n_j|).
\end{equation}
Hence if \eqref{rg01} is non-vanishing, then 
\begin{equation}\label{rg02}
\length{\beta}=\length{\beta_1}+\cdots+\length{\beta_j}+\length{\gamma}-\lambda(|\n_1|+\cdots+|\n_j|).
\end{equation}
From \eqref{homogeneity} and $1\geq\alpha\lambda$ we obtain $\length{\beta_i}\geq \lambda|\beta_i|$, 
which together with $|\beta_i|>|\n_i|$ yields $\length{\beta_i}>\lambda|\n_i|$, 
and hence $\length{\beta}>\length{\gamma}$.
For the second item of \eqref{tri3} we first note that by \eqref{D0props} and \eqref{Dnprops} we have 
for all $\n$
\begin{equation}\label{rg03}
    (D^{(\n)})_{\beta'}^{\gamma'}\neq0\quad\implies\quad |\beta'|=|\gamma'|+\alpha-|\n|.
\end{equation}
Hence, similarly as above we obtain that if \eqref{rg01} is non-vanishing, then 
\begin{equation}\label{rg06}
    |\beta|=|\beta_1|+\cdots+|\beta_j|+|\gamma|-(|\n_1|+\cdots+|\n_j|).
\end{equation}
By $|\beta_i|>|\n_i|$ for $i=1,\dots,j$ and $j\geq1$ we obtain $|\beta|>|\gamma|$, 
which finishes the proof of \eqref{tri3}. 
\eqref{tri4} is an immediate consequence of \eqref{tri3}.

\medskip

We come to (iii) and note that by \eqref{dGamma}, $(\d\G)_\beta^\gamma$ is a linear combination of terms of the form 
$$
\d\pi_{\beta_1}^{(\n)} \sum_{\beta'} (\G)_{\beta_2}^{\beta'} (D^{(\n)})_{\beta'}^\gamma, 
$$
where $|\n|\leq2$ and $\beta_1+\beta_2=\beta$. 
Since only populated $\beta_1$ are relevant we have $\beta_1\neq0$, 
in particular $\length{\beta_1}>0$ and thus $\beta_2\prec\beta$. 
If $\beta_2$ were $0$, then $\beta'=0$ by the already established \eqref{tri3}.
However, $(D^{(\n)})_0^\gamma\neq0$ implies $\gamma=g_\n$ by \eqref{D0props} and \eqref{Dnprops}, 
which contradicts the assumption $\sum_\ell\gamma_b(\ell)>0$. 
Hence $\beta_2\neq0$ and therefore $\beta_1\prec\beta$, 
which finishes the proof of \eqref{tri5}.
For \eqref{tri6}, we appeal to \eqref{tri3}, \eqref{triD0} and \eqref{triDn} to obtain 
\begin{equation}
    \length{\beta}=\length{\beta_1}+\length{\beta_2}\geq \length{\beta_1} +\length{\gamma}-\lambda|\n|.
\end{equation}
By \eqref{choice_dpi}, $\beta_1$ is populated and not purely polynomial, which implies $\length{\beta_1}\geq1$. 
Together with $|\n|\leq2$ we obtain $\length{\beta}\geq\length{\gamma}+1-2\lambda>\length{\gamma}$. 
Similarly, by \eqref{tri3} and \eqref{rg03} we obtain 
\begin{equation}\label{rg10}
    |\beta|=|\beta_1|+|\beta_2|-\alpha\geq|\beta_1|+|\gamma|-|\n|.
\end{equation}
Since $|\beta_1|\geq\alpha$ and $|\n|\leq2$, this finishes the proof of \eqref{tri6}. 

\medskip

We finally provide the argument for \eqref{tri7}. 
In view of \eqref{rg10} it remains to argue that $|\gamma|-|\n|\geq\alpha$. 
If $\n=\0$, this is clear.
If $\n\neq\0$, note that $(D^{(\n)})_{\beta'}^\gamma\neq0$ implies $\gamma_p(\n)\geq1$ by \eqref{Dncomp}, 
hence in particular $|\gamma|_p\geq|\n|$.
As $\gamma$ is populated, we have $|\gamma| = \alpha\sum_\l\gamma_b(\l) +|\gamma|_p$, 
which implies the desired $|\gamma|\geq\alpha+|\n|$ since 
$\gamma$ is not purely polynomial and hence $\sum_\l\gamma_b(\l)\geq1$. 
\end{proof}

%%%%%%%%%%%%%%%%%%%%%%%%%%%%%%%%%%%%%%%%%%%%%%%%%%%%%%%%%

\subsection{BPHZ-choice of the renormalisation constant}
\label{sec:bphz}
In this section, we will explain how we will choose the renormalisation constants $c_\beta$. 
Note that in~\cref{sec:counterterm} we have already considerably reduced the structure of the counterterm. 
We thus need to argue that at the level of $\Pi_{x\beta}^{-}$ a number of terms should require no renormalisation due to exactly the same symmetries we have used to derive a reduced form of the counterterm. 
This leads us the to the following proposition.

\begin{proposition}
Assume that~\cref{ass} is satisfied. Then, for all $x,y,h \in \R^{1+d}$, the following properties hold true
\begin{enumerate}
\item $\Pi_{x\beta}[\xi (\cdot +h)](y)= \Pi_{x+h \beta}[\xi ](y+h)$,
\newline
 $\Pi_{x\beta}^-[\xi (\cdot +h)](y)= \Pi_{x+h \beta}^-[\xi ](y+h)$, ~\label{symmetry1}
\item $\Pi_{x\beta}[-\xi(R \cdot)](y) =  (-1)^{|\beta|_p}\Pi_{R x \beta}[\xi](R y) $, 
\newline $\Pi_{x\beta}^-[-\xi(R \cdot)](y) =  (-1)^{1+ |\beta|_p}\Pi_{R x \beta}^-[\xi](R y) $, and \label{symmetry2}
\item $\Pi_{x\beta}[-\xi](y) =  (-1)^{\sum \beta_b(\ell)}\Pi_{x \beta}[\xi](y) $, 
\newline $\Pi_{x\beta}^-[-\xi](y) =  (-1)^{1+\sum \beta_b(\ell)}\Pi_{x \beta}^-[\xi](y) $. \label{symmetry3}
\end{enumerate}
Furthermore, let $\beta'$ be such that $\beta'_p(\mathbf{n})=0$ for all $\n\neq\0$ and denote by $\beta^{i}=\beta' + g_{\mathbf{n}^i}$ for $1 \leq i \leq d$ where $\mathbf{n}^i$ is the unit vector of $\mathbb{N}_0^{1+d}$ in the $i$-th direction. Then for $j=1,\dots,d$
\begin{enumerate}[resume]
\item $\sum_{i =1}^d\bar{O}_{ij}\Pi_{x\beta^i}[\bar{O}^T\xi(O\cdot)](y) =  \Pi_{O x \beta^j}[\xi](O y)$,\newline
$\sum_{i=1}^d\bar{O}_{ij} \Pi_{x\beta^i}^-[\bar{O}^T \xi(O\cdot)](y) = \bar{O}^T\Pi_{Ox \beta^j}^-[\xi](Oy)$. \label{symmetry4}
\end{enumerate}
\label{prop:symmetries}
\end{proposition}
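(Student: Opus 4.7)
The plan is to establish all four symmetry properties simultaneously by induction on the weighted length $\length{\beta}$ defined in~\eqref{order}, following the inductive scheme of~\cref{sec:induction_proper}. At each step, the identity will first be verified at the level of $\Pi^-_{x\beta}$ by substituting the transformed noise into the representation~\eqref{piminuscomponents} and invoking the inductive hypothesis on the components $\Pi_{x\beta'}$ with $\beta' \prec \beta$ (see \cref{lem:tri1}~(i)), together with the corresponding invariance of the renormalisation constants $c_{\beta'}$ with $\beta' + g_{\mathbf{n}^i} \preceq \beta$. The identity for $\Pi_{x\beta}$ is then deduced from that of $\Pi^-_{x\beta}$ by applying $\nabla\cdot$ to both sides, noting that $L = \partial_0 + \Delta^2$ is invariant under each of the four transformations, and appealing to the Liouville-type uniqueness of~\cref{lem:int1} combined with $\Pi_{x\beta}(x) = 0$ a.s.~(see~\eqref{Pixx}). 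The base case of purely polynomial $\beta = g_\mathbf{n}$ reduces to a direct verification on $(y-x)^\mathbf{n}$, using $\mathfrak{s}_0 = 4$ being even for~(2) and linearity of the orthogonal action for~(4).

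For the inductive step on $\Pi^-_{x\beta}$, we examine each of the three types of summands in~\eqref{piminuscomponents} and track the sign contributions under the appropriate transformation. For~(1), translation covariance of $L$, of the spatial derivatives, and of the mollifier $\psi_\tau$ yields the identity directly. For~(2), each factor $\Pi_{x\beta_i}$ contributes $(-1)^{|\beta_i|_p}$ by induction, each spatial derivative in $\nabla$ or $\nabla\Delta$ flips sign under $R$, and the additivity $\sum_i |\beta_i|_p = |\beta|_p$ (which follows from the decompositions $e_k + \sum \beta_i = \beta$, $f_\ell + \sum\beta_i = \beta$, or $\sum\beta_i = \beta$ together with $|e_k|_p = |f_\ell|_p = 0$) combines these into the asserted sign. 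For~(3), each summand is exactly $\sum_\ell \beta(\ell)$-linear in $\xi_\tau$, so the sign follows from factoring $(-1)$ out of each noise-carrying object. For~(4), the orthogonal matrix $\bar O$ acts on the distinguished polynomial factor $g_{\mathbf{n}^i}$, on the spatial derivatives, and on the $\R^d$-valued $\xi$ and $\Pi^-$; the linear combination $\sum_i \bar O_{ij}$ on the left-hand side precisely rotates the polynomial factor from the $\mathbf{n}^i$-direction to the $\mathbf{n}^j$-direction, while the outer $\bar O^T$ on the right accounts for the vector nature of $\Pi^-$.

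The main obstacle is handling the deterministic renormalisation constants $c_{\beta-g_{\mathbf{n}^i}}$ appearing in the third sum of~\eqref{piminuscomponents}, which must themselves be invariant under the transformations in order to close the induction. This is a consequence of the BPHZ choice described in the remainder of~\cref{sec:bphz}: since $c_{\beta'}$ is defined via $\E$-type expressions on objects of the form $\Pi^-$, the invariances of the law $\E$ from~\cref{ass}(i)--(ii) combined with \cref{rem:centered} translate, at each induction step, into the desired invariances of $c$. In particular, taking expectations in~(2) and~(3) forces $c_\beta = 0$ unless $[\beta]$ is even, which recovers part of~\eqref{pop4}. Once the identity is in place for $\Pi^-_{x\beta}$, the lifting step to $\Pi_{x\beta}$ is straightforward: both the transformed $\Pi_{x\beta}$ and the candidate right-hand side of the desired identity satisfy the same PDE $L w = \nabla\cdot(\text{transformed }\Pi^-_{x\beta})$ with matching base-point condition and growth bound, and hence coincide by~\cref{lem:int1}.
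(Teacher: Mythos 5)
Your proposal is correct, and it takes a genuinely different route from the paper's: you carry out the rigorous induction on the hierarchy that the paper says ``can easily be made fully rigorous'' but explicitly omits ``for the sake of brevity.'' The paper's own argument is a formal one, comparing power-series coefficients in the ansatz~\eqref{ansatz} after observing that if $[u,a,b,p,\xi]$ solves~\eqref{spde} then so does each transformed tuple; this gives the $\Pi_{x\beta}$-identity directly, and the $\Pi^-_{x\beta}$-identity only then follows from~\eqref{piminuscomponents}. You reverse the order: $\Pi^-_{x\beta}$ first, from~\eqref{piminuscomponents} and the inductive hypothesis on $\Pi_{x\beta'}$, $\beta'\prec\beta$, and then $\Pi_{x\beta}$ via invariance of $L$ and the Liouville uniqueness of \cref{lem:int1}. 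The paper's route is shorter and more conceptual; yours is step-by-step verifiable and makes explicit where the invariance of the deterministic $c$ (through the law $\E$ alone) enters the closure of the induction.

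Two smaller points. First, on the sign in~(3) for $\Pi^-$: your argument -- factoring a $(-1)$ out of each $\xi$-dependent factor -- produces $\Pi^-_{x\beta}[-\xi]=(-1)^{\sum_\ell\beta(\ell)}\Pi^-_{x\beta}[\xi]$, not $(-1)^{1+\sum_\ell\beta(\ell)}$ as stated. The extra $(-1)$ in~(2) arises from the odd number of spatial derivatives in $\nabla$, $\nabla\Delta$ under $R$, i.e.~the vectorial reflection of the $\R^d$-valued $\Pi^-$, and has no analogue in~(3). Your sign appears to be the correct one: for $\beta=f_0$ one has $\Pi^-_{xf_0}=\xi_\tau$, so $\Pi^-_{xf_0}[-\xi]=-\Pi^-_{xf_0}[\xi]$, whereas $1+\sum_\ell f_0(\ell)=2$ would give a $+$; and it matches the parity the paper actually uses downstream in \cref{lem:reducingterms}, namely ``$1+[\beta]$ even,'' which for populated $\beta$ is precisely ``$\sum_\ell\beta(\ell)$ even.'' Second, ``each summand is exactly $\sum_\ell\beta(\ell)$-linear in $\xi_\tau$'' is slightly imprecise: summands carrying a factor $((D^{(\0)})^m c)_{\beta_{m+2}}$ have $\xi$-degree $\sum_\ell\beta(\ell)-\sum_\ell\beta_{m+2}(\ell)$, which can be strictly smaller; however, $\sum_\ell\beta_{m+2}(\ell)$ is even by \eqref{pop2}, \eqref{pop4} and \eqref{D0props}, so the parity, and hence your conclusion, is unaffected.
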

\begin{proof}
We will provide a formal proof of these identities by using the power series expansion for the solution. 
This proof can easily be made fully rigorous by using an induction argument on the hierarchy of equations given by \eqref{hierarchy}. However, for the sake of brevity and clarity of the exposition, we will avoid doing this here.

For the symmetry in~\cref{symmetry1}, we note, as before, that if the tuple $[u,a,b,p,\xi]$ is a solution of~\eqref{spde}, then so is $[u(\cdot +h),a,b,p(\cdot+h),\xi(\cdot+h)]$. It follows from the power series expansion~\eqref{ansatz} and comparing coefficients of $u(\cdot +h)$ and $u$ that we must have $\Pi_{x\beta}[\xi (\cdot +h)](y)= \Pi_{x+h \beta}[\xi ](y+h)$ for all $x,y,h \in \R^{1+d}$. The equality at the level of $\Pi_{x\beta}^-$ follows by simply using~\eqref{piminuscomponents}.

Similarly, for~\cref{symmetry2}, we note that if the tuple $[u,a,b,p,\xi]$ is a solution of~\eqref{spde} then so is $[u(R\cdot), a ,b , p(R \cdot), -\xi(R\cdot)]$. Using the expansion~\eqref{ansatz}, we have
\begin{align}
&u(R y)-u(R x) \\ &=\sum_{\beta}\Pi_{x\beta}[- \xi (R \cdot)](y)\mathsf{z}^\beta[a(\cdot + u(R x)) ,b(\cdot + u(Rx)),p(R \cdot +  Rx)-p( Rx)] \\
&= \sum_{\beta}(-1)^{\sum|\n|_{\geq 1}\beta_p(\n)}\Pi_{x\beta}[- \xi (R \cdot)](y)\mathsf{z}^\beta[a(\cdot + u(R x)) ,b(\cdot + u(Rx)),p(\cdot +  Rx)-p( Rx)]  \\
&= \sum_{\beta}(-1)^{|\beta|_p}\Pi_{x\beta}[- \xi (R \cdot)](y)\mathsf{z}^\beta[a(\cdot + u(R x)) ,b(\cdot + u(Rx)),p(\cdot +  Rx)-p( Rx)] 
\end{align}
where $|\mathbf{n}|_{\geq 1}:=\sum_{i =1}^d \mathbf{n}_i$.
The symmetry then follows by comparing coefficients. We now apply~\eqref{piminuscomponents} to compute
\begin{align}
&\Pi_{x\beta}^-[- \xi (R \cdot)](y)\\
&= \sum_{\substack{k\\e_k+\beta_1+\cdots+\beta_{k+1}=\beta}}
    (-1)^{1+ \sum_{i=1}^{k+1} |\beta_i|_{p}} 
    \Pi_{Rx\beta_1}[\xi](Ry)\cdots\Pi_{Rx\beta_k}[\xi](Ry) 
    (\nabla\Delta\Pi_{R x\beta_{k+1}}[\xi]) (Ry) \\
&\, + \sum_{\substack{\ell \\ f_\ell+\beta_1+\cdots+\beta_\ell=\beta}} 
    (-1)^{1+ \sum_{i=1}^{\ell} |\beta_i|_{p}} \Pi_{R x\beta_1}[\xi]\cdots\Pi_{R x\beta_\ell}[\xi] \xi_\tau \\
&\, - \sum_{\substack{m \\ \beta_1+\cdots+\beta_{m+2}=\beta}} 
    \tfrac{1}{m!}
    (-1)^{1+ \sum_{i=1}^{m+1} |\beta_i|_{p}}\Pi_{Rx\beta_1}[\xi](Ry)\cdots\Pi_{Rx\beta_m}[\xi](Ry)(\nabla\Pi_{R x\beta_{m+1}}[\xi])(Ry) \\[-2ex] 
    &\hspace{70ex}\times
    ((D^{(\0)})^m c)_{\beta_{m+2}} \\
    &= (-1)^{1+ |\beta|_p} \Pi_{Rx \beta}^-[\xi](Ry)\, ,
\end{align}
where we have used~\cref{ass} and the fact that $c$ depends only on the law of $\xi$. This completes the proof of~\cref{symmetry2}. 

We now move on to~\cref{symmetry3} by noting that if $[u,a,b,p,\xi]$ is a solution of~\eqref{spde}, then so is $[u,a,-b,p,-\xi]$. Once again, appealing to the expansion~\eqref{ansatz}, we have
\begin{align}
u(y)-u(x) 
&= \sum_{\beta}\Pi_{x\beta}[- \xi ](y)\mathsf{z}^\beta[a(\cdot+u(x)) ,-b(\cdot+u(x)),p(\cdot +  x)-p( x)] \\
&= \sum_{\beta}(-1)^{\sum\beta_b(\ell)}\Pi_{x\beta}[- \xi ](y)\mathsf{z}^\beta[a(\cdot+u(x)) ,b(\cdot+u(x)),p(\cdot +  x)-p( x)] \, .
\end{align}
Comparing coefficients with the original power series,~\cref{symmetry3} follows at the the level of $\Pi_{x\beta}$. The proof for $\Pi_{x\beta}^-$ follows in an identical manner to that of~\cref{symmetry2} by using~\eqref{piminuscomponents}.

Finally, for~\cref{symmetry4} we note that if $[u,a,b,p,\xi]$ is a solution of~\eqref{spde}, then so  is \newline $[u(O\cdot), a ,b , p(O\cdot), \bar{O}^T\xi(O\cdot)]$. Note that of $\beta^i$ is of the form described in the statement of the proposition, then
\begin{align}
&\mathsf{z}^{\beta^i}[a(\cdot + u(O\cdot)),b(\cdot + u(O\cdot)),p(O\cdot +O x) -p(Ox) ] \\
&=\sum_{j}\bar{O}_{ij} \mathsf{z}^{\beta^j}[a(\cdot + u(O\cdot)),b(\cdot + u(O\cdot)),p(\cdot +O x) -p(Ox) ] \, .
\end{align}
Comparing coefficients as before, we obtain
\begin{equation}
\sum_{i=1}^d  \bar{O}_{ij} \Pi_{x\beta^i}[\bar{O}^T \xi(O\cdot)](y) = \Pi_{O x\beta^j}[\xi](Oy) \, .\label{orth1}
\end{equation}
Before we move on to $\Pi_{x\beta}^-$, we note that an essentially similar argument to the one above can be used to show that, if $\beta$ has no polynomial component, then
\begin{equation}
\Pi_{x\beta}[\bar{O}^T \xi(O\cdot)](y) = \Pi_{Ox \beta}[\xi](Oy)\, .
\label{orth2}
\end{equation}
For $\Pi_{x \beta^i}^-$, we consider the three terms on the right hand side of~\eqref{piminuscomponents} separately. if we attempt to compute $\sum_{i}\bar{O}_{ij}\Pi_{x\beta^i}^{-}[\bar{O}^T \xi(O\cdot)](y)$, the first term on the right hand side of~\eqref{piminuscomponents} is of the following form
\begin{align}
\sum_{i=1}^d\bar{O}_{ij}\Pi_{x\beta_1}[\bar{O}^T \xi(O\cdot)](y)\cdots\Pi_{x\beta_k}[\bar{O}^T \xi(O\cdot)](y)(\nabla\Delta\Pi_{ x\beta_{k+1}}[\bar{O}^T \xi(O\cdot)]) (y)
\end{align}
where one of $\beta_1, \dots ,\beta_{k+1}$  is of the form $\bar \beta+ g_{\mathbf{n}^i}$, for some $\bar \beta$ with no polynomial component, with all the other multiindices having no polynomial component. In the case that one of $\beta_1,\dots,\beta_k$ (say $\beta_1$) is of the form $\bar \beta+ g_{\mathbf{n}^i}$, we can apply~\eqref{orth1} and~\eqref{orth2} to obtain
\begin{align}
&\sum_{i=1}^d\bar{O}_{ij}\Pi_{x\beta_1}[\bar{O}^T \xi(O\cdot)](y)\cdots\Pi_{x\beta_k}[\bar{O}^T \xi(O\cdot)](y)(\nabla\Delta\Pi_{ x\beta_{k+1}}[\bar{O}^T \xi(O\cdot)]) (y) \\
&= \Pi_{Ox\bar{\beta} + g_{\mathbf{n}^j}}[\xi](Oy)\cdots\Pi_{Ox\beta_k}[\xi](Oy)(\bar{O}^T\nabla\Delta\Pi_{ Ox\beta_{k+1}}[\xi]) (Oy) \, .
\end{align}
Similarly, if $\beta_{k+1}=\bar \beta+ g_{\mathbf{n}^i}$, we can apply similar arguments to obtain
\begin{align}
&\sum_{i=1}^d\bar{O}_{ij}\Pi_{x\beta_1}[\bar{O}^T \xi(O\cdot)](y)\cdots\Pi_{x\beta_k}[\bar{O}^T \xi(O\cdot)](y)(\nabla\Delta\Pi_{ x\beta_{k+1}}[\bar{O}^T \xi(O\cdot)]) (y) \\
&= \Pi_{Ox \beta_1}[\xi](Oy)\cdots\Pi_{Ox\beta_k}[\xi](Oy)(\bar{O}^T \nabla\Delta\Pi_{ Ox\bar{\beta} + g_{\mathbf{n}^j}}[\xi]) (Oy) \, .
\end{align}
Applying similar arguments to the other two terms on the right hand side of~\eqref{piminuscomponents} and using the fact that $c$ only depends on the law of $\xi$ (and~\cref{ass}), we obtain
\begin{equation}
\sum_{i=1}^d\bar{O}_{ij} \Pi_{x\beta^i}^-[\bar{O}^T \xi(O\cdot)](y) = \bar{O}^T\Pi_{Ox \beta^j}^-[\xi](Oy)\, ,
\end{equation}
thus completing the proof.
\end{proof}
We are now finally in a position to choose the constants $c_\beta$. We want to choose the constants such that for all $|\beta|<3$, the following large scale average vanishes,
\begin{equation}
\lim_{t \to \infty}\E [\Pi_{x\beta t}^-(x)]=0 \, .
\label{eq:BPHZchoice}
\end{equation}
Using the result of~\cref{prop:symmetries} tells us that  $\E [\Pi_{x \beta t}^-(x)] $ is independent of $x$ and is non-zero only if
\begin{equation}
 1+ |\beta|_p \quad\textnormal{and}\quad 1 + \sum_{\ell}\beta_b(\ell) \quad\textnormal{are even} \, .
 \end{equation} 
 This tells us that we only need to concern ourselves with $\beta=\beta' + g_{\mathbf{n}^i}$ for $1\leq i\leq d$ where $\beta'$ has no polynomial component and $\sum_{\ell}\beta'_b(\ell)$ is odd, thus \eqref{pop4} is satisfied. 

Using the triangularity established in~\cref{lem:tri1}, we can choose the constants $c_\beta$ in a manner that is self-consistent with respect to the ordering $|\cdot|_{\prec}$ defined in~\eqref{order}. 
As described in \cref{sec:inductive_proof}, 
we will perform induction on the ordering $\prec$ assuming that, for a given $\Pi_{x\beta}^-$ we have constructed and estimated $\Pi_{x\beta'}$ for $\beta' \prec \beta$ and $c_{\beta'}$ for $\beta' + g_{\mathbf{n}^i}\prec\beta$ (for all $i=1,\dots,d$). Then, for such a $\Pi_{x\beta}^-$ which depends on some $c_{\beta'}$, by ~\cref{lem:tri1}~(i), either $\beta' + g_{\mathbf{n}^i} \prec \beta$ for all $1 \leq i \leq d$ in which case $c_{\beta'}$ has already been chosen or $\beta' + g_{\mathbf{n}^i} = \beta$. In the latter case, we note that we can rewrite $\Pi_{x\beta' + g_{\mathbf{n}^i}}^-$ from~\eqref{piminuscomponents} componentwise as follows
\begin{equation}
\Pi_{x \beta' + g_{\mathbf{n}^i}}^- = \tilde{\Pi}_{x \beta' + g_{\mathbf{n}^i}}^- - c_{\beta'} \mathbf{n}^i \, ,
\label{eq:Pitilde} 
\end{equation}
where $\tilde{\Pi}_{x\beta' + g_{\mathbf{n}^i}}^-$ just represents the remaining terms on the right hand side of~\eqref{piminuscomponents}. It follows from~\cref{lem:tri1} (i) that $\tilde{\Pi}_{x\beta' + g_{\mathbf{n}^i}}^-$ only depends on $c_{\bar{\beta}}$ for $\bar{\beta} + g_{\mathbf{n}^j} \preceq \beta' + g_{\mathbf{n}^i} $, for all $1 \leq j \leq d$, which have all been chosen already. We then make the following choice
\begin{equation}
 c_{\beta'} =  \lim_{t \to \infty}\E [\tilde{\Pi}^-_{x\beta' + g_{\mathbf{n}^i} t}(x)]_i \, .
 \label{eq:BPHZ2}
\end{equation} 
Note that due to~\cref{symmetry4} from~\cref{prop:symmetries}, we have that
\begin{equation}
\E [\tilde{\Pi}^-_{x\beta' + g_{\mathbf{n}^i} t}(x)]_j =0 \, ,
\label{consequence1}
\end{equation}
for $i \neq j$ and 
\begin{equation}
\E [\tilde{\Pi}^-_{x\beta' + g_{\mathbf{n}^j} t}(x)]_j = \E [\tilde{\Pi}^-_{x\beta' + g_{\mathbf{n}^i} t}(x)]_i \, ,
\label{consequence2}
\end{equation}
for all $1 \leq i,j \leq d$.  For fixed $1 \leq i< j \leq d$, ~\eqref{consequence1} follows by choosing 
\begin{align}
(\bar{O})_{m n}=
\begin{cases}
\delta_{mn} & \textnormal{for } m \in \{1,\dots,d\}\setminus\{i,j\}, \\
\delta_{jn} & \textnormal{for } m =i, \\
-\delta_{in} & \textnormal{for } m =j, 
\end{cases}
\end{align}
while \eqref{consequence2} follows by choosing
\begin{align}
(\bar{O})_{m n}=
\begin{cases}
\delta_{mn} & \textnormal{for } m \in \{1,\dots,d\}\setminus\{i,j\}, \\
\delta_{jn} & \textnormal{for } m =i, \\
\delta_{in} & \textnormal{for } m =j. 
\end{cases}
\end{align}
Thus, the choice~\eqref{eq:BPHZ2} is consistent and ensures that the BPHZ renormalisation condition~\eqref{eq:BPHZchoice} is satisfied for all multiindices.

The choice of renormalisation we have made in~\eqref{eq:BPHZchoice} by controlling this large scale average of $\Pi_{x\beta}^-$ in fact lets us control $\E \Pi_{x\beta t}^- (y)$ as we shall establish in the following proposition.
\begin{proposition}\label{prop:expectation}
Assume $|\beta|<3$ and that$~\eqref{estPiminus}_{\prec \beta}$ and$~\eqref{estGamma}_\beta^\gamma$ hold true for all $\gamma$ not purely polynomial. Then,
\begin{equation}
\int_T^\infty \dx{t} \, \left| \frac{\d}{\d t}\E \Pi_{x\beta t}^-(y)\right| 
\lesssim (\sqrt[8]{T})^{\alpha-3} 
(\sqrt[8]{T} + \abss{x-y} )^{|\beta|-\alpha} \, .
\label{eq:timederivative}
\end{equation}
Furthermore, by~\eqref{eq:BPHZchoice}, 
\begin{equation}
|\E \Pi_{x\beta t}^-(y)| \lesssim (\sqrt[8]{t})^{\alpha-3}
(\sqrt[8]{t} + \abss{x-y} )^{|\beta|-\alpha} \, .
\label{eq:expbound}
\end{equation}
\end{proposition}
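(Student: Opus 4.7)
The bound \eqref{eq:expbound} will follow from \eqref{eq:timederivative} by integration in time: writing
\[
\E\Pi^-_{x\beta t}(y) = \E\Pi^-_{x\beta T}(y) - \int_t^T \frac{d}{ds}\E\Pi^-_{x\beta s}(y)\,ds
\]
and sending $T\to\infty$, it suffices to show $\lim_{T\to\infty}\E\Pi^-_{x\beta T}(y)=0$ for every $y$, since then \eqref{eq:timederivative} bounds the remaining integral by the right-hand side of \eqref{eq:expbound}. By the translation invariance of \cref{prop:symmetries}~(1) together with the shift-invariance of $\E$ from \cref{ass}~(i), the function $y\mapsto\E\Pi^-_{x\beta t}(y)$ depends only on $y-x$. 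The BPHZ choice \eqref{eq:BPHZchoice} furnishes the vanishing limit at $y=x$, and the Cauchy criterion supplied by \eqref{eq:timederivative} then extends the vanishing to every $y$.

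\textbf{Derivation of \eqref{eq:timederivative}.} The plan is to establish the pointwise estimate
\[
\Big|\frac{d}{dt}\E\Pi^-_{x\beta t}(y)\Big| \lesssim (\sqrt[8]{t})^{\alpha-11}\,(\sqrt[8]{t}+|x-y|_\s)^{|\beta|-\alpha},
\]
from which \eqref{eq:timederivative} follows by the substitution $u=\sqrt[8]t$ (a short computation using $|\beta|<3$ and $\alpha<3$). Starting from $\partial_t\psi_t=-LL^*\psi_t$ and the semigroup property \eqref{eq:semi}, we write
\[
\frac{d}{dt}\E\Pi^-_{x\beta t}(y) = -\int dw\,(LL^*\psi_{t/2})(y-w)\,\E\Pi^-_{x\beta t/2}(w).
\]
The key feature of $LL^*=-\partial_0^2+\Delta^4$ is that it is of parabolic order $8$, so the kernel $LL^*\psi_{t/2}$ annihilates every polynomial of parabolic degree strictly less than $8$; in particular $\int LL^*\psi_{t/2}=0$. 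This permits us to subtract from $\E\Pi^-_{x\beta t/2}(w)$ a suitable $y$-centered polynomial. The choice is guided by the recentering \eqref{recenter}: applying $L$ to $\Pi_x=\G_{xy}\Pi_y+\Pi_x(y)$ yields $\nabla\cdot(\Pi^-_x-\G_{xy}\Pi^-_y)=0$, so modulo a divergence-free correction $\E\Pi^-_{x\beta t/2}(w)$ may be replaced by $\sum_{\gamma\prec\beta}\E[(\G_{xy})_\beta^\gamma\,\Pi^-_{y\gamma t/2}(w)]$. A second subtraction around $w=y$, again permitted by zero-mean, produces increments to which the inductive estimate $\eqref{estPiminus}_{\prec\beta}$ applies, while the random factor $(\G_{xy})_\beta^\gamma$ is controlled by the inductive $\eqref{estGamma}_\beta^\gamma$ via a H\"older inequality in $\E$. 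Finally, the moment bound \eqref{eq:kerbound}, which for $LL^*\psi_{t/2}$ contributes a factor $(\sqrt[8]t)^{-8}$ through the two extra derivatives, yields the claimed pointwise estimate.

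\textbf{Main obstacle.} The central technical difficulties are twofold. First, the randomness of $\G_{xy}$ prevents an exchange with $\E$, so one must carefully decouple the factors in $\E[(\G_{xy})_\beta^\gamma\,\Pi^-_{y\gamma t/2}(w)]$ via H\"older, retaining the correct $L^p$-structure for the inductive estimates to apply. Second, the recentering at the $\Pi^-$ level produces only divergence-free agreement rather than an exact identity, so the residual divergence-free field $\Pi^-_x-\G_{xy}\Pi^-_y$ must be disposed of separately; this is handled by exploiting the factorisation $LL^*\psi_{t/2}=L(L^*\psi_{t/2})$ to shift a derivative and integrate by parts, combined with the BPHZ choice \eqref{eq:BPHZchoice} which is precisely calibrated to absorb the residual large-scale contribution. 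The whole argument proceeds by induction on the ordering $\prec$, in line with the inductive scheme laid out in \cref{sec:inductive_proof}.
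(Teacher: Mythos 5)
Your overall strategy matches the paper's, but both halves of the argument contain gaps that would need to be fixed before the proof goes through.

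\textbf{Gap in the derivation of \eqref{eq:expbound}.} You claim that the BPHZ choice gives vanishing of the large-$T$ limit at $y=x$, and that ``the Cauchy criterion supplied by \eqref{eq:timederivative} then extends the vanishing to every $y$.'' This does not follow. The Cauchy criterion shows that $\lim_{T\to\infty}\E\Pi^-_{x\beta T}(y)$ \emph{exists} for every $y$; it says nothing about the \emph{value} of that limit, and translation invariance of $\E\Pi^-_{x\beta t}(y)$ in $y-x$ certainly does not force the limit at $y\neq x$ to equal the limit at $y=x$. What is actually needed is a recentering in $y$ at fixed $t$: for $|\beta|<3$ the remainder in \eqref{recenterPi-} is a polynomial of degree $\leq|\beta|-3<0$, hence zero, so $\Pi^-_{x\beta}=(\G_{xy}\Pi^-_y)_\beta$ exactly. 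One then writes $\E\Pi^-_{x\beta t}(y)=\E\Pi^-_{y\beta t}(y)+\E((\G_{xy}-\id)\Pi^-_{yt})_\beta(y)$, controls the first term by the diagonal BPHZ bound $|\E\Pi^-_{x\beta t}(x)|\lesssim(\sqrt[8]{t})^{|\beta|-3}$ obtained from \eqref{eq:BPHZchoice} and \eqref{eq:timederivative}, and bounds the second term by H\"older together with the triangularity \eqref{tri3} and the inductive hypotheses. Your proposal omits this step entirely.

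\textbf{Gap in the derivation of \eqref{eq:timederivative}.} You recenter with a \emph{fixed} base point $y$ and assert that, modulo a ``divergence-free correction,'' $\E\Pi^-_{x\beta t/2}(w)$ may be replaced by $\sum_{\gamma\prec\beta}\E[(\G_{xy})_\beta^\gamma\Pi^-_{y\gamma t/2}(w)]$. This drops the $\gamma=\beta$ term $\E\Pi^-_{y\beta t/2}(w)$ (recall $(\G_{xy})_\beta^\beta=1$), which is \emph{not} a $y$-centered polynomial and not part of any divergence-free correction --- for $|\beta|<3$ the recentering identity \eqref{recenterPi-} holds \emph{exactly}, so the full sum $\sum_{\gamma\preceq\beta}$ appears. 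The announced ``second subtraction around $w=y$'' cannot remove this diagonal term either: that would produce $\E[\Pi^-_{y\beta t/2}(w)-\Pi^-_{y\beta t/2}(y)]$, which is essentially the quantity being estimated. The correct device is to recenter at the \emph{moving} integration variable $z$ (i.e.\ use $\G_{xz}$, not $\G_{xy}$): then the diagonal contribution $\E\Pi^-_{z\beta s}(z)$ is a genuine constant in $z$ by translation invariance, and is annihilated by $\int LL^*\psi_{t-s}=0$, leaving only the strictly triangular part $(\G_{xz}-\id)_\beta^{\beta'}$ with $\beta'\prec\beta$, to which Cauchy--Schwarz and the inductive estimates $\eqref{estGamma}_\beta$ and $\eqref{estPiminus}_{\prec\beta}$ apply. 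With that modification the rest of your computation (the factor $(\sqrt[8]{t})^{-8}$ from $LL^*\psi_{t/2}$ via \eqref{eq:kerbound}, and the integration in $t$ using $|\beta|<3$) is sound.
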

\begin{proof}
We note that
\begin{align}
\frac{\d}{\d t}\E \Pi_{x\beta t}^-(y) 
&= \frac{\d}{\d t}  \E \int_{\R^{1+d}} \dx{z} \, \psi_{t-s}(y-z)  \Pi_{x\beta s}^-(z) \\
&=  \int_{\R^{1+d}} \dx{z} \, (LL^* \psi_{t-s})(y-z)  \E (\Gamma^*_{xz}\Pi_{z s}^-)_\beta(z)  \, ,
\end{align}
where we have used the definition of $\psi_t$ along with the fact that the remainder that shows up in~\eqref{recenterPi-} is a random polynomial of degree lesser than or equal to $|\beta|-3$. We now simply apply~\cref{prop:symmetries} along with the translation invariance in law of the ensemble $\xi$ from~\cref{ass} to rewrite the above expression as
\begin{align}
\frac{\d}{\d t}\E \Pi_{x\beta t}^-(y) &=   \int_{\R^{1+d}} \dx{z} \, (LL^* \psi_{t-s})(y-z)  \E ((\Gamma^*_{xz}- \mathrm{id})\Pi_{z s}^-)_\beta(z)  \, .
\label{eq:inter}
\end{align}
From the triangularity of $\Gamma^*$ established in~\cref{lem:tri1} (see~\eqref{tri3}), we know that $((\Gamma^*_{xz}- \mathrm{id})\Pi_{z s}^-)_\beta$ depends on $\Pi_{z\beta'}^-$ only for $\beta' \prec \beta$.   Additionally, from~\eqref{eq:piminusttilde}, we know that $\Pi_{z}^- \in \widetilde{\T}^*$ from which it follows that $((\Gamma^*_{xz}- \mathrm{id})\Pi_{z s}^-)_\beta$ contains only terms of the form $(\Gamma^*_{xz}- \mathrm{id})_\beta^{\beta'}$ for $\beta'$ not purely polynomial.

We now choose $s=\frac{t}{2}$ in~\eqref{eq:inter}. Note that, applying the Cauchy--Schwarz inequality, we have the following estimate
\begin{align}
\big|\E \big[(\Gamma^*_{xz}- \mathrm{id})_{\beta}^{\beta'}\Pi_{z \beta's}^-(z)\big] \big|
&\lesssim \abss{x-z}^{|\beta|-|\beta'|}(\sqrt[8]{t})^{|\beta'|-3} 
\lesssim  (\sqrt[8]{t})^{\alpha-3} (\sqrt[8]{t}+ \abss{x-z})^{|\beta|-\alpha}\, ,
\end{align}
where we have used the fact that $|\beta'|\geq\alpha$ and $|\beta|-|\beta'|\geq0$ by the triangularity \eqref{tri3} of $\G$. 
Integrating in $z$ and applying the moment bound~\eqref{eq:kerbound}, we have
\begin{equation}
\left|\frac{\d}{\d t}\E \Pi_{x\beta t}^-(y) \right| \lesssim (\sqrt[8]{t})^{\alpha-11}(\sqrt[8]{t}+ \abss{x-y})^{|\beta|-\alpha}\, .
\end{equation}
Integrating in $t$ from $T$ to $\infty$, we obtain by $|\beta|<3$ the bound
\begin{align}
\int_T^\infty \dx{t} \, \left| \frac{\d}{\d t}\E \Pi_{x\beta t}^-(y)\right| 
\lesssim (\sqrt[8]{T})^{\alpha-3}
(\sqrt[8]{T} + \abss{x-y})^{|\beta|-\alpha}\, , 
\end{align}
which completes the proof of~\eqref{eq:timederivative}. 
We now have using~\eqref{eq:BPHZchoice}
\begin{equation}
|\E \Pi_{x \beta T}^-(x)| 
\leq  \int_T^\infty \dx{t} \, \left| \frac{\d}{\d t}\E \Pi_{x\beta t}^-(x)\right| 
\lesssim (\sqrt[8]{T})^{|\beta|-3} \, .
\end{equation}
Using the above bound and $\Gamma^*$, we have
\begin{align}
|\E \Pi_{x \beta t}^-(y)| &\leq |\E \Pi_{y \beta t}^-(y)| + |\E ((\Gamma^*_{xy}- \mathrm{id})\Pi_{y t}^-)_\beta(y)|  
\lesssim (\sqrt[8]{t})^{|\beta|-3} + |\E ((\Gamma^*_{xy}- \mathrm{id})\Pi_{y t}^-)_\beta(y)| .\label{eq:expbound1}
\end{align}
For the last right hand side term, we use the triangularity of $\Gamma^*_{xy}-\mathrm{id}$ as before to estimate the resulting terms with $\beta'\prec \beta$ as follows
\begin{align}
\big|\E \big[(\Gamma^*_{xy}- \mathrm{id})_{\beta}^{\beta'}\Pi_{y \beta't}^-(y)\big] \big|
\lesssim \abss{x-y}^{|\beta|-|\beta'|}(\sqrt[8]{t})^{|\beta'|-3} 
\lesssim (\sqrt[8]{t})^{\alpha-3}
(\sqrt[8]{t} + \abss{x-y})^{|\beta|-\alpha}\, ,
\end{align}
where we have used the fact that $|\beta'| < |\beta|$ (from~\eqref{tri3}) and $|\beta'|\geq \alpha$. 
Putting it together with the previous estimate, 
we obtain \eqref{eq:expbound}.
\end{proof}

\begin{remark}[Alternative choice of $\Pi^-$]
The careful reader may have noticed that the renormalisation constant 
$c_{\beta}$ that we choose for the multiindices $\beta + g_{\n^i}$, 
$1 \leq i \leq d$ vanishes after the application of the divergence operator. 
Since we are ultimately interested in estimates on $\Pi_{x\beta}$, 
which follow by integration from a corresponding estimate on $\nabla\cdot\Pi^-_{x\beta}$, 
this may lead one to the conclusion that the constant is not necessary. 
However, this is not the case: 
counterterms chosen within the induction at some point 
will play an important role for some ``bigger'' multiindices that come up later in the induction. 
As an example, consider the multiindex $2f_1+g_{(0,2)}$ 
where
\begin{align}
\nabla\cdot\Pi^-_{x 2f_1+g_{(0,2)}}
&= \nabla\cdot \big(\Pi_{x f_1+g_{(0,2)}} \xi_\tau
- \nabla (\cdot-x)_1^2 c_{2f_1}
- \nabla\Pi_{x f_1+g_{(0,2)}} c_{f_1} \big) \\
&= \nabla\cdot \big(\Pi_{x f_1+g_{(0,2)}} \xi_\tau \big)
- c_{2f_1} \, ,
\label{eq:counterexample}
\end{align}
where we have used that $c_{f_1}=0$ by~\cref{prop:symmetries}. 
From this expression, the diverging lower bound on $c_{2f_1}$ (in $d=1$) from~\cref{prop:scaling_c}, and the estimate~\eqref{estPiminus} (which in turn implies an estimate on its divergence), it is clear that $c_{2f_1}$ cannot be chosen to be $0$.

Alternatively, we could have made a different, but equally valid, choice for $\Pi_{x}^-$, say $\check{\Pi}_{x}^-$, by including the divergence operator $\nabla \cdot$ in its definition. This would amount to solving the hierarchy of linear PDEs given by
\begin{equation}
L \check{\Pi}_{x\beta}= \check{\Pi}_{x\beta}^{-} \, .
\end{equation}
We note that in this setting we would have to perform the BPHZ renormalisation in a different way. Repeating the arguments of Subsection~\ref{sec:bphz} leading to \eqref{eq:BPHZchoice}, it is easy to check that for all $\beta$ such that $\check{c}_{\beta}$ is populated, we would make the choice
\begin{equation}
\check{c}_{\beta} = \frac12 \lim_{t \to \infty}\E [\tilde{\check\Pi}^-_{x\beta + g_{2\mathbf{n}^i} t}(x)] 
\end{equation}
for some (and indeed all) $1 \leq i \leq d$. Here, $\tilde{\check\Pi}_{x}^-$ is defined in the natural manner as before.\footnote{
Note that $\beta + 2g_{\mathbf{n}^i} $ is not populated} 
Note that, a priori, this gives us a different choice of the constants $\{\check{c}_{\beta}\}_{\beta}$ corresponding to a different functional form of the counterterm $\check{h}(u(\cdot))$. However, if our construction of the models is consistent, $h,\check{h}$ should coincide with each other. We already know from~\cref{coro} that the two models $(\Pi,\Gamma)$ and $(\check{\Pi},\check{\Gamma})$ (defined in the sense of~\cite[Definition 1.1]{Tem23}) agree with each other. By induction, we can then show that the families of constants $\{\check{c}_{\beta}\}_{\beta}$ and $\{c_{\beta}\}_{\beta}$  are the same. 
Indeed, let us assume that, for any given $\beta$, we know that $c_{\beta'}=\check{c}_{\beta'}$ for all $\beta' \prec \beta$. Then, if we look at the model equation for the multiindex $\beta + g_{2\mathbf{n}^i}$,  for any $1 \leq i \leq d$, for both $\Pi_{x}$ and $\check{\Pi}_x$ and subtract them, we have
\begin{equation}
 0 =  \nabla \cdot \tilde{{\Pi}}_{x\beta+ g_{2\mathbf{n}^i}}- \tilde{\check{\Pi}}_{x\beta + g_{2\mathbf{n}^i}} + 2c_{\beta}\mathbf{n}^i - 2\check{c}_{\beta} \mathbf{n}^i \, .
\end{equation} 
We already know from~\cref{lem:tri1} (i) that $\nabla \cdot \tilde{\Pi}_{x\beta+ g_{2\mathbf{n}^i}}$ depends on $c_{\beta'}$ for $\beta' + g_{\mathbf{n}^j} \prec\beta + g_{2\mathbf{n}^i} $ ($\beta' + g_{\mathbf{n}^i} =\beta + g_{2\mathbf{n}^i} $ is clearly not possible) for all $1 \leq j \leq d$ from which it follows that $\beta' \prec \beta$. The same holds true for $\tilde{\check{\Pi}}_{x\beta + g_{2\mathbf{n}^i}}$ since it has the same dependence on $\{c_\beta\}_\beta$ as 
$\nabla \cdot \tilde{{\Pi}}_{x\beta+ g_{2\mathbf{n}^i}}$. It follows then that $c_\beta=\check{c}_\beta$. The base case can be checked in a similarly straightforward manner.
\label{rem:divcount}
\end{remark}

	%%%%%%%%%%%%%%%%%%%%%%%%%%%%%%%%%%%%%%%%%%%%%%%%%%%%%%
\subsection{Annealed Schauder theory}\label{sec:schauder}

\subsubsection{Integration of the model}
	In this subsection, we discuss the basic integration argument needed for our estimates, i.e.~we discuss how to solve~\eqref{eq:lspde-intro}.

		\begin{lemma}[Annealed Schauder estimate]
		\label{lem:int1}
			Let $d\geq 1$, $\gamma>0$, $\eta \in [\gamma,\infty)\setminus \Z$, $p < \infty$, and $x \in \R^{1+d}$ be given.	Assume that $f \in (\mathcal{S}'(\R^{1+d}))^{ d}$ 	is a random vector-valued tempered distribution which satisfies 
				\begin{equation}
					\E^{\frac1p} |f_t (y)|^p  \leq  (\sqrt[8]{t})^{\gamma -3} (\sqrt[8]{t} + \abss{x-y})^{\eta -\gamma} \, ,
					\label{eq:rhsweakbound}
				\end{equation} 
			for all $t >0$ and $y \in \R^{1+d}$. Then, there exists a unique random function $u$ satisfying
				\begin{equation}
					\sup_{y \in \R^{1+d}}\frac{1}{\abss{x-y}^\eta}\E^{\frac1p}|u(y)|^p < \infty
					\label{eq:lhsstrongbound}
				\end{equation}
			and, in the sense of distributions,
				\begin{equation}
					Lu = \nabla \cdot f \, .
					\label{eq:linPDE}
				\end{equation}
			Furthermore, the constant in the bound~\eqref{eq:lhsstrongbound} depends only on $\gamma$, $\eta$, and $d$.
		\end{lemma}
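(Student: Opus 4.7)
I would separate the argument into uniqueness and existence, with the bulk of the work lying in the construction for existence.

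For uniqueness, suppose $u_1, u_2$ are two random functions satisfying the conclusion. Their difference $w := u_1 - u_2$ satisfies $Lw = 0$ in the distributional sense almost surely and $\E^{1/p}|w(y)|^p \lesssim |x-y|_\s^\eta$. Since $L$ is hypoelliptic, $w$ is almost surely a smooth caloric function with polynomial growth, hence a polynomial in $y$. For any nonzero polynomial $P$ there are integers $N_0 \leq N_1$ (the $\s$-scaled orders of vanishing at $x$ and of growth at infinity) such that $|P(y)| \gtrsim |y-x|_\s^{N_0}$ for small $|y-x|_\s$ and $|P(y)| \gtrsim |y-x|_\s^{N_1}$ along some direction for large $|y-x|_\s$. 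The bound $|w(y)| \lesssim |y-x|_\s^\eta$ forces $N_0 \geq \eta$ and $N_1 \leq \eta$, hence $N_0 = N_1 = \eta$. Since $N_0, N_1 \in \mathbb{Z}$ but $\eta \notin \mathbb{Z}$, this is impossible, so $w \equiv 0$.

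For existence, I would exploit the semigroup identity $\int_0^\infty LL^* \psi_t\, \d t = \delta_0$, which follows from $\partial_t \psi_t = -LL^* \psi_t$ together with $\psi_0 = \delta_0$ and $\psi_t \to 0$ as $t \to \infty$. Formally this yields $u = \int_0^\infty L^* \psi_t * \nabla \cdot f \, \d t$, and rewriting the convolution as a derivative on $f_t := \psi_t * f$ gives the candidate representation
\begin{equation*}
u(y) \;\overset{?}{=}\; \int_0^\infty L^* \nabla \cdot f_t(y)\, \d t.
\end{equation*}
The crucial a priori moment estimate is obtained by writing $f_t = f_{t/2} * \psi_{t/2}$ and moving derivatives onto $\psi_{t/2}$; combined with the hypothesis \eqref{eq:rhsweakbound} and the moment bound \eqref{eq:kerbound}, one gets
\begin{equation*}
\E^{1/p}|\partial^\n L^* f_t(y)|^p \;\lesssim\; (\sqrt[8]{t})^{\gamma - 7 - |\n|_\s}\, (\sqrt[8]{t}+|x-y|_\s)^{\eta-\gamma}.
\end{equation*}
Near $t=0$ the integrand is integrable thanks to $\gamma > 0$, but at $t = \infty$ the $(\sqrt[8]{t})^{\eta - 8}$ tail is in general non-integrable for $\eta \geq 0$. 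To cure this, I subtract from the integrand its Taylor polynomial in the $y$-variable at $y=x$ up to $\s$-parabolic order $N := \lfloor \eta \rfloor$; the Taylor remainder then gains a factor $|y-x|_\s^{N+1}$ at the expense of a factor $(\sqrt[8]{t})^{-(N+1)}$ in the moment bound, producing $(\sqrt[8]{t})^{\eta - N - 2}$ which is integrable at infinity since $N+1 > \eta$ (here the assumption $\eta \notin \mathbb{Z}$ enters). Splitting the $t$-integral at $t \sim |x-y|_\s^8$ and combining the raw estimate for $t \leq |x-y|_\s^8$ with the Taylor-subtracted estimate for $t \geq |x-y|_\s^8$, the standard integration yields the desired $\E^{1/p}|u(y)|^p \lesssim |x-y|_\s^\eta$.

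The main subtlety, and the step where one must be careful, is to verify that the Taylor-subtracted construction actually produces a solution to $Lu = \nabla \cdot f$ in the distributional sense. Applying $L$ inside the $t$-integral gives $\int_0^\infty L L^* \nabla \cdot f_t\, \d t = \int_0^\infty (-\partial_t)\nabla \cdot f_t\, \d t = \nabla \cdot f - \lim_{t\to\infty} \nabla \cdot f_t$ by the semigroup property, while the subtracted Taylor polynomial is of $\s$-parabolic degree at most $N$ in $y$ with coefficients that are convergent $t$-integrals; applying $L$ to it produces a polynomial in $y$ of $\s$-parabolic degree at most $N - 4$. I would then argue, using the uniqueness already established, that this additive polynomial is forced to vanish: the difference between the constructed $u$ and any genuine distributional solution of $Lu = \nabla \cdot f$ would be an $L$-harmonic polynomial that must satisfy the growth $|y-x|_\s^\eta$, hence is zero, meaning the polynomial error in $L$ must be absorbed into the construction. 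An alternative, and perhaps cleaner, route is to define $u$ by first constructing a truncated version $u^{(T)}(y) := \int_0^T L^* \nabla \cdot f_t(y)\, \d t$ satisfying $Lu^{(T)} = \nabla \cdot f - \nabla \cdot f_T$, then subtracting the Taylor polynomial of $u^{(T)}$ at $x$ of order $N$ and passing to the limit $T \to \infty$; the limit exists as a random function by the moment estimates, and the limiting equation $Lu = \nabla \cdot f$ follows from $\nabla \cdot f_T \to 0$ in distribution as $T \to \infty$ combined with the fact that subtracting Taylor polynomials at $x$ is a projection whose image is complementary to the kernel-with-growth of $L$.
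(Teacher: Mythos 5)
Your overall strategy — the Taylor--subtracted kernel representation, the near/far-field splitting of the $t$-integral, and a Liouville-type uniqueness argument — is exactly the one the paper uses, and the moment estimate you derive for $\partial^\n L^* f_t$ is correct (modulo the exponent: the scaled order of $L^*\nabla\cdot$ is $5$, so the exponent should read $\gamma-8-|\n|_\s$, not $\gamma-7-|\n|_\s$; this is harmless). The uniqueness argument is essentially equivalent to the paper's, which more directly shows $\partial^\n v = 0$ for all $|\n|>\eta$ via the semigroup identity $\partial_t\partial^\n v_t = -\partial^\n LL^*v_t = 0$ combined with the decay of $\partial^\n v_t$ as $t\to\infty$, rather than invoking hypoellipticity and a polynomial-growth dichotomy; both routes land in the same place.

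The genuine gap is in your verification that $Lu = \nabla\cdot f$. Both premises of your first route fail in general: the coefficients $\int_0^\infty \partial^\n(L^*\nabla\cdot f_t)(x)\,\d t$ of the Taylor polynomial are \emph{not} convergent integrals (for $|\n|\leq\eta$ the integrand scales like $(\sqrt[8]{t})^{\eta-8-|\n|}$, which is not integrable as $t\to\infty$ since $\eta-|\n|\geq 0$), and $\nabla\cdot f_T$ does \emph{not} tend to zero as $T\to\infty$ once $\eta\geq 4$. So you cannot split the integrand into ``convergent identity piece $+$ convergent Taylor piece'' and apply $L$ to each separately. Your alternative route is closer, but its justification still invokes $\nabla\cdot f_T\to 0$ and a vague complementarity claim. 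The fix, which the paper uses, hinges on the algebraic identity $L\,\mathrm{T}_x^\eta g = \mathrm{T}_x^{\eta-4}\,Lg$ for smooth $g$ (applying $L$ to a Taylor polynomial of order $\leq\eta$ at $x$ yields the Taylor polynomial of $Lg$ of order $\leq\eta-4$). With this and the semigroup relation $LL^*f_t = -\partial_t f_t$, one gets
\begin{equation}
L\int_s^T \d t\,(\mathrm{id}-\mathrm{T}_x^\eta)(L^*\nabla\cdot f_t) = (\mathrm{id}-\mathrm{T}_x^{\eta-4})\nabla\cdot f_s - (\mathrm{id}-\mathrm{T}_x^{\eta-4})\nabla\cdot f_T \, ,
\end{equation}
and now \emph{both} boundary terms go to zero: as $T\to\infty$, the Taylor-subtracted quantity $(\mathrm{id}-\mathrm{T}_x^{\eta-4})\nabla\cdot f_T$ vanishes (the subtraction gains exactly the decay needed), while as $s\to 0$, the first term splits as $\nabla\cdot f_s - \mathrm{T}_x^{\eta-4}\nabla\cdot f_s$, with $\nabla\cdot f_s \to \nabla\cdot f$ by the semigroup property and $\mathrm{T}_x^{\eta-4}\nabla\cdot f_s\to 0$ because its coefficients $\partial^\n\nabla\cdot f_s(x)$ scale like $(\sqrt[8]{s})^{\eta-4-|\n|}$ with $|\n|<\eta-4$ (using $\eta\notin\Z$). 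The moral: the Taylor projection must stay wedded to the integrand; the verification works because $\mathrm{T}_x^\eta$ and $L$ intertwine, not because either piece converges on its own.
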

		\begin{proof}
			We first notice that we can formally represent the fundamental solution associated to $L$ as
			 	\begin{equation}
			 		\int_0^\infty \dx{t} (L^* \psi_t) \, ,
			 	\end{equation}
			where $L^*$ is the adjoint of $L$. We thus propose the following solution formula for $u$ 
				\begin{equation}
					u= \int_0^\infty \dx{t} (\mathrm{id} -\mathrm{T}_x^\eta) (L^* \nabla \cdot f_t) \, ,
					\label{eq:solutionformula}
				\end{equation}
			where the operator $\mathrm{T}_x^\eta$ projects an arbitrary smooth function onto its Taylor polynomial centered at $x$ of order $\leq \eta$. We will argue that $u$, defined in this manner, makes sense, satisfies~\eqref{eq:lhsstrongbound}, and is a distributional solution of~\eqref{eq:linPDE}. We will see that subtracting the Taylor polynomial is necessary in order for the expression~\eqref{eq:solutionformula} to make sense.

			Given the bound~\eqref{eq:rhsweakbound} on the right hand side $f$, we can obtain the following estimate
				\begin{equation}
					\E^{\frac1p}|\partial^{\n}f_t(y)|^{p} \lesssim  (\sqrt[8]{t})^{\gamma -3 - |\n|} (\sqrt[8]{t} + \abss{x-y})^{\eta -\gamma} \, .
					\label{eq:bounda}
				\end{equation}
			To see this, we use the semigroup property~\eqref{eq:semi}, along with the bound \eqref{eq:rhsweakbound}
				\begin{align}
					\E^{\frac1p}|\partial^{\n}f_t(y)|^{p} &\lesssim \int \dx{z}  |\partial^{\n}\psi_{\frac t2}(y-z)|\E^{\frac1p}|f_{\frac t2}(z)|^p \\
					&\lesssim (\sqrt[8]{t})^{\gamma -3} \int \dx{z}  |\partial^{\n}\psi_{\frac t2}(y-z)| (\sqrt[8]{t} + 
     \abss{x-z})^{\eta -\gamma} \, .
					\label{eq:boundb}
				\end{align}
			Applying the moment bound \eqref{eq:kerbound} gives us~\eqref{eq:bounda}. Given~\eqref{eq:bounda}, we can now estimate~\eqref{eq:solutionformula} by splitting it into a \emph{far-field} and~\emph{near-field} component. Before we do this, we note that the Taylor remainder $(\mathrm{id} -\mathrm{T}_x^\eta) (L^* \nabla \cdot f_t)(y)$ can be expressed as a linear combination of terms of the form $(y-x)^{\n}\partial^{\n} L^* \nabla \cdot f_t(z)$ for $|\n|>\eta$ where $z$ is some point between $y$ and $x$. Using an essentially identical argument to~\eqref{eq:bounda}, we can estimate such a term by
				\[
					\abss{x-y}^{|\n|}(\sqrt[8]{t})^{\gamma -8 - |\n|} (\sqrt[8]{t} + \abss{x-y})^{\eta -\gamma} \, .
				\]
			Thus, for $\sqrt[8]{t} \geq \abss{x-y}$, i.e.~the far-field component, we have
				\begin{align}
					\E^{\frac1p}\big| \int_{\abss{x-y}^8}^\infty \dx{t} (\mathrm{id} -\mathrm{T}_x^\eta) (L^* \nabla \cdot f_t)(y) \big|^p  
                    \lesssim  \int_{\abss{x-y}^8}^\infty \dx{t} \abss{x-y}^{|\n|}(\sqrt[8]{t})^{\eta -8 - |\n|} 
					\lesssim \abss{x-y}^\eta \, ,
				\end{align}
            where we have used $\eta\geq\gamma$ and $|\n|>\eta$.
			For $\sqrt[8]{t} \leq \abss{x-y}$, i.e.~the near-field component, we argue as follows
				\begin{align}
					\E^{\frac1p}\big|(\mathrm{id} -\mathrm{T}_x^\eta) (L^* \nabla \cdot f_t)(y)\big|^p 
					&\leq\E^{\frac1p}\big| (L^* \nabla \cdot f_t)(y)\big|^p + \sum_{|\n| \leq \eta} \abss{x-y}^{|\n|}\E^{\frac1p}\big| \partial^{\n}(L^* \nabla \cdot f_t)(x)\big|^p  \\
					&\lesssim (\sqrt[8]{t})^{\gamma - 8 } \abss{x-y}^{\eta -\gamma} + \sum_{|\n| \leq \eta} \abss{x-y}^{|\n|} (\sqrt[8]{t})^{\eta- 8 -|\n|} \, ,
				\end{align} 
			where in the last step we have used~\eqref{eq:bounda}. Since $\eta$ is not an integer, the sum in the above expression can be limited to $|\n| <\eta$ and so all powers of $t$ in the above expression are greater than $-1$ giving us the desired integrability near $t=0$. We thus have
				\begin{equation}
					\E^{\frac1p}\big|\int_0^{\abss{x-y}^8} \dx{t} (\mathrm{id} -\mathrm{T}_x^\eta) (L^* \nabla \cdot f_t)(y)\big|^p \lesssim \abss{x-y}^\eta \, ,
				\end{equation}
			completing the proof of~\eqref{eq:lhsstrongbound}. The fact that $u$ is a solution follows from applying the operator $L$ to~\eqref{eq:solutionformula} with cut-off at $s$ and $T$ and then passing to the limit. The limit converges in the topology defined by the norm in~\eqref{eq:lhsstrongbound}. 
			Note that
				\begin{align}
					L \int_{s}^T \dx{t} (\mathrm{id}-\mathrm{T}_x^\eta) (L^* \nabla \cdot f_t) = (1 - \mathrm{T}_x^{\eta-4} ) \nabla \cdot f_s - (1 - \mathrm{T}_x^{\eta-4} ) \nabla\cdot f_T \, .
				\end{align}
			Now~\eqref{eq:bounda} implies that, as $s \to 0$, $\mathrm{T}_x^{\eta-4} \nabla \cdot f_s$ converges to $0$ almost surely. Using~\eqref{eq:bounda} again, we find that $(1-\mathrm{T}_x^{\eta-4}) \nabla \cdot f_T$ converges to $0$ almost surely as $T \to \infty$. Thus, $u$ is necessarily a solution. 

			Finally, we present a Liouville-type argument for uniqueness. Let $v$ be the difference of two distributional solutions of \eqref{eq:linPDE} satisfying \eqref{eq:lhsstrongbound}. Then,
				\begin{equation}
					Lv = 0 \, .
					\label{eq:diff}
				\end{equation}
			We now use the kernel bound~\eqref{eq:kerbound} along with~\eqref{eq:lhsstrongbound} to see that
				\begin{equation}
					\lim_{t \to \infty}\E^{\frac1p}|\partial^{\n} v_t|^p  =0 \, ,
					\label{eq:infradecay}
				\end{equation}
			as long as $|\n| > \eta$. Using~\eqref{eq:diff}, we also have
				\begin{equation}
					\partial_t \partial^{\n} v_t =  -\partial^{\n} LL^*v_t =0 \, . 
				\end{equation}
			It follows by integrating in time and using~\eqref{eq:infradecay}
				\begin{equation}
					\partial^{\n} v = 0 \, ,
				\end{equation}
			for all $|\n|> \eta$. It follows that $v$ is a polynomial of degree $|\m|\leq \eta$ and, in fact, $|\m|<\eta$ since $\eta \notin \Z$. But the estimate \eqref{eq:lhsstrongbound} tells us that $v$ must in fact be identically zero, thus completing the proof.
		\end{proof}

		\begin{remark}[Failure of integration for integer $\eta$]
  \label{rem:failureSchauder}
			The careful reader may have noticed that we excluded $\eta \in \Z$ from the statement of~\cref{lem:int1}. This is due to the fact that Schauder theory, and by extension an annealed estimate of the form~\eqref{eq:lhsstrongbound}, fails to hold true for integer exponents. To understand this one can look at the Poisson equation
				\begin{equation}
					\Delta u =f
				\end{equation}
			for $f \in C^0(\R^d), d\geq 2$ and ask if $u \in C^2(\R^d)$. This is in general not true. As a counterexample for  $d=2$, consider $u(x) =\varphi(x)  (x_1^2 -x_2^2) \log (-\log(|x|^2)) $  where $\varphi$ is a smooth bump function which is $1$ for $|x|\leq 1$ and $0$ for $|x|\geq 2$. One can check (see~\cite[Section 2.2]{FRRO22}) that $u$ has a bounded and continuous Laplacian but an unbounded Hessian. The same counterexample can be used to show that the Calder\'on--Zygmund estimate fails for $p =\infty$.
		\end{remark}

We will now apply~\cref{lem:int1} to two specific cases, estimating $\Pi_{x\beta}$ given an estimate on $\Pi_{x\beta}^{-}$ and estimating $\delta \Pi_{x\beta}$ given an estimate on $\delta \Pi_{x\beta}^-$.

\begin{corollary}[Integration I: {$\Pi^{-}$ to $\Pi$}]\label{cor:intI}
Assume that $\eqref{estPiminus}_\beta$ holds.
Then, $\eqref{estPi}_\beta$ holds, i.e.
\begin{equation}
\E^\frac{1}{q'}|\Pi_{x\beta}(y)|^{q'}\lesssim |x-y|_\s^{|\beta|}.
\end{equation}
\end{corollary}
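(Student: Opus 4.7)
The plan is a direct application of the annealed Schauder estimate~\cref{lem:int1} with $f:=\Pi^-_{x\beta}$, $\gamma:=\alpha$, $\eta:=|\beta|$, and $p:=q'$.

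For purely polynomial $\beta=g_\n$ the component $\Pi_{xg_\n}=(\cdot-x)^\n$ has already been constructed in~\cref{ind:pp} and the estimate is immediate from $|g_\n|=|\n|$. I therefore restrict attention to populated, non purely polynomial $\beta$. In that case the population condition~\eqref{pop1} gives $[\beta]+1=\sum_\ell\beta(\ell)\geq1$, so that $|\beta|=\alpha(1+[\beta])+|\beta|_p\geq\alpha$; moreover $|\beta|\notin\Z$ by~\eqref{homogeneity_integer_polynomial} (which uses $\alpha\notin\mathbb{Q}$ from \cref{ass}~(iii)). The remaining two hypotheses of~\cref{lem:int1} are $\gamma>0$, which is the standing assumption $\alpha>0$, and the weak bound~\eqref{eq:rhsweakbound} on $f_t=\Pi^-_{x\beta t}$, which is exactly the inductive input $\eqref{estPiminus}_\beta$.

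Applying~\cref{lem:int1} then furnishes a random function $u$ solving $Lu=\nabla\cdot\Pi^-_{x\beta}$ distributionally and satisfying
\begin{equation*}
\sup_{y\in\R^{1+d}}|x-y|_\s^{-|\beta|}\,\E^{\frac{1}{q'}}|u(y)|^{q'}<\infty,
\end{equation*}
which is precisely the desired bound $\eqref{estPi}_\beta$. Since the hierarchy~\eqref{hierarchy} determines $\Pi_{x\beta}$ only up to an analytic function, I am free to \emph{define} $\Pi_{x\beta}:=u$; the Liouville-type uniqueness built into~\cref{lem:int1}, together with the non-integrality of $|\beta|$, shows that this choice is in fact the only one compatible with the growth class~\eqref{eq:lhsstrongbound}. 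No substantive obstacle arises at this step: the analytic work has already been carried out in~\cref{lem:int1}, and the probabilistic work is contained in the inductive hypothesis $\eqref{estPiminus}_\beta$.
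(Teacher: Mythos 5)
Your proof is correct and coincides with the paper's: the paper also dispatches this corollary in one line by applying Lemma~\ref{lem:int1} with $f=\Pi^-_{x\beta}$ (and $\gamma=\alpha$, $\eta=|\beta|$). Your additional bookkeeping — handling the purely polynomial case separately, checking $|\beta|\geq\alpha$ and $|\beta|\notin\Z$ via~\eqref{pop1} and~\eqref{homogeneity_integer_polynomial}, and noting that the Liouville principle pins down $\Pi_{x\beta}$ within its analytic equivalence class — is exactly what the paper leaves implicit.
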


\begin{corollary}[Integration II: {$\delta \Pi^{-}$ to $\delta \Pi$}]\label{cor:intII}
Assume that $\eqref{eq:rhsweakbounddPi}_\beta$ holds. Then, $\eqref{estdeltaPi}_\beta$ holds, i.e. 
\begin{equation}\label{estdeltaPi}
\E^\frac{1}{q'}|\delta\Pi_{x\beta}(y)|^{q'}\lesssim |x-y|_\s^{|\beta|}\bar w.
\end{equation}
\end{corollary}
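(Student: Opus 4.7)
The plan is to mirror the proof of Integration~I for $\Pi$ (\cref{cor:intI}) by applying the annealed Schauder estimate~\cref{lem:int1} directly to the linear PDE satisfied by $\delta\Pi_{x\beta}$. For $\beta$ purely polynomial, the statement was already settled in~\cref{ind:pp}, where $\delta\Pi_{x\beta}=0$ and the bound is trivial; we may therefore restrict to populated, not purely polynomial $\beta$, for which~\eqref{eq:PDEPibeta} states $L\Pi_{x\beta}=\nabla\cdot\Pi^-_{x\beta}$ almost surely. Applying the Malliavin derivative to both sides (using the closability part of~\cref{ass}~(iii)) yields the distributional identity
\begin{equation}
L\,\delta\Pi_{x\beta} = \nabla\cdot\delta\Pi^-_{x\beta},
\end{equation}
which is of the form~\eqref{eq:linPDE} with $u=\delta\Pi_{x\beta}$ and $f=\delta\Pi^-_{x\beta}$.

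I would then invoke~\cref{lem:int1} with parameters $\gamma=\alpha$, $\eta=|\beta|$, $p=q'$, and right-hand side $\delta\Pi^-_{x\beta}/\bar w$ (rescaling by the deterministic quantity $\bar w$ so that the hypothesis~\eqref{eq:rhsweakbound} matches~\eqref{eq:rhsweakbounddPi}). The admissibility of $(\gamma,\eta)$ must be verified: $\gamma=\alpha>0$ holds by~\cref{ass}~(iii); the lower bound $|\beta|\geq\alpha$ is immediate from the definition~\eqref{homogeneity} and the population condition~\eqref{pop1}, which gives $1+[\beta]=\sum_\ell \beta(\ell)>0$ for non-purely-polynomial populated $\beta$; finally $|\beta|\notin\mathbb{Z}$ follows from the contrapositive of~\eqref{homogeneity_integer_polynomial}. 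The lemma then produces a unique random function $u$ obeying $Lu=\nabla\cdot(\delta\Pi^-_{x\beta}/\bar w)$ together with $\E^{1/q'}|u(y)|^{q'}\lesssim |x-y|_\s^{|\beta|}$, so that multiplication by $\bar w$ delivers~\eqref{estdeltaPi} at once, provided we can identify $\bar w \cdot u$ with $\delta\Pi_{x\beta}$.

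The hard part is precisely this identification, since uniqueness in~\cref{lem:int1} is conditional on the a priori polynomial growth bound~\eqref{eq:lhsstrongbound} in the probabilistic $L^{q'}$-norm. This is exactly what is furnished, qualitatively and for $\tau>0$, by the weighted $C^{4,\alpha}$-estimate~\eqref{smoothness_deltaPi} of~\cref{rem:smoothness_Mall}: together with pointwise evaluation via Kolmogorov, it gives a finite (though $\tau$-dependent) upper bound of the required polynomial order. The difference $\delta\Pi_{x\beta}-\bar w\cdot u$ then solves $Lv=0$ with polynomial growth, which by the Liouville-type argument inside the proof of~\cref{lem:int1} (using $|\beta|\notin\mathbb{Z}$) forces $v\equiv 0$. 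The crucial point, inherited from~\cref{lem:int1}, is that while this qualitative step uses $\tau$-dependent smoothness, the quantitative constant in~\eqref{estdeltaPi} depends only on the hypothesis~\eqref{eq:rhsweakbounddPi}, which is $\tau$-uniform by the inductive assumption.
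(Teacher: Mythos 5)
Your strategy matches the paper's: apply the annealed Schauder estimate \cref{lem:int1} with $f=\bar w^{-1}\delta\Pi^-_{x\beta}$, $\gamma=\alpha$, $\eta=|\beta|$, $p=q'$, and then identify $\bar w\cdot u$ with $\delta\Pi_{x\beta}$. The admissibility checks you perform ($\alpha>0$, $|\beta|\geq\alpha$, $|\beta|\notin\Z$) are all correct. The paper simply declares the corollary "follows immediately," so the identification step you flag is indeed the only non-trivial point, and it is sensible to worry about it.

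There is, however, a gap in the way you close that step. You claim that \eqref{smoothness_deltaPi}, via Kolmogorov, supplies a finite ($\tau$-dependent) version of the a priori bound~\eqref{eq:lhsstrongbound}, so that the Liouville principle in \cref{lem:int1} forces the difference $v:=\delta\Pi_{x\beta}-\bar w\,u$ to vanish. But setting $\n=\0$, $z=x$ in \eqref{smoothness_deltaPi} and using $\delta\Pi_{x\beta}(x)=0$ gives only
\begin{equation}
\E^{\frac1p}|\delta\Pi_{x\beta}(y)|^p \lesssim (\sqrt[8]{\tau}+|x-y|_\s)^{|\beta|-\alpha}\,|x-y|_\s^{\alpha}\,\bar w\,,
\end{equation}
which vanishes only to order $\alpha$ at the base point. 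Dividing by $|x-y|_\s^{|\beta|}$ yields $\big((\sqrt[8]\tau+|x-y|_\s)/|x-y|_\s\big)^{|\beta|-\alpha}\bar w$, which diverges as $y\to x$ whenever $|\beta|>\alpha$. Hence \eqref{eq:lhsstrongbound} does \emph{not} follow from \eqref{smoothness_deltaPi}, even with a $\tau$-dependent constant. The Liouville argument then only proves that $v$ is a space-time polynomial of degree $<|\beta|$; for $|\beta|>1$ the order-$\alpha$ vanishing at $x$ leaves room for non-trivial linear (and higher) terms, so one cannot conclude $v\equiv 0$ from this alone.

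To close the identification correctly, use the structure of $\Pi_{x\beta}$: by \eqref{eq:solutionformula}, $\Pi_{x\beta}$ is built with the Taylor subtraction $\mathrm{T}_x^{|\beta|}$, so $\partial^\n\Pi_{x\beta}(x)=0$ deterministically for $|\n|\leq|\beta|$. Since differentiation and the deterministic linear integral operations in \eqref{eq:solutionformula} commute with the Malliavin derivative (by the closability hypothesis of Assumption~\ref{ass}~(iii)), one obtains $\partial^\n\delta\Pi_{x\beta}(x)=0$ for all $|\n|\leq|\beta|$, which is precisely the super-$|\beta|$-vanishing your Liouville argument needs. Equivalently (and more directly), commuting $\delta$ through \eqref{eq:solutionformula} identifies $\delta\Pi_{x\beta}$ outright with the output of \cref{lem:int1} applied to $\delta\Pi^-_{x\beta}$, bypassing the uniqueness argument altogether.
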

The proofs of the above two corollaries follow immediately from applying~\cref{lem:int1} with $f$ chosen to be $\Pi_{x\beta}^{-}$ and $\bar{w}^{-1}\delta \Pi_{x\beta}^{-}$, respectively.

	%%%%%%%%%%%%%%%%%%%%%%%%%%%%%%%%%%%%%%%%%%%%%%%%%%%%%%
	\subsubsection{Integration of the rough path increment}
	We now present a weighted version of the integration argument in~\cref{lem:int1} that will help us pass from the increment $(\delta \Pi_x^- - \dx{\Gamma_{xz}^* \Pi_z^-})$ to $(\delta \Pi_x -\delta \Pi_x(z) - \dx{\Gamma_{xz}^*} \Pi_z)$. 	The crucial ingredient for this integration argument is the following representation formula which establishes the relationship between the two increments:
		\begin{align}
			(\delta \Pi_x -\delta \Pi_x(z) - \dx{\Gamma_{xz}^*} \Pi_z)_\beta = \int_0^\infty \dx{t} (\id - \mathup{T}_z^2)(L^* \nabla \cdot (\delta \Pi_x^- - \dx{\Gamma_{xz}^* \Pi_z^-})_{\beta t} )  \, .
			\label{eq:representation}
		\end{align}
		We will provide the proof of this identity in~\cref{lem:integrationIII}, which is the main result of this section.

			\begin{lemma}[Integration III: {$(\delta \Pi_x^- - \dx{\Gamma_{xz}^* \Pi_z^-})$ to $(\delta \Pi_x -\delta \Pi_x(z) - \dx{\Gamma_{xz}^*} \Pi_z)$}]
			Let $|\beta|<3$ and assume that $\eqref{estPiminus}_{\prec \beta}$, $\eqref{eq:rhsweakbounddPi}_\beta$, and $\eqref{eq:rhsweakboundincrement}_\beta$ hold true. Furthermore, assume that, for all $\gamma$ not purely polynomial, we have the bound $\eqref{eq:gammabound}_{\beta}^\gamma$. 	Then, $\eqref{eq:representation}_\beta$ and $\eqref{eq:lhsstrongboundincrement}_\beta$ hold true.
			\label{lem:integrationIII}
		\end{lemma}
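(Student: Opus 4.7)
The plan is to mirror the unweighted Schauder argument of~\cref{lem:int1}, now adapted to this weighted and recentered setting. I define the candidate
\begin{equation*}
v(y) := \int_0^\infty \dx{t}\, (\id - \mathup{T}_z^2)\bigl(L^* \nabla \cdot (\delta \Pi^-_x - \dx{\Gamma_{xz}^*}\Pi_z^-)_{\beta t}\bigr)(y),
\end{equation*}
establish the bound~\eqref{eq:lhsstrongboundincrement} directly for $v$, verify it satisfies the PDE $Lv = \nabla\cdot(\delta\Pi^-_x-\d\G_{xz}\Pi^-_z)_\beta$ and vanishes to second order at $z$, and finally invoke a Liouville-type argument based on~\eqref{eq:prerepresentation} and~\eqref{qualitativeIncrement} to identify $v$ with $(\delta\Pi_x-\delta\Pi_x(z)-\d\G_{xz}\Pi_z)_\beta$. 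This gives both~\eqref{eq:representation} and~\eqref{eq:lhsstrongboundincrement}.

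The first step is to upgrade the hypothesis~$\eqref{eq:rhsweakboundincrement}_\beta$ to derivative bounds via the semigroup identity~\eqref{eq:semi} and the moment bound~\eqref{eq:kerbound}: for every multiindex $\mathbf{n}$,
\begin{equation*}
\E^{\frac{1}{q'}}\bigl|\partial^{\mathbf{n}}(\delta\Pi^-_x - \d\G_{xz}\Pi^-_z)_{\beta t}(y)\bigr|^{q'} \lesssim (\sqrt[8]{t})^{\alpha-3-|\mathbf{n}|}(\sqrt[8]{t}+\abss{y-z})^{\kappa}(\sqrt[8]{t}+\abss{y-z}+\abss{x-z})^{|\beta|-\alpha}(w_x(y)+w_x(z)),
\end{equation*}
where the weights factor through thanks to the averaging estimate~\eqref{average_psi_wx} (applied in a way that the resulting $w_x$-norms at the averaging point remain controlled by $w_x(y)+w_x(z)$).

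Next, I split $v(y)$ at the threshold $t=\abss{y-z}^8$. In the far-field $\sqrt[8]{t}\geq\abss{y-z}$, I expand $(\id-\mathup{T}_z^2)$ as a third-order Taylor remainder, gaining a factor $\abss{y-z}^3$ at the cost of three additional derivatives inside the integrand; the resulting $t$-integral converges at infinity by $\kappa+\alpha<3$ (from~\eqref{kappa}, since $3+\alpha\in\A$ forces $\kappa+2\alpha<3+\alpha$) together with $|\beta|<3$, producing the claimed scaling $\abss{y-z}^{\kappa+\alpha}(\abss{y-z}+\abss{x-z})^{|\beta|-\alpha}$. In the near-field $\sqrt[8]{t}\leq\abss{y-z}$, I bound the two pieces of $(\id-\mathup{T}_z^2)$ separately, evaluating the first at $y$ and using the Taylor polynomial bound at $z$ for the second; integrability at $t=0$ is ensured by $\kappa+\alpha>2$ (which follows from $\kappa+2\alpha>3$ and $\alpha<1$ in~\eqref{kappa}), again yielding the required scaling. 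In both regimes the weight factor $(w_x(y)+w_x(z))$ pulls through cleanly, establishing~\eqref{eq:lhsstrongboundincrement} for~$v$.

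For the PDE, I apply $L$ to the truncated integral $\int_s^T$ and send $s\to 0$, $T\to\infty$; the endpoint contributions vanish in probability by Step~1, so $Lv = \nabla\cdot(\delta\Pi^-_x-\d\G_{xz}\Pi^-_z)_\beta$. By construction $\partial^{\mathbf{n}} v(z)=0$ for $|\mathbf{n}|\leq 2$. By~\eqref{eq:prerepresentation}, $w := (\delta\Pi_x-\delta\Pi_x(z)-\d\G_{xz}\Pi_z)_\beta - v$ satisfies $Lw=0$; by~\eqref{qualitativeIncrement} together with the vanishing of $v$ at $z$, $\partial^{\mathbf{n}} w(z)=0$ for $|\mathbf{n}|\leq 2$; and by the qualitative smoothness~\eqref{boundedness_deltaPi}, the hypothesis~$\eqref{eq:gammabound}_\beta^\gamma$ combined with~$\eqref{estPi}_{\prec\beta}$, and the just-established bound on $v$, the increment $w$ has at most polynomial growth of degree strictly less than $3$. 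A Liouville argument along the lines of~\cref{lem:int1} then forces $w\equiv 0$, yielding~\eqref{eq:representation} and completing the proof. The main technical obstacle is the bookkeeping in Step~3: tracking the three scales $\sqrt[8]{t}$, $\abss{y-z}$, $\abss{x-z}$ and the two weights $w_x(y),w_x(z)$ simultaneously, where the sharpness of the constraints in~\eqref{kappa} is essential both for near-field integrability at $t=0$ and for far-field decay at $t=\infty$.
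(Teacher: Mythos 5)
Your overall strategy — define $v$ by the Taylor-subtracted time-integral, bound it directly, verify the PDE, and finish with a Liouville argument — is the same as the paper's. But the far-field treatment has a genuine gap, and it is exactly the part that makes this lemma harder than the unweighted Schauder estimate of \cref{lem:int1}.

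You split only at $t=|y-z|_\s^8$ and propose to run the entire range $\sqrt[8]{t}\geq|y-z|_\s$ using the derivative bounds derived from $\eqref{eq:rhsweakboundincrement}_\beta$. But after the third-order Taylor remainder and the five extra derivatives from $L^*\nabla\cdot$, the integrand in $\d\log t$ behaves like
\begin{equation*}
|y-z|_\s^{|\n|}\,(\sqrt[8]{t})^{\alpha+\kappa-|\n|}\,(\sqrt[8]{t}+|y-z|_\s+|x-z|_\s)^{|\beta|-\alpha}\,(w_x(y)+w_x(z)),
\end{equation*}
and for $\sqrt[8]{t}\gg\max(|y-z|_\s,|x-z|_\s)$ the exponent of $\sqrt[8]{t}$ is $\kappa+|\beta|-|\n|$. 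Since $|\n|$ can be as small as $3$ and $|\beta|$ can be arbitrarily close to $3$ from below, this exponent can be positive (any $\kappa>0$ destroys it), so the integral diverges at $t=\infty$. Your cited conditions $\kappa+\alpha<3$ and $|\beta|<3$ control $\kappa+\alpha-|\n|$ but not $\kappa+|\beta|-|\n|$; they suffice for the regime $\sqrt[8]{t}\leq|x-z|_\s$ (where the $|\beta|-\alpha$ power sheds to a fixed $|x-z|_\s^{|\beta|-\alpha}$) but not beyond.

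This is precisely why the paper splits into \emph{three} regimes rather than two and, crucially, in the true far-field $\sqrt[8]{t}\geq\max(|y-z|_\s,|x-z|_\s)$ abandons the hypothesis $\eqref{eq:rhsweakboundincrement}_\beta$ altogether, decomposing the integrand into the $\d\G_{xz}\Pi^-_z$ and $\delta\Pi^-_x$ pieces: the former is estimated via $\eqref{eq:gammabound}_\beta^\gamma$ together with $\eqref{estPiminus}_{\prec\beta}$ (giving exponents $|\gamma|-|\n|<0$), and the latter via $\eqref{eq:rhsweakbounddPi}_\beta$ (giving exponents $|\beta|-|\n|<0$). Your proposal never invokes $\eqref{estPiminus}_{\prec\beta}$ or $\eqref{eq:rhsweakbounddPi}_\beta$ in the main estimate, which is a tell-tale sign the gap is real — those hypotheses are exactly what rescues the far-field. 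The near-field and intermediate-range arguments you sketch, as well as the Liouville step, are fine.
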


		\begin{proof}
		We will assume for the time being that $\eqref{eq:representation}_\beta$ holds true.	The strategy of proof is to show that the right hand side of~\eqref{eq:representation} is estimated by the right hand side of \eqref{eq:lhsstrongboundincrement}. We split our argument into three ranges: a near-field range $\sqrt[8]{t} \leq \abss{y-z}$, a far-field range $\sqrt[8]{t} \geq \max(\abss{y-z},\abss{x-z})$, and an intermediate range $\abss{y-z} \leq \sqrt[8]{t} \leq \abss{x-z}$. We start with the near-field range. Applying essentially the same argument as in the proof of~\cref{lem:int1} along with the negative moment bound~\eqref{eq:kerbound} we have the bound
			\begin{align}
				&\mathbb{E}^\frac{1}{q'}|\partial^{\bf n}
				(\delta \Pi^-_{x}-\dx{\Gamma^*_{xz}}\Pi^-_{z})_{\beta t}(y)|^{q'}
				\\
				&\lesssim(\sqrt[8]{t})^{\alpha-3-|{\bf n}|} (\sqrt[8]{t}+\abss{y-z})^{\kappa}
				(\sqrt[8]{t}+\abss{y-z}+\abss{x-z})^{|\beta|-\alpha}(w_x(y) + w_x(z)) \, .
				\label{eq:nearfield1}
			\end{align}
		We use \eqref{eq:nearfield1} to derive two intermediate estimates, one by restricting to $y=z$ and the other by restricting to the near-field range:
			\begin{align}
				&\mathbb{E}^\frac{1}{q'}|\partial^{\bf n}
				(\delta \Pi^-_{x}-\dx{\Gamma^*_{xz}}\Pi^-_{z})_{\beta t}(z)|^{q'}
				\lesssim(\sqrt[8]{t})^{\alpha-3-|{\bf n}|+\kappa}
				(\sqrt[4]{t}+\abss{x-z})^{|\beta|-\alpha}w_x(z),\\
				&\mathbb{E}^\frac{1}{q'}|\partial^{\bf n}
				(\delta \Pi^-_{x}-\dx{\Gamma^*_{xz}}\Pi^-_{z})_{\beta t}(y)|^{q'}
				\nonumber\\
				&\lesssim(\sqrt[8]{t})^{\alpha-3-|{\bf n}|}\abss{y-z}^{\kappa}
				(\abss{y-z}+\abss{x-z})^{|\beta|-\alpha}(w_x(y)+w_x(z))\quad\mbox{, if}\;
				\sqrt[8]{t}\leq \abss{y-z}.
			\end{align}
		We use this to estimate the Taylor polynomial and the original term as follows:
			\begin{align} 
			&\mathbb{E}^\frac{1}{q'}\left|{\rm T}_z^2L^* \nabla \cdot
				(\delta \Pi^-_{x}-\dx{\Gamma^*_{xz}}\Pi^-_{z})_{\beta t}(y)\right|^{q'} 
				\\
				&\lesssim t^{-1}\sum_{|{\bf n}|\leq 2}\abss{y-z}^{|{\bf n}|}
				(\sqrt[8]{t})^{\alpha-|{\bf n}|+\kappa}
				(\sqrt[8]{t}+\abss{x-z})^{|\beta|-\alpha}w_x(z)  \label{eq:Taylordiff} \, ,
				\end{align}
				\begin{align}
				&\mathbb{E}^\frac{1}{q'}\left|L^* \nabla \cdot
				(\delta \Pi^-_{x}-\dx{\Gamma^*_{xz}}\Pi^-_{z})_{\beta t}(y)\right|^{q'}
				\\
				&\lesssim t^{-1}(\sqrt[8]{t})^{\alpha}\abss{y-z}^{\kappa}
				(\abss{y-z}+\abss{x-z})^{|\beta|-\alpha}(w_x(y)+w_x(z))\quad\mbox{, if}\;
				\sqrt[8]{t}\leq\abss{y-z} \, .
			\end{align}
		Integrating $\int_0^{\abss{y-z}^8} \dx{t}$ and using the fact that $\alpha-2+\kappa>0$, which itself follows from~\eqref{kappa} and $\alpha<1$, on the first integral, and $\alpha>0$ on the second, we obtain
			\begin{align}
			&\mathbb{E}^\frac{1}{q'}\left|\int_0^{\abss{y-z}^8}\dx{t}{\rm T}_z^2 L^* \nabla \cdot
			(\delta \Pi^-_{x}-\dx{\Gamma^*_{xz}}\Pi^-_{z})_{\beta t}(y)\right|^{q'}\\
			&\lesssim \abss{y-z}^{\alpha+\kappa}
			(\abss{y-z}+\abss{x-z})^{|\beta|-\alpha}w_x(z),\label{eq:intIII3}\\
			&\mathbb{E}^\frac{1}{q'}\left|\int_0^{\abss{y-z}^8}\dx{t} L^* \nabla \cdot
			(\delta \Pi^-_{x}-\dx{\Gamma^*_{xz}}\Pi^-_{z})_{\beta t}(y)\right|^{q'}\\
			&\lesssim \abss{y-z}^{\alpha+\kappa}
			(\abss{y-z}+\abss{x-z})^{|\beta|-\alpha}(w_x(y)+w_x(z)),\label{eq:intIII4}
			\end{align}
		which takes care of the near-field contribution.

		We now deal with the far-field contribution $\sqrt[8]{t}\ge\max\{\abss{y-z},\abss{x-z}\}$, by splitting it into the one coming from $\dx{\Gamma_{xz}^*} \Pi_z^{-}$ and the one from $\delta\Pi_x^{-}$. For the first one, we use the fact that $\Pi^{-}_z\in\widetilde{\T}^*$ (see \eqref{eq:piminusttilde}) and the strict triangularity of
		$\dx{\Gamma^*_{xz}}$ with respect to $\prec$ (see \eqref{tri6}) along with~\eqref{eq:gammabound} and~\eqref{estPiminus} for $\Pi_{z\beta}^-$ to establish
			\begin{align}
			&\mathbb{E}^\frac{1}{q'}\left|(\dx{\Gamma_{xz}^*}\Pi_z^{-})_{\beta t}(y)\right|^{q'}\\
			&\lesssim\sum_{|\gamma|\in\mathsf{A}\cap(-\infty,3)\cap[\alpha,\kappa+|\beta|]}
			\abss{x-z}^{\kappa+|\beta|-|\gamma|}w_x(z)(\sqrt[8]{t})^{\alpha-3}
			(\sqrt[8]{t}+\abss{y-z})^{|\gamma|-\alpha},
			\label{dGammaLiouville2}
			\end{align}
		which, using a similar argument as before, we can transform into 
			\begin{align}
			&\mathbb{E}^\frac{1}{q'}\left|\partial^{\bf n}
			(\dx{\Gamma_{xz}^*}\Pi_z^{-})_{\beta t}(y)\right|^{q'}\\
			&\lesssim\sum_{|\gamma|\in\mathsf{A}\cap(-\infty,3)\cap[\alpha,\kappa+|\beta|]}
			(\sqrt[8]{t})^{|\gamma|-3-|{\bf n}|}
			\abss{x-z}^{\kappa+|\beta|-|\gamma|}w_x(z)
			\quad\mbox{, if}\;\abss{y-z}\leq\sqrt[8]{t} \, .
			\label{eq:farfield1}
			\end{align}
		We now represent Taylor's remainder in a manner compatible with the natural scaling $\mathfrak{s}$ associated to the operator $L$ 
			\begin{align}\label{eq:taylorremainder}
			(\id-{\rm T}_z^2)f(y)&=\int_0^1 \dx{s} \frac{(1-s)^2}{2}\frac{\d^3}{\d s^3}h(s) \, ,\\
			 h(s)&=f(s^{\mathfrak{s}_0}y_0+(1-s^{\mathfrak{s}_0})z_0,\dots, s^{\mathfrak{s}_d}y_d+(1-s^{\mathfrak{s}_d})z_d) \, .
			\end{align}
		Applying this to $f= L^* \nabla \cdot(\dx{\Gamma_{xz}^*}\Pi_z^{-})_{\beta t}$ and using~\eqref{eq:farfield1}, we obtain
			\begin{align}
				&\mathbb{E}^\frac{1}{q'}\left|(\id-{\rm T}_z^2)L^* \nabla \cdot
				(\dx{\Gamma_{xz}^*}\Pi_z^{-})_{\beta t}(y)\right|^{q'}\\
				&\lesssim t^{-1} \sum_{\substack{|{\bf n}|\ge 3 \\ n_0+\cdots+n_d\le 3}}
				\sum_{|\gamma|\in\mathsf{A}\cap(-\infty,3)\cap[\alpha,\kappa+|\beta|]}
				\abss{y-z}^{|{\bf n}|}(\sqrt[8]{t})^{|\gamma|-|{\bf n}|}
				\abss{x-z}^{\kappa+|\beta|-|\gamma|}w_x(z) \, , \\
				&\textnormal{ if}\;\abss{y-z}\leq\sqrt[8]{t} \, . 
    \label{anotherref}
			\end{align}
		Integrating over $\int_{\max\{\abss{y-z}^8,\abss{x-z}^8\}}^\infty \dx{t}$ and noting that $|\gamma|-|{\bf n}|<3-3=0$, we obtain
			\begin{align}
				&\mathbb{E}^\frac{1}{q'}\left|\int_{\max\{\abss{y-z}^8,\abss{x-z}^8\}}^\infty \dx{t}
				(\id-{\rm T}_z^2)L^* \nabla \cdot
				(\dx{\Gamma_{xz}^*}\Pi_z^{-})_{\beta t}(y)\right|^{q'}\\
				&\lesssim \hspace{-2ex}\sum_{\substack{|{\bf n}|\ge 3 \\ n_0+\cdots+n_d\le 3}}
				\sum_{|\gamma|\in\mathsf{A}\cap(-\infty,3)\cap[\alpha,\kappa+|\beta|]}
				\abss{y\-z}^{|{\bf n}|}(\abss{y\-z}\+\abss{x\-z})^{|\gamma|-|{\bf n}|}
				\abss{x\-z}^{\kappa+|\beta|-|\gamma|}w_x(z) \label{eq:farfield2}\\
				&\lesssim 
				\abss{y-z}^{\kappa+\alpha}(\abss{y-z}+\abss{x-z})^{|\beta|-\alpha}w_x(z)
				\quad\mbox{by}\quad|{\bf n}|\ge 3\geq
				\kappa+\alpha\, .
			\end{align}
		For the second part of the far-field contribution, we can use~\eqref{eq:rhsweakbounddPi} to obtain 
			\begin{align}
				\mathbb{E}^\frac{1}{q'}\left|\partial^{\bf n}\delta\Pi^{-}_{x\beta t}(y)\right|^{q'}
				\lesssim(\sqrt[8]{t})^{\alpha-3-|{\bf n}|}(\sqrt[8]{t}+\abss{x-y})^{|\beta|-\alpha}\bar w \, ,
				\label{eq:PDEvalid}
			\end{align}
		which, by Taylor's theorem and $\abss{x-y}+\abss{x-z}\lesssim\abss{y-z}+\abss{x-z}$, implies
			\begin{align}
			&\mathbb{E}^\frac{1}{q'}\left|(\id-{\rm T}_z^2)L^* \nabla \cdot \delta\Pi^{-}_{x\beta t}(y)\right|^{q'}
			\\&\lesssim t^{-1}\sum_{\substack{|{\bf n}|\ge 3 \\ n_0+\cdots+n_d\le 3}}
			\abss{y-z}^{|{\bf n}|}(\sqrt[8]{t})^{\alpha-|{\bf n}|}(\sqrt[8]{t}+\abss{y-z}+\abss{x-z})^{|\beta|-\alpha}\bar w.
			\label{dPiminusLiouville}
			\end{align}
		Integrating over $\int_{\max\{\abss{y-z}^8,\abss{x-z}^8\}}^\infty \dx{t}$ and using 
		$|\beta|-|{\bf n}|<3-3=0$ we obtain
			\begin{align}
			&\mathbb{E}^\frac{1}{q'}\left|\int_{\max\{\abss{y-z}^8,\abss{x-z}^8\}}^\infty \dx{t}
			(\id-{\rm T}_z^2)L^* \nabla \cdot\delta\Pi^{-}_{x\beta t}(y)\right|^{q'}\\\
			&\lesssim\sum_{\substack{|{\bf n}|\ge 3 \\ n_0+\cdots+n_d\le 3}}
			\abss{y-z}^{|{\bf n}|}
			(\abss{y-z}+\abss{x-z})^{\alpha-|{\bf n}|}
			(\abss{y-z}+\abss{x-z})^{|\beta|-\alpha}\bar w \label{eq:farfield3}\\
			&\lesssim
			\abss{y-z}^{\kappa+\alpha}(\abss{y-z}+\abss{x-z})^{|\beta|-\alpha} w_x(z)  \, ,
			\end{align}
		where in the last step we have simply used the definition of $w_x(z)$ (see \eqref{weight}) and the fact that $|\bf n|> \kappa + \alpha$. We now treat the intermediate range.	To this end, we start by applying the semigroup property to~\eqref{eq:nearfield1} to obtain
			\begin{align}
				&\mathbb{E}^{\frac{1}{q'}}|\partial^{\bf n}(\delta \Pi_x^- - \dx{\Gamma_{xz}^*} \Pi_z^-)_{\beta t}(y)|^{q'}\\
				&\leq\int \dx{y'}|\psi_\frac{t}{2}(y-y')|
				\mathbb{E}^\frac{1}{q'}|\partial^{\bf n}(\delta \Pi_x^- - \dx{\Gamma_{xz}^*} \Pi_z^-)_{\beta \frac t2}(y')|^{q'} \\
				&\lesssim \int \dx{y'}|\psi_\frac{t}{2}(y-y')|(\sqrt[8]{t})^{\alpha-3-|{\bf n}|} (\sqrt[8]{t}+\abss{y'-z})^{\kappa}
				(\sqrt[8]{t}+\abss{y'-z}+\abss{x-z})^{|\beta|-\alpha} \\
    &\,\times (w_x(y') + w_x(z)) \\
				&\lesssim (\sqrt[8]{t})^{\alpha-3-|{\bf n}|+\kappa}
(\sqrt[8]{t}+\abss{x-z})^{|\beta|-\alpha}w_x(z)\quad\mbox{, if}\;\abss{y-z}\leq\sqrt[8]{t} \, .
			\end{align}
		where in the last step we have applied the Cauchy--Schwarz inequality and \eqref{average_psi_wx}. Applying Taylor's theorem, we have
			\begin{align}
				&\mathbb{E}^\frac{1}{q'}\left|(\id-{\rm T}_z^2)L^* \nabla \cdot
				(\delta \Pi^-_{x}-\dx{\Gamma^*_{xz}}\Pi^-_{z})_{\beta t}(y)\right|^{q'}
				\\
				&\lesssim t^{-1}\sum_{\substack{|{\bf n}|\ge 3 \\ n_0+\cdots+n_d\le 3}}
				\abss{y-z}^{|{\bf n}|}(\sqrt[8]{t})^{\alpha-|{\bf n}|+\kappa}
				\abss{x-z}^{|\beta|-\alpha}w_x(z)\, , \quad\mbox{if}\;
				\abss{y-z}\le\sqrt[8]{t}\le\abss{x-z}.
			\end{align}
		Finally, integrating over $\int_{\abss{y-z}^8}^{\abss{x-z}^8} \dx{t}$ as expected, we obtain
			\begin{align}\label{eq:intfield1}
				\mathbb{E}^\frac{1}{q'}\left|\int_{\abss{y-z}^8}^{\abss{x-z}^8}\dx{t}(\id-{\rm T}_z^2)
				L^* \nabla \cdot(\delta\Pi_{x}^- -\dx{\Gamma_{xz}^*}\Pi_z^-)_{\beta t}(y)\right|^{q'}
				\lesssim\abss{y-z}^{\kappa+\alpha}\abss{x-z}^{|\beta|-\alpha}w_x(z) \, .
			\end{align}

   \medskip
   
We are now left to argue that the representation in~\eqref{eq:representation} is justified. 
Note that by using~\eqref{eq:PDEPibeta} and taking the Malliavin derivative, \eqref{eq:prerepresentation} holds true, i.e
\begin{equation}
L(\delta \Pi_x -\delta \Pi_x(z) - \dx{\Gamma_{xz}^*} \Pi_z)_\beta 
= \nabla \cdot (\delta \Pi_x^- - \dx{\Gamma_{xz}^* \Pi_z^-})_{\beta}  \, .
\label{eq:PDEdGamma}
\end{equation} 
Furthermore, we know from~\eqref{qualitativeIncrement} that 
$(\delta \Pi_x -\delta \Pi_x(z) - \dx{\Gamma_{xz}^*} \Pi_z)_\beta(y)$ 
vanishes super-quadratically in $\abss{y-z}$, 
and by \eqref{estdeltaPi}, \eqref{estPi} and the fact that $\dx{\Gamma^*}$ has the projection $Q$ built-in 
it grows sub-cubically in $y$ at infinity. 
Provided the right hand side of \eqref{eq:representation} also satisfies equation \eqref{eq:PDEdGamma}, vanishes super-quadratically at $z$ and grows sub-cubically, 
it then follows that~\eqref{eq:representation} holds true by using exactly the same Liouville-type argument as in the proof of~\cref{lem:int1}. 

To see that the time integral on the right hand side of~\eqref{eq:representation} is a solution of~\eqref{eq:PDEdGamma}, 
we cut off the time integral as follows and apply the operator $L$ to note that
\begin{align}
&L \int_s^T \dx{t} (\id - \mathup{T}_z^2)(L^* \nabla \cdot (\delta \Pi_x^- - \dx{\Gamma_{xz}^* \Pi_z^-})_{\beta t} ) \\
&= \nabla \cdot (\delta \Pi_x^- - \dx{\Gamma_{xz}^* \Pi_z^-})_{\beta s} -\nabla \cdot (\delta \Pi_x^- - \dx{\Gamma_{xz}^* \Pi_z^-})_{\beta T} \, .
\end{align}
Using~\eqref{dGammaLiouville2} and \eqref{eq:PDEvalid}, 
the second term goes to $0$ as $T \to \infty$, 
while the first term converges to the required object as $s \to 0$.

For the vanishing behaviour at $z$, 
we note that above we have estimated the right hand side of~\eqref{eq:representation} 
by the right hand side of~$\eqref{eq:lhsstrongboundincrement}$. 
Along with the observation that $\kappa +\alpha>2$ (from~\eqref{kappa} and $\alpha \in (0,1)$), 
this implies that the right hand side of~\eqref{eq:representation}  vanishes super-quadratically in $\abss{y-z}$. 

For the growth at infinity, we split the $t$-integral of \eqref{eq:representation} into three regimes: $t \in [0,1], [1, \abss{y-z}^8], [\abss{y-z}^8,\infty)$. For $0 \leq t \leq 1$, we estimate the parts involving $\id$ and ${\rm T}_z^2$ separately. For the part involving $\id$, we can directly apply~\eqref{eq:PDEvalid} and~\eqref{dGammaLiouville2}, to obtain
	  \begin{align}
	  &\mathbb{E}^\frac{1}{q'}\left|\int_{0}^1 \dx{t} \, L^* \nabla \cdot(\delta \Pi_x^- - \dx{\Gamma_{xz}^* \Pi_z^-})_{\beta t}(y)\right|^{q'} \\
	  &\lesssim  \int_0^1 \dx{t} \, t^{-1}\bigg((\sqrt[8]{t})^{\alpha}(\sqrt[8]{t}+\abss{x-y})^{|\beta|-\alpha}\bar w  \\& + \sum_{|\gamma|\in\mathsf{A}\cap(-\infty,3)\cap[\alpha,\kappa+|\beta|]}
			\abss{x-z}^{\kappa+|\beta|-|\gamma|}w_x(z)(\sqrt[8]{t})^{\alpha}
			(\sqrt[8]{t}+\abss{y-z})^{|\gamma|-\alpha} \bigg) \\
			&\lesssim (1+ \abss{x-y})^{|\beta|-\alpha}\bar{w} + \sum_{|\gamma|\in\mathsf{A}\cap(-\infty,3)\cap[\alpha,\kappa+|\beta|]}
			\abss{x-z}^{\kappa+|\beta|-|\gamma|}w_x(z) (1+\abss{y-z})^{|\gamma|-\alpha} \,,
	  \end{align}
	  which grows sub-cubic in $\abss{y}$ as desired since both $|\beta|,|\gamma|<3$. For the Taylor polynomial, we use the bound~\eqref{eq:Taylordiff} to arrive at
	  \begin{align}
	    &\mathbb{E}^\frac{1}{q'}\left|\int_{0}^1 \dx{t} \,{\rm T}_z^2 L^* \nabla \cdot(\delta \Pi_x^- - \dx{\Gamma_{xz}^* \Pi_z^-})_{\beta t} (y) \right|^{q'} \\
	    & \lesssim \int_0^1 \dx{t}t^{-1}  \sum_{|{\bf n}|\leq 2}\abss{y-z}^{|{\bf n}|}
				(\sqrt[8]{t})^{\alpha-|{\bf n}|+\kappa}
				(\sqrt[8]{t}+\abss{x-z})^{|\beta|-\alpha}w_x(z) \\
			&	\lesssim  \sum_{|{\bf n}|\leq 2}\abss{y-z}^{|{\bf n}|} (1+\abss{x-z})^{|\beta|-\alpha}w_x(z) \, ,
	  \end{align}
	  which again is sub-cubic in $\abss{y}$. For the second regime, $1 \leq t \leq \abss{y-z}^8$, we estimate the $\id$ part with exactly the same estimates as before to obtain
	  \begin{align}
	  &\mathbb{E}^\frac{1}{q'}\left|\int_{1}^{\abss{y-z}^8} \dx{t} \, L^* \nabla \cdot(\delta \Pi_x^- - \dx{\Gamma_{xz}^* \Pi_z^-})_{\beta t} (y) \right|^{q'} \\
	  & \lesssim   \int_1^{\abss{y-z}^8} \dx{t} \, t^{-1}\bigg((\sqrt[8]{t})^{\alpha}(\sqrt[8]{t}+\abss{x-y})^{|\beta|-\alpha}\bar w  \\& + \sum_{|\gamma|\in\mathsf{A}\cap(-\infty,3)\cap[\alpha,\kappa+|\beta|]}
			\abss{x-z}^{\kappa+|\beta|-|\gamma|}w_x(z)(\sqrt[8]{t})^{\alpha}
			(\sqrt[8]{t}+\abss{y-z})^{|\gamma|-\alpha} \bigg) \\
			&\lesssim (1+ \abss{y-z}^\alpha) (\abss{y-z}+\abss{x-y})^{|\beta|-\alpha} \bar{w} \\& + \sum_{|\gamma|\in\mathsf{A}\cap(-\infty,3)\cap[\alpha,\kappa+|\beta|]}
			\abss{x-z}^{\kappa+|\beta|-|\gamma|}w_x(z) (1+\abss{y-z}^{\alpha}) \abss{y-z}^{|\gamma|-\alpha} \,,
	  \end{align}
   which grows again sub-cubically in $\abss{y}$ since $|\beta|,|\gamma|<3$. 
	  We estimate the Taylor polynomial ${\rm T}_z^2$, separately as follows: for the term involving $\delta \Pi_{x\beta}^-$, we use~\eqref{eq:PDEvalid} to arrive at
	  \begin{align}
	  &\mathbb{E}^\frac{1}{q'}\left|\int_{1}^{\abss{y-z}^8} \dx{t} \,{\rm T}_z^2 L^* \nabla \cdot \delta \Pi^-_{x\beta t}(y)\right|^{q'} \\
	  &\lesssim \int_1^{\abss{y-z}^8}  \dx{t}\, t^{-1} \sum_{|\mathbf{n}|\leq 2}\abss{y-z}^{|\bf n|}(\sqrt[8]{t})^{\alpha-|{\bf n}|}(\sqrt[8]{t}+\abss{x-z})^{|\beta|-\alpha}\bar w  \\
	  &\lesssim \int_1^{\abss{y-z}^8}  \dx{t}\, t^{-1} \sum_{|\mathbf{n}|\leq 2}\abss{y-z}^{|\bf n|}(\sqrt[8]{t})^{\alpha-|{\bf n}|}((\sqrt[8]{t})^{|\beta|-\alpha} +\abss{x-z}^{|\beta|-\alpha})\bar w  \\
	  &\lesssim \sum_{|{\bf n}| \leq 2}\abss{y-z}^{|\mathbf{n}|} 
   (1+ \abss{y-z}^{|\beta|-|\n|} + \abss{x-z}^{|\beta|-\alpha} + \abss{y-z}^{\alpha-|\n|} \abss{x-z}^{|\beta|-\alpha})  \, ,
	  \end{align}
   which grows sub-cubically in $\abss{y}$ by $|\beta|<3$, 
	  while for the term involving $\dx{\Gamma^*_{xz}}$, we proceed with~\eqref{eq:gammabound} and \eqref{estPiminus} to obtain
	  \begin{align}
	  &\mathbb{E}^\frac{1}{q'}\left|\int_{1}^{\abss{y-z}^8} \dx{t} \,{\rm T}_z^2 L^* \nabla \cdot(\dx{\Gamma_{xz}^* \Pi_z^-})_{\beta t} (y)  \right|^{q'} \\
	  &\lesssim \int_1^{\abss{y-z}^8} \dx{t} \, t^{-1}\sum_{|\gamma|\in\mathsf{A}\cap(-\infty,3)\cap[\alpha,\kappa+|\beta|]}
			\abss{x-z}^{\kappa+|\beta|-|\gamma|}w_x(z)
   \sum_{|\n|\leq2}\abss{y-z}^{|\n|}
			(\sqrt[8]{t})^{|\gamma|-|\n|} \\
			&\lesssim  \sum_{|\gamma|\in\mathsf{A}\cap(-\infty,3)\cap[\alpha,\kappa+|\beta|]} \sum_{|\n|\leq2} 
			\abss{x-z}^{\kappa+|\beta|-|\gamma|}w_x(z) (1+\abss{y-z}^{|\gamma|-|\n|})\, .
	  \end{align}
   Again by $|\gamma|<3$ this grows sub-cubically in $\abss{y}$.
	  We  now treat the final regime $\abss{y-z}^8 \leq t < \infty$. For the term involving $\delta \Pi_{x}^-$, we simply apply~\eqref{dPiminusLiouville} to estimate it as follows
	  \begin{align}
	  	& \mathbb{E}^\frac{1}{q'}\left|\int_{\abss{y-z}^8}^\infty \dx{t} \, (\id-{\rm T}_z^2)L^* \nabla \cdot \delta\Pi^{-}_{x\beta t}(y)\right|^{q'}
			\\&\lesssim \int_{\abss{y-z}^8}^\infty \dx{t} \, t^{-1}\sum_{\substack{|{\bf n}|\ge 3 \\ n_0+\cdots+n_d\le 3}}
			\abss{y-z}^{|{\bf n}|}(\sqrt[8]{t})^{\alpha-|{\bf n}|}(\sqrt[8]{t}+\abss{y-z}+\abss{x-z})^{|\beta|-\alpha}\bar w \\
			&\lesssim   			  \abss{y-z}^{|\beta|} +\abss{x-z}^{|\beta|-\alpha} \abss{y-z}^{\alpha} \, .
	  \end{align}
	 For the the term involving $\dx{\Gamma^*}$, we use~\eqref{anotherref} to obtain
	  \begin{align}
	  	&\mathbb{E}^\frac{1}{q'}\left|\int_{\abss{y-z}^8}^\infty  \,(\id - {\rm T}_z^2)L^*\nabla \cdot(\dx{\Gamma_{xz}^*}\Pi_z^{-})_{\beta t}(y)\right|^{q'}\\
			&\lesssim \int_{\abss{y-z}^8}^\infty  \dx{t}\, t^{-1}\hspace{-2ex}\sum_{\substack{|{\bf n}|\ge 3 \\ n_0+\cdots+n_d\le 3}}
				\sum_{|\gamma|\in\mathsf{A}\cap(-\infty,3)\cap[\alpha,\kappa+|\beta|]}
                \hspace{-5ex}
				\abss{y\-z}^{|{\bf n}|}(\sqrt[8]{t})^{|\gamma|-|{\bf n}|}
				\abss{x\-z}^{\kappa+|\beta|-|\gamma|}w_x(z) \\
			&\lesssim   \sum_{|\gamma|\in\mathsf{A}\cap(-\infty,3)\cap[\alpha,\kappa+|\beta|]} \abss{y-z}^{|\gamma|}
			\abss{x-z}^{\kappa+|\beta|-|\gamma|}w_x(z) \, . \qedhere
	  \end{align}
    \end{proof}

	%%%%%%%%%%%%%%%%%%%%%%%%%%%%%%%%%%%%%%%%%%%%%%%%%%%%%%

\subsection{Reconstruction}\label{sec:recon}

It will be crucial for reconstruction to control the dependence of $\Pi^-_x$ on its base point, 
we therefore start with the following observation. 
As a consequence of \eqref{recenter}, $\G_{xy}$ recenters also the negative part $\Pi^-_y$ of the model,
\begin{equation}\label{recenterPi-}
\Pi^-_x = \G_{xy}\Pi^-_y 
+ P \sum_k \a_k (\G_{xy}(\id-P)\Pi_y+\pi^{(\0)}_{xy})^k
\nabla\Delta(\G_{xy}(\id-P)\Pi_y+\pi^{(\0)}_{xy}) \, .
\end{equation}
In particular, by \eqref{polypart:poly} one can read off that 
$(\Pi^-_x-\G_{xy}\Pi^-_y)_\beta$ is a space-time polynomial of degree $\leq|\beta|-3$. 
The proof of \eqref{recenterPi-} is analogous to the one of \cite[Proposition~5.2]{LOTT21} (in particular, $\eqref{recenterPi-}_\beta$ is a consequence of $\eqref{recenter}_{\prec\beta}$). 

\begin{lemma}[Reconstruction I]\label{rec1}
Assume $|\beta|>3$, that $\eqref{estPi}_{\prec\beta}$, 
$\eqref{estpin}_{\prec\beta}$ and $\eqref{estPiminus}_{\prec\beta}$ hold, 
and that $\eqref{estGamma}_\beta^\gamma$ holds for all $\gamma$ populated and not purely polynomial.
Then $\eqref{estPiminus}_\beta$ holds.
\end{lemma}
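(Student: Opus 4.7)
The purely polynomial case $\beta=g_\n$ is immediate since $\Pi^-_{x\beta}=0$ by \eqref{eq:piminusttilde}, so I focus on populated, not purely polynomial $\beta$ with $|\beta|>3$. The workhorse will be the recentering identity \eqref{recenterPi-} applied with the secondary base point taken to be the evaluation point $y$:
\[
\Pi^-_x = \Gamma^*_{xy}\Pi^-_y + R_{xy},
\]
where $R_{xy\beta}$ is a random polynomial of degree $\le|\beta|-3$ in its argument, built algebraically from $\Gamma^*_{xy}(\id-P)\Pi_y+\pi^{(\0)}_{xy}$ and factors of $\a_k$. Convolving with $\psi_t$ and evaluating at $y$ gives the split
\[
\Pi^-_{x\beta t}(y)
= \sum_\gamma (\Gamma^*_{xy})^\gamma_\beta\,\Pi^-_{y\gamma t}(y) + R_{xy\beta t}(y),
\]
which I bound term by term.

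\textbf{The two contributions.} For the $\Gamma^*$-sum, the fact that $\Pi^-_y\in\widetilde{\T}^*$ together with the triangularity \eqref{tri3} restricts $\gamma$ to populated multiindices with $\alpha\le|\gamma|\le|\beta|$ that are not purely polynomial. The inductive hypothesis $\eqref{estPiminus}_{\prec\beta}$ specialised to $x=y$ (so $|x-y|_\s=0$) yields $\E^{1/2p}|\Pi^-_{y\gamma t}(y)|^{2p}\lesssim(\sqrt[8]{t})^{|\gamma|-3}$, while the assumed $\eqref{estGamma}_\beta^\gamma$ supplies $\E^{1/2p}|(\Gamma^*_{xy})^\gamma_\beta|^{2p}\lesssim|x-y|_\s^{|\beta|-|\gamma|}$. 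H\"older, combined with the elementary inequality $a^{b}c^{d}\le(a+c)^{b+d}$ for $b,d\ge0$, then gives
\[
|x-y|_\s^{|\beta|-|\gamma|}(\sqrt[8]{t})^{|\gamma|-3}
\le (\sqrt[8]{t})^{\alpha-3}(\sqrt[8]{t}+|x-y|_\s)^{|\beta|-\alpha},
\]
as required. For the polynomial remainder, each monomial of degree $|\n|\le|\beta|-3$ appearing in $R_{xy\beta}(z)$ has a random coefficient whose stochastic $L^p$-moment is controlled by $|x-y|_\s^{|\beta|-|\n|}$ via $\eqref{estPi}_{\prec\beta}$ and $\eqref{estpin}_{\prec\beta}$ together with additivity of the homogeneity $|\cdot|$ along the multiplicative structure of \eqref{recenterPi-}. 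Convolution against $\psi_t$ replaces the monomial by its $\n$-th moment, which is of size $(\sqrt[8]{t})^{|\n|}$ by \eqref{eq:psiscaling}, producing the contribution $|x-y|_\s^{|\beta|-|\n|}(\sqrt[8]{t})^{|\n|}$, bounded as above.

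\textbf{Main obstacle.} The only real subtlety is the algebraic bookkeeping for $R_{xy}$: one must check that each monomial's coefficient collects factors whose $|\cdot|$-homogeneities add up to exactly $|\beta|-|\n|$. This follows from the multiplicative structure of \eqref{recenterPi-}, the identity $|g_{\n'}|=|\n'|$ on purely polynomial components, and the population constraint \eqref{pop1}, but deserves careful verification since $\Gamma^*_{xy}(\id-P)\Pi_y+\pi^{(\0)}_{xy}$ mixes several different sources of polynomial pieces. Once this bookkeeping is in place, the rest of the argument is the standard H\"older plus moment-bound combination, with the moment bound \eqref{eq:kerbound} absorbing the convolution against $\psi_t$.
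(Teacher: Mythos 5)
The proposal has a genuine gap: the diagonal term is not controlled. When you write
\[
\Pi^-_{x\beta t}(y) = \sum_\gamma (\Gamma^*_{xy})^\gamma_\beta\,\Pi^-_{y\gamma t}(y) + R_{xy\beta t}(y),
\]
the sum over $\gamma$ includes $\gamma=\beta$ with coefficient $(\Gamma^*_{xy})^\beta_\beta=1$, because the triangularity \eqref{tri3} concerns $\Gamma^*_{xy}-\id$, not $\Gamma^*_{xy}$ itself. This produces the term $\Pi^-_{y\beta t}(y)$, which is precisely the diagonal ($x=y$) case of the estimate $\eqref{estPiminus}_\beta$ you are trying to prove; it is \emph{not} covered by $\eqref{estPiminus}_{\prec\beta}$. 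Your claim that the inductive hypothesis "specialised to $x=y$" handles this is circular. Moreover, for $|\beta|>3$ the target bound $(\sqrt[8]{t})^{|\beta|-3}$ has a \emph{positive} power of $t$, so the small-$t$ regime is the hard part; the qualitative smoothness estimates \eqref{smoothness_Pi-} give only a $\tau$-dependent control there, which is not enough.

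The paper closes this loop with a reconstruction argument: it invokes the general reconstruction lemma \cite[Lemma 4.8]{LO22}, whose two inputs are (a) the \emph{vanishing at the base point}, $\lim_{t\to0}\E^{1/p}|\Pi^-_{x\beta t}(x)|^p=0$, proved by showing $\Pi^-_{x\beta}(x)=0$ pointwise from the hierarchy and the smoothness estimates, and (b) the \emph{continuity in the base point},
\[
\E^{1/p}|(\Pi^-_y-\Pi^-_x)_{\beta t}(x)|^p\lesssim(\sqrt[8]{t})^{\alpha-3}(\sqrt[8]{t}+|x-y|_\s)^{|\beta|-\alpha},
\]
where the difference $\Pi^-_y-\Pi^-_x$ yields $(\Gamma^*_{yx}-\id)^\gamma_\beta$, whose diagonal entry vanishes, so only $\gamma\prec\beta$ appears and the inductive hypothesis suffices. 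The reconstruction theorem is exactly the mechanism that upgrades these two inputs to the full off-diagonal bound; it cannot be sidestepped by a bare H\"older-plus-moment computation. Your bookkeeping for the polynomial remainder $R_{xy}$ is essentially fine, and is in fact part of establishing (b), but without the reconstruction step the argument does not terminate.
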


\begin{proof}
    The estimate \eqref{estPiminus} follows by general reconstruction \cite[Lemma~4.8]{LO22} from ``vanishing at the base point'',
    \begin{equation}
        \lim_{t\to0} \E^\frac{1}{p}| \Pi^-_{x\beta t}(x) |^p = 0 
        \quad \text{provided } |\beta|>3,
    \end{equation}
    and ``continuity in the base point'',
    \begin{equation}\label{Pi-cont_basepoint}
        \E^\frac{1}{p}| (\Pi^-_y-\Pi^-_x)_{\beta t}(x)|^p 
        \lesssim (\sqrt[8]{t})^{\alpha-3}(\sqrt[8]{t}+|x-y|_\s)^{|\beta|-\alpha}.
    \end{equation}
    For the former, we note that by (annealed) continuity \eqref{smoothness_Pi-} of $\Pi^-_{x\beta}$, 
    it is enough to show that $\Pi^-_{x\beta}(x)=0$. 
    By Lemma~\ref{lem:tri1}~(i), we observe that on the right hand side of \eqref{piminuscomponents} only $\Pi_{x\beta'}$ with $\beta'\prec\beta$ come up. 
    We can therefore appeal to $\eqref{estPi}_{\prec\beta}$, 
    which implies $\Pi_{x\beta'}(x)=0$, to see 
    \begin{equation}
        \Pi^-_{x\beta}(x) = \nabla\Delta\Pi_{x \, \beta-e_0}(x) 
        + \delta_\beta^{f_0} \xi_\tau(x) 
        - \sum_{\beta_1+\beta_2=\beta} \nabla\Pi_{x\beta_1}(x) c_{\beta_2}.
    \end{equation}
    Note that $\eqref{estPi}_{\prec\beta}$ implies by the semigroup property and the moment bound \eqref{eq:kerbound} 
    \begin{equation}
        \E^\frac{1}{p}|\partial^\n\Pi_{x\beta' t}(x)|^p
        \lesssim (\sqrt[8]{t})^{|\beta'|-|\n|}.
    \end{equation}
    By $|\beta-e_0|=|\beta|>3$ and the (annealed) continuity \eqref{smoothness_Pi}, 
    we have $\nabla\Delta\Pi_{x\,\beta-e_0}(x)=0$. 
    Similarly, we have $\sum_{\beta_1+\beta_2=\beta}\nabla\Pi_{x\beta_1}(x)c_{\beta_2}=0$: 
    by \eqref{pop4} we have
    $|\beta_1|=|\beta|-|\beta_2|+\alpha>3-2-\alpha+\alpha=1$ and therefore $\nabla\Pi_{x\beta_1}(x)=0$. 
    By $|f_0|=\alpha<3<|\beta|$ we also have $\delta_\beta^{f_0}=0$, 
    which concludes the argument for $\Pi^-_{x\beta}(x)=0$. 

    \medskip
    
    We turn to the continuity in the base point \eqref{Pi-cont_basepoint}, 
    which relies crucially on \eqref{recenterPi-}.
    Since \eqref{recenterPi-} is identical to \cite[(2.63)]{LOTT21}, 
    except that the second derivative $\partial_1^2$ of \cite[(2.63)]{LOTT21} 
    is replaced by the third derivative $\nabla\Delta$ here, 
    the proof of \eqref{Pi-cont_basepoint} is identical to the one of \cite[(4.9)]{LOTT21}, 
    except that the exponent $-2$ has to be replaced by $-3$. 
\end{proof}

\begin{lemma}[Reconstruction II]\label{rec2}
Assume that $\eqref{estPi}_{\prec\beta}$, $\eqref{estGamma}_{\prec\beta}$, 
$\eqref{estPiminus}_{\prec\beta}$ and $\eqref{eq:lhsstrongboundincrement}_{\prec\beta}$ hold,
and that $\eqref{estdGammaincrement}_{\preceq\beta}^\gamma$ holds
for all $\gamma$ populated and not purely polynomial. 
Then $\eqref{eq:rhsweakboundincrement}_\beta$ holds.
\end{lemma}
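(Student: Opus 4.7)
The plan is to apply the general reconstruction theorem -- already used for \cref{rec1}, cf.~\cite[Lemma~4.8]{LO22} -- to the two-parameter family
\begin{equation*}
    F_{xz}(y) := (\delta\Pi^-_x - \d\G_{xz}\Pi^-_z)_\beta(y),
\end{equation*}
but this time aiming for the improved exponent $\kappa+\alpha-3$ in place of the plain $|\beta|-3$ at the secondary base point $z$. The key observation is that, since $|\beta|<3$, the projection $Q$ acts as the identity on the $\beta$-component, so the cancellation identity \eqref{magic} is available and provides the ``vanishing at the secondary base point''; the assumption $\eqref{estdGammaincrement}^\gamma_{\preceq\beta}$ provides the complementary ``continuity in the secondary base point''.

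For the base-point vanishing, I would evaluate \eqref{magic} at a generic secondary base point $y'$, expand the resulting $\beta$-component as a finite sum over sub-multi-index decompositions $e_k+\beta_1+\cdots+\beta_{k+1}=\beta$ (and the analogue for $f_\ell$) as in \eqref{piminuscomponents}, and estimate each resulting product. The factors $\Pi_{x\gamma}(y')$ are controlled by $\eqref{estPi}_{\prec\beta}$; the third-derivative factors $\nabla\Delta(\delta\Pi_x-\d\G_{xy'}\Pi_{y'})_\gamma(y')$, whose first two derivatives at $y'$ vanish by \eqref{qualitativeIncrement}, are controlled in an annealed sense by combining the integrated increment estimate $\eqref{eq:lhsstrongboundincrement}_{\prec\beta}$ with the quantitative $C^{4,\alpha}$-smoothness \eqref{smoothness_deltaPi}; and the noise factor $\delta\xi_\tau(y')$ is dominated by the weight $w_x(y')$ via \eqref{weight} and Minkowski. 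Convolving the resulting expression against $\psi_t$ and invoking the moment bound \eqref{eq:kerbound} together with the weighted averaging \eqref{average_psi_wx} then produces the base-point estimate
\begin{equation*}
    \E^{1/q'}|(F_{xy'}\ast\psi_t)(y')|^{q'} \lesssim (\sqrt[8]{t})^{\alpha-3+\kappa}(\sqrt[8]{t}+\abss{x-y'})^{|\beta|-\alpha}w_x(y').
\end{equation*}

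For the continuity in the secondary base point, I would exploit that the polynomial correction in \eqref{recenterPi-} is empty for $|\beta|<3$, so the $\beta$-component satisfies $Q\Pi^-_z=Q\G_{zy'}\Pi^-_{y'}$; combined with the $Q$ projection built into the ansatz \eqref{dGamma} for $\d\G$, this gives the algebraic identity
\begin{equation*}
    \d\G_{xy'}\Pi^-_{y'}-\d\G_{xz}\Pi^-_z = (\d\G_{xy'}-\d\G_{xz}\G_{zy'})\Pi^-_{y'},
\end{equation*}
whose $\beta$-component is bounded by pairing $\eqref{estdGammaincrement}^\gamma_{\preceq\beta}$ (restricted to $\gamma$ not purely polynomial, since $\Pi^-\in\widetilde{\T}^*$) with $\eqref{estPiminus}_{\prec\beta}$. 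Feeding the two inputs into the reconstruction machinery and splitting into the three scale regimes $\sqrt[8]{t}\lesssim\abss{y-z}$, $\abss{y-z}\lesssim\sqrt[8]{t}\lesssim\abss{x-z}$, and $\sqrt[8]{t}\gtrsim\max\{\abss{y-z},\abss{x-z}\}$, exactly as in the proof of \cref{lem:integrationIII}, should produce $\eqref{eq:rhsweakboundincrement}_\beta$.

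The hardest step will be the base-point vanishing: one must carefully combine \eqref{qualitativeIncrement} (which kills up to second derivatives of the increment at $y'$) with $\eqref{eq:lhsstrongboundincrement}_{\prec\beta}$ and \eqref{smoothness_deltaPi}, in order to extract, after convolution with $\psi_t$, the improved factor $(\sqrt[8]{t})^\kappa$ beyond what plain reconstruction would give. This is precisely where the subcriticality condition $\kappa+2\alpha>3$ from \eqref{kappa1} is consumed, closing the reconstruction.
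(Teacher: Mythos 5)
Your plan has a genuine gap in the ``vanishing at the base point'' step, and it is precisely the point where the paper's proof does something structurally different. You propose to apply the general reconstruction lemma \cite[Lemma~4.8]{LO22} directly to the raw increment $F_{xz}(y)=(\delta\Pi^-_x-\d\G_{xz}\Pi^-_z)_\beta(y)$, and to obtain the required vanishing from the magic identity \eqref{magic}. But \eqref{magic} does \emph{not} say that $(\delta\Pi^-_x-\d\G_{xz}\Pi^-_z)_\beta(z)=0$; it expresses this quantity as the nontrivial sum $\bigl(\sum_k\a_k\Pi_x^k(z)\nabla\Delta(\delta\Pi_x-\d\G_{xz}\Pi_z)+\sum_\ell\b_\ell\Pi_x^\ell(z)\delta\xi_\tau\bigr)_\beta(z)$, and neither the third-derivative factor (where \eqref{qualitativeIncrement} only kills derivatives up to order two) nor the $\delta\xi_\tau$ factor vanishes. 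Consequently the quantity $(\delta\Pi^-_x-\d\G_{xz}\Pi^-_z)_{\beta t}(z)$ converges, as $t\to 0$, to the nonzero pointwise value, not to $0$. Your claimed base-point estimate $\E^{1/q'}|\cdot|^{q'}\lesssim(\sqrt[8]{t})^{\alpha+\kappa-3}(\ldots)w_x$ does not repair this: by \eqref{kappa3} and $3+\alpha\in\A$ one has $\kappa+\alpha<3$, so the exponent $\alpha+\kappa-3$ is strictly negative and the bound does not go to zero. The hypothesis of \cite[Lemma~4.8]{LO22} therefore fails for your choice of $F$.

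The paper's remedy is to subtract these singular terms \emph{before} reconstruction, i.e.\ to set
\begin{equation*}
F_{xz}:=(\delta\Pi^-_x-\d\G_{xz}\Pi^-_z)-\Bigl(\sum_k\a_k\Pi_x^k(z)\nabla\Delta(\delta\Pi_x-\d\G_{xz}\Pi_z)+\sum_\ell\b_\ell\Pi_x^\ell(z)\delta\xi_\tau\Bigr),
\end{equation*}
for which \eqref{magic} becomes precisely $F_{xz\beta}(z)=0$, so (combined with the annealed continuity of \cref{rem:smoothness,rem:smoothness_Mall}) the reconstruction lemma applies. The subtracted terms are then estimated directly, \emph{without} reconstruction, since they are products of quantities frozen at $z$ with a third derivative of the increment (controlled by $\eqref{eq:lhsstrongboundincrement}_{\prec\beta}$) or with $\delta\xi_\tau$ (controlled via its smoothed moment bound, producing exactly the $(\sqrt[8]{t})^{\alpha+\kappa-3}w$ factor). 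Your ``continuity in the secondary base point'' step — rewriting $\d\G_{xy'}\Pi^-_{y'}-\d\G_{xz}\Pi^-_z=(\d\G_{xy'}-\d\G_{xz}\G_{zy'})\Pi^-_{y'}$ and pairing with $\eqref{estdGammaincrement}$ — is correct and matches one of the four terms in the paper's continuity estimate, but the other three terms (arising from the subtraction) are missing from your accounting, and without the subtraction the whole reconstruction argument never gets off the ground.
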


\begin{proof}
We define 
\begin{equation}
F_{xz}:=(\delta\Pi^-_x-\d\G_{xz}\Pi^-_z) - \Big(
\sum_k \a_k \Pi^k_x(z) \nabla\Delta(\delta\Pi_x-\d\G_{xz}\Pi_z)
+ \sum_\l \b_\l \Pi^\l_x(z)\delta\xi_\tau \Big),
\end{equation}
and we shall establish
\begin{align}\label{bound1}
&\E^\frac{1}{q'} | F_{xz\beta t}(y) |^{q'} \\
&\lesssim (\sqrt[8]{t})^{\alpha-3} (\sqrt[8]{t}+|y-z|_\s)^\kappa 
(\sqrt[8]{t}+|y-z|_\s+|x-z|_\s)^{|\beta|-\alpha} (w_x(y)+w_x(z)), 
\end{align}
which together with 
\begin{align}\label{bound2}
&\E^\frac{1}{q'} \big| \big( 
\sum_k \a_k \Pi^k_x(z) \nabla\Delta(\delta\Pi_x-\d\G_{xz}\Pi_z)
+ \sum_\l \b_\l \Pi^\l_x(z)\delta\xi_\tau \big)_{\beta t}(y) \big|^{q'} \\
&\lesssim (\sqrt[8]{t})^{\alpha-3} (\sqrt[8]{t}+|y-z|_\s)^\kappa 
(\sqrt[8]{t}+|y-z|_\s+|x-z|_\s)^{|\beta|-\alpha} (w_x(y)+w_x(z))
\end{align}
yields the desired \eqref{eq:rhsweakboundincrement}. 
We start with the proof of \eqref{bound1}. 
As in the proof of Lemma~\ref{rec1}, 
as a consequence of general reconstruction \cite[Lemma~4.8]{LO22} 
we obtain from ``vanishing at the base point''
\begin{equation}\label{bound1a}
\lim_{t\to0} \E^\frac{1}{q'}| F_{xz\beta t}(z) |^{q'} = 0
\end{equation}
and ``continuity in the base point''
\begin{align}\label{bound1b}
&\E^\frac{1}{q'} | (F_{xy} - F_{xz})_{\beta t}(y) |^{q'} \\
&\lesssim (\sqrt[8]{t})^{\alpha-3} (\sqrt[8]{t}+|y-z|_\s)^{\kappa+\alpha} 
(\sqrt[8]{t}+|y-z|_\s+|x-z|_\s)^{|\beta|-2\alpha} (w_x(y)+w_x(z))
\end{align}
the following stronger version of \eqref{bound1},
\begin{align}
&\E^\frac{1}{q'} | F_{xz\beta t}(y) |^{q'} \\
&\lesssim (\sqrt[8]{t})^{\alpha-3} (\sqrt[8]{t}+|y-z|_\s)^{\kappa+\alpha} 
(\sqrt[8]{t}+|y-z|_\s+|x-z|_\s)^{|\beta|-2\alpha} (w_x(y)+w_x(z)).
\end{align}
Here, we have used that $\kappa+2\alpha-3>0$, which follows from \eqref{kappa},
and we have used that $|\beta|\geq2\alpha$ unless the left hand side of \eqref{bound1b} vanishes, 
for which we give the argument term by term in \eqref{bound1bi} -- \eqref{bound1biv} below.
For \eqref{bound1a} we note that \eqref{magic} amounts to
\begin{equation}
F_{xz\beta}(z) = 0, 
\end{equation}
which implies \eqref{bound1a}, provided we have (annealed) continuity 
of $F_{xz\beta}(y)$ in the active variable $y$. 
This continuity is a consequence of \cref{rem:smoothness} and \cref{rem:smoothness_Mall}. 

We turn to \eqref{bound1b}, and bound its left hand side by the triangle inequality by 
\begin{align}
&\E^\frac{1}{q'}| (\d\G_{xy}\Pi^-_y - \d\G_{xz}\Pi^-_z)_{\beta t}(y) |^{q'} \label{bound1bi} \\
&\,+ \sum_k \E^\frac{1}{q'} \big| \big( \a_k \Pi^k_x(y)\nabla\Delta(\d\G_{xz}\Pi_z-\d\G_{xy}\Pi_y)_t(y) \big)_\beta \big|^{q'} 
\label{bound1bii} \\
&\,+ \sum_k \E^\frac{1}{q'} \big| \big( \a_k (\Pi^k_x(y)-\Pi^k_x(z)) \nabla\Delta(\delta\Pi_x-\d\G_{xz}\Pi_z)_t(y) \big)_\beta\big|^{q'} \label{bound1biii} \\
&\,+ \sum_\l \E^\frac{1}{q'} | \big( \b_\l (\Pi^\l_x(y) - \Pi^\l_x(z)) \big)_\beta (\delta\xi_\tau)_{t}(y) |^{q'}. \label{bound1biv}
\end{align}
For \eqref{bound1bi}, we note that the presence of $Q$ in $\d\G_{xz}$ 
allows by \eqref{recenterPi-} to rewrite
\begin{equation}
\d\G_{xy}\Pi^-_y - \d\G_{xz}\Pi^-_z = (\d\G_{xy}-\d\G_{xz}\G_{zy})\Pi^-_y.
\end{equation}
Hence the left hand side of \eqref{bound1bi} is by H\"older's inequality, 
$\eqref{estdGammaincrement}_\beta^{\gamma\neq\pp}$ and $\eqref{estPiminus}_{\prec\beta}$ estimated by  
\begin{align}
1_{|\beta|\geq2\alpha} 
\sum_{|\gamma|\in \A\cap[\alpha,\kappa+2\alpha)} |y-z|_\s^{\kappa+2\alpha-|\gamma|} (|y-z|_\s+|x-z|_\s)^{|\beta|-2\alpha} (w_x(y)+w_x(z)) 
(\sqrt[8]{t})^{|\gamma|-3},
\end{align}
which is bounded by the right hand side of \eqref{bound1b}. 
Since $\nabla\Delta(\d\G_{xy}\Pi_y-\d\G_{xz}\Pi_z) = \nabla\Delta(\d\G_{xy}-\d\G_{xz}\G_{zy})\Pi_y$ by \eqref{recenter},  \eqref{bound1bii} is by H\"older's inequality, $\eqref{estdGammaincrement}_{\prec\beta}^{\gamma\neq\pp}$, $\eqref{estPi}_{\prec\beta}$ and the moment bound \eqref{eq:kerbound} estimated by 
\begin{align}
&\sum_k \sum_{e_k+\beta_1+\cdots+\beta_{k+1}=\beta} |x-y|_\s^{|\beta_1|+\cdots+|\beta_k|} \, 1_{|\beta_{k+1}|\geq2\alpha}\\
&\times \sum_{|\gamma|\in \A\cap[\alpha,\kappa+2\alpha)} |y-z|_\s^{\kappa+2\alpha-|\gamma|} 
(|y-z|_\s+|x-z|_\s)^{|\beta_{k+1}|-2\alpha} (w_x(y)+w_x(z)) 
(\sqrt[8]{t})^{|\gamma|-3},
\end{align}
which is again bounded by the right hand side of \eqref{bound1b}. 
To estimate \eqref{bound1biii}, 
we note that the same argumentation as for \cite[(4.81)]{LOTT21} shows that 
$\eqref{estPi}_{\prec\beta}$ and $\eqref{estGamma}_{\prec\beta}$ imply
\begin{equation}\label{mt05}
\sum_k \E^\frac{1}{p}\big|\big(\a_k\Pi^k_x(y)-\a_k\Pi^k_x(z)\big)_{\beta}\big|^p 
\lesssim |y-z|_\s^\alpha (|y-z|_\s+|x-z|_\s)^{|\beta|-2\alpha}. 
\end{equation}
Also here, the left hand side vanishes unless $|\beta|\geq2\alpha$: 
the $k=0$ term vanishes, and for $k\geq1$ the left hand side vanishes unless $\beta$ contains $e_k$ which implies $|\beta|\geq2\alpha$. 
Furthermore, the same argumentation as for \cite[(4.82)]{LOTT21} shows that 
$\eqref{eq:lhsstrongboundincrement}_{\beta}$ implies
\begin{align}
&\E^\frac{1}{q'} | \nabla\Delta(\delta\Pi_x-\d\G_{xz}\Pi_z)_{\beta t}(y) |^{q'}\\
&\lesssim (\sqrt[8]{t})^{\alpha-3} (\sqrt[8]{t}+|y-z|_\s)^{\kappa} 
(\sqrt[8]{t}+|y-z|_\s+|x-z|_\s)^{|\beta|-\alpha} (w_x(y)+w_x(z)).
\end{align}
Hence \eqref{bound1biii} is by H\"older's inequality, 
$\eqref{estPi}_{\prec\beta}$, $\eqref{estGamma}_{\prec\beta}$ 
and $\eqref{eq:lhsstrongboundincrement}_{\prec\beta}$ estimated by 
\begin{align}
&\sum_{\beta_1+\beta_2=\beta} |y-z|_\s^\alpha (|y-z|_\s+|x-z|_\s)^{|\beta_1|-2\alpha} \\
&\times (\sqrt[8]{t})^{\alpha-3} (\sqrt[8]{t}+|y-z|_\s)^{\kappa} 
(\sqrt[8]{t}+|y-z|_\s+|x-z|_\s)^{|\beta_2|-\alpha} (w_x(y)+w_x(z)),
\end{align}
which is once more bounded by the right hand side of \eqref{bound1b}. 
We turn to \eqref{bound1biv}, and note that the same 
argumentation as for \eqref{mt05} shows that $\eqref{estPi}_{\prec\beta}$ and $\eqref{estGamma}_{\prec\beta}$ imply
\begin{equation}\label{mt06}
\sum_\l \E^\frac{1}{p}\big|\big(\b_\l \Pi^\l_x(y)-\b_\l \Pi^\l_x(z)\big)_{\beta}\big|^p 
\lesssim |y-z|_\s^\alpha (|y-z|_\s+|x-z|_\s)^{|\beta|-2\alpha},
\end{equation}
as above with the understanding that the left hand side vanishes unless $|\beta|\geq2\alpha$.
Furthermore, by the semigroup property 
\begin{equation}
\delta\xi_t(y) 
= \int_{\R^{1+d}} \dx{z} |y-z|^{\kappa} (LL^*)^{-\frac{s}{2|L|}} 
\psi_t(y-z) |y-z|^{-\kappa} (LL^*)^{\frac{s}{2|L|}} \delta\xi(z) \, ,
\end{equation}
which by the triangle inequality and Cauchy--Schwarz yields
\begin{equation}
\E^\frac{1}{q}| \delta\xi_t(y)|^q 
\leq \left(\int_{\R^{1+d}}\dx{z} |y-z|^{2\kappa} 
| (LL^*)^{-\frac{s}{2|L|}} \psi_t(y-z) |^2 \right)^\frac{1}{2}
w(y) \, .
\end{equation}
Since $q'<q$, this implies by the scaling \eqref{eq:psiscaling} of $\psi_t$
\begin{equation}
\E^\frac{1}{q'}| (\delta\xi_\tau)_t(y) |^{q'} 
\lesssim (\sqrt[8]{t})^{\alpha-3+\kappa} w(y) \, .
\end{equation}
Thus \eqref{bound1biv} is by H\"older's inequality, $\eqref{estPi}_{\prec\beta}$ and $\eqref{estGamma}_{\prec\beta}$ estimated by 
\begin{equation}
|y-z|_\s^\alpha (|y-z|_\s+|x-z|_\s)^{|\beta|-2\alpha} (\sqrt[8]{t})^{\alpha-3+\kappa} w(y),
\end{equation}
which is bounded by the right hand side of \eqref{bound1b}. 

\medskip

It remains to establish \eqref{bound2}. 
We bound its left hand side by the triangle inequality by 
\begin{align}
&\sum_k \E^\frac{1}{q'} \big| \big( 
\a_k \Pi^k_x(z) \nabla\Delta(\delta\Pi_x-\d\G_{xz}\Pi_z)_t(y) \big)_\beta \big|^{q'}
+\sum_\l \E^\frac{1}{q'} \big| \big( \b_\l \Pi^\l_x(z)\big)_\beta (\delta\xi_\tau)_{t}(y) \big|^{q'}.
\end{align}
As above, we argue that this is by $\eqref{estPi}_{\prec\beta}$ and 
$\eqref{eq:lhsstrongboundincrement}_{\prec\beta}$ estimated by the right hand side of \eqref{bound2}.
\end{proof}
	%%%%%%%%%%%%%%%%%%%%%%%%%%%%%%%%%%%%%%%%%%%%%%%%%%%%%%
\subsection{Algebraic arguments}\label{sec:algebraic}

\begin{lemma}[Algebraic argument I]\label{alg1}
Assume that $\eqref{estpin}_{\prec\beta}$ holds. 
Then $\eqref{estGamma}_\beta^{\gamma}$ holds for all $\gamma$ populated and not purely polynomial. 
Furthermore, 
\begin{equation}\label{estGammaDn}
    \E^\frac{1}{p}|(\G_{xy}D^{(\n)})_\beta^\gamma|^p\lesssim |x-y|_\s^{|\beta|-|\gamma|-\alpha+|\n|} 
\end{equation}
holds for all $\gamma$ populated and for all $\n$.
\end{lemma}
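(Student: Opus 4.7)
The plan is to derive both estimates directly from the exponential representation~\eqref{exp} of $\G_{xy}$, whose matrix coefficients are given by
\begin{equation*}
(\G_{xy})_\beta^\gamma
= \sum_{j\geq0} \frac{1}{j!} \sum_{\n_1,\dots,\n_j} \sum_{\beta_1+\cdots+\beta_{j+1}=\beta}
\pi^{(\n_1)}_{xy\beta_1}\cdots\pi^{(\n_j)}_{xy\beta_j} (D^{(\n_1)}\cdots D^{(\n_j)})_{\beta_{j+1}}^\gamma.
\end{equation*}
For $\gamma$ populated and not purely polynomial, the condition $\sum_\l \gamma(\l)>0$ together with the triangularity~\eqref{tri1} ensures that each non-vanishing summand with $j\geq1$ satisfies $\beta_1,\dots,\beta_j\prec\beta$, so that the inductive hypothesis~$\eqref{estpin}_{\prec\beta}$ is available on every factor $\pi^{(\n_i)}_{xy\beta_i}$. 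The combinatorial finiteness established in the proof of \cref{lem:defineGamma} further guarantees that only finitely many summands contribute for fixed $\beta,\gamma$.

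Next, H\"older's inequality in the $j$ independent factors gives
\begin{equation*}
\E^{\frac{1}{p}} \bigl|\pi^{(\n_1)}_{xy\beta_1}\cdots\pi^{(\n_j)}_{xy\beta_j}\bigr|^p
\lesssim \prod_{i=1}^j \E^{\frac{1}{jp}} \bigl|\pi^{(\n_i)}_{xy\beta_i}\bigr|^{jp}
\lesssim |x-y|_\s^{\sum_{i=1}^j (|\beta_i|-|\n_i|)},
\end{equation*}
whereas the combinatorial factor $(D^{(\n_1)}\cdots D^{(\n_j)})_{\beta_{j+1}}^\gamma$ is a deterministic bounded constant. The exponent identity~\eqref{rg06}, which asserts $\sum_{i=1}^j(|\beta_i|-|\n_i|)=|\beta|-|\gamma|$ on non-vanishing terms, then delivers $\eqref{estGamma}_\beta^\gamma$ after summing the finitely many contributions; for $\gamma=\beta$ only the $j=0$ term survives, matching the bound with exponent $0$.

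For the second estimate~$\eqref{estGammaDn}_\beta^\gamma$, write
\begin{equation*}
(\G_{xy}D^{(\n)})_\beta^\gamma = \sum_{\gamma'} (\G_{xy})_\beta^{\gamma'} (D^{(\n)})_{\gamma'}^\gamma,
\end{equation*}
which by~\eqref{D0comp} and~\eqref{Dncomp} is a finite sum in $\gamma'$. If $\gamma$ is populated and not purely polynomial, every non-vanishing $\gamma'$ inherits $\sum_\l \gamma'(\l)>0$ from $\gamma$, so the exponential-formula argument from the first two paragraphs applies verbatim to $(\G_{xy})_\beta^{\gamma'}$---the triangularity~\eqref{tri1} does not require $\gamma'$ itself to be populated, only $\sum_\l \gamma'(\l)>0$---yielding $|x-y|_\s^{|\beta|-|\gamma'|}$. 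Combining this with the identity~\eqref{rg03}, which gives $|\gamma'|=|\gamma|+\alpha-|\n|$, produces the desired exponent $|\beta|-|\gamma|-\alpha+|\n|$. For $\gamma=g_{\n'}$ purely polynomial and populated (so $\n'\neq\0$), a direct inspection of~\eqref{D0comp} and~\eqref{Dncomp} shows that $(\G_{xy}D^{(\n)})_\beta^{g_{\n'}}$ reduces to either $0$ or $\delta_\beta^0$, both of which vanish for populated $\beta$ and trivially satisfy the bound.

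The main obstacle, if any, is the careful bookkeeping of the exponent in the exponential formula and the combinatorial finiteness of the $j$-sum; no new analytic input is needed since~\eqref{exp}, the triangularity results of \cref{lem:tri1}, and the inductive moment bounds on $\pi^{(\n)}_{\prec\beta}$ are already in hand.
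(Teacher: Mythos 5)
Your proof is correct and follows essentially the same route as the paper: expand $(\G_{xy})_\beta^\gamma$ via the exponential formula \eqref{coeffGamma}, invoke the triangularity \eqref{tri1} to bring $\eqref{estpin}_{\prec\beta}$ to bear on every factor $\pi^{(\n_i)}_{xy\beta_i}$, apply H\"older's inequality, and read off the exponent from \eqref{rg06}; for \eqref{estGammaDn} you compose with $D^{(\n)}$, observe that the intermediate multiindex $\gamma'$ inherits $\sum_\ell\gamma'(\ell)>0$ (so the already-established bound on $(\G_{xy})_\beta^{\gamma'}$ applies), and use \eqref{rg03}. The only cosmetic difference is in dispatching the purely polynomial $\gamma=g_{\n'}$ case for \eqref{estGammaDn}: you note that the resulting $\delta_\beta^0$ vanishes because $\beta=0$ is not populated, whereas the paper directly verifies that the exponent $|\beta|-|\gamma|-\alpha+|\n|$ equals $0$ when $\beta=0$ and $\gamma=g_\n$, so the bound $\lesssim 1$ is satisfied regardless. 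Both resolutions are valid.
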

\begin{proof}
% we didn't give that proof in LOTT 
    Recall from \eqref{coeffGamma} that $(\G_{xy})_\beta^\gamma$ is a linear combination of terms of the form 
    \begin{equation}
        \pi_{xy \beta_1}^{(\n_1)}\cdots\pi_{xy \beta_j}^{(\n_j)} (D^{(\n_1)}\cdots D^{(\n_j)})_{\beta_{j+1}}^\gamma, 
    \end{equation}
    where $j\geq0$, $\n_1,\dots,\n_j\in\N_0^{1+d}$ and $\beta_1+\cdots+\beta_{j+1}=\beta$. 
    Since by assumption $\sum_\ell\gamma_b(\ell)>0$, \eqref{tri1} yields $\beta_1,\dots,\beta_j\prec\beta$, 
    hence H\"older's inequality and $\eqref{estpin}_{\prec\beta}$ 
    imply that the stochastic norm $\E^\frac{1}{p}|\cdot|^p$ of the above expression is estimated by 
    \begin{equation}
        |x-y|_\s^{|\beta_1|-|\n_1|}\cdots|x-y|_\s^{|\beta_j|-|\n_j|}. 
    \end{equation}
    By \eqref{rg06}, the sum of the exponents equals $|\beta|-|\gamma|$. 
    We turn to the proof of \eqref{estGammaDn}. 
    If $\gamma$ is purely polynomial, then either the left hand side of \eqref{estGammaDn} vanishes, 
    or $\n\neq\0$ and $\gamma=g_\n$. In the latter case, $(\G_{xy}D^{(\n)})_\beta^\gamma = (\G_{xy})_\beta^0=\delta_\beta^0$, which trivially satisfies \eqref{estGammaDn} since $|\beta=0|=\alpha$ and $|\gamma=g_\n|=|\n|$.
    If $\gamma$ is not purely polynomial, then $\sum_\ell\gamma_b(\ell)>0$. It follows from \eqref{D0props} and \eqref{Dnprops} that $(\G_{xy}D^{(\n)})_\beta^\gamma=\sum_{\beta'}(\G_{xy})_{\beta}^{\beta'}(D^{(\n)})_{\beta'}^\gamma$ is a sum over multiindices $\beta'$ restricted to $\sum_\ell\beta'_b(\ell)>0$. 
    We can therefore appeal to the already established $\eqref{estGamma}_\beta^{\beta'}$ 
    (notice that above we only used $\sum_\ell\gamma_b(\ell)>0$, not that $\gamma$ is populated) 
    to estimate the $\E^\frac{1}{p}|\cdot|^p$-norm of every summand by 
    $|x-y|_\s^{|\beta|-|\beta'|}$. From \eqref{rg03} we obtain $|\beta'|=|\gamma|+\alpha-|\n|$, which establishes \eqref{estGammaDn}.    
\end{proof}
 
\begin{lemma}[Algebraic argument II]\label{alg2}
Assume that $\eqref{estpin}_{\prec\beta}$ and $\eqref{estdeltapin}_{\prec\beta}$ hold. 
Then for all $\gamma$ populated and not purely polynomial 
\begin{equation}\label{estdeltaGamma}
\E^\frac{1}{q'}|(\delta\G_{xy})_\beta^{\gamma}|^{q'}\lesssim |x-y|_\s^{|\beta|-|\gamma|} \bar w \, .
\end{equation}
\end{lemma}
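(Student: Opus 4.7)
The plan is to mirror the argument of \cref{alg1}, differentiating the exponential formula \eqref{exp} in the direction $\delta\xi$. Since the derivations $D^{(\n)}$ depend only on the algebraic structure (not on $\xi$), and only the $\pi^{(\n)}_{xy}$ carry randomness, the product rule yields
\begin{equation}
    \delta\G_{xy} = \sum_{j\geq1}\tfrac{1}{j!}\sum_{i=1}^j\sum_{\n_1,\dots,\n_j} \pi^{(\n_1)}_{xy}\cdots\delta\pi^{(\n_i)}_{xy}\cdots\pi^{(\n_j)}_{xy} D^{(\n_1)}\cdots D^{(\n_j)}.
\end{equation}
Since all $D^{(\n)}$ pairwise commute (for $\n\neq\0$ they are partial derivatives in the $\p_\n$-variables; a direct check using \eqref{D0comp} and \eqref{Dncomp} shows $D^{(\0)}$ commutes with each $D^{(\n)}$), this can be reorganised into
\begin{equation}\label{eq:plan_deltaGamma}
    \delta\G_{xy} = \sum_\n \delta\pi^{(\n)}_{xy}\, \G_{xy}\, D^{(\n)},
\end{equation}
which is the Malliavin analogue of the identity used in the proof of \cref{alg1}.

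Taking matrix coefficients gives
\begin{equation}
    (\delta\G_{xy})_\beta^\gamma = \sum_{\n}\sum_{\beta_1+\beta_2=\beta} \delta\pi^{(\n)}_{xy\beta_1}\,(\G_{xy}D^{(\n)})_{\beta_2}^\gamma .
\end{equation}
Since $\gamma$ is populated and not purely polynomial, $\sum_\ell\gamma(\ell)>0$. By the same argument as in \cref{alg1} based on \eqref{D0props} and \eqref{Dnprops}, this forces $\beta_2\neq 0$, whence $\beta_1\prec\beta$; moreover only finitely many $(\n,\beta_1,\beta_2)$ contribute, since $\delta\pi^{(\n)}_{xy\beta_1}$ vanishes unless $|\beta_1|>|\n|$ (cf.\ \eqref{pin}). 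The main step is then H\"older's inequality combined with two inputs: the inductive hypothesis $\eqref{estdeltapin}_{\prec\beta}$, giving
\begin{equation}
    \E^{\frac{1}{q'}}|\delta\pi^{(\n)}_{xy\beta_1}|^{q'}\lesssim |x-y|_\s^{|\beta_1|-|\n|}\bar w,
\end{equation}
and the bound \eqref{estGammaDn} already established in \cref{alg1}, giving
\begin{equation}
    \E^{\frac{1}{p}}|(\G_{xy}D^{(\n)})_{\beta_2}^\gamma|^{p}\lesssim |x-y|_\s^{|\beta_2|-|\gamma|-\alpha+|\n|},
\end{equation}
for the H\"older-conjugate exponent $p$ of $q'/(q'-q+1)$ (or more simply, any $p<\infty$, since $\eqref{estGammaDn}$ holds for all $p$).

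Combining these, the summand is bounded in $\E^{1/q'}|\cdot|^{q'}$ by $|x-y|_\s^{|\beta_1|+|\beta_2|-|\gamma|-\alpha}\bar w$. Using that $|\cdot|$ is additive up to the shift by $\alpha$, namely $|\beta_1|+|\beta_2|=|\beta_1+\beta_2|+\alpha=|\beta|+\alpha$ by \eqref{homogeneity}, the total exponent is $|\beta|-|\gamma|$, yielding exactly \eqref{estdeltaGamma}. No step is really an obstacle here; the only point requiring mild care is the bookkeeping in passing from the symmetric expansion \eqref{eq:plan_deltaGamma} to the commutation of the $D^{(\n)}$, and checking that the inductive range $\beta_1\prec\beta$ is indeed available (which is where $\sum_\ell\gamma(\ell)>0$ is used, exactly as in \cref{alg1}).
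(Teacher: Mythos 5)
Your proof is correct and follows essentially the same route as the paper's, which differentiates the exponential formula and then invokes the argument of Algebraic argument~I verbatim; you repackage this via the compact identity $\delta\G_{xy}=\sum_\n\delta\pi^{(\n)}_{xy}\G_{xy}D^{(\n)}$ (already noted in the footnote accompanying \eqref{dGamma}) and then invoke \eqref{estGammaDn}, which amounts to the same H\"older estimate. The exponent $q'/(q'-q+1)$ you cite as the H\"older conjugate is not quite right, but your parenthetical fallback --- any $p<\infty$ suffices since \eqref{estGammaDn} holds for all $p$ and one has the freedom $1<q'<q$ in the Malliavin-type bounds --- is the correct resolution.
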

\begin{proof}
% we gave a different proof in LOTT 
    Applying $\delta$ to \eqref{exp} yields by the chain rule 
    \begin{equation}
        \delta\G_{xy} = \sum_{j\geq1}\tfrac{1}{(j-1)!} \sum_{\n_1,\dots,\n_j} \delta\pi^{(\n_1)}_{xy}\pi^{(\n_2)}_{xy}\cdots\pi^{(\n_j)}_{xy} D^{(\n_1)}\cdots D^{(\n_j)}.
    \end{equation}
    Clearly, \eqref{tri1} transfers from $\G$ to $\delta\G$, 
    hence the same argumentation as in Lemma~\ref{alg1} applies.
\end{proof}

\begin{lemma}[Algebraic argument III]\label{alg3}
Assume that $\eqref{estpin}_{\prec\beta}$, $\eqref{estdpinincrement}_{\prec\beta}$ and $\eqref{estdpin}_{\prec\beta}$ hold,
and that $\eqref{estGamma}_{\preceq\beta}^\gamma$ holds for all $\gamma$ populated and not purely polynomial. 
Then for all $\gamma$ populated and not purely polynomial 
\begin{align}\label{estdGammaincrement} 
    &\E^\frac{1}{q'}|(\d\G_{xy}-\d\G_{xz}\G_{zy})_\beta^\gamma|^{q'} \\
    &\lesssim |y-z|_\s^{\kappa+2\alpha-|\gamma|} (|y-z|_\s+|x-z|_\s)^{|\beta|-2\alpha} (w_x(y)+w_x(z)) \, ,
\end{align}
with the understanding that $\kappa+2\alpha-|\gamma|>0$ and $|\beta|\geq2\alpha$ unless the left hand side vanishes.
\end{lemma}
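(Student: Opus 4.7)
The plan is to mimic the strategy of Algebraic argument~I (\cref{alg1}) and Algebraic argument~II (\cref{alg2}), but starting from the ansatz \eqref{dGamma} for $\d\G$ and using the three-point increment estimate $\eqref{estdpinincrement}_{\prec\beta}$ as the key inductive input. First I would write
\begin{align}
\d\G_{xy}-\d\G_{xz}\G_{zy}
&= \sum_{|\n|\le 2}\bigl[\d\pi^{(\n)}_{xy}-\d\pi^{(\n)}_{xz}-\d\G_{xz}\pi^{(\n)}_{zy}\bigr]\,\G_{xy}D^{(\n)}Q \\
&\quad+\sum_{|\n|\le 2}\d\pi^{(\n)}_{xz}\bigl(\G_{xy}D^{(\n)}Q-\G_{xz}D^{(\n)}Q\G_{zy}\bigr) \\
&\quad+\sum_{|\n|\le 2}(\d\G_{xz}\pi^{(\n)}_{zy})\,\G_{xy}D^{(\n)}Q \, ,
\end{align}
using $\G_{xy}=\G_{xz}\G_{zy}$ and the defining formula for $\d\G$. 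The first line is the ``good'' term directly controlled by $\eqref{estdpinincrement}_{\prec\beta}$, while the remaining two lines are lower order in $|y-z|_\s$ and need to be handled by combining the triangularity of $\G$ (which moves $\G_{zy}$ past $D^{(\n)}Q$ up to commutator-type terms built from $\pi^{(\m)}_{zy}$'s) with $\eqref{estpin}_{\prec\beta}$, $\eqref{estdpin}_{\prec\beta}$ and $\eqref{estGamma}_{\preceq\beta}^{\gamma\neq \pp}$.

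The main step is then a termwise estimate on coordinates $\beta\to\gamma$. For the first line, I would take the $\beta$-$\gamma$ entry, expand the matrix product, and apply H\"older's inequality in probability with one $q'$-factor absorbing the three-point increment (thus producing a weight $w_x(y)+w_x(z)$) and $p$-factors for $\G_{xy}$; the triangularity of $D^{(\n)}Q$ forces the intermediate homogeneity to lie in $\A\cap[\alpha,\kappa+2\alpha)\cap(-\infty,3)$, and each term carries the correct power $|y-z|_\s^{\kappa+2\alpha-|\gamma|}(|y-z|_\s+|x-z|_\s)^{|\beta|-2\alpha}$ once one adds the homogeneity identity analogous to \eqref{rg06}. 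For the second and third lines, the commutator $\G_{xy}D^{(\n)}Q-\G_{xz}D^{(\n)}Q\G_{zy}$ and the factor $\d\G_{xz}\pi^{(\n)}_{zy}$ can be estimated by splitting them into pieces that pair a plain $\d$-object at the single base point $x$ (controlled by $\eqref{estdpin}_{\prec\beta}$ and \eqref{eq:gammabound}) with a plain increment of $\G$-type factors (controlled by \cref{alg1}); the resulting homogeneities add up in exactly the same way once one applies \eqref{rg06} and its $\d$-analogue.

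The main obstacle will be the bookkeeping of the projection $Q$ and the assertion $|\beta|\ge 2\alpha$ on the support of the left-hand side. The role of $Q$ is subtle: because $Q\G_{zy}\neq \G_{zy}Q$, the rewriting above generates genuinely new terms of the schematic form $\sum_{|\beta'|\ge 3}(\G_{zy})_\cdot^{\beta'}(\cdot)_{\beta'}$, and one needs to verify that these collapse correctly when evaluated on a populated, non-purely-polynomial $\gamma$ (in particular one uses that $Q\p_\n=\p_\n$ for $|\n|\le 2$, together with the built-in $Q$ in \eqref{dGamma}, so that the $D^{(\n)}Q$ factor only sees homogeneities below $3$ anyway). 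The vanishing statements are then read off structurally: $|\beta|\ge 2\alpha$ follows from \eqref{tri7} applied to either $\d\G_{xy}$ or $\d\G_{xz}\G_{zy}$, and $\kappa+2\alpha-|\gamma|>0$ follows because each contributing intermediate homogeneity is strictly less than $\kappa+2\alpha$ by the same argument used to prove \eqref{tri6}, combined with the restriction $|\gamma|<\kappa+2\alpha$ already present in $\eqref{estdpinincrement}_{\prec\beta}$.
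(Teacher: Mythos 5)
Your telescoping
\begin{align}
\d\G_{xy}-\d\G_{xz}\G_{zy}
&= \sum_{|\n|\le 2}\bigl[\d\pi^{(\n)}_{xy}-\d\pi^{(\n)}_{xz}-\d\G_{xz}\pi^{(\n)}_{zy}\bigr]\,\G_{xy}D^{(\n)}Q
+\sum_{|\n|\le 2}\d\pi^{(\n)}_{xz}\bigl(\G_{xy}D^{(\n)}Q-\G_{xz}D^{(\n)}Q\G_{zy}\bigr) \\
&\quad+\sum_{|\n|\le 2}(\d\G_{xz}\pi^{(\n)}_{zy})\,\G_{xy}D^{(\n)}Q
\end{align}
is algebraically correct, and the first line is exactly the term the paper's proof treats via the increment estimate. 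The gap is in the claim that the second and third lines, taken individually, are ``lower order in $|y-z|_\s$'' to the extent required. In the third line, the only source of $|y-z|_\s$-decay is $\pi^{(\n)}_{zy\gamma'}\lesssim|y-z|_\s^{|\gamma'|-|\n|}$, and the built-in $Q$ inside $\d\G_{xz}$ caps the intermediate index at $|\gamma'|<3$ with no useful lower bound. Concretely, take $\n=\0$, $\gamma'=f_0$, $\gamma=f_0$, $\beta_2=f_1$: then $(\d\G_{xz})^{f_0}_{\beta-f_1}\,\pi^{(\0)}_{zy\,f_0}\,(\G_{xy}D^{(\0)}Q)^{f_0}_{f_1}\lesssim |x-z|_\s^{\kappa+|\beta|-2\alpha}|y-z|_\s^{\alpha}w_x(z)$, which in the regime $|y-z|_\s\ll|x-z|_\s$ is strictly larger than the target $|y-z|_\s^{\kappa+\alpha}|x-z|_\s^{|\beta|-2\alpha}w_x(z)$, since $\kappa+\alpha>2>\alpha$. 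A symmetric failure occurs in the second line. So the decomposition cannot be estimated term by term.

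What makes the paper's proof close is not the observation you emphasize (that $D^{(\n)}Q$ ``only sees homogeneities below $3$''), but its converse: the paper introduces $\d\Gtilde_{xz}:=\sum_{|\n|\le2}\d\pi^{(\n)}_{xz}\G_{xz}D^{(\n)}$ (no $Q$), uses the commutation identity for $D^{(\n)}$ past $\G_{zy}$ to rewrite $(\d\Gtilde_{xy}-\d\Gtilde_{xz}\G_{zy})Q=\sum_\n(\d\pi^{(\n)}_{xy}-\d\pi^{(\n)}_{xz}-\d\Gtilde_{xz}\pi^{(\n)}_{zy})\G_{xy}D^{(\n)}Q$, and then organizes the difference $\d\G_{xy}-\d\G_{xz}\G_{zy}$ so that every ``leftover'' term carries the factor $\d\G_{xz}-\d\Gtilde_{xz}=-\sum_\m\d\pi^{(\m)}_{xz}\G_{xz}D^{(\m)}(\id-Q)$. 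This $(\id-Q)$ forces the intermediate homogeneity $|\gamma'|\ge3$, hence $|\gamma'|>3$ (by $\alpha\notin\mathbb{Q}$), hence $|\gamma'|\ge\kappa+\alpha$ by \eqref{kappa3}, which supplies precisely the missing $|y-z|_\s$-decay. In your grouping, this cancellation is hidden across your second and third lines: adding them and applying the commutation identity reproduces the paper's mismatch terms, but without doing so explicitly the termwise estimate you propose does not satisfy \eqref{estdGammaincrement}. (As a smaller point, $|\beta|\ge2\alpha$ does not follow from \eqref{tri7} applied to ``$\d\G_{xz}\G_{zy}$'' directly; one first has to pass through \eqref{GammaPreservesTildeT} to reduce to $\d\G_{xz}$, as the paper does.)
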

\begin{proof}
Due to the presence of the projection $Q$ in the definition \eqref{dGamma} of $\d\G$ together
with the triangularity \eqref{tri1} of $\G$ with respect to ~$|\cdot|$, 
the left hand side of \eqref{estdGammaincrement} vanishes unless $|\gamma|<3$, 
which by \eqref{kappa} implies $|\gamma|<\kappa+2\alpha$. 
Furthermore, by \eqref{GammaPreservesTildeT} and \eqref{tri7} we observe that $|\beta|\geq2\alpha$ 
unless the left hand side of \eqref{estdGammaincrement} vanishes.

\medskip

We turn to the proper estimate \eqref{estdGammaincrement}, 
for which we momentarily denote by $\d\Gtilde_{xy}$ the object defined in \eqref{dGamma} without the projection $Q$, i.e.
\begin{equation}
    \d\Gtilde_{xy} = \sum_{|\n|\leq2} \d\pi^{(\n)}_{xy} \G_{xy} D^{(\n)}.
\end{equation}
Then  
\begin{equation}
    (\d\G_{xy}-\d\G_{xz}\G_{zy}) = (\d\G_{xy}-\d\G_{xz}\G_{zy})Q
    = (\d\Gtilde_{xy}-\d\Gtilde_{xz}\G_{zy}) Q + (\d\Gtilde_{xz}-\d\G_{xz})\G_{zy} Q,
\end{equation}
where in the first equality we used $Q\G_{zy}Q=Q\G_{zy}$, which follows from \eqref{tri3}, 
and in the second equality we used $\d\G_{xy}Q=\d\Gtilde_{xy}Q$. 
We start by estimating 
\begin{align}
    ((\d\Gtilde_{xz}-\d\G_{xz})\G_{zy} Q)_\beta^\gamma 
    &= \big(\sum_{|\n|\leq2} \d\pi^{(\n)}_{xz} \G_{xz} D^{(\n)} (\id-Q) \G_{zy} Q\big)_\beta^\gamma \\
    &= \sum_{|\n|\leq2} \sum_{|\beta'|\geq3} \sum_{\beta_1+\beta_2=\beta} 
    \d\pi^{(\n)}_{xz\beta_1} (\G_{xz} D^{(\n)})_{\beta_2}^{\beta'} (\G_{zy})_{\beta'}^\gamma \, 1_{|\gamma|<3} .
\end{align}
By assumption $\gamma$ is populated and not purely polynomial, which carries over to $\beta'$ by \eqref{GammaPreservesTildeT}. 
By \eqref{tri5} and \eqref{tri6}, which also hold for $\d\Gtilde$ since we did not at all make use of the projection $Q$ in the proof, we have $\beta_1,\beta_2,\beta'\prec\beta$. 
Therefore we appeal to $\eqref{estdpin}_{\prec\beta}$, 
$\eqref{estGamma}_{\prec\beta}^{\gamma}$ and $\eqref{estGammaDn}_{\prec\beta}^{\beta'}$ to estimate the $\E^\frac{1}{q'}|\cdot|^{q'}$ norm of every summand by 
\begin{equation}
    |x-z|_\s^{\kappa+|\beta_1|-|\n|} w_x(z) |x-z|_\s^{|\beta_2|-|\beta'|-\alpha+|\n|} |y-z|_\s^{|\beta'|-|\gamma|}.
\end{equation}

Since $\beta'$ is populated and not purely polynomial, the condition $|\beta'|\geq3$ strengthens to $|\beta'|>3$, 
and from \eqref{kappa3} we obtain $|\beta'|\geq\kappa+2\alpha$.
Hence the above expession is further estimated by 
\begin{equation}
    |y-z|_\s^{\kappa+2\alpha-|\gamma|} (|y-z|_\s+|x-z|_\s)^{|\beta_1|+|\beta_2|-3\alpha} w_x(z), 
\end{equation}
which is estimated by the right hand side of \eqref{estdGammaincrement} since $\beta_1+\beta_2=\beta$ implies $|\beta_1|+|\beta_2|-\alpha = |\beta|$ by \eqref{homogeneity}.

\medskip

We turn to the estimate on $(\d\Gtilde_{xy}-\d\Gtilde_{xz}\G_{zy}) Q$. 
The same argumentation as in \cite[(4.43)]{LOTT21} reveals 
\begin{equation}
    (\d\Gtilde_{xy}-\d\Gtilde_{xz}\G_{zy})Q 
    = \sum_{|\n|\leq2} (\d\pi^{(\n)}_{xy}-\d\pi^{(\n)}_{xz}-\d\Gtilde_{xz}\pi^{(\n)}_{zy}) \G_{xy} D^{(\n)} Q,
\end{equation}
where we rewrite the right hand side as 
\begin{equation}
    \sum_{|\n|\leq2} (\d\pi^{(\n)}_{xy}-\d\pi^{(\n)}_{xz}-\d\G_{xz}\pi^{(\n)}_{zy}) \G_{xy} D^{(\n)} Q 
    + \sum_{|\n|\leq2} \big((\d\G_{xz}-\d\Gtilde_{xz})\pi^{(\n)}_{zy}\big) \G_{xy} D^{(\n)} Q. 
\end{equation}
The $(\cdot)_\beta^\gamma$-component of this first term equals
\begin{equation}\label{rg07}
    \sum_{|\n|\leq2} \sum_{\beta_1+\beta_2=\beta} 
    (\d\pi^{(\n)}_{xy}-\d\pi^{(\n)}_{xz}-\d\G_{xz}\pi^{(\n)}_{zy})_{\beta_1} (\G_{xy} D^{(\n)})_{\beta_2}^\gamma 1_{|\gamma|<3} .
\end{equation}
As in the proof of \eqref{tri5} we argue that $\beta_2\neq0$ and thus $\beta_1\prec\beta$. 
Clearly, we also have $\beta_2\preceq\beta$ by $\beta_1+\beta_2=\beta$. 
Since $\gamma$ is by assumption populated, we can therefore appeal to H\"older's inequality, 
$\eqref{estdpinincrement}_{\prec\beta}$ and $\eqref{estGammaDn}_{\preceq\beta}^{\gamma}$ to estimate the $\E^\frac{1}{q'}|\cdot|^{q'}$-norm of every summand of \eqref{rg07} by 
\begin{equation}
    |y-z|_\s^{\kappa+\alpha-|\n|}(|y-z|_\s+|x-z|_\s)^{|\beta_1|-\alpha} (w_x(y)+w_x(z)) |x-y|_\s^{|\beta_2|-|\gamma|-\alpha+|\n|}.
\end{equation}
Let us mention that only $\gamma$ with $(\G_{xy}D^{(\n)})_{\beta_2}^\gamma\neq0$ come up, 
which by \eqref{D0comp} and \eqref{Dncomp} satisfy $|\gamma|\geq|\n|+\alpha$. 
This expression is therefore further bounded by 
\begin{equation}
    |y-z|_\s^{\kappa+2\alpha-|\gamma|} 
    (|y-z|_\s+|x-z|_\s)^{|\beta_1|+|\beta_2|-3\alpha}(w_x(y)+w_x(z)) ,
\end{equation}
which coincides with the right hand side of \eqref{estdGammaincrement}.

\medskip

It remains to estimate 
\begin{align}
    &\big(\sum_{|\n|\leq2} \big((\d\Gtilde_{xz}-\d\G_{xz})\pi^{(\n)}_{zy}\big) \G_{xy} D^{(\n)} Q\big)_\beta^\gamma \\
    &= \sum_{|\n|\leq2} \sum_{\beta_1+\beta_2=\beta} 
    \big((\d\Gtilde_{xz}-\d\G_{xz})\pi^{(\n)}_{zy}\big)_{\beta_1} (\G_{xy} D^{(\n)})_{\beta_2}^\gamma 1_{|\gamma|<3} \\
    &=\sum_{|\n|\leq2} \sum_{\beta_1+\beta_2+\beta_3=\beta} \sum_{|\beta'|\geq3} \sum_{|\m|\leq2}
    \d\pi^{(\m)}_{xz\beta_1} (\G_{xz}D^{(\m)})_{\beta_2}^{\beta'} \pi^{(\n)}_{zy\beta'} 
    (\G_{xy} D^{(\n)})_{\beta_3}^\gamma 1_{|\gamma|<3} .
\end{align}
We shall argue now that $\beta_1,\beta_2,\beta_3,\beta'\prec\beta$. 
Since $\beta_1$ is populated, we have $\beta_1\neq0$ and therefore $\beta_2,\beta_3\prec\beta$. 
Since $\gamma$ is by assumption not purely polynomial, we also have $\beta_3\neq0$ and therefore $\beta_1\prec\beta$. 
From \eqref{tri6} we know $\beta'\prec\beta_1+\beta_2=\beta-\beta_3\preceq\beta$. 
We can therefore appeal to $\eqref{estdpin}_{\prec\beta}$, $\eqref{estGammaDn}_{\prec\beta}^{\beta',\gamma}$ and $\eqref{estpin}_{\prec\beta}$ to estimate every summand by 
\begin{equation}
    |x-z|_\s^{|\beta_1|-|\m|+\kappa}w_x(z)
    |x-z|_\s^{|\beta_2|-|\beta'|-\alpha+|\m|}
    |y-z|_\s^{|\beta'|-|\n|}
    |x-y|_\s^{|\beta_3|-|\gamma|-\alpha+|\n|}.
\end{equation}
By $|\beta'|\geq3$ we obtain $|\beta'|+\alpha>3$ and therefore \eqref{kappa3} yields $|\beta'|\geq\kappa+\alpha$, 
and the same argument as for \eqref{rg07} yields $|\gamma|\geq|\n|+\alpha$. 
The above expression is therefore further estimated by 
\begin{equation}
    |y-z|_\s^{\kappa+2\alpha-|\gamma|} 
    (|y-z|_\s+|x-z|_\s)^{|\beta_1|+|\beta_2|+|\beta_3|-4\alpha} w_x(z),
\end{equation}
which is bounded by the right hand side of \eqref{estdGammaincrement} since $\beta_1+\beta_2+\beta_3=\beta$ and $|\cdot|-\alpha$ is additive. 
\end{proof}

\begin{lemma}[Algebraic argument IV]\label{alg4}
Assume that $\eqref{estdeltaPi}_{\prec\beta}$ and $\eqref{estdpin}_{\prec\beta}$ hold, 
and that $\eqref{estGamma}_\beta^{\gamma}$ holds for all $\gamma$ populated and not purely polynomial. 
Then for all $\gamma$ populated and not purely polynomial
\begin{equation}
\mathbb{E}^{\frac{1}{q'}}|(\dx{\Gamma_{xz}^*})_{\beta}^\gamma|^{q'} 
\lesssim \abss{x-z}^{\kappa + |\beta|-|\gamma|} w_x(z) \, .
\label{eq:gammabound}
\end{equation}
\end{lemma}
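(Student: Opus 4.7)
The plan is to expand $\d\G_{xz}$ componentwise using the ansatz \eqref{dGamma} and then estimate each factor by the already available inductive hypotheses. Writing
\[
(\d\G_{xz})_\beta^\gamma \;=\; \sum_{|\n|\leq 2} \sum_{\beta_1+\beta_2=\beta} \d\pi^{(\n)}_{xz\beta_1}\,(\G_{xz}D^{(\n)}Q)_{\beta_2}^\gamma,
\]
I first want to observe that the sum is effectively finite: since $\gamma$ is populated and not purely polynomial, $\sum_\ell\gamma(\ell)>0$, and by the triangular properties \eqref{tri6}, only $\gamma$ with $|\gamma|<3$ can contribute (consistent with the role of $Q$). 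Moreover $\d\pi^{(\n)}_{xz\beta_1}\in Q\widetilde{\T}^*$ forces $\beta_1$ to be populated and not purely polynomial, in particular $\beta_1\neq 0$, so $\beta_2\prec\beta$. Similarly, using that $\gamma$ is not purely polynomial, one checks that $(\G_{xz}D^{(\n)})_{\beta_2}^\gamma\neq 0$ forces $\beta_2\neq 0$, hence also $\beta_1\prec\beta$. Thus both factors land in the range covered by the inductive hypotheses.

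The next step is to apply H\"older's inequality with exponents $q',p$ and estimate the two factors separately. For the first factor, the assumption $\eqref{estdpin}_{\prec\beta}$ directly gives
\[
\E^{\frac{1}{q'}}|\d\pi^{(\n)}_{xz\beta_1}|^{q'} \lesssim |x-z|_\s^{\kappa+|\beta_1|-|\n|}\,w_x(z).
\]
For the second factor, I would decompose $(\G_{xz}D^{(\n)})_{\beta_2}^\gamma = \sum_{\beta'}(\G_{xz})_{\beta_2}^{\beta'}(D^{(\n)})_{\beta'}^\gamma$. By the mapping properties \eqref{D0props}--\eqref{Dnprops}, any $\beta'$ contributing must satisfy $\sum_\ell\beta'(\ell)=\sum_\ell\gamma(\ell)>0$, so $\beta'$ is populated and not purely polynomial; hence the assumption $\eqref{estGamma}_\beta^{\gamma}$ (together with its inductive counterpart for $\beta_2\prec\beta$) yields $\E^{\frac{1}{p}}|(\G_{xz})_{\beta_2}^{\beta'}|^p\lesssim |x-z|_\s^{|\beta_2|-|\beta'|}$. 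Using \eqref{rg03} to substitute $|\beta'|=|\gamma|+\alpha-|\n|$ gives
\[
\E^{\frac{1}{p}}|(\G_{xz}D^{(\n)})_{\beta_2}^\gamma|^p \lesssim |x-z|_\s^{|\beta_2|-|\gamma|-\alpha+|\n|}.
\]

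Combining the two bounds, the exponents of $|x-z|_\s$ add up to
\[
\bigl(\kappa+|\beta_1|-|\n|\bigr)+\bigl(|\beta_2|-|\gamma|-\alpha+|\n|\bigr) = \kappa+|\beta_1|+|\beta_2|-\alpha-|\gamma| = \kappa+|\beta|-|\gamma|,
\]
where in the last step I use the additivity of $|\cdot|-\alpha$ along decompositions $\beta_1+\beta_2=\beta$. Since the sums over $|\n|\leq 2$ and over populated decompositions $\beta_1+\beta_2=\beta$ are finite (the latter by the coercivity \eqref{coercive} applied to the triangular constraint), this completes the bound.

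There is no real obstacle in this proof beyond bookkeeping; the assertion is essentially an immediate consequence of the ansatz \eqref{dGamma} combined with the multiplicative nature of $\G$. The mildest subtlety is that one must identify the correct factor of $-\alpha$ in the exponent, which comes from the fact that $D^{(\n)}$ changes the homogeneity by $|\n|-\alpha$; once this is tracked carefully the powers match. The role of the projection $Q$ is only to suppress terms with $|\gamma|\geq 3$, which are ruled out anyway by the triangularity of the ansatz, so it does not enter quantitatively.
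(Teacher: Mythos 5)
Your proof is correct in substance and follows the same structure one would expect from the cited source and from the other algebraic arguments in the paper (expand $\d\G$ componentwise via the ansatz \eqref{dGamma}, use the triangularity to place both factors in the inductive range, then apply H\"older). Two intermediate claims, however, are stated incorrectly, though neither affects the validity of the argument.

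First, you assert that ``only $\gamma$ with $|\gamma|<3$ can contribute'' by the triangular property \eqref{tri6}. That is not what \eqref{tri6} gives: it gives $|\gamma|\leq|\beta|+2-\alpha$, which for $|\beta|<3$ is not bounded by $3$. The restriction to $|\gamma|<3$ comes solely from the projection $Q$ built into the definition \eqref{dGamma} of $\d\G$. This is harmless because for $|\gamma|\geq3$ the left-hand side of \eqref{eq:gammabound} vanishes identically and the estimate is trivial, but the reasoning is off.

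Second, in estimating $(\G_{xz}D^{(\n)})_{\beta_2}^\gamma$ you decompose over $\beta'$ with $(D^{(\n)})_{\beta'}^\gamma\neq0$, note $\sum_\ell\beta'(\ell)=\sum_\ell\gamma(\ell)>0$, and conclude that $\beta'$ is populated and not purely polynomial. The second conclusion is fine; the first is false. In fact one can check from \eqref{D0comp} and \eqref{Dncomp} that $\beta'$ never satisfies the population constraint \eqref{pop1} when $\gamma$ does. This does not break the argument, because the proof of \eqref{estGamma} in \cref{alg1} only uses $\sum_\ell\gamma(\ell)>0$ (the paper points this out explicitly when deriving \eqref{estGammaDn}), so the estimate $\E^{1/p}|(\G_{xz})_{\beta_2}^{\beta'}|^p\lesssim|x-z|_\s^{|\beta_2|-|\beta'|}$ is still available. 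But the cleaner route, which avoids this entirely, is to cite \eqref{estGammaDn} directly: it is part of the conclusion of \cref{alg1}, holds for all populated $\gamma$ and all $\n$, and gives exactly the bound $\E^{1/p}|(\G_{xz}D^{(\n)})_{\beta_2}^\gamma|^p\lesssim|x-z|_\s^{|\beta_2|-|\gamma|-\alpha+|\n|}$ you need without any discussion of whether $\beta'$ is populated. In short: correct conclusion, correct exponent bookkeeping, but you should invoke \eqref{estGammaDn} rather than re-deriving it with a faulty intermediate claim.
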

The proof of Lemma~\ref{alg4} follows the same lines as the one in \cite[Proposition~4.16]{LOTT21}, 
which we therefore skip.

%%%%%%%%%%%%%%%%%%%%%%%%%%%%%%%%%%%%%%%%%%%%%%%

\subsection{Three-point arguments}\label{sec:3point}

\begin{lemma}[Three-point argument I]\label{3pt1}
Assume that $\eqref{estPi}_{\preceq\beta}$ holds, 
and that $\eqref{estGamma}_\beta^{\gamma}$ holds for all $\gamma$ populated and not purely polynomial. 
Then 
\begin{equation}\label{estpin}
    \E^\frac{1}{p}|\pi^{(\n)}_{xy\beta}|^p\lesssim |x-y|_\s^{|\beta|-|\n|} \, .
\end{equation}
\end{lemma}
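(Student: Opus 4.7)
The plan is to read the base-point-change identity \eqref{recenter} as a hidden Taylor expansion of $\Pi_{x\beta}$ around $y$, in which the $\pi^{(\n)}_{xy\beta}$'s appear as the polynomial coefficients. After estimating the non-polynomial ``remainder'' in $L^p$ using the inductive hypotheses, the coefficients $\pi^{(\n)}_{xy\beta}$ will be extracted by a finite-dimensional Vandermonde-type inversion in the anisotropic scaling $\s$.

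The case $\n=\0$ is immediate: by the choice \eqref{pi0}, $\pi^{(\0)}_{xy\beta}=\Pi_{x\beta}(y)$, so $\eqref{estpin}_\beta$ with $|\n|=0$ is just $\eqref{estPi}_\beta$. Purely polynomial $\beta$ have already been handled in \cref{ind:pp}, so from now on I assume $\n\neq\0$ and $\beta$ populated and not purely polynomial. Taking the $\beta$-component of \eqref{recenter} at an auxiliary point $z'$ and splitting the sum over $\gamma$ into purely polynomial $\gamma=g_\m$ (which, via \eqref{Gamma_pn} with $\delta_\beta^{g_\m}=0$ and \eqref{polypart:poly}, contributes $\pi^{(\m)}_{xy\beta}(z'-y)^\m$) and non-polynomial populated $\gamma$, I obtain the identity
\begin{equation}
P(z'):=\sum_{\m\neq\0}\pi^{(\m)}_{xy\beta}(z'-y)^\m
=\Pi_{x\beta}(z')-\Pi_{x\beta}(y)-\sum_{\substack{\gamma\text{ pop.,}\\\text{not purely poly}}}(\G_{xy})_\beta^\gamma\,\Pi_{y\gamma}(z').
\end{equation}

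Using $\eqref{estPi}_{\preceq\beta}$ on $\Pi_{x\beta}(z')$, $\Pi_{x\beta}(y)$ and $\Pi_{y\gamma}(z')$ (for $\gamma\preceq\beta$ by \eqref{tri3}), the hypothesis $\eqref{estGamma}_\beta^\gamma$ on the non-polynomial $\gamma$, H\"older's inequality applied to the products $(\G_{xy})_\beta^\gamma\Pi_{y\gamma}(z')$, and the finiteness of the sum over $\gamma$ (guaranteed by \eqref{tri3} and local finiteness of $\A$), the right-hand side satisfies
\begin{equation}
\E^\frac{1}{p}|P(z')|^p \lesssim (|x-y|_\s+|y-z'|_\s)^{|\beta|}.
\end{equation}
The population constraint \eqref{pin} restricts the nonzero $\pi^{(\m)}_{xy\beta}$ to finitely many $\m$ with $|\m|<|\beta|$, so $P$ lies in a fixed finite-dimensional space of polynomials of bounded $\s$-weighted degree.

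To extract the coefficients, I will evaluate $P$ at finitely many test points $z'=y+h_i$ of the form $h_i=|x-y|_\s\hat h_i$ with fixed $\hat h_i$ chosen so that the evaluation map $Q\mapsto(Q(\hat h_i))_i$ is invertible on this finite-dimensional space. A Vandermonde-type inversion then expresses $\pi^{(\n)}_{xy\beta}$ pathwise as a linear combination $\sum_i a_i\,P(y+h_i)$ with deterministic coefficients satisfying $|a_i|\lesssim|x-y|_\s^{-|\n|}$, and applying $\E^\frac{1}{p}|\cdot|^p$ together with the previous bound yields $\eqref{estpin}_\beta$. The main technical point will be ensuring that this inversion respects the anisotropic scaling $\s$, which ultimately amounts to a classical Vandermonde argument on a finite-dimensional deterministic polynomial space combined with the rescaling $h=|x-y|_\s\hat h$; there is no probabilistic subtlety since the polynomial space and the inversion coefficients $a_i$ are deterministic.
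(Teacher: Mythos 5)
Your proof reconstructs exactly the three-point identity that the paper's argument (deferred to \cite[Proposition~4.4]{LOTT21}) relies on — after moving the $\n=\0$ term $\pi^{(\0)}_{xy\beta}=\Pi_{x\beta}(y)$ to the right-hand side, your identity for $P(z')$ coincides with the one displayed after the lemma — and your $L^p$ bound $\E^{1/p}|P(z')|^p\lesssim(|x-y|_\s+|y-z'|_\s)^{|\beta|}$, using $\eqref{estGamma}_\beta^\gamma$, $\eqref{estPi}_{\preceq\beta}$ (valid since \eqref{tri3} gives $\gamma\preceq\beta$) and H\"older, together with the subsequent Vandermonde inversion on the finite set $\{\m\neq\0:|\m|<|\beta|\}$ guaranteed by \eqref{pin}, is a valid variant of the evaluation-and-averaging used there. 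One notational correction: the rescaling must be anisotropic, $h_i^{(j)}=|x-y|_\s^{\mathfrak{s}_j}\hat h_i^{(j)}$ for $j=0,\dots,d$, so that $h_i^\m\sim|x-y|_\s^{|\m|}$ and $|h_i|_\s\sim|x-y|_\s$; the isotropic $h_i=|x-y|_\s\hat h_i$ you wrote yields neither (it gives $h_i^\m\sim|x-y|_\s^{\m_0+\cdots+\m_d}$ and $|h_i|_\s\sim|x-y|_\s^{1/\mathfrak{s}_0}$ for $|x-y|_\s$ small), although you do correctly flag the anisotropy as the technical point.
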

The proof of Lemma~\ref{3pt1} follows the same lines as in \cite[Proposition~4.4]{LOTT21}, and relies on the three-point identity 
\begin{equation}
    \sum_\n \pi^{(\n)}_{xy}(z-y)^\n = \Pi_x(z)-\Pi_y(z)-(\G_{xy}-\id)P\Pi_y(z),
\end{equation}
which is a consequence of \eqref{recenter}, \eqref{polypart:poly}, \eqref{Gamma_pn} and \eqref{pi0}.

\begin{lemma}[Three-point argument II]\label{3pt2}
Assume that $\eqref{estPi}_{\prec\beta}$ and $\eqref{estdeltaPi}_{\preceq\beta}$ hold,
and that $\eqref{estGamma}_\beta^{\gamma}$ and $\eqref{estdeltaGamma}_\beta^{\gamma}$ hold for all 
$\gamma$ populated and not purely polynomial. 
Then 
\begin{equation}\label{estdeltapin}
\E^\frac{1}{q'}|\delta\pi^{(\n)}_{xy\beta}|^{q'}\lesssim |x-y|_\s^{|\beta|-|\n|}\bar w \, .
\end{equation}
\end{lemma}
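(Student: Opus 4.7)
The plan is to parallel the proof of Lemma~\ref{3pt1} after differentiating its underlying three-point identity. Since $P$ and the monomials $(z-y)^\n$ are deterministic, I will apply the Malliavin derivative to
\[
\sum_\n \pi^{(\n)}_{xy}(z-y)^\n = \Pi_x(z) - \Pi_y(z) - (\G_{xy}-\id)P\Pi_y(z),
\]
to obtain
\[
\sum_\n \delta\pi^{(\n)}_{xy}(z-y)^\n
= \delta\Pi_x(z) - \delta\Pi_y(z) - (\delta\G_{xy})P\Pi_y(z) - (\G_{xy}-\id)P\delta\Pi_y(z),
\]
which will serve as my starting point.

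Next I will take the $\beta$-component and control each of the four right hand side terms in the $\E^\frac{1}{q'}|\cdot|^{q'}$-norm using the inductive hypotheses. The bare terms $\delta\Pi_{x\beta}(z)$ and $\delta\Pi_{y\beta}(z)$ are estimated directly via $\eqref{estdeltaPi}_{\preceq\beta}$. For the two mixed products I will apply H\"older's inequality with exponents $p\geq 2$ and $q''\in(q',q)$ satisfying $1/q' = 1/p + 1/q''$, a choice that is possible because $q'<q\leq 2\leq p$: the $\delta\G$ and $\delta\Pi$ factors then carry the single factor of $\bar w$ via $\eqref{estdeltaGamma}_\beta^\gamma$ and $\eqref{estdeltaPi}_{\prec\beta}$, while the companion $(P\Pi_y)_\gamma(z)$ and $(\G_{xy}-\id)_\beta^\gamma$ factors are controlled by $\eqref{estPi}_{\prec\beta}$ and $\eqref{estGamma}_\beta^\gamma$. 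The restriction of the latter two hypotheses to $\gamma$ populated and not purely polynomial is exactly compatible with the presence of $P$, which kills the purely polynomial components of $\Pi_y$ and $\delta\Pi_y$. Triangularity \eqref{tri3}--\eqref{tri4} together with \eqref{pin} confines the sum over $\gamma$ to a finite set, and will produce a polynomial bound of the form
\[
\E^\frac{1}{q'}|\text{RHS}_\beta(z)|^{q'} \lesssim \bar w \sum_\gamma |x-y|_\s^{|\beta|-|\gamma|}\,|y-z|_\s^{|\gamma|}.
\]

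The final step is the purely algebraic extraction of the individual coefficients $\delta\pi^{(\n)}_{xy\beta}$ from the polynomial identity in $(z-y)$. By \eqref{pin} only finitely many $\n$ with $|\n|<|\beta|$ contribute for fixed $\beta$, so I will evaluate at a suitable finite family of points $z_1,\dots,z_N$ with $|z_i-y|_\s\sim|x-y|_\s$ and invert the resulting Vandermonde-type linear system, which is uniformly well-conditioned after rescaling by $|x-y|_\s^{|\n|}$. This extraction step is identical to the deterministic one in the proof of Lemma~\ref{3pt1} (cf.~\cite[Proposition~4.4]{LOTT21}), and I do not anticipate a real obstacle beyond this algebraic bookkeeping and the appropriate choice of H\"older exponents. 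The main conceptual point is simply that the single $\delta$ appearing in each right hand side term threads through linearly, cleanly separating the factor $\bar w$ from the probabilistically bounded companions, so no genuinely new ideas beyond those of Lemma~\ref{3pt1} are needed.
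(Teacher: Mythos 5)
Your proof is correct and follows essentially the same route the paper takes (the paper defers the details to \cite[Proposition~4.10]{LOTT21}): you differentiate the three-point identity, estimate each term by H\"older with one factor carrying the single $\bar w$, and extract the coefficients by a Vandermonde-type argument. Your four-term version of the differentiated identity is equivalent to the paper's three-term version $\sum_\n \delta\pi^{(\n)}_{xy}(z-y)^\n = \delta\Pi_x(z)-\G_{xy}P\delta\Pi_y(z)-\delta\G_{xy}P\Pi_y(z)$ upon observing that $(\id-P)\delta\Pi_y=0$ (the purely polynomial part of $\Pi_y$ is deterministic, so its Malliavin derivative vanishes), which also silently justifies applying $\eqref{estGamma}_\beta^\gamma$ and $\eqref{estdeltaGamma}_\beta^\gamma$ only for $\gamma$ not purely polynomial.
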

The proof of Lemma~\ref{3pt2} is identical to the one of \cite[Proposition~4.10]{LOTT21}, 
and relies on 
\begin{equation}
    \sum_\n \delta\pi^{(\n)}_{xy}(z-y)^\n = \delta\Pi_x(z)-\G_{xy}P\delta\Pi_y(z)-\delta\G_{xy}P\Pi_y(z),
\end{equation}
which is seen to be true by applying $\delta$ to the three-point identity above.

\begin{lemma}[Three-point argument III]\label{3pt3}
Assume that $\eqref{estPi}_{\prec\beta}$ and $\eqref{eq:lhsstrongboundincrement}_\beta$ hold, 
and that $\eqref{estdGammaincrement}_\beta^{\gamma}$ holds for all $\gamma$ populated and not purely polynomial. 
Then for $|\n|\leq2$
\begin{align}
&\E^\frac{1}{q'}|(\d\pi^{(\n)}_{xy}-\d\pi^{(\n)}_{xz}-\d\G_{xz}\pi^{(\n)}_{zy})_\beta|^{q'}\label{estdpinincrement} \\
&\lesssim |y-z|_\s^{\kappa+\alpha-|\n|}(|y-z|_\s+|x-z|_\s)^{|\beta|-\alpha} (w_x(y)+w_x(z)) \, . 
\end{align}
\end{lemma}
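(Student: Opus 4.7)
The plan is to mimic the pattern of Lemmas~\ref{3pt1} and~\ref{3pt2}, based on a three-point identity for $\d\pi^{(\n)}$. The starting point is the purely algebraic identity
\[
\d\pi^{(\n)}_{xy} - \d\pi^{(\n)}_{xz} - \d\G_{xz}\pi^{(\n)}_{zy} = (\d\G_{xy} - \d\G_{xz}\G_{zy})\p_\n \qquad\text{for }0\neq|\n|\leq 2,
\]
which follows from the ansatz \eqref{dGamma} together with $\G_{zy}\p_\n = \p_\n + \pi^{(\n)}_{zy}$: since $Q\p_\n=\p_\n$ for $|\n|\leq 2$ and $D^{(\m)}\p_\n = \delta^{\m,\n}$, one reads off $\d\G_{xy}\p_\n = \d\pi^{(\n)}_{xy}$, and consequently $\d\G_{xz}\G_{zy}\p_\n = \d\pi^{(\n)}_{xz} + \d\G_{xz}\pi^{(\n)}_{zy}$. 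However, the column $g_\n$ on the right-hand side is purely polynomial, so the hypothesis \eqref{estdGammaincrement}$_\beta^\gamma$ (stated only for non-polynomial $\gamma$) cannot be invoked directly and we must combine the above with the defining relation $\n!\,\d\pi^{(\n)}_{xy} = \partial^\n(\delta\Pi_x - \d\G_{xy}P\Pi_y)(y)$ (see item~(10) of~\cref{sec:induction_proper}), which incorporates the super-quadratic vanishing \eqref{qualitativeIncrement}.

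Subtracting the defining relations at $y$ and at $z$, telescoping through $\partial^\n(\delta\Pi_x - \d\G_{xz}P\Pi_z)(y)$, and using the recentering identity $\G_{zy}P\Pi_y - P\Pi_z = -\Pi_z(y) + \sum_{\n'\neq\0}\p_{\n'}[(\cdot-z)^{\n'}-(\cdot-y)^{\n'}] - \sum_{\n'\neq\0}(\cdot-y)^{\n'}\pi^{(\n')}_{zy}$ (obtained from \eqref{recenter} and the definition of $P$), the algebraic identity above rearranges into a three-point identity of the schematic form
\[
\d\pi^{(\n)}_{xy} - \d\pi^{(\n)}_{xz} - \d\G_{xz}\pi^{(\n)}_{zy} = \tfrac{1}{\n!}Q\partial^\n R_{xz}(y) + T^{(\n)}_{xy,xz}(y),
\]
where $R_{xz}(y) := (\delta\Pi_x - \delta\Pi_x(z) - \d\G_{xz}\Pi_z)(y)$ is the object controlled by \eqref{eq:lhsstrongboundincrement}$_\beta$, and $T^{(\n)}$ is a linear combination of (derivatives at $y$ of) $(\d\G_{xy} - \d\G_{xz}\G_{zy})P\Pi_y$, which, thanks to the projection $P$, only tests \eqref{estdGammaincrement} on non-polynomial $\gamma$. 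The case $\n=0$ is analogous, with the identity reducing directly to $Q R_{xz}(y) - (\id-Q)\d\G_{xz}\Pi_z(y)$; the second term vanishes on purely polynomial columns of $\d\G_{xz}$ by the $Q$ built into \eqref{dGamma} and is controlled on non-polynomial columns via \eqref{estdGammaincrement} and $\eqref{estPi}_{\prec\beta}$.

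The bound \eqref{estdpinincrement} then follows from the assumed ingredients: $\partial^\n R_{xz}(y)$ is estimated by appealing to \cref{rem:smoothness_Mall} in order to transfer \eqref{eq:lhsstrongboundincrement}$_\beta$ to derivatives, giving the factor $|y-z|_\s^{\kappa+\alpha-|\n|}(|y-z|_\s+|x-z|_\s)^{|\beta|-\alpha}(w_x(y)+w_x(z))$ (using $\kappa+\alpha > |\n|$ for $|\n| \leq 2$, which follows from \eqref{kappa} and $\alpha>0$); while $T^{(\n)}$ is bounded through H\"older's inequality using \eqref{estdGammaincrement}$_\beta^\gamma$ combined with $\eqref{estPi}_{\prec\beta}$. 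The main obstacle will be the careful bookkeeping in the algebraic derivation of the three-point identity, in particular ensuring that every purely polynomial $g_\n$-column of $\d\G_{xy} - \d\G_{xz}\G_{zy}$ either cancels against a corresponding $\d\pi^{(\m)}_{xz}$ term produced by the recentering corrector, or is absorbed into the Taylor remainder of $R_{xz}$; any residual polynomial column would not be controlled by the hypotheses and would obstruct the proof. The triangularity \eqref{tri6} of $\d\G$ with respect to $\prec$ is invoked throughout to justify that all sums terminate within the induction hypothesis.
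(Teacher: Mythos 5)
Your algebraic starting point is correct and closely mirrors the paper's: for $0\neq|\n|\leq2$ the relation $\d\G_{xy}\p_\n=\d\pi^{(\n)}_{xy}$ together with $\G_{zy}\p_\n=\p_\n+\pi^{(\n)}_{zy}$ gives $\d\pi^{(\n)}_{xy}-\d\pi^{(\n)}_{xz}-\d\G_{xz}\pi^{(\n)}_{zy}=(\d\G_{xy}-\d\G_{xz}\G_{zy})\p_\n$, and you correctly identify that the $g_\n$-column is purely polynomial and therefore not covered by the hypothesis $\eqref{estdGammaincrement}_\beta^{\gamma\neq\pp}$, so that a reorganization via the defining relation and the recentering identity is needed. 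Your treatment of $\n=\0$ is essentially correct.

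However, the way you propose to close the argument for $\n\neq\0$ has a genuine gap. You want to arrive at a per-$\n$ identity whose main term is $\tfrac{1}{\n!}Q\partial^\n R_{xz}(y)$ with $R_{xz}=\delta\Pi_x-\delta\Pi_x(z)-\d\G_{xz}\Pi_z$, and then estimate this by ``transferring $\eqref{eq:lhsstrongboundincrement}$ to derivatives via \cref{rem:smoothness_Mall}.'' That step fails: \cref{rem:smoothness_Mall} (cf.\ \eqref{boundedness_deltaPi}, \eqref{smoothness_deltaPi}) carries explicit factors $(\sqrt[8]{\tau})^{-|\n|}$ that diverge as $\tau\to0$, so no $\tau$-uniform bound of the form $|y-z|_\s^{\kappa+\alpha-|\n|}(\cdot)^{|\beta|-\alpha}$ on $\partial^\n R_{xz}(y)$ for $|\n|=1,2$ can be extracted from it. Indeed, in the limit $\tau\to0$ the component $\delta\Pi_{x\beta}$ is only $C^\alpha$ with $\alpha<1$, so $\partial^\n\delta\Pi_{x\beta}$ for $|\n|\geq1$ is not even a function uniformly in $\tau$, and no interpolation between \eqref{eq:lhsstrongboundincrement} (a $C^0$-type bound on $R_{xz}$) and the $\tau$-divergent $C^{2,\alpha}$-type bound can produce the desired $\tau$-uniform estimate. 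This is precisely the difficulty that the paper's proof circumvents.

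The paper instead establishes the summed identity
\begin{align}
\sum_{|\n|\leq2}(\d\pi^{(\n)}_{xy}-\d\pi^{(\n)}_{xz}-\d\G_{xz}\pi^{(\n)}_{zy})(\cdot-y)^\n
= R_{xz}(\cdot) - R_{xy}(\cdot) - (\d\G_{xy}-\d\G_{xz}\G_{zy})P\Pi_y(\cdot)\,,
\end{align}
whose left-hand side is an exact polynomial of degree $\leq2$ in the active variable, and whose right-hand side admits, after restricting to $|\cdot-y|_\s\leq|y-z|_\s$, the $\tau$-uniform \emph{pointwise} bound $|y-z|_\s^{\kappa+\alpha}(|y-z|_\s+|x-z|_\s)^{|\beta|-\alpha}(w_x(\cdot)+w_x(y)+w_x(z))$, using only $\eqref{eq:lhsstrongboundincrement}_\beta$, $\eqref{estdGammaincrement}_\beta^{\gamma\neq\pp}$ and $\eqref{estPi}_{\prec\beta}$ (the projection $P$ on $\Pi_y$ ensures only non-polynomial columns of the $\d\G$-increment are tested). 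The coefficients of this polynomial are then extracted not by differentiation of $R$, but by evaluating at $y+\lambda\m$ and averaging over $\lambda\leq|y-z|_\s/|\m|$: this is a Markov-type inequality for polynomials that requires nothing beyond the pointwise bound. Your argument needs to be replaced with an extraction mechanism of this kind; the derivative-based one does not work.
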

\begin{proof}
    For $\n=\0$, the statement is a consequence of \eqref{choice_dpi}, \eqref{pi0} and \eqref{eq:lhsstrongboundincrement}.
    For $\n\neq\0$, we first prove the formula
    \begin{align}
        \sum_{|\n|\leq2} (\d\pi^{(\n)}_{xy} - \d\pi^{(\n)}_{xz} - \d\G_{xz}\pi^{(\n)}_{zy}) (\cdot-y)^\n \label{rg08} 
        &= (\delta\Pi_x-\delta\Pi_x(z)-\d\G_{xz}\Pi_z) \\[-2.5ex]
        &\, - (\delta\Pi_x-\delta\Pi_x(y)-\d\G_{xy}\Pi_y) \\
        &\, - (\d\G_{xy}-\d\G_{xz}\G_{zy})P\Pi_y. 
    \end{align}
    Indeed, \eqref{choice_dpi} and \eqref{recenter} yield
    \begin{align}
        &(\delta\Pi_x-\delta\Pi_x(z)-\d\G_{xz}\Pi_z) - (\delta\Pi_x-\delta\Pi_x(y)-\d\G_{xy}\Pi_y) 
        - (\d\G_{xy}-\d\G_{xz}\G_{zy})P\Pi_y \\
        &= \d\pi^{(\0)}_{xy} - \d\pi^{(\0)}_{xz} - \d\G_{xz} \Pi_z(y) + (\d\G_{xy}-\d\G_{xz}\G_{zy})(1-P)\Pi_y,
    \end{align}
    which by \eqref{pi0} and \eqref{polypart:poly} equals
    \begin{equation}
        \d\pi^{(\0)}_{xy} - \d\pi^{(\0)}_{xz} - \d\G_{xz} \pi^{(\0)}_{zy} 
        + (\d\G_{xy}-\d\G_{xz}\G_{zy}) \sum_{\n\neq\0} \p_\n (\cdot-y)^\n.
    \end{equation}
    From \eqref{dGamma}, \eqref{D0} and \eqref{Dn} we read off 
    \begin{equation}\label{rg09}
        \d\G_{xy}\sum_{\n\neq\0} \p_\n (\cdot-y)^\n = \sum_{\n\neq\0, |\n|\leq2} \d\pi^{(\n)}_{xy} (\cdot-y)^\n,
    \end{equation}
    and using in addition \eqref{Gamma_pn} we see 
    \begin{align}
        \d\G_{xz}\G_{zy}\sum_{\n\neq\0}\p_\n (\cdot-y)^\n 
        &= \d\G_{xz} \sum_{\n\neq\0} (\p_\n+\pi^{(\n)}_{zy}) (\cdot-y)^\n \\
        &= \sum_{\n\neq\0, |\n|\leq2} (\d\pi^{(\n)}_{xz} + \d\G_{xz} \pi^{(\n)}_{zy}) (\cdot-y)^\n ,
    \end{align}
    which establishes \eqref{rg08}. 
    The $\E^\frac{1}{q'}|\cdot|^{q'}$-norm of the $\beta$-component of the right hand side of \eqref{rg08} is 
    by $\eqref{eq:lhsstrongboundincrement}_\beta$, $\eqref{estdGammaincrement}_\beta^{\gamma\neq\pp}$ and $\eqref{estPi}_{\prec\beta}$ 
    estimated by 
    \begin{align}
        &|\cdot-z|_\s^{\kappa+\alpha} (|\cdot-z|_\s+|x-z|_\s)^{|\beta|-\alpha} (w_x(\cdot)+w_x(z)) \\
        &\,+ |\cdot-y|_\s^{\kappa+\alpha} (|\cdot-y|_\s+|x-y|_\s)^{|\beta|-\alpha} (w_x(\cdot)+w_x(y)) \\
        &\,+ |y-z|_\s^{\kappa+2\alpha-|\gamma|} (|y-z|_\s+|x-z|_\s)^{|\beta|-2\alpha} (w_x(y)+w_x(z)) |\cdot-y|_\s^{|\gamma|}.
    \end{align}
    Restricting the active variable to $|\cdot-y|_\s\leq|y-z|_\s$, this is further estimated by 
    \begin{equation}
        |y-z|_\s^{\kappa+\alpha} (|y-z|_\s+|x-z|_\s)^{|\beta|-\alpha} (w_x(\cdot)+w_x(y)+w_x(z)) \, ,
    \end{equation}
and we obtain 
\begin{align}\label{mt10}
&\E^\frac{1}{q'}\big|
\sum_{|\n|\leq2} (\d\pi^{(\n)}_{xy} - \d\pi^{(\n)}_{xz} - \d\G_{xz}\pi^{(\n)}_{zy}) (\cdot-y)^\n \big|^{q'} \\
&\lesssim |y-z|_\s^{\kappa+\alpha} (|y-z|_\s+|x-z|_\s)^{|\beta|-\alpha} (w_x(\cdot)+w_x(y)+w_x(z)) \, .
\end{align}
We now evaluate at $y+\lambda\m$ for $0\neq|\m|\leq2$ and average over $\lambda\leq|y-z|_\s/|\m|$ in order to recover $\eqref{estdpinincrement}_\beta$ for $0\neq|\n|\leq2$. 
Indeed, for the left hand side of \eqref{mt10} we appeal to the obvious $\fint_{\lambda\leq|y-z|_\s} \lambda \sim |y-z|_\s$. 
For the right hand side of \eqref{mt10}, by definition \eqref{weight} of $w_x$, it suffices to appeal to $\fint_{\lambda\leq|y-z|_\s} |y+\lambda\m-x|^{-\kappa}\lesssim|y-x|^{-\kappa}$. 
\end{proof}

\begin{lemma}[Three-point argument IV]\label{3pt4}
Assume that $\eqref{estPi}_{\prec\beta}$, $\eqref{estdeltaPi}_\beta$ and $\eqref{eq:lhsstrongboundincrement}_\beta$ hold,
and that $\eqref{eq:gammabound}_\beta^{\gamma}$ holds for all $\gamma$ populated and not purely polynomial. 
Then for $|\n|\leq2$
\begin{equation}\label{estdpin}
    \E^\frac{1}{q'}|\d\pi^{(\n)}_{xy\beta}|^{q'} \lesssim |x-y|_\s^{|\kappa+|\beta|-|\n|}w_x(y) \, .
\end{equation}
\end{lemma}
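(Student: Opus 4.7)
The plan is to establish a three-point identity for $\d\pi^{(\n)}_{xy\beta}$ analogous to those underlying Lemmas~\ref{3pt1} and~\ref{3pt3}, bound each of its terms using the inductive hypotheses, and then extract the individual coefficients by evaluating at several test points and averaging. First, I would handle the case $\n=\0$ separately: by \eqref{choice_dpi} and \eqref{pi0}, $\d\pi^{(\0)}_{xy}=Q\delta\Pi_x(y)$, so $\eqref{estdeltaPi}_\beta$ gives $\E^{1/q'}|\d\pi^{(\0)}_{xy\beta}|^{q'}\lesssim|x-y|_\s^{|\beta|}\bar w$, which together with $\bar w\leq|x-y|_\s^\kappa w_x(y)$ (from definition \eqref{weight}) yields the claim.

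For $|\n|=1,2$, I would first derive the three-point formula. Using $\d\pi^{(\0)}_{xy}=Q\delta\Pi_x(y)$ and the identity \eqref{rg09} which gives $\d\G_{xy}(1-P)\Pi_y=\sum_{\n\neq\0,|\n|\leq 2}\d\pi^{(\n)}_{xy}(\cdot-y)^\n$, a direct computation shows that for $|\beta|<3$
\begin{align*}
\sum_{|\n|\leq 2}\d\pi^{(\n)}_{xy\beta}(z-y)^\n
= \delta\Pi_{x\beta}(z) - \bigl(\delta\Pi_x - \delta\Pi_x(y) - \d\G_{xy}\Pi_y\bigr)_\beta(z) - \bigl(\d\G_{xy} P\Pi_y\bigr)_\beta(z),
\end{align*}
while for $|\beta|\geq 3$ both sides vanish by the presence of $Q$ in \eqref{dGamma} and in \eqref{choice_dpi}.

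Next I would estimate each of the three summands on the right-hand side, taking $z=y+\lambda\m$ with $0\neq|\m|_\s\leq 2$ and $\lambda\sim|x-y|_\s$ so that $|z-y|_\s\lesssim|x-y|_\s$. The first term is bounded by $|x-z|_\s^{|\beta|}\bar w\lesssim|x-y|_\s^{\kappa+|\beta|}w_x(y)$ via $\eqref{estdeltaPi}_\beta$ and $\bar w\leq|x-y|_\s^\kappa w_x(y)$. The second term is bounded by $\eqref{eq:lhsstrongboundincrement}_\beta$ as $|z-y|_\s^{\kappa+\alpha}(|z-y|_\s+|x-y|_\s)^{|\beta|-\alpha}(w_x(y)+w_x(z))\lesssim|x-y|_\s^{\kappa+|\beta|}(w_x(y)+w_x(z))$. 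For the third term, expanding $(\d\G_{xy}P\Pi_y)_\beta(z)=\sum_\gamma(\d\G_{xy})_\beta^\gamma\Pi_{y\gamma}(z)$ and noting that non-vanishing $\gamma$ are populated, not purely polynomial, $|\gamma|<3$, and satisfy $\gamma\prec\beta$ (by \eqref{tri6}), H\"older's inequality together with $\eqref{eq:gammabound}_\beta^\gamma$ and $\eqref{estPi}_{\prec\beta}$ gives $\lesssim\sum_\gamma|x-y|_\s^{\kappa+|\beta|-|\gamma|}|y-z|_\s^{|\gamma|}w_x(y)\lesssim|x-y|_\s^{\kappa+|\beta|}w_x(y)$ (finite sum, $|\gamma|\geq\alpha$).

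Finally, since $\sum_{|\n|\leq 2}\d\pi^{(\n)}_{xy\beta}(z-y)^\n$ is a polynomial of degree $\leq 2$ in $z-y$, I would evaluate at finitely many $\m$ spanning the degree-$\leq 2$ space and average $\lambda$ over a range of size $\sim|x-y|_\s$ to invert the linear system and recover each $\d\pi^{(\n)}_{xy\beta}$, picking up a factor $|x-y|_\s^{-|\n|}$ and yielding the claimed bound. The main obstacle is the averaging step needed to replace the $w_x(z)$ appearing in the second term by $w_x(y)$: for the $|x-z|_\s^{-\kappa}\bar w$ part of $w_x(z)$ (see \eqref{weight}) this follows from $\fint_\lambda|x-y-\lambda\m|_\s^{-\kappa}\lesssim|x-y|_\s^{-\kappa}$ as in the end of the proof of Lemma~\ref{3pt3}, while the contribution of $w(\cdot)$ is handled by a Fubini-type argument using its $L^2$-based definition, yielding the desired $w_x(y)$ on the right-hand side.
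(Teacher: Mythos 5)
Your proof is correct and follows essentially the same route as the paper: the $\n=\0$ case via $\eqref{choice_dpi}$ and $\eqref{estdeltaPi}_\beta$ together with $\bar w\leq|x-y|_\s^\kappa w_x(y)$; the three-point identity for $\n\neq\0$ (your formulation with the $\n=\0$ term moved to the left is equivalent to the paper's $\eqref{rg09}$-based formula, since $\d\pi^{(\0)}_{xy}=Q\delta\Pi_x(y)$); the term-by-term estimate by $\eqref{estdeltaPi}_\beta$, $\eqref{eq:lhsstrongboundincrement}_\beta$, $\eqref{eq:gammabound}_\beta^\gamma$, and $\eqref{estPi}_{\prec\beta}$; the restriction to $|y-z|_\s\lesssim|x-y|_\s$; and the finite-degree polynomial recovery by averaging as in Lemma~\ref{3pt3}. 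The only slip is a mis-citation: the identity $\d\pi^{(\0)}_{xy}=Q\delta\Pi_x(y)$ comes from $\eqref{choice_dpi}$, not $\eqref{pi0}$ (the latter concerns $\pi^{(\0)}$, not $\d\pi^{(\0)}$). Your explicit remark on how to replace $w_x(z)$ by $w_x(y)$ after averaging — handling the $|x-z|_\s^{-\kappa}\bar w$ piece by the moment bound and the $w(z)$ piece by Cauchy--Schwarz together with the $L^2$-definition $\eqref{eq:weightednorm}$ (analogously to $\eqref{average_psi_w}$) — is a useful clarification of a step the paper delegates to ``as in the proof of Lemma~\ref{3pt3}''.
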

\begin{proof}
    For $\n=\0$, the statement is a consequence of \eqref{choice_dpi} and \eqref{estdeltaPi}. 
    For $\n\neq\0$ we observe that by \eqref{rg09}
    \begin{align}
        \sum_{\n\neq\0,|\n|\leq2} \d\pi^{(\n)}_{xy} (z\-y)^\n 
        = \delta\Pi_x(z) \- \delta\Pi_x(y) \- 
        (\delta\Pi_x \- \delta\Pi_x(y) \- \d\G_{xy}\Pi_y)(z) \- \d\G_{xy}P\Pi_y(z) .
    \end{align}
    By assumption we can therefore estimate 
    \begin{align}
        \E^\frac{1}{q'} \Big| \sum_{\n\neq\0,|\n|\leq2} \d\pi^{(\n)}_{xy\beta} (z-y)^\n \Big|^{q'} 
        &\lesssim |x-z|_\s^{|\beta|} \bar w + |x-y|_\s^{|\beta|} \bar w \\
        &\, + |y-z|_\s^{\kappa+\alpha}(|y-z|_\s+|x-y|_\s)^{|\beta|-\alpha} (w_x(y)+w_x(z)) \\
        &\, + \sum_{|\gamma|\in \A\cap(\alpha,|\beta|+2-\alpha]} |x-y|_\s^{\kappa+|\beta|-|\gamma|} w_x(y) |y-z|_\s^{|\gamma|}.
    \end{align}
    Restricting $z$ to $|y-z|_\s\leq|x-y|_\s$, the right hand side is further estimated by 
    \begin{equation}
        |x-y|_\s^{\kappa+|\beta|} (w_x(y)+w_x(z)),
    \end{equation}
    and as in the proof of Lemma~\ref{3pt3} we obtain \eqref{estdpin}.
\end{proof}

%%%%%%%%%%%%%%%%%%%%%%%%%%%%%%%%%%%%%%%%%%%%%%%%%%%%%%

\subsection{Averaging}\label{sec:average}

\begin{lemma}[Averaging]\label{lem:averaging}
Assume that 
$\eqref{eq:rhsweakboundincrement}_\beta$, 
$\eqref{eq:rhsweakbounddPi}_{\prec\beta}$,
$\eqref{estPiminus}_{\prec\beta}$, 
$\eqref{estGamma}_{\beta}^{\gamma}$,
$\eqref{estdeltaGamma}_\beta^{\gamma}$ and 
$\eqref{eq:gammabound}_\beta^{\gamma}$ hold, 
all for $\gamma$ not purely polynomial.
Then $\eqref{eq:rhsweakbounddPi}_\beta$ holds.
\end{lemma}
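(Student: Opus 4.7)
The central idea is that $\delta\Pi^-_{x\beta t}(y)$ is independent of the auxiliary base point $z$, so we may integrate $z$ out against a suitable probability kernel in order to convert the $w_x$-weighted bound from the increment estimate into the plain $\bar w$-weighted target. Using the semigroup property \eqref{eq:semi} together with the splitting induced by the auxiliary base point, we represent
\begin{equation*}
\delta\Pi^-_{x\beta t}(y) = \int \psi_{t/2}(y-z)\Bigl[(\delta\Pi^-_x-\dx{\Gamma_{xz}^*}\Pi^-_z)_{\beta,\,t/2}(z)+(\dx{\Gamma_{xz}^*}\Pi^-_z)_{\beta,\,t/2}(z)\Bigr]\dx{z}.
\end{equation*}
The crucial feature is that in \eqref{eq:rhsweakboundincrement}, letting the active variable coincide with the secondary base point $z$ collapses the weight $w_x(y)+w_x(z)$ to simply $w_x(z)$, removing the $w_x(y)$-contribution that cannot be absorbed into $\bar w$ through averaging.

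For the first (increment) integrand, \eqref{eq:rhsweakboundincrement} at active point $=$ base point $=z$ yields
\begin{equation*}
\E^{\frac{1}{q'}}\bigl|(\delta\Pi^-_x-\dx{\Gamma_{xz}^*}\Pi^-_z)_{\beta,\,t/2}(z)\bigr|^{q'} \lesssim (\sqrt[8]{t})^{\alpha-3+\kappa}(\sqrt[8]{t}+\abss{x-z})^{|\beta|-\alpha}\,w_x(z).
\end{equation*}
Minkowski's integral inequality in $z$, followed by Cauchy--Schwarz separating $w_x$ from the polynomial factor, together with the moment bound \eqref{eq:kerbound} on the polynomial and the averaging property \eqref{average_psi_wx} on the weight (which supplies the crucial factor $(\sqrt[8]{t})^{-\kappa}\bar w$), cancel the excess $(\sqrt[8]{t})^\kappa$ and deliver the target $(\sqrt[8]{t})^{\alpha-3}(\sqrt[8]{t}+\abss{x-y})^{|\beta|-\alpha}\bar w$.

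For the second integrand $(\dx{\Gamma_{xz}^*}\Pi^-_z)_{\beta,\,t/2}(z)=\sum_\gamma(\dx{\Gamma_{xz}^*})_\beta^\gamma\,\Pi^-_{z\gamma,\,t/2}(z)$, we combine \eqref{eq:gammabound} with \eqref{estPiminus}$_{\prec\beta}$ by H\"older's inequality. For components of sufficiently high homogeneity $|\gamma|\geq\alpha+\kappa$, the same averaging closes directly. For the remaining components with $|\gamma|<\alpha+\kappa$---which necessarily satisfy $|\gamma|<3$ thanks to the projection $Q$ built into \eqref{dGamma}---the polynomial remainder in \eqref{recenterPi-} has negative degree $|\gamma|-3$ and hence vanishes, giving $\Pi^-_{z\gamma}=(\Gamma_{zy}^*\Pi^-_y)_\gamma$. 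Together with the algebraic rewriting $\dx{\Gamma_{xz}^*}\Gamma_{zy}^*=\dx{\Gamma_{xy}^*}-(\dx{\Gamma_{xy}^*}-\dx{\Gamma_{xz}^*}\Gamma_{zy}^*)$, this separates the contribution into a $z$-independent piece whose $\psi_{t/2}$-convolution collapses by the semigroup property to $(\dx{\Gamma_{xy}^*}\Pi^-_y)_{\beta t}(y)$ (estimated directly from \eqref{eq:gammabound} and \eqref{estPiminus}$_{\prec\beta}$), plus an increment piece controlled by \eqref{estdGammaincrement} and averaged against $w_x$ exactly as in the first part.

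The principal technical difficulty---and the place where the constraint \eqref{kappa} on $\kappa$ is crucially used---is balancing the competing powers of $\sqrt[8]{t}$ and $\abss{x-y}$ across the various $\gamma$-components in both regimes $\abss{x-y}\lessgtr\sqrt[8]{t}$ so that all terms combine into the clean $\bar w$-weighted bound, exploiting $|\beta|\geq 2\alpha$ (from \eqref{tri7}) and the restriction $|\gamma|\leq|\beta|+2-\alpha$ (from \eqref{tri6}) whenever the relevant components are non-vanishing.
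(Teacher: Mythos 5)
Your strategy of averaging directly in $z$ at general $(x,y)$ runs into a genuine obstruction in the treatment of the $\dx{\Gamma^*}$-piece. After writing
$(\dx{\Gamma_{xz}^*}\Pi^-_z)_\beta = (\dx{\Gamma_{xy}^*}\Pi^-_y)_\beta - ((\dx{\Gamma_{xy}^*}-\dx{\Gamma_{xz}^*}\G_{zy})\Pi^-_y)_\beta$,
the $z$-independent piece collapses under the $\psi_{t/2}$-convolution to $(\dx{\Gamma_{xy}^*}\Pi^-_y)_{\beta t}(y)$, as you say. But the only estimate available for $\dx{\Gamma_{xy}^*}$ is \eqref{eq:gammabound}, which carries the weight $w_x(y)$, and once the $z$-average has collapsed there is no further integration that could convert this into the target weight $\bar w$. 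The crucial point is that the second contribution $w(y)$ in the definition \eqref{weight} of $w_x(y)$ is \emph{not} controlled by $\bar w$ in general (since $\kappa<D/2$, $w(\cdot)$ is a singular, unbounded weight). The trick \eqref{average_psi_wx} only works because it trades an \emph{integrated} $w_x(\cdot)$ for $(\sqrt[8]{t})^{-\kappa}\bar w$; a pointwise evaluation of $w_x(y)$ has no such gain. Thus the $z$-independent piece cannot be brought into the form $(\sqrt[8]{t})^{\alpha-3}(\sqrt[8]{t}+\abss{x-y})^{|\beta|-\alpha}\bar w$. There is a second, more minor, issue: restricting the substitution $\Pi^-_{z\gamma}=(\G_{zy}\Pi^-_y)_\gamma$ to the low-homogeneity components $|\gamma|<\alpha+\kappa$ before invoking the group identity $\dx{\Gamma_{xz}^*}\G_{zy}=\dx{\Gamma_{xy}^*}-(\dx{\Gamma_{xy}^*}-\dx{\Gamma_{xz}^*}\G_{zy})$ is not algebraically clean, since the identity requires the full $\gamma$-sum; but even after repairing this, the $w_x(y)$ problem above persists.

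The paper circumvents this by splitting the proof into two stages. Stage one establishes \eqref{eq:rhsweakbounddPi} at $x=y$ only: there the integration variable in $\delta\Pi^-_{x\beta t}(x) = \int\psi_{t/2}(x-\cdot)\delta\Pi^-_{x\beta, t/2}$ serves as both secondary base point and active point, so both pieces (the increment piece and the $\dx{\Gamma^*}$-piece) carry $w_x$-weights that are functions of the \emph{integration} variable, and \eqref{average_psi_wx} converts them all to $\bar w$. Stage two removes the restriction $x=y$ by applying $\delta$ to the recentering identity \eqref{recenterPi-} (exploiting $|\beta|<3$), which gives $\delta\Pi^-_{x\beta t}(y)=(\delta\G_{xy}\Pi^-_y+\G_{xy}\delta\Pi^-_y)_{\beta t}(y)$. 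Crucially, the map that appears here is $\delta\G_{xy}$, the Malliavin derivative of $\G_{xy}$, whose estimate \eqref{estdeltaGamma} already carries the clean weight $\bar w$ rather than $w_x(y)$; the second term is then closed by the just-established $x=y$ case of \eqref{eq:rhsweakbounddPi} via the triangularity \eqref{tri3}. You have conflated the two distinct recentering maps $\delta\G$ and $\dx{\Gamma^*}$: only the former is estimated purely in terms of $\bar w$, and only the latter appears in the increment-based decomposition, which is why the paper separates the averaging step (where $\dx{\Gamma^*}$ appears but all weights are integrated) from the recentering step (where $\delta\G$ appears pointwise).
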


\begin{proof}
We first establish $\eqref{eq:rhsweakbounddPi}_\beta$ for $x=y$.
For that, we use the semigroup property \eqref{eq:semi} and the triangle inequality to get 
\begin{align}
\E^\frac{1}{q'}| \delta\Pi^-_{x\beta t}(x) |^{q'} 
&\leq \int_{\R^{1+d}} \dx{y}\, |\psi_{\frac{t}{2}}(x-y)| \,
\E^\frac{1}{q'}| (\delta\Pi^-_x -\d\G_{xy}\Pi^-_y)_{\beta \frac{t}{2}}(y)|^{q'} \\
&\, + \int_{\R^{1+d}} \dx{y}\, |\psi_{\frac{t}{2}}(x-y)| \,
\E^\frac{1}{q'}| (\d\G_{xy}\Pi^-_y)_{\beta \frac{t}{2}}(y)|^{q'} \, .
\end{align}
The first right hand side term is by $\eqref{eq:rhsweakboundincrement}_\beta$ estimated by 
\begin{equation}
\int_{\R^{1+d}} \dx{y}\, |\psi_{\frac{t}{2}}(x-y)| \,
(\sqrt[8]{t})^{\alpha-3} (\sqrt[8]{t})^\kappa (\sqrt[8]{t}+|x-y|_\s)^{|\beta|-\alpha}
w_x(y),
\end{equation}
which by \eqref{average_psi_wx} and the moment bound \eqref{eq:kerbound} is bounded by the desired $(\sqrt[8]{t})^{|\beta|-3} \bar w$.
For the second right hand side term we appeal to
$\eqref{eq:gammabound}_\beta^{\gamma\neq\pp}$, 
the triangularity \eqref{tri6} of $\d\G$ with respect to $\prec$, 
and $\eqref{estPiminus}_{\prec\beta}$, 
which by H\"older's inequality imply an estimate by 
\begin{equation}
\int_{\R^{1+d}} \dx{y} \, |\psi_\frac{t}{2}(x-y)| \, 
\sum_{|\gamma|\in \A\cap[\alpha, |\beta|+2-\alpha] }
|x-y|_\s^{\kappa+|\beta|-|\gamma|} w_x(y) (\sqrt[8]{t})^{|\gamma|-3} \, .
\end{equation}
Again by \eqref{average_psi_wx} and the moment bound \eqref{eq:kerbound} this is estimated as desired by $(\sqrt[8]{t})^{|\beta|-3}\bar w$. 
To get rid of the restriction $x=y$ we apply the Malliavin derivative to \eqref{recenterPi-}, 
where we note that due to $|\beta|<3$ the correction is not present, 
which yields
\begin{equation}
\delta\Pi^-_{x\beta t}(y) 
= \big( \delta\G_{xy} \Pi^-_y + \G_{xy}\delta\Pi^-_y\big)_{\beta t}(y) \, .
\end{equation}
Applying $\E^\frac{1}{q'}|\cdot|^{q'}$ and H\"older's inequality, 
we use on the first right hand side term 
$\eqref{estdeltaGamma}_\beta^{\gamma\neq\pp}$ and 
$\eqref{estPiminus}_{\prec\beta}$ which is sufficient by the triangularity \eqref{tri4}, 
and we use on the second right hand side term 
$\eqref{estGamma}_\beta^{\gamma\neq\pp}$ and the just established 
$\eqref{eq:rhsweakbounddPi}_{\preceq\beta}$ for $x=y$ which is sufficient by the triangularity \eqref{tri3}, 
to obtain 
\begin{align}
\E^\frac{1}{q'}| \delta\Pi^-_{x\beta t}(y)|^{q'}
&\lesssim \sum_{|\gamma|\in \A\cap[\alpha,|\beta|]} \big( 
|x-y|_\s^{|\beta|-|\gamma|} \bar w \, (\sqrt[8]{t})^{|\gamma|-3}
+ |x-y|_\s^{|\beta|-|\gamma|} (\sqrt[8]{t})^{|\gamma|-3} \bar w \big) \\
&\lesssim (\sqrt[8]{t})^{\alpha-3} (\sqrt[8]{t}+|x-y|_\s)^{|\beta|-\alpha} \bar w \, .
\qedhere
\end{align}
\end{proof}

%%%%%%%%%%%%%%%%%%%%%%%%%%%%%%%%%%%%%%%%%%%%%%%%%%%%%%

\section{\texorpdfstring{\bf Proof of~\cref{prop:scaling_c}}{Proof of the form of the counterterm}}\label{sec:explicit}

We remind the reader that in this section we are working with the model $\hat\Pi_{\hat\beta}$ and counterterm $\hat{c}_{\hat\beta}$ from \cref{rem:analyticity}, 
with multiindices $\hat\beta$ that do not contain any $e_0$ components, i.e.~$\hat{\beta}_a(k=0)=0$. 
For notational convenience we drop the hat on the multiindex $\hat\beta$ in the remainder of this section. 
Furthermore, we work with $\hat{L}=(\partial_0-(1-a_0)\Delta^2)$ which depends on $a_0$ and we define $m_0=1-a_0$. 
As argued in the first paragraph of Appendix~\ref{app:analytic}, the estimates obtained in Theorem~\ref{thm:main} remain valid for $\hat\Pi$, locally uniformly for $a_0<1$. We will denote the $m_0$-dependent semigroup associated to the operator $\hat{L}\hat{L}^*$ by $\hat{\psi}_{t}$ (see~\eqref{kernel}). Note however, that for the large scale average which shows up in BPHZ choice  of renormalisation made in~\eqref{eq:BPHZchoice}, we will continue to use the $m_0$-independent semigroup $\psi_{t}$.  Given~\cref{prop:symmetries} and the BPHZ choice of renormalisation made in~\eqref{eq:BPHZchoice}, we have the following result that gives us a natural restriction on when $\hat{c}_\beta$ can be chosen to be zero.
\begin{lemma}
Let~\cref{ass} be satisfied with $\alpha \in (\frac12,1)\setminus \mathbb{Q}$, 
and let $d=1$. 
Then, for all $x \in \mathbb{R}^{1+d}$ and $|\beta|< 3$ such that $ \beta \notin \{e_1 + f_0 + f_1 + g_{(0,1)}, 2f_1 + g_{(0,1)}, 2e_1 +  2f_0 + g_{(0,1)}\}$, we have
\begin{equation}
\lim_{t \to \infty}\E [\hat{\tilde\Pi}_{x \beta t}^-(x)] =0 \, ,
\end{equation}
where $\hat{\tilde\Pi}^-$ is defined analogously to \eqref{eq:Pitilde} by 
$\hat\Pi^-_{x\hat\beta}=\hat{\tilde\Pi}^-_{x\hat\beta}-\hat c_{\hat\beta-g_\n} \n$.
\label{lem:reducingterms}   
\end{lemma}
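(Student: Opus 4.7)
My plan is to enumerate the populated multiindices $\beta=\beta'+g_{(0,1)}$ with $|\beta|<3$ and analyze each case. Using the homogeneity identity $|\beta|=1+\alpha\sum_\ell\beta'(\ell)$ together with $|\beta|<3$, $\alpha\in(\tfrac12,1)$, and the $c$-population constraint \eqref{pop2}, the possibilities reduce to $\sum_\ell\beta'(\ell)\in\{1,2,3\}$ (with $\sum_\ell\beta'(\ell)=3$ arising only when $\alpha<\tfrac23$), giving the finite list $\beta'\in\{f_1,\,e_1+f_0\}$ when the sum equals $1$, $\beta'\in\{2f_1,\,2e_1+2f_0,\,e_1+f_0+f_1,\,e_2+2f_0,\,f_0+f_2\}$ when it equals $2$, and a finite set of non-exceptional multiindices when it equals $3$.

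When $\sum_\ell\beta'(\ell)$ is odd, $[\beta']=\sum_\ell\beta'(\ell)$ is odd, so \eqref{pop4} forces $c_{\beta'}=0$ and hence $\tilde\Pi^-_{x\beta}=\Pi^-_{x\beta}$. I will argue that in \eqref{piminuscomponents} every summand has total $\xi$-degree equal to $\sum_\ell\beta(\ell)-\sum_\ell\beta_c(\ell)$, where the counterterm multiindex $\beta_c$ (if present) has $\sum_\ell\beta_c(\ell)$ even by \eqref{pop2} and \eqref{pop4}; thus every summand is multilinear in $\xi$ of odd total degree, and the noise invariance $\xi\sim-\xi$ from \cref{rem:centered} forces $\E\Pi^-_{x\beta}\equiv 0$, so the limit vanishes trivially.

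When $\sum_\ell\beta'(\ell)=2$ and $\beta'$ is not exceptional, i.e.~$\beta'\in\{e_2+2f_0,\,f_0+f_2\}$, the induction hypothesis combined with \eqref{pop4} eliminates all $c$-contributions to $\tilde\Pi^-_{x\beta}$, reducing it to a purely stochastic bilinear expression computable from \cref{rem:hierarchy}. For $\beta'=f_0+f_2$ one finds $\E\tilde\Pi^-_{x\beta}(y)=2(y-x)_1[J(0)-J(y-x)]$, where $J(z):=\E[\phi(0)\xi_\tau(z)]$ with $\phi=L^{-1}\xi_\tau$ the centered stationary companion of $\Pi_{xf_0}$; the reflection invariance from Assumption~\ref{ass} yields $J(Rz)=J(z)$, so $J(0)-J(y-x)$ is even in the spatial coordinate $z_1=y_1-x_1$, making the integrand odd in $z_1$ and its convolution with the spatially symmetric kernel $\psi_t$ vanishes identically in $t$. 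For $\beta'=e_2+2f_0$ the expectation reduces (after writing $\Pi_{xf_0}=\phi(\cdot)-\phi(x)$ and using that $C(z):=\E[\phi(0)\phi(z)]$ is even in $z_1$) to $-2(y-x)_1\partial_1^3 C(y-x)$; here $\partial_1^3 C$ is odd in $z_1$, so the integrand is even and the parity argument fails. I will instead invoke the parabolic scaling $\psi_t(z_0,z_1)=t^{-5/8}\psi_1(t^{-1/2}z_0,t^{-1/8}z_1)$: rescaling $k=(t^{-1/2}u_0,t^{-1/8}u_1)$ in the Plancherel identity and using the $C^\alpha$-regularity of $\phi$ to control the low-frequency behavior of the spectral density of $\partial_1^3 C$, one obtains $\int\psi_t(z)\cdot 2z_1\partial_1^3 C(z)\,dz=O(t^{(2\alpha-2)/8})\to 0$ as $t\to\infty$, since $\alpha<1$. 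The multiindices with $\sum_\ell\beta'(\ell)=3$ are all non-exceptional and fall back into the odd case handled above.

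The main obstacle is the $\beta'=e_2+2f_0$ subcase: the reflection symmetries from \cref{prop:symmetries} leave the relevant integrand even in $z_1$, so parity alone cannot close the argument, and one must quantitatively exploit the scaling of $\psi_t$ associated to $LL^*=-\partial_0^2+\partial_1^8$ against the singular bilinear moment $\E[\Pi_{xf_0}\partial_1^3\Pi_{xf_0}]$. Since this involves only one specific multiindex, it reduces to an explicit computation.
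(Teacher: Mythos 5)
Your decomposition into cases according to $\sum_\ell\beta'(\ell)$ and the disposal of the odd cases via the sign flip $\xi\sim-\xi$ correctly mirrors the paper's reduction (which invokes \cref{prop:symmetries}). Your handling of $\beta'=e_2+2f_0$ by a quantitative decay estimate is also the right idea and the right rate. However, your parity argument for $\beta'=f_0+f_2$ contains a genuine error that leaves a gap.

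The problem originates in the identification $\phi=L^{-1}\xi_\tau$: since $\Pi_{xf_0}$ solves $L\Pi_{xf_0}=\nabla\cdot\xi_\tau=\partial_1\xi_\tau$ in $d=1$ (and \emph{not} $L\Pi_{xf_0}=\xi_\tau$), the stationary companion is $\phi=L^{-1}\partial_1\xi_\tau$. The extra spatial derivative flips the parity. One can see this cleanly from~\cref{prop:symmetries}~(2) with $\beta=f_0$ (so $|\beta|_p=0$): $\Pi_{0f_0}[-\xi(R\cdot)](y)=\Pi_{0f_0}[\xi](Ry)$, while $\bigl(-\xi(R\cdot)\bigr)_\tau(y)=-\xi_\tau(Ry)$, so
\begin{equation}
\E\bigl[\Pi_{0f_0}(y)\,\xi_\tau(y)\bigr]
=-\E\bigl[\Pi_{0f_0}(Ry)\,\xi_\tau(Ry)\bigr],
\end{equation}
i.e.\ this correlation is \emph{odd} in $y_1$, not even. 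Equivalently, with the correct $\phi$ one finds $J(Rz)=-J(z)$, so $J(0)=0$ and the surviving integrand $-2\,(y-x)_1\,J(y-x)$ is \emph{even} in $(y-x)_1$; its convolution with the spatially even kernel $\psi_t$ therefore does not vanish by parity. The paper's proof confirms this: after plugging in the integral representation of $\Pi_{0f_0}$, only the piece proportional to $\int\psi_t(y)\,y_1\,\dx{y}=0$ cancels exactly, while the remaining piece survives and must be shown to decay as $t\to\infty$ (the paper obtains $(\sqrt[8]{t})^{-5}$ by a direct Fourier estimate). Your proposal, by misdiagnosing this case as an exact cancellation, omits the decay argument that is actually required.

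As a smaller remark on $\beta'=e_2+2f_0$: the paper avoids any appeal to self-similarity of $\partial_1^3 C$ by using the algebraic identity $2\Pi_{0f_0}\partial_1^3\Pi_{0f_0}=\partial_1^3(\Pi_{0f_0}^2)-3\,\partial_1\bigl((\partial_1\Pi_{0f_0})^2\bigr)$, observing that the second summand has $y$-independent expectation (hence drops out entirely against $\psi_t\,y_1$), and bounding the first by $(\sqrt[8]{t})^{2\alpha-2}$ using $\E|\Pi_{0f_0}(y)|^2\lesssim|y|_\s^{2\alpha}$. Your scaling heuristic reaches the same rate, but the passage from the $C^\alpha$-type estimate to a clean self-similar form for $\partial_1^3 C$ at all relevant scales would need to be made rigorous.
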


\begin{proof}
Since the value of $\E [\hat{\tilde\Pi}_{x \beta t}^-(x)]$ depends only on the law of $\xi$, the symmetries of~\cref{prop:symmetries} tell us that  $\E [\hat{\Pi}_{x \beta t}^-(x)] \neq 0$ only if
$[\beta] + |\beta|_p \textnormal{ and } 1 + [\beta] \textnormal{ are even}$.
 This restriction, together with the fact that $\alpha \in (\frac12, 1)$ and $|\beta|<3$ tell us that  $\E [\hat{\tilde\Pi}_{x \beta t}^-(x)] \neq 0$ only if 
 \begin{equation}\label{badMultiindices}
 \beta \in \left\{ e_1 + f_0 +f _1 , f_0 + f_2 , 2f_1  , 2 e_1 + 2 f_0 , e_2 + 2 f_0 \right\} + g_{(0,1)} \, .
 \end{equation}
 Note now that by \cref{rem:hierarchy}, 
 and since $\hat{c}_{f_1}=0$ as a consequence of $f_1$ not being present in the above set, we have 
 \begin{equation}
 \hat{\tilde\Pi}_{x (f_0 +f_2 + g_{(0,1)}) }^-(y) =  2(y_1 - x_1) \hat{\Pi}_{x f_0}(y) \xi_{\tau}(y) \, .
 \end{equation}
By~\cref{prop:symmetries}~(1) and \cref{ass} we can choose $x=0$ without loss of generality and compute using the integral representation \eqref{eq:solutionformula}
\begin{align}
\E \left[\hat{\tilde\Pi}_{0 (f_0 +f_2 + g_{(0,1)}) t }^-(0)\right] 
&= 2\E \int_{\R^2} \dx{y} \psi_{t}(y) y_1 \hat{\Pi}_{0 f_0}(y) \xi_\tau(y) \\
&= 2 \E \int_{\R^2}\int_0^\infty \dx{y} \dx{s} \psi_{t}(y) y_1 \left[(\mathrm{id} - {\rm T}_0^0) \hat{L}^* \partial_1 \xi_\tau*\hat{\psi}_s(y)\right]  \xi_\tau(y) \\
&=  \E \int_{\R^2} \int_{\R^2}\int_0^\infty \dx{y} \dx{z} \dx{s} \psi_{t}(y) (\hat{L}^* \partial_1\hat{\psi}_s) (y-z) y_1  \xi_\tau(z)  \xi_\tau(y) \\
&\,- \E \int_{\R^2} \int_{\R^2}\int_0^\infty \dx{y} \dx{z} \dx{s} \psi_{t}(y) (\hat{L}^* \partial_1 \hat{\psi}_s) (-z) y_1  \xi_\tau(z)  \xi_\tau(y) \\
&=  \int_{\R^2} \int_{\R^2}\int_0^\infty \dx{y} \dx{z} \dx{s} \psi_{t}(y) (\hat{L}^* \partial_1\hat{\psi}_s) (y-z) y_1  F(y-z) \\
&\,-  \int_{\R^2} \int_{\R^2}\int_0^\infty \dx{y} \dx{z} \dx{s} \psi_{t}(y) (\hat{L}^* \partial_1 \hat{\psi}_s) (-z) y_1  F(y-z) \\
&= -  \int_{\R^2} \int_{\R^2}\int_0^\infty \dx{y} \dx{z} \dx{s} \psi_{t}(y) (\hat{L}^* \partial_1 \hat{\psi}_s) (-z) y_1  F(y-z)\, ,
\end{align}
where in the last equality we have used the fact that $\int_{\R^2}\dx{y}\psi_t (y) y_1 =0$.  For the remaining term, we proceed as follows 
\begin{align}
&\left| \int_{\R^2} \int_{\R^2}\int_0^\infty \dx{y} \dx{z} \dx{s} \psi_{t}(y) (\hat{L}^* \partial_1 \hat{\psi}_s) (-z) y_1  F(y-z) \right| \\
&=\left|
\int_{\R^2}\int_0^\infty \dx{z} \dx{s} 
\big((\cdot)_1\psi_t * F\big) (z) (\hat{L}^*\partial_1\hat{\psi}_s)(z)
\right| \\
&\lesssim  \int_0^\infty \int_{\R^2} \dx{s} \dx{k} t |k_1|^7 |k_1|^4|k_1| |\mathcal{F}F(k)||\mathcal{F}\hat{\psi}_s(k)|  |\mathcal{F}\psi_t(k)| \\
&\lesssim t\int_0^\infty \int_{\R^2} \dx{s} \dx{k}  \abss{k}^{12} |\mathcal{F}F(k)|
e^{-\abss{k}^8 (t+s)} \\
&= t \int_{\R^2}  \dx{k}  \abss{k}^{4} |\mathcal{F}F(k)|
e^{-\abss{k}^8 t} \lesssim  (\sqrt[8]{t})^{-1}
\stackrel{ t \to \infty}{\to} 0\, \,,\label{eq:byhand1}
\end{align}
where the implicit constants depend on $\lambda,\Lambda$.
Note that we have used the fact that $\abss{k}^8 \lesssim(k_0^2 +k_1^8) \lesssim \abss{k}^8$, that $\mathcal{F}F$ is a Schwartz function, and the explicit form of the Fourier transforms of $\psi_t,\hat{\psi}_s$. 

Finally, we treat the term $\hat{\tilde\Pi}_{x(e_2 +2f_0 + g_{(0,1)} )}^-$ using again \cref{rem:hierarchy} and $\hat{c}_{e_1+f_0}=0$ as follows 
\begin{align}
 &\left|\E \hat{\tilde\Pi}_{0(e_2 +2f_0 + g_{(0,1)})t}^- (0)\right|\\
 &= 2\left|\E\int_{\R^2} \dx{y} \psi_{t}(y) y_1 \hat{\Pi}_{0 f_0}(y) (\partial_1^3 \hat{\Pi}_{0f_0}) (y)\right| \\
 &\leq \left| \E\int_{\R^2} \dx{y} \psi_{t}(y) y_1  \partial_1^3(\hat{\Pi}_{0 f_0}^2) (y)\right| 
 + 3 \left|\E \int_{\R^2} \dx{y} \psi_{t}(y) y_1  \partial_1((\partial_1\hat{\Pi}_{0 f_0})^2) (y)\right| \\
 &\lesssim (\sqrt[8]{t})^{2\alpha -2} \\
 &\,+ \left| \int_{\R^2} \int_0^\infty \int_0^\infty \dx{y} \dx{s_1}\dx{s_2} \psi_{t}(y) y_1 \partial_1 \int_{\R^2} \int_{\R^2} \dx{z_1} \dx{z_2} \hat{\psi}_{s_1}(y-z_1) \hat{\psi}_{s_2}(y-z_2) F(z_1-z_2)  \right| \\
 &=  (\sqrt[8]{t})^{2\alpha -2} \\
 &\,+ \left| \int_{\R^2} \int_0^\infty \int_0^\infty \dx{y} \dx{s_1}\dx{s_2} \psi_{t}(y) y_1 \partial_1 \int_{\R^2} \int_{\R^2} \dx{z_1} \dx{z_2} \hat{\psi}_{s_1}(z_1) \hat{\psi}_{s_2}(z_2) F(z_1-z_2)  \right| \\&=   (\sqrt[8]{t})^{2\alpha -2} \stackrel{t \to \infty}{\to} 0 \, ,
\end{align} 
where we have used the fact that $\alpha<1$ and the implicit constants depend on $\lambda,\Lambda$. 
\end{proof}

\begin{proof}[Proof of~\cref{prop:scaling_c}]
The results of~\cref{lem:reducingterms} tell us that in $d=1$ and for $\alpha \in (\frac12,1)$ there are three multiindices in need of renormalisation, which we deal with in the following three steps.

\textbf{Step 1} ($\beta=e_1+f_0+f_1+g_{(0,1)}$)\textbf{.} 
Choosing $x=0$ without loss of generality due to~\cref{prop:symmetries}, $\hat{\tilde\Pi}_{0(e_1 + f_0 + f_1 + g_{(0,1)})}^-$ can be expressed as 
\begin{align}
\hat{\tilde\Pi}_{0(e_1 + f_0 + f_1 + g_{(0,1)})}^- (y) &=  \underbrace{y_1 \partial_1^3 \hat{\Pi}_{0(f_0 + f_1)}(y)}_{\rm (i)}  + \underbrace{ \partial_1^3(\hat{\Pi}_{0 f_0} \hat{\Pi}_{0(f_1 + g_{(0,1)})})(y)}_{\rm (ii)} \\& \underbrace{-3 \partial_1(\partial_1  \hat{\Pi}_{0 (f_1 + g_{(0,1)})} \partial_1 \hat{\Pi}_{0f_0})(y)}_{\rm (iii)}  + \underbrace{\hat{\Pi}_{0(e_1 + f_0 + g_{(0,1)})}(y) \xi_\tau(y)}_{\rm (iv)} \, .
\end{align}
We now convolve with $\psi_t$ and evaluate at $0$, take the expectation, and treat the four terms on the right hand side of the above expression separately. For (i) and (ii), we can simply apply the bounds from~\cref{thm:main}, noting that $|f_0+f_1|=2\alpha$, $|f_0|=\alpha$ and $|f_1+g_{(0,1)}|=\alpha+1$ and that~\cref{thm:main} holds true locally uniformly in $a_0<1$, to obtain
\begin{equation}
\left|\E {\rm (i)}_t(0)\right| 
+ \left|\E {\rm (ii)}_t(0)\right| 
\lesssim (\sqrt[8]{t})^{2 \alpha -2 } 
\stackrel{t \to \infty}{\to }0\, , 
\end{equation}
where we used $\alpha<1$ and the implicit constants depend on $\lambda,\Lambda$. We now treat the term (iv). We  know from ~\eqref{badMultiindices} that $\hat{c}_{e_1 + f_0}=0$. Thus, using~\cref{rem:hierarchy} and the solution formula along with the fact that $|e_1+f_0+g_{(0,1)}|=\alpha+1<2$ we have that
\begin{align}
&\E {\rm (iv)}_t(0) \\
&= \E \int_{\R^2}\dx{y} \psi_t(y) \xi_{\tau}(y) \int_0^\infty \dx{s_1} (\mathrm{id} - \mathrm{T}_0^1) \hat{L}^* \partial_1 \hat{\psi}_{s_1}* ((\cdot)_1 \partial_1^3\hat{\Pi}_{0 f_0})(y)  \\
&=  \E \int_{\R^2}\dx{y} \psi_t(y) \xi_{\tau}(y) \int_0^\infty \dx{s_1} (\mathrm{id}-\mathrm{T}_0^1)\int_{\R^2} \dx{z} (\hat{L}^* \partial_1\hat{\psi}_{s_1})(y-z)  z_1 \\
&\,\times \int_0^\infty \int_{\R^2}\dx{s_2}\dx{v} \partial_1^4\hat{L}^*\hat{\psi}_{s_2}(z-v) \xi_{\tau}(v)   \\
&= \int_{\R^2}\int_{\R^2}\int_{\R^2}\int_0^\infty \int_0^\infty \dx{y}\dx{z}\dx{v}\dx{s_1}\dx{s_2} \psi_t(y) 
(\hat{L}^* \partial_1 \hat{\psi}_{s_1})(y\-z)  z_1  \partial_1^4\hat{L}^*\hat{\psi}_{s_2}(z\-v) F(y\-v)   \\
&\,+ \int_{\R^2}\int_{\R^2}\int_{\R^2}\int_0^\infty \int_0^\infty \dx{y}\dx{z}\dx{v}\dx{s_1}\dx{s_2} \psi_t(y)   (\hat{L}^* \partial_1 \hat{\psi}_{s_1})(z)  z_1  \partial_1^4\hat{L}^*\hat{\psi}_{s_2}(z\-v) F(y\-v)   \\
&\,-  \int_{\R^2}\int_{\R^2}\int_{\R^2}\int_0^\infty \int_0^\infty \dx{y}\dx{z}\dx{v}\dx{s_1}\dx{s_2} \psi_t(y)   y_1  (\partial_1^2 \hat{L}^*\hat{\psi}_{s_1})(z)  z_1 \partial_1^4\hat{L}^*\hat{\psi}_{s_2}(z\-v) F(y\-v) \\
&=   \underbrace{-\int_{\R^2}\int_0^\infty \int_0^\infty \dx{z}\dx{s_1}\dx{s_2}   (\hat{L}^* \partial_1 \hat{\psi}_{s_1})(z)  z_1  (\partial_1^4\hat{L}^*\hat{\psi}_{s_2})*F(z)}_{\rm (iv)_a}  \\
&\,+ \underbrace{\int_{\R^2}\int_0^\infty \int_0^\infty \dx{y}\dx{s_1}\dx{s_2} \psi_t(y)  \big( (\hat{L}^* \partial_1 \hat{\psi}_{s_1} (\cdot)_1)*(\partial_1^4\hat{L}^*\hat{\psi}_{s_2})*F\big)(y) }_{\rm (iv)_b}\\
&\,\underbrace{-  \int_{\R^2}\int_0^\infty \int_0^\infty \dx{y}\dx{s_1}\dx{s_2} y_1 \psi_t(y)  \big( (\hat{L}^* \partial_1^2 \hat{\psi}_{s_1} (\cdot)_1)*(\partial_1^4\hat{L}^*\hat{\psi}_{s_2})*F\big)(y)}_{\rm (iv)_c} \, ,
\end{align}
where in the last equality we have used that $F$ and $\psi$ are even. 
We now deal with $\rm (iv)_b$ and $\rm (iv)_c$. Assuming $m_0=1$  without loss of generality (since the estimates below depend only on $\lambda,\Lambda$), for $\rm (iv)_b$, we have
\begin{align}
|{\rm (iv)_b}|&=
\Big|\int_{\R^2}\int_0^\infty \int_0^\infty \dx{y}\dx{s_1}\dx{s_2} \psi_t(y)  \big( (\hat{L}^* \partial_1 \hat{\psi}_{s_1} (\cdot)_1)*(\partial_1^4\hat{L}^*\hat{\psi}_{s_2})*F\big)(y) \Big|  \\
&= \Big|\int_0^\infty \int_0^\infty \int_{\R^2} \dx{s_1}\dx{s_2}\dx{k} \mathcal{F}\psi_t(k)  i (2\pi  k_1)^5 (- 2\pi i k_0 + (2 \pi k_1)^4 )^2 \mathcal{F}\hat{\psi}_{s_2}(k) \\
&\,\times \mathcal{F}F(k) \mathcal{F}\big(\hat{\psi}_{s_1}(\cdot)_1\big)(k)\Big|\\
&= \Big| \int_0^\infty \int_0^\infty \int_{\R^2} \dx{s_1}\dx{s_2}\dx{k} \mathcal{F}\psi_t(k)  i (2\pi  k_1)^5 (- 2\pi i k_0 + (2 \pi k_1)^4 )^2 \mathcal{F}\hat{\psi}_{s_2}(k) \\
&\,\times \mathcal{F}F(k) \frac{i}{2\pi}\partial_{k_1}\mathcal{F}\hat{\psi}_{s_1}(k)\Big| \\
&\lesssim \int_0^\infty \int_0^\infty \int_{\R^2} \dx{s_1}\dx{s_2}\dx{k} \left|\mathcal{F}\psi_t(k)   k_1^5 (k_0^2 + k_1^8 ) \mathcal{F}\hat{\psi}_{s_2}(k)  \partial_{k_1}\mathcal{F}\hat{\psi}_{s_1}(k) \mathcal{F}F(k) \right| \\
&\lesssim \int_0^\infty \int_0^\infty \int_{\R^2} \dx{s_1}\dx{s_2}\dx{k} \left|s_1  k_1^{12} (k_0^2 + k_1^8 ) e^{-(( 2\pi  k_0)^2 + (2 \pi k_1)^8 )(t+s_1+s_2)} \mathcal{F}F(k) \right| 
\end{align}
where in the last inequality we have used the explicit form of $\mathcal{F}\psi$. 
Integrating in $s_1$ and $s_2$ and using that for fixed $\tau>0$, 
$F$ is a Schwartz function and thus bounded, we obtain 
\begin{equation}
|{\rm (iv)_b}|
\lesssim \int_{\R^2} \dx{k} \frac{1}{\abss{k}^4} e^{-\abss{k}^8t} \left| \mathcal{F}F(k) \right| 
\lesssim  \int_{\R^2} \dx{k} \frac{1}{\abss{k}^4} e^{-\abss{k}^8t} 
\lesssim (\sqrt[8]{t})^{-1} \stackrel{t \to \infty}{\to} 0\, .
\end{equation}
The term $\rm (iv)_c$ can be treated in a similar manner as follows
\begin{align}
|{\rm (iv)_c}|
&=\left| \int_{\R^2}\int_0^\infty \int_0^\infty \dx{y}\dx{s_1}\dx{s_2} y_1 \psi_t(y) \big( (\hat{L}^* \partial_1^2 \hat{\psi}_{s_1} (\cdot)_1)*(\partial_1^4\hat{L}^*\hat{\psi}_{s_2})*F\big)(y)\right| \\
&\lesssim \int_{\R^2} \dx{k} t\abss{k}^5 e^{-\abss{k}^8 t}  
\lesssim (\sqrt[8]t)^{-1} \stackrel{t \to \infty}{\to}0 \,,
\end{align}
where the implicit constant depends on $\lambda,\Lambda$.
We leave $\rm (iv)_a$ aside for the time being and move on to the term $\rm (iii)$ which we deal with as follows: 
By the integral representation \eqref{eq:solutionformula} (note that $|f_1+g_{(0,1)}|=1+\alpha$ and $|f_0|=\alpha$) and using that $\psi$ is even we have 
\begin{align}
\E {\rm (iii)}_t(0) 
&=   3\E \Big[\int_{\R^2}\dx{y}\partial_1 \psi_t(y) 
\int_0^\infty \dx{s_1} \hat{L}^* \partial_1^2  \hat{\psi}_{s_1}*(\xi_\tau)(y) \\
&\,\times \int_0^\infty \dx{s_2} \big( \hat{L}^* \partial_1^2 \hat{\psi}_{s_2}*((\cdot)_1 \xi_\tau)(y) - \hat{L}^* \partial_1^2 \hat{\psi}_{s_2}* ((\cdot)_1 \xi_\tau)(0) \big) \Big] \\
&=  3 \int_{\R^2}\int_{\R^2}\int_{\R^2}\int_0^\infty \int_0^\infty\dx{y} \dx{z}\dx{x}\dx{s_1}\dx{s_2}\partial_1 \psi_t(y) (\hat{L}^* \partial_1^2 \hat{\psi}_{s_1})(y-z) \\
&\,\times\Big( ( \hat{L}^* \partial_1^2\hat{\psi}_{s_2})(y-x) 
- ( \hat{L}^* \partial_1^2\hat{\psi}_{s_2})(-x) \Big) 
x_1 F(z-x) \, .
\end{align}
Using that $\psi$ and $F$ are even, we obtain
\begin{align}
&\E {\rm (iii)}_t(0) \\
&= 3 \int_{\R^2}\hspace{-.5ex}
\int_{\R^2}\hspace{-.5ex}
\int_{\R^2}\hspace{-.5ex}
\int_0^\infty\hspace{-.75ex}
\int_0^\infty\hspace{-.5ex} 
\dx{y} \dx{z}\dx{x}\dx{s_1}\dx{s_2}y_1 
\partial_1 \psi_t(y) 
(\hat{L}^* \partial_1^2 \hat{\psi}_{s_1})(z) 
(\hat{L}^* \partial_1^2 \hat{\psi}_{s_2})(x)  
F(z-x) \\
&\,- 3 \int_{\R^2} \int_0^\infty \int_0^\infty\dx{y} \dx{s_1}\dx{s_2}\partial_1 \psi_t(y)  \big( (\hat{L}^* \partial_1^2 \hat{\psi}_{s_1}) *( \hat{L}^* \partial_1^2\hat{\psi}_{s_2} (\cdot)_1) *F \big)(y) \\
&= \underbrace{-3  \int_{\R^2}\int_0^\infty \int_0^\infty\dx{z} \dx{s_1}\dx{s_2}   (\hat{L}^* \partial_1^2 \hat{\psi}_{s_1})(z) 
(\hat{L}^* \partial_1^2 \hat{\psi}_{s_2} * F) (z)}_{\rm (iii)_a} \\
&\,- \underbrace{3 \int_{\R^2} \int_0^\infty \int_0^\infty 
\dx{y} \dx{s_1}\dx{s_2}
\partial_1 \psi_t(y)  \big( 
(\hat{L}^* \partial_1^2 \hat{\psi}_{s_1}) *
(\hat{L}^* \partial_1^2 \hat{\psi}_{s_2} (\cdot)_1) * F \big)(y)}_{\rm (iii)_b} \, ,
\end{align}
where in the last equality we have used the fact that $\int_{\R^2} \dx{y} y_1\partial_1\psi(y) = -1$. 
We treat the term $\rm (iii)_b$ in a similar manner to $\rm (iv)_b$  as follows
\begin{equation}
|{\rm (iii)_b}|
\lesssim \int_{\R^2}\dx{k} \frac{1}{\abss{k}^{4}} e^{-\abss{k}^8 t} 
\lesssim (\sqrt[8]{t})^{-1} \stackrel{t \to \infty}{\to} 0 \, \, ,
\end{equation}
where the implicit constant again depends on $\lambda,\Lambda$.
We are now left to deal with the terms $\rm (iii)_a$ and $\rm (iv)_a$. We integrate by parts and after some tedious computations obtain
\begin{align}
{\rm (iii)_a + (iv)_a} 
&= -2  \int_{\R^2}\int_0^\infty \int_0^\infty\dx{z} \dx{s_1}\dx{s_2}   (\hat{L}^*  \hat{\psi}_{s_1})(z) ( \hat{L}^* \partial_1^4\hat{\psi}_{s_2}*F)(z) \\
&\,+ \int_{\R^2}\int_0^\infty \int_0^\infty\dx{z} \dx{s_1}\dx{s_2} z_1  (\hat{L}^*  \hat{\psi}_{s_1})(z) ( \hat{L}^* \partial_1^5\hat{\psi}_{s_2}*F)(z) \\
&=-2 \int_{\R^2}\dx{k}  \frac{(2 \pi k_1)^4}{(2 \pi  k_0)^2 + m_0^2(2\pi k_1)^8} \mathcal{F}F(k) \\
&\,+ 4 \int_{\R^2}\dx{k}  m_0^2\frac{(2 \pi k_1)^{12}}{((2 \pi  k_0)^2 + m_0^2(2\pi k_1)^8)^2} \mathcal{F}F(k)\, ,
\end{align}
which completes the proof of~\eqref{eq:value}. 

We now consider the choice of mollifier made in the statement of the theorem. After rescaling appropriately, we are left with
\begin{align}
&\hat{c}_{e_1+f_0+f_1}(a_0)\\
&= \lim_{t \to \infty }\E \hat{\tilde\Pi}_{0(e_1 + f_0 + f_1 + g_{(0,1)}) t}^-(0) \\
&= \frac{(\sqrt[8]{\tau})^{2\alpha -2}}{m_0(2\pi)^2} \int_{\R^2} \dx{k} \frac{ k_1^4}{(k_0^2 + k_1^8)(m_0^2k_0^2 + k_1^8)^{\frac{2\alpha -1}{8}}} \left(\frac{4 k_1^8}{k_0^2 + k_1^8}-2 \right)e^{-m_0^2k_0^2 \tau^{\eta-1} - k_1^8} \\
&= \frac{(\sqrt[8]{\tau})^{2\alpha -2}}{m_0} \underbrace{\frac{1}{(2\pi)^2}\int_{\R^2} \dx{k} \frac{ k_1^4}{(k_0^2 + k_1^8)(m_0^2 k_0^2 + k_1^8)^{\frac{2\alpha -1}{8}}} \left(\frac{4 k_1^8}{k_0^2 + k_1^8}-2 \right)e^{ - k_1^8} }_{=:C_{\alpha,1}(m_0)}\\
&\,+ \frac{(\sqrt[8]{\tau})^{2\alpha -2}}{m_0(2\pi)^2} \int_{\R^2} \dx{k} \frac{ k_1^4}{(k_0^2 + k_1^8)(m_0^2k_0^2 + k_1^8)^{\frac{2\alpha -1}{8}}} \left(\frac{4 k_1^8}{k_0^2 + k_1^8}-2 \right)e^{ - k_1^8}\big(e^{-k_{0}^2m_0^2 \tau^{\eta-1}}-1 \big) .
\end{align}
The first term is as desired, with~\eqref{eq:exactvalue2} following by exactly computing the integral for $\alpha=\frac{1}{2}$. The remainder we control as follows for $\eta>1$
\begin{align}
 &\frac{(\sqrt[8]{\tau})^{2\alpha -2}}{m_0(2\pi)^2} \left|\int_{\R^2} \dx{k} \frac{ k_1^4}{(k_0^2 + k_1^8)(m_0^2k_0^2 + k_1^8)^{\frac{2\alpha -1}{8}}} \left(\frac{12 k_1^8}{k_0^2 + k_1^8}-2 \right)e^{ - k_1^8} (e^{-k_{0}^2m_0^2 \tau^{\eta-1}}-1 )  \right|  \\
&\lesssim \frac{(\sqrt[8]{\tau})^{2\alpha -2}}{m_0^{\frac{2 \alpha + 3}{4}}} \int_{\R^2} \dx{k} \frac{ k_1^4}{(k_0^2)^{1+\frac{2\alpha -1}{8}}} e^{ - k_1^8}\left|e^{-k_{0}^2m_0^2 \tau^{\eta-1}}-1 \right|   \\
&\lesssim \frac{(\sqrt[8]{\tau})^{2\alpha -2}}{m_0^{\frac{2 \alpha + 3}{4}}} \int_{\R} \dx{k_0} \frac{\big| e^{-k_{0}^2m_0^2 \tau^{\eta-1}}-1 \big|}{(k_0^2)^{1+\frac{2\alpha -1}{8}}} 
\lesssim  
(\sqrt[8]{\tau})^{2\alpha -2 +  (\eta-1) (3 + 2\alpha)} \, . 
\end{align} 

\textbf{Step 2} ($\beta=2f_1+g_{(0,1)}$)\textbf{.}
We now move on to $\hat{\Pi}_{x(2f_1 + g_{(0,1)})}^-$ and note that again we have, by \cref{rem:hierarchy} and by $\hat{c}_{f_1}=0$,  
\begin{align}
\E \left[\hat{\tilde\Pi}_{0 (2 f_1 + g_{(0,1)}) t }^-(0)\right] 
&= \E \int_{\R^2} \dx{y} \psi_{t}(y) \hat{\Pi}_{0(f_1 + g_{(0,1)})}(y) \xi_\tau(y) \\
&= \E \int_{\R^2}\int_0^\infty \dx{y} \dx{s} \psi_{t}(y) \left[(\mathrm{id} - {\rm T}_0^1) \hat{L}^* \partial_1\hat{\psi}_s*((\cdot)_1\xi_\tau)(y)\right]  \xi_\tau(y) \\
&= \E \int_{\R^2}\int_{\R^2} \int_0^\infty \dx{y}\dx{z} \dx{s} \psi_{t}(y)  (\hat{L}^* \partial_1\hat{\psi}_s)(y-z) z_1\xi_\tau(z)  \xi_\tau(y) \\
&\,-  \E \int_{\R^2}\int_{\R^2} \int_0^\infty \dx{y}\dx{z} \dx{s} \psi_{t}(y)  (\hat{L}^* \partial_1\hat{\psi}_s)(-z) z_1\xi_\tau(z)  \xi_\tau(y) \\
&\,- \E \int_{\R^2}\int_{\R^2} \int_0^\infty \dx{y}\dx{z} \dx{s} \psi_{t}(y)  y_1 
( \hat{L}^* \partial_1^2\hat{\psi}_s)(-z) z_1\xi_\tau(z)  \xi_\tau(y) \\
&= \underbrace{\int_{\R^2}\int_{\R^2} \int_0^\infty \dx{y}\dx{z} \dx{s} \psi_{t}(y)  (\hat{L}^* \partial_1\hat{\psi}_s)(y-z) z_1 F(y-z)}_{\rm (v)} \\
&\,-  \underbrace{ \int_{\R^2}\int_{\R^2} \int_0^\infty \dx{y}\dx{z} \dx{s} \psi_{t}(y)  (\hat{L}^* \partial_1\hat{\psi}_s)(-z) z_1 F(y-z)}_{\rm (vi)} \\
&\,- \underbrace{\int_{\R^2}\int_{\R^2} \int_0^\infty \dx{y}\dx{z} \dx{s} \psi_{t}(y)  y_1 ( \hat{L}^* \partial_1^2\hat{\psi}_s)(-z) z_1F(y-z)}_{\rm (vii)} \, .
\end{align}
 We leave (v) as it is and now treat the terms (vi) and (vii) individually, explictly bounding them as we did in~\eqref{eq:byhand1}. For (vi), we have, after applying Plancherel's identity and using the assumptions on $m_0$,
\begin{align}
|\mathrm{(vi)}|
&= \left| \int_{\R^2}\int_0^\infty \dx{y} \dx{s}
\psi_t(y) (((\cdot)_1 \hat{L}^*\partial_1\hat{\psi}_s)* F)(y) \right| \\
&\lesssim \int_{\R^2}\int_0^\infty \dx{k}\dx{s} e^{-|k|_\s^8 t} 
\left| \partial_{k_1} \big(k_1(-k_0+k_1^4)e^{-|k|_\s^8}\big) \mathcal{F}F(k)\right| \\
 &\lesssim \int_{\R^2} \int_0^\infty \dx{k}\dx{s} e^{-\abss{k}^8 t}  |\mathcal{F}F(k)| \Big((|k_1|^4 \+ |k_0| ) e^{-\abss{k}^8s} \+ s|k_1|^7 (|k_0||k_1| \+ |k_1|^5)e^{-\abss{k}^8s}\Big) \, ,
\end{align}
where we have arrived at the above expression by using the explicit form of the Fourier transform of $z_1$ and brutally estimating the terms that show up. As before, the implicit constants depend on $\lambda,\Lambda$. Integrating in $s$ and using that $F$ is a Schwartz function, 
we see that the above term can be bounded as follows

\begin{equation}
|\mathrm{(vi)}| 
\lesssim  \int_{\R^2} \dx{k} \frac{1}{\abss{k}^4}e^{-\abss{k}^8 t} 
\lesssim (\sqrt[8]{t})^{-1}\stackrel{t\to \infty}{\to} 0 \, .
\end{equation}
For (vii), we proceed similarly to obtain
\begin{align}
&|\mathrm{(vii)}| 
=\left| \int_{\R^2}\int_0^\infty \dx{y}\dx{s} \psi_t(y) y_1
(((\cdot)_1 \hat{L}^*\partial_1^2\hat{\psi}_s)*F)(y) \right| \\
&\lesssim \int_{\R^2}\int_0^\infty \dx{k}\dx{s} \left|
\partial_{k_1} e^{-|k|_\s^8 t} 
\partial_{k_1}\big( k_1^2(-k_0+k_1^4)e^{-|k|_\s^8}\big) \mathcal{F}F(k) \right| \\
&\lesssim \int_{\R^2} \int_{0}^\infty \dx{k} \dx{s} |\mathcal{F}F(k)||k_1|^7t e^{-\abss{k}^8 t}\left( (|k_0||k_1| \+ |k_1|^5) \+ |k_1|^7 s  (|k_0||k_1|^2 \+ |k_1|^6) \right) e^{-\abss{k_1}^8 s} \, .
\end{align}
Integrating in $s$ and using that $\mathcal{F}F$ is a Schwartz function, 
we bound the above quantity as follows
\begin{equation}
|\mathrm{(vii)}| 
\lesssim \int_{\R^2} \dx{k} \abss{k}^4t e^{-\abss{k}^8 t} 
\lesssim (\sqrt[8]{t})^{-1} \stackrel{t\to \infty}{\to}0 \, .
\end{equation}
We are thus left only with (v) which we treat in the following manner 
\begin{align}
{\rm (v)}
= \- \int_{\R^2} \int_0^\infty \dx{z}\dx{s} 
(\hat{L}^* \partial_1\hat{\psi}_s)(z) z_1 F(z)\, ,
\end{align}
where we have used the fact that $\int_{\R^2}\dx{y}y_1 \psi_t(y) =0$. Applying Plancherel's identity and using the explicit form of the Fourier transform of $z_1$, we obtain
\begin{align}
{\rm (v)}
&= - \int_{\R^2} \int_0^\infty \dx{k} \dx{s} 
(-i2\pi k_0  + m_0(2\pi k_1)^4 )(i 2\pi k_1) e^{-((2\pi k_0)^2+ m_0^2(2\pi k_1)^8) s } \frac{i}{2\pi}\partial_{k_1} \mathcal{F}F(k) \\
&=  \int_{\R^2} \dx{k} \frac{k_1 }{(2 \pi k_0)^2 + m_0^2(2 \pi k_1)^8} 
\left(-i 2\pi k_0 + m_0 (2\pi k_1)^4\right) \partial_{k_1}\mathcal{F}F(k) \, ,
\end{align}
which implies~\eqref{eq:value2}. 

Again, we now choose $C$ and $\varphi_\tau$ to be as in the statement of the theorem and rescale as we did before to obtain the following
\begin{align}
&\hat{c}_{2f_1}(a_0)\\
&=\lim_{t \to \infty} \E \hat{\tilde\Pi}_{x(2f_1 + g_{(0,1)})}^- \\
&=(\sqrt[8]{\tau})^{2\alpha-2}\underbrace{
\frac{1}{(2\pi)^2}\int_{\R^2} \dx{k} \frac{k_1^4}{(k_0^2 + k_1^8)(m_0^2k_0^2 + k_1^8)^{\frac{2\alpha -1 }{8}}} \left(\frac{8k_1^8}{k_0^2 + k_1^8} -5\right) e^{ -k_1^8}}_{=:C_{\alpha,2}(m_0)}\\
&\,+ \frac{(\sqrt[8]{\tau})^{2\alpha-2}}{(2\pi)^2}\int_{\R^2} \dx{k} \frac{k_1^4}{(k_0^2 + k_1^8)(m_0^2k_0^2 + k_1^8)^{\frac{2\alpha -1 }{8}}} \left(\frac{8k_1^8}{k_0^2 + k_1^8} -5\right) e^{ -k_1^8} (e^{-m_0^2 k_0^2 \tau^{\eta-1}} -1)  \, .
\end{align}
The first term is as desired, with~\eqref{eq:exactvalue22} following by exactly computing the integral at $\alpha=\frac12$.
We control the second term as follows
\begin{align}
&\frac{(\sqrt[8]{\tau})^{2\alpha-2}}{(2\pi)^2}\left|\int_{\R^2} \dx{k} \frac{k_1^4}{(k_0^2 + k_1^8)(m_0^2k_0^2 + k_1^8)^{\frac{2\alpha -1 }{8}}} \left(\frac{8k_1^8}{k_0^2 + k_1^8} -5\right) e^{ -k_1^8} (e^{-m_0^2 k_0^2 \tau^{\eta-1}} -1) \right|\\
&\lesssim \frac{(\sqrt[8]{\tau})^{2\alpha-2}}{m_0^{\frac{2\alpha-1}{4}}} \int_{\R} \dx{k_0} \frac{|e^{-m_0^2 k_0^2 \tau^{\eta-1}} -1|}{(k_0^2)^{1 +\frac{2\alpha -1 }{8}}}  \lesssim m_0 (\sqrt[8]{\tau})^{2\alpha-2 + (\eta-1)(2 \alpha + 3)} \, . 
\end{align}

\textbf{Step 3} ($\beta=2e_1+2f_0+g_{(0,1)}$)\textbf{.}
We move on to treat $\hat{\Pi}_{x(2e_1 + 2 f_0 + g_{(0,1)})}^-$, where we note that by \cref{rem:hierarchy} and $\hat{c}_{e_1+f_0}=0$ as a consequence of $e_1+f_0$ not being an element of the set in \eqref{badMultiindices}, we have 
\begin{align}
&\E \left[\hat{\tilde\Pi}_{0 (2 e_1 +2f_0+ g_{(0,1)}) t }^-(0)\right]  \\
&=\E \int_{\R^2} \dx{y} \psi_{t}(y) y_1  (\partial_1^3 \hat{\Pi}_{0 (e_1 + 2f_0)}) (y) \\
&\,+  \E \int_{\R^2} \dx{y} \psi_{t}(y) \left( \hat{\Pi}_{0 (e_1+f_0+ g_{(0,1)})}(y) (\partial_1^3 \hat{\Pi}_{0f_0}) (y) + \hat{\Pi}_{0f_0}(y) (\partial_1^3 \hat{\Pi}_{0 (e_1+f_0+ g_{(0,1)})} ) (y)\right)  \, .
\end{align}
For the first term on the right hand side, we simply apply the bounds from~\cref{thm:main} and the scaling of $\psi_t$ (see~\eqref{eq:psiscaling}), 
where we note that $|e_1+2f_0|=2\alpha$ and $\alpha<1$, to obtain
\begin{equation}
 \left|\E \int_{\R^2} \dx{y} \psi_{t}(y) y_1  (\partial_1^3 \hat{\Pi}_{0 (e_1 + 2f_0)}) (y)\right|  
\lesssim (\sqrt[8]{t})^{2\alpha -2} \stackrel{ t \to \infty}{\to} 0 \, .
\end{equation}
We deal with the second term by using the solution formula~\eqref{eq:solutionformula} as follows
\begin{align}
&\E \int_{\R^2} \dx{y} \psi_{t}(y) \left( \hat{\Pi}_{0 (e_1+f_0+ g_{(0,1)})}(y) (\partial_1^3 \hat{\Pi}_{0f_0}) (y) + \hat{\Pi}_{0f_0}(y) (\partial_1^3 \hat{\Pi}_{0 (e_1+f_0+ g_{(0,1)})} ) (y)\right)  \\
&= \E \int_{\R^2} \dx{y} \psi_{t}(y) \partial_1^3 \left(\hat{\Pi}_{0f_0} \hat{\Pi}_{0 (e_1+f_0+ g_{(0,1)})} \right)  (y)  \\
&\,- 3 \E \int_{\R^2} \dx{y} \psi_{t}(y) \partial_1 \left( \partial_1 \hat{\Pi}_{0f_0} \partial_1\hat{\Pi}_{0 (e_1+f_0+ g_{(0,1)})} \right)  (y)  \\
&= \E \int_{\R^2} \dx{y} \psi_{t}(y) \partial_1^3 \left(\hat{\Pi}_{0f_0} \hat{\Pi}_{0 (e_1+f_0+ g_{(0,1)})} \right)  (y) 
\\& \,  + 3 \E \int_{\R^2} \dx{y} \partial_1 \psi_{t}(y) 
\int_0^\infty \dx{s_1} \hat{L}^* \partial_1^2 \hat{\psi}_{s_1} * \xi_\tau (y) \\  
&\,\times  \left(\int_0^\infty \dx{s_2} \hat{L}^* \partial_1^2 \hat{\psi}_{s_2} * \left((\cdot)_1\partial_1^3\hat{\Pi}_{0f_0}\right)(y) - \hat{L}^* \partial_1^2 \hat{\psi}_{s_2} * \left((\cdot)_1\partial_1^3\hat{\Pi}_{0f_0}\right)(0)\right)  \\
&=  \E \int_{\R^2} \dx{y} \psi_{t}(y) \partial_1^3 \left(\hat{\Pi}_{0f_0} \hat{\Pi}_{0 (e_1+f_0+ g_{(0,1)})} \right)  (y) \\
&\, +3 \int_{\R^2} \int_0^\infty \int_0^\infty \int_0^\infty \dx{y} \dx{s_1}\dx{s_2} \dx{s_3} \partial_1  \psi_{t}(y)  \int_{\R^2} \int_{\R^2} \int_{\R^2} \dx{z} \dx{x}\dx{v} x_1  \\ 
& \,\times \bar{\psi}_{s_1}(y-z) \big( \bar{\psi}_{s_2}(y-x)- \bar{\psi}_{s_2}(-x)\big) \tilde{\psi}_{s_3}(x-v) F(z-v)   \, ,
\end{align}
where $\bar \psi_s = \partial_1^2 \hat{L}^* \hat{\psi}_s$ and $\tilde{\psi}=\partial_1^4 \hat{L}^* \hat{\psi}_s$. The first term on the right hand side goes to $0$ as $t\to\infty$ by $\alpha<1$ by applying the bounds from~\cref{thm:main} (note that $|f_0|=\alpha$ and $|e_1+f_0+g_{(0,1)}|=\alpha+1$). We now deal with the second term which we can rewrite as follows
\begin{align}
&3 \int_{\R^2} \int_0^\infty \int_0^\infty \int_0^\infty \dx{y} \dx{s_1}\dx{s_2} \dx{s_3} \partial_1  \psi_{t}(y)  \int_{\R^2} \int_{\R^2} \int_{\R^2} \dx{z} \dx{x}\dx{v} x_1  \\ 
& \, \times \bar{\psi}_{s_1}(y-z) \big( \bar{\psi}_{s_2}(y-x)- \bar{\psi}_{s_2}(-x)\big) \tilde{\psi}_{s_3}(x-v) F(z-v) \\
&= 3 \int_{\R^2} \int_0^\infty \int_0^\infty \int_0^\infty \dx{y} \dx{s_1}\dx{s_2} \dx{s_3} \partial_1  \psi_{t}(y)  \int_{\R^2} \int_{\R^2} \int_{\R^2} \dx{z} \dx{x}\dx{v} x_1 \\
& \, \times \bar{\psi}_{s_1}(y-z) \bar{\psi}_{s_2}(y-x) \tilde{\psi}_{s_3}(x-v) F(z-v) \\
&\, - 3 \int_{\R^2} \int_0^\infty \int_0^\infty \int_0^\infty \dx{y} \dx{s_1}\dx{s_2} \dx{s_3} \partial_1  \psi_{t}(y)  \int_{\R^2} \int_{\R^2} \int_{\R^2} \dx{z} \dx{x}\dx{v} x_1  \\
&\,\times \bar{\psi}_{s_1}(y-z) \bar{\psi}_{s_2}(-x) \tilde{\psi}_{s_3}(x-v) F(z-v) \\
&= 3 \int_{\R^2} \int_0^\infty \int_0^\infty \int_0^\infty \dx{y} \dx{s_1}\dx{s_2} \dx{s_3} \partial_1  \psi_{t}(y)  \int_{\R^2} \int_{\R^2} \int_{\R^2} \dx{z} \dx{x}\dx{v} (x_1 -y_1 +y_1) \\
&\,\times \bar{\psi}_{s_1}(y-z) \bar{\psi}_{s_2}(y-x) \tilde{\psi}_{s_3}(x-v) F(z-v)  \\
& \, - 3  \int_{\R^2} \int_0^\infty \int_0^\infty \int_0^\infty \dx{y} \dx{s_1}\dx{s_2} \dx{s_3} \partial_1  \psi_{t}(y)  \int_{\R^2} \int_{\R^2} \int_{\R^2} \dx{z} \dx{x}\dx{v} x_1 \\
&\,\times \bar{\psi}_{s_1}(y-z) \bar{\psi}_{s_2}(-x) \tilde{\psi}_{s_3}(x-v) F(z-v) \\
&= \underbrace{3 \int_{\R^2} \int_0^\infty \int_0^\infty \int_0^\infty \dx{y} \dx{s_1}\dx{s_2} \dx{s_3} y_1  \partial_1  \psi_{t}(y)   \int_{\R^2} \dx{v}  \left( \bar{\psi}_{s_1}*F\right)(v) \left(\bar{\psi}_{s_2}* \tilde{\psi}_{s_3}\right) (v)}_{\rm (viii)}   \\
& \, +  \underbrace{3  \int_{\R^2} \int_0^\infty \hspace{-.5ex}
\int_0^\infty \hspace{-.5ex} \int_0^\infty \hspace{-.5ex} \int_{\R^2} \dx{y} \dx{s_1}\dx{s_2} \dx{s_3} \dx{z} \partial_1  \psi_{t}(y)   \bar{\psi}_{s_1}(y-z) \left(((\cdot)_1\bar{\psi}_{s_2})*\tilde{\psi}_{s_3}*F\right) (-z) }_{\rm (ix)} \, ,
\end{align}
where for the term (viii) we have used the fact that the term involving $(x_1-y_1)$ is independent of $y$ and that $\int_{\R^2} \dx{y} \partial_1 \psi_t (y)=0$. We bound the term (ix) as follows
\begin{align}
 |{\rm (ix)}|
 &=\left|3  \int_{\R^2} \int_0^\infty \hspace{-.5ex}
 \int_0^\infty \hspace{-.5ex} \int_0^\infty \hspace{-.5ex} \int_{\R^2} \dx{y} \dx{s_1}\dx{s_2} \dx{s_3} \dx{z} (\partial_1  \psi_{t} * \bar{\psi}_{s_1}) (z) \left(((\cdot)_1\bar{\psi}_{s_2} )*\tilde{\psi}_{s_3}*F\right) (z) \right|  \\
&\lesssim \int_{\R^2} \int_0^\infty \int_0^\infty \int_0^\infty  \dx{k} \dx{s_1}\dx{s_2} \dx{s_3} e^{-\abss{k}^8t} |k_1|^7(|k_0| + |k_1|^4)^3 \\ 
&\,\times (s_2|k_1|^9+|k_1| )e^{-\abss{k}^8 (s_1+s_2+s_3)}|\mathcal{F}F(k)|  \\
& \lesssim   \int_{\R^2} \frac{e^{-\abss{k}^8t}}{\abss{k}^4}  \lesssim (\sqrt[8]{t})^{-1} \stackrel{t \to \infty}{\to}0  \, ,
\end{align} 
where we have again used the explicit forms of the Fourier transforms of $\psi$ and $x_1$ along with the fact that $F$ is a Schwartz function. The implicit constants depend on $\lambda,\Lambda$. We are thus left to treat the term (viii), which we do as follows
\begin{align}
\mathrm{(viii)} 
&= 3 \int_{\R^2} \int_0^\infty \int_0^\infty \int_0^\infty \dx{y} \dx{s_1}\dx{s_2} \dx{s_3} y_1  \partial_1  \psi_{t}(y)   \int_{\R^2} \dx{v}  \left( \bar{\psi}_{s_1}*F\right)(v) \left(\bar{\psi}_{s_2}* \tilde{\psi}_{s_3}\right) (v) \\
&= -3    \int_{\R^2} \dx{k} \frac{1}{((2\pi k_0)^2 + m_0^2 (2\pi k_1)^8)^2}(i 2 \pi k_0 + m_0 (2\pi k_1)^4 ) (2 \pi k_1)^8 \mathcal{F}F(k) \\
&= -3    \int_{\R^2} \dx{k} \frac{m_0(2 \pi k_1)^{12}}{((2\pi k_0)^2 + m_0^2 (2\pi k_1)^8)^2} \mathcal{F}F(k) \, ,
\end{align}
which completes the proof of~\eqref{eq:value3}. 

As before, choosing $C$ and $\varphi_\tau$ to be as in the statement of the theorem and rescaling we obtain
\begin{align}
\hat{c}_{2e_1 + 2f_0}(a_0)&=\lim_{t\to \infty}\E \left[\hat{\tilde\Pi}_{0 (2 e_1 +2f_0+ g_{(0,1)}) t }^-(0)\right]\\
&= \frac{(\sqrt[8]{\tau})^{2\alpha-2}}{m_0^{2}}\underbrace{\frac{-3}{(2\pi)^2}    \int_{\R^2} \dx{k} \frac{k_1^{12}}{(k_0^2 +k_1^8)^2(m_0^2 k_0^{2} +  k_1^8)^{ \frac{2\alpha-1}{8}}} e^{- k_1^8}}_{=:C_{\alpha,3}(m_0)} \\
& \,  - \frac{3(\sqrt[8]{\tau})^{2\alpha-2}}{(2\pi)^2m_0^{2}}    \int_{\R^2} \dx{k} \frac{k_1^{12}}{(k_0^2 +k_1^8)^2(m_0^2 k_0^{2} +  k_1^8)^{ \frac{2\alpha-1}{8}}} e^{-k_1^8} (e^{- m_0^2k_0^2\tau^{\eta-1}} -1) \, ,
\end{align}
where the value of $C_{\alpha,3}$ given in~\eqref{eq:exactvalue23} follows from computing the integral explicitly for $\alpha=\frac12$. The error term can be controlled as follows
\begin{align}
& \frac{3(\sqrt[8]{\tau})^{2\alpha-2}}{(2\pi)^2m_0^{2}} \left| \int_{\R^2} \dx{k} \frac{k_1^{12}}{(k_0^2+k_1^8)^2(k_0^{2} +  m_0^2k_1^8)^{\frac{2\alpha-1}{8}}} e^{-k_1^8} 
(e^{- m_0^2k_0^2\tau^{\eta-1}} -1)\right| \\
&\lesssim 
\frac{(\sqrt[8]{\tau})^{2\alpha-2}}{m_0^{2+\frac{2\alpha-1}{4}}} 
\int_{\R} \dx{k_0} \frac{|e^{- m_0^2k_0^2\tau^{\eta-1}} -1|}{(k_0^{2})^{2+ \frac{2\alpha-1}{8}}} 
\lesssim
m_0 (\sqrt[8]{\tau})^{2\alpha-2+(\eta-1)(11+2\alpha)} \, . \qedhere
\end{align}
\end{proof}

%%%%%%%%%%%%%%%%%%%%%%%%%%%%%%%%%%%%%%%%%%%%%%%%%%%%%%

\appendix
\section{\texorpdfstring{\bf Proof of qualitative smoothness}{Proof of qualitative smoothness}} 
\label{app:smooth}

\begin{proof}[Proof of \cref{rem:smoothness}]
The estimates \eqref{boundedness_c} -- \eqref{smoothness_Pi-} are clear for purely polynomial mutltiindices. 
The remaining multiindices we treat by induction with respect to ~$\prec$.
The base case amounts to $\eqref{smoothness_Pi-}_{\beta=f_0}$, 
which is contained in Step~1 below. 
The induction step we split over the following four steps.
We show in Step~1 that 
$\eqref{boundedness_c}_{\prec\beta-g_{\n^i}}$ for all $i=1,\dots,d$ \& 
$\eqref{boundedness_Pi}_{\prec\beta}$ \& 
$\eqref{smoothness_Pi}_{\prec\beta}$ imply 
$\eqref{smoothness_Pi-}_{\beta}$. 
In Step~2 we prove that 
$\eqref{smoothness_Pi-}_{\beta}$ implies
$\eqref{smoothness_Pi}_\beta$. 
Step~3 establishes that 
$\eqref{smoothness_Pi}_\beta$ implies 
$\eqref{boundedness_Pi}_\beta$, 
and finally in Step~4 we obtain that 
$\eqref{boundedness_c}_{\prec\beta-g_{\n^i}}$ for all $i=1,\dots,d$ \& 
$\eqref{boundedness_Pi}_{\prec\beta}$ imply 
$\eqref{boundedness_c}_{\beta-g_{\n^i}}$ for all $i=1,\dots,d$. 

\medskip

{\bf Step 1.} 
We show 
$\eqref{boundedness_c}_{\prec\beta-g_{\n^i}}$ \& 
$\eqref{boundedness_Pi}_{\prec\beta}$ \& 
$\eqref{smoothness_Pi}_{\prec\beta}$ together with 
$\eqref{estPi}_{\prec\beta}$ imply 
$\eqref{smoothness_Pi-}_{\beta}$.
We only give the proof for $\n=\0$, 
the proof for $|\n|=1$ is analogous by using Leibniz rule.
To obtain \eqref{smoothness_Pi-}, we estimate the individual components of $\Pi^-_x$ separately, and start with $\sum \a_k \Pi_x^k\nabla\Delta\Pi_x$. 
We rewrite the $\beta$-component of its increment as
\begin{align}
&\sum_{k\geq0} \sum_{\beta_1+\beta_2=\beta} 
\big( \a_k\Pi_x^k(y) - \a_k\Pi_x^k(z) \big)_{\beta_1} \nabla\Delta\Pi_{x\beta_2}(y) 
\\ 
&\,+ \sum_{k\geq0}\sum_{\beta_1+\beta_2=\beta} 
(\a_k\Pi_x^k(z))_{\beta_1} \big( \nabla\Delta\Pi_x(y)-\nabla\Delta\Pi_x(z) \big)_{\beta_2},
\end{align}
where we note that $\beta_1,\beta_2\prec\beta$.
Thus we can estimate the $\E^\frac{1}{p}|\cdot|^p$-norm of the first line
as in \eqref{mt05} with $\eqref{estPi}_{\prec\beta}$ and  $\eqref{smoothness_Pi}_{\prec\beta}$, and with $\eqref{boundedness_Pi}_{\prec\beta}$
by 
\begin{equation}
\sum_{\beta_1+\beta_2=\beta} 
|y-z|_\s^\alpha (|x-y|_\s+|x-z|_\s)^{|\beta_1|-2\alpha} 
(\sqrt[8]{\tau})^{\alpha-3} (\sqrt[8]{\tau}+|x-y|_\s)^{|\beta_2|-\alpha}, 
\end{equation}
which since $|\cdot|-\alpha$ is additive is estimated by the right hand side of \eqref{smoothness_Pi-}.
Similarly, the $\E^\frac{1}{p}|\cdot|^p$-norm of the second line is with 
$\eqref{estPi}_{\prec\beta}$ and $\eqref{smoothness_Pi}_{\prec\beta}$ estimated by 
\begin{equation}
\sum_{\beta_1+\beta_2=\beta} 
|x-z|_\s^{|\beta_1|-\alpha} (\sqrt[8]{\tau})^{-3} 
(\sqrt[8]{\tau}+|x-y|_\s+|x-z|_\s)^{|\beta_2|-\alpha} |y-z|_\s^\alpha,
\end{equation}
which is as before estimated by the right hand side of \eqref{smoothness_Pi-}.

\medskip

We turn to $\sum_\ell \b_\ell \Pi_x^\ell \xi_\tau$, 
and rewrite the $\beta$-component of its increment as
\begin{equation}
\sum_{\ell\geq0} \big(\b_\ell\Pi_x^\ell(y)-\b_\ell\Pi_x^\ell(z)\big)_\beta 
\xi_\tau(y) 
+ \sum_{\ell\geq0} (\b_\ell\Pi_x^\ell(z))_\beta (\xi_\tau(y)-\xi_\tau(z)).
\end{equation}
The $\E^\frac{1}{p}|\cdot|^p$-norm of the first sum is as in \eqref{mt06} with 
$\eqref{estPi}_{\prec\beta}$ and $\eqref{smoothness_Pi}_{\prec\beta}$, 
and with \eqref{bound_xi} estimated by 
\begin{equation}
|y-z|_\s^\alpha (|x-y|_\s+|x-z|_\s)^{|\beta|-2\alpha} (\sqrt[8]{\tau})^{\alpha-3} ,
\end{equation}
which is estimated by the right hand side of \eqref{smoothness_Pi-}.
For the second sum we first note that 
\begin{equation}
\E^\frac{1}{p}|\xi_\tau(y)-\xi_\tau(z)|^p 
\lesssim (\sqrt[8]{\tau})^{-3} |y-z|_\s^\alpha,
\end{equation}
which follows from the mean-value theorem and \eqref{bound_xi}. 
Together with $\eqref{estPi}_{\prec\beta}$ we therefore obtain a bound 
of the $\E^\frac{1}{p}|\cdot|^p$-norm of the second sum by 
\begin{equation}
|x-z|_\s^{|\beta|-\alpha} (\sqrt[8]{\tau})^{-3} |y-z|_\s^\alpha,
\end{equation}
which is once more estimated by the right hand side of \eqref{smoothness_Pi-}.

\medskip

We turn to $\sum_m \tfrac{1}{m!} \Pi_x^m \nabla\Pi_x (D^{(\0)})^m c$, 
and rewrite the $\beta$-component of its increment as 
\begin{align}
&\sum_{m\geq0} \sum_{\beta_1+\beta_2+\beta_3=\beta} 
\big(\Pi_x^m(y)-\Pi_x^m(z)\big)_{\beta_1} \nabla\Pi_{x\beta_2}(y) 
((D^{(\0)})^m c)_{\beta_3} \\ 
&\,+ \sum_{m\geq0} \sum_{\beta_1+\beta_2+\beta_3=\beta} 
(\Pi_x^m(z))_{\beta_1} \big(\nabla\Pi_x(y)-\nabla\Pi_x(z)\big)_{\beta_2} 
((D^{(\0)})^m c)_{\beta_3} .
\end{align}
We note that only $c_\gamma$-components with $\gamma\prec\beta-g_{\n^i}$ can appear due to \cref{lem:tri1}~(i), 
and by \eqref{D0props} in this case $|\gamma|=|\beta_3|-m\alpha$.
We thus estimate the $\E^\frac{1}{p}|\cdot|^p$-norm of the first line as in \eqref{mt05} (with $\beta$ replaced by $\beta_1+e_m$) with $\eqref{estPi}_{\prec\beta}$ and 
$\eqref{smoothness_Pi}_{\prec\beta}$, 
and with 
$\eqref{boundedness_Pi}_{\prec\beta}$ and $\eqref{boundedness_c}_{\prec\beta-g_{\n^i}}$ by a linear combination of terms of the form 
\begin{align}
\sum_{\beta_1+\beta_2+\beta_3=\beta}
|y-z|_\s^{\alpha} (|x-y|_\s+|x-z|_\s)^{|\beta_1+e_m|-2\alpha} 
(\sqrt[8]{\tau})^{\alpha-1} (\sqrt[8]{\tau}+|x-y|_\s)^{|\beta_2|-\alpha} \\
\times (\sqrt[8]{\tau})^{|\beta_3|-m\alpha-\alpha-2} .
\end{align}
By $|\beta_1+e_m|=|\beta_1|+m\alpha$, and since $|\beta_3|-m\alpha=|\gamma|\geq0$, 
this is further bounded by 
\begin{equation}\label{mt07}
\sum_{\beta_1+\beta_2+\beta_3=\beta} 
|y-z|_\s^\alpha (\sqrt[8]{\tau})^{-3}
(\sqrt[8]{\tau}+|x-y|_\s+|x-z|_\s)^{|\beta_1|+|\beta_2|+|\beta_3|-3\alpha} ,
\end{equation}
which by additivity of $|\cdot|-\alpha$ is estimated by the right hand side of \eqref{smoothness_Pi-}.
For the $\E^\frac{1}{p}|\cdot|^p$-norm of the second line we proceed similarly, 
and use $\eqref{estPi}_{\prec\beta}$, $\eqref{smoothness_Pi}_{\prec\beta}$ and 
$\eqref{boundedness_c}_{\prec\beta-g_{\n^i}}$ to obtain an estimate by a linear combination of terms of the form 
\begin{align}
\sum_{\beta_1+\beta_2+\beta_3=\beta}
|x-z|_\s^{|\beta_1|+(m-1)\alpha} 
(\sqrt[8]{\tau})^{-1} (\sqrt[8]{\tau}+|x-y|_\s+|x-z|_\s)^{|\beta_2|-\alpha} |y-z|_\s^\alpha \\
\times (\sqrt[8]{\tau})^{|\beta_3|-m\alpha-\alpha-2}, 
\end{align}
which as before is estimated by \eqref{mt07} and therefore by the right hand side of \eqref{smoothness_Pi-}.

\medskip

{\bf Step 2.}
We show 
$\eqref{smoothness_Pi-}_{\beta}$ together with 
$\eqref{estPi}_\beta$ \& 
$\eqref{estGamma}_\beta$ \&
$\eqref{estPiminus}_\beta$ imply 
$\eqref{smoothness_Pi}_\beta$.
For the rest of Step~2 we fix $\n$ with $|\n|\leq4$. 
First, we claim that \eqref{smoothness_Pi} follows from 
\begin{equation}\label{campanato}
\E^\frac{1}{p}| \partial^\m \partial^\n \Pi_{x\beta t}(y) |^p 
\lesssim (\sqrt[8]{\tau})^{-|\n|} 
(\sqrt[8]{t}+\sqrt[8]{\tau}+|x-y|_\s)^{|\beta|-\alpha} 
(\sqrt[8]{t})^{\alpha-|\m|} 
\textnormal{ for all }\m\neq\0.
\end{equation}
Indeed, rewriting $\partial^\n\Pi_{x\beta}(y)-\partial^\n\Pi_{x\beta}(z)$ as 
\begin{equation}
\big( \partial^\n\Pi_{x\beta}(y) - \partial^\n\Pi_{x\beta t}(y) \big)
+ \big( \partial^\n\Pi_{x\beta t}(y) - \partial^\n\Pi_{x\beta t}(z) \big)
+ \big( \partial^\n\Pi_{x\beta t}(z) - \partial^\n\Pi_{x\beta}(z) \big) ,
\end{equation}
we can estimate the $\E^\frac{1}{p}|\cdot|^p$-norm of the first and the third terms by using \eqref{kernel} and \eqref{campanato} by 
\begin{equation}
\int_0^t \dx{s} \, \E^\frac{1}{p}| L L^* \partial^\n \Pi_{x\beta s}(y) |^p 
\lesssim \int_0^t \dx{s} \, (\sqrt[8]{\tau})^{-|\n|} 
(\sqrt[8]{s}+\sqrt[8]{\tau}+|x-y|_\s)^{|\beta|-\alpha} 
(\sqrt[8]{s})^{\alpha-8} .
\end{equation}
By $\alpha>0$ this expression is integrable at $0$ and with the choice $\sqrt[8]{t}=|y-z|_\s$ thus estimated by the right hand side of \eqref{smoothness_Pi}.
For the $\E^\frac{1}{p}|\cdot|^p$-norm of the second term we obtain by the mean-value theorem (mind the anisotropy) and \eqref{campanato} an estimate by 
\begin{equation}
(\sqrt[8]{\tau})^{-|\n|} 
(\sqrt[8]{t}+\sqrt[8]{\tau}+|x-y|_\s+|x-z|_\s^{|\beta|-\alpha}
\big( (\sqrt[8]{t})^{\alpha-4} |y-z|_\s^4 
+ (\sqrt[8]{t})^{\alpha-1} |y-z|_\s \big) ,
\end{equation}
which again by the choice $\sqrt[8]{t}=|y-z|_\s$ is estimated by the right hand side of \eqref{smoothness_Pi}. 

\medskip

We further claim that it is enough to establish \eqref{campanato} along the diagonal $y=x$ in form of 
\begin{equation}\label{campanato_diagonal}
\E^\frac{1}{p}| \partial^\m \partial^\n \Pi_{x\beta t}(x) |^p 
\lesssim (\sqrt[8]{\tau})^{-|\n|} 
(\sqrt[8]{t}+\sqrt[8]{\tau})^{|\beta|-\alpha} 
(\sqrt[8]{t})^{\alpha-|\m|} 
\quad\textnormal{for all }\m\neq\0.
\end{equation}
Indeed, using the recentering \eqref{recenter}, the estimate $\eqref{estGamma}_\beta$ of $\G_{xy}$ and \eqref{campanato_diagonal} we obtain 
\begin{equation}
\E^\frac{1}{p}|\partial^\m\partial^\n\Pi_{x\beta t}(y)|^p
\lesssim \sum_{|\gamma|\in \A\cap[\alpha,|\beta|]} |x-y|_\s^{|\beta|-|\gamma|} 
(\sqrt[8]{\tau})^{-|\n|} 
(\sqrt[8]{t}+\sqrt[8]{\tau})^{|\gamma|-\alpha} 
(\sqrt[8]{t})^{\alpha-|\m|} \, ,
\end{equation}
which is estimated by the right hand side of \eqref{campanato}.

\medskip

Before we prove \eqref{campanato_diagonal}, we note that it is enough to establish \eqref{campanato_diagonal} in the regime $t<\tau$.
Indeed, we obtain from $\eqref{estPi}_\beta$, 
the semigroup property \eqref{eq:semi} and the moment bound \eqref{eq:kerbound}
the estimate 
\begin{equation}\label{mt08}
\E^\frac{1}{p}| \partial^\m \partial^\n \Pi_{x\beta t}(x) |^p 
\lesssim (\sqrt[8]{t})^{|\beta|-|\n|-|\m|} \, , 
\end{equation}
which for $t\geq\tau$ is stronger than \eqref{campanato_diagonal}.

\medskip

We now turn to the proof of \eqref{campanato_diagonal} for $t<\tau$, 
where we distinguish the two cases $|\beta|<1+|\n|$ and $|\beta|\geq1+|\n|$. 
For the latter, we appeal again to \eqref{mt08} and use that $|\beta|-\alpha-|\n|\geq|\beta|-1-|\n|\geq0$ to see 
\begin{equation}
\E^\frac{1}{p}| \partial^\m \partial^\n \Pi_{x\beta t}(x) |^p 
\lesssim (\sqrt[8]{t})^{\alpha-|\m|}
(\sqrt[8]{\tau})^{|\beta|-\alpha-|\n|} \, ,
\end{equation}
which is estimated by the right hand side of \eqref{campanato_diagonal}.
For the former case, we appeal to the integral representation \eqref{eq:solutionformula} 
where we note that due to the presence of $\partial^\m\partial^\n$ the Taylor polynomial drops out, hence 
\begin{equation}
\E^\frac{1}{p}| \partial^\m\partial^\n\Pi_{x\beta t}(x) |^p
\lesssim \int_t^\infty \dx{s}\, \E^\frac{1}{p}|\partial^\m\partial^\n L^* \nabla\cdot\Pi^-_{x\beta s}(x)|^p \, .
\end{equation}
We split the integral from $t$ to $\tau$ and from $\tau$ to $\infty$. 
For the latter, we appeal to the semigroup property \eqref{eq:semi}, 
the estimate $\eqref{estPiminus}_\beta$ of $\Pi_{x\beta}^-$ and the moment bound \eqref{eq:kerbound} to obtain 
\begin{equation}
\int_\tau^\infty \dx{s}\, \E^\frac{1}{p}|\partial^\m\partial^\n 
L^* \nabla\cdot\Pi^-_{x\beta s}(x)|^p
\lesssim \int_\tau^\infty \dx{s}\, 
(\sqrt[8]{s})^{|\beta|-8-|\n|-|\m|} \, .
\end{equation}
Since $|\beta|-|\n|-|\m|\leq|\beta|-|\n|-1<0$, 
the integral is convergent at $s=\infty$ and bounded by 
\begin{equation}
(\sqrt[8]{\tau})^{|\beta|-|\n|-|\m|}
\lesssim
(\sqrt[8]{\tau})^{-|\n|}
(\sqrt[8]{t}+\sqrt[8]{\tau})^{|\beta|-\alpha}
(\sqrt[8]{\tau})^{\alpha-|\m|} \, .
\end{equation} 
Since $\alpha-|\m|<0$ and $t<\tau$, 
this is estimated by the right hand side of \eqref{campanato_diagonal}. 
For the integral from $t$ to $\tau$ we make the convolution with $\psi_s$ explicit 
\begin{equation}
\partial^\m\partial^\n L^* \nabla\cdot\Pi^-_{x\beta s}(x) 
= \int \dx{y}\, \partial^\m\partial^\n L^* \psi_s(x-y) 
\big( \nabla\cdot\Pi^-_{x\beta}(y) - \nabla\cdot\Pi^-_{x\beta}(x) \big) \, ,
\end{equation}
where we could smuggle in the term $\nabla\cdot\Pi^-_{x\beta}(x)$ since the integral over derivatives of $\psi_s$ vanishes. 
We thus obtain from $\eqref{smoothness_Pi-}_\beta$ and the moment bound \eqref{eq:kerbound} 
\begin{equation}
\int_t^\tau \dx{s}\, \E^\frac{1}{p}|\partial^\m\partial^\n 
L^* \nabla\cdot\Pi^-_{x\beta s}(x)|^p
\lesssim \int_t^\tau \dx{s}\, 
(\sqrt[8]{s})^{-|\n|-|\m|-4}
(\sqrt[8]{\tau})^{-4}
(\sqrt[8]{s}+\sqrt[8]{\tau})^{|\beta|-\alpha}
(\sqrt[8]{s})^{\alpha} \, ,
\end{equation}
which can be estimated by 
\begin{equation}
(\sqrt[8]{\tau})^{-4} 
(\sqrt[8]{t}+\sqrt[8]{\tau})^{|\beta|-\alpha}
\big( (\sqrt[8]{t})^{\alpha-|\n|-|\m|+4}
+ (\sqrt[8]{\tau})^{\alpha-|\n|-|\m|+4} \big) \, .
\end{equation}
Since $|\n|\leq4$ and $t<\tau$ we have $(\sqrt[8]{\tau})^{-4} \leq (\sqrt[8]{\tau})^{-|\n|} (\sqrt[8]{t})^{|\n|-4} $, hence the above expression is estimated by 
\begin{equation}
(\sqrt[8]{t}+\sqrt[8]{\tau})^{|\beta|-\alpha}
\big( (\sqrt[8]{\tau})^{-|\n|} (\sqrt[8]{t})^{\alpha-|\m|}
+ (\sqrt[8]{\tau})^{\alpha-|\n|-|\m|} \big) \, , 
\end{equation}
which since $\alpha-|\m|<0$ and $t<\tau$ is estimated by the right hand side of \eqref{campanato_diagonal}.

\medskip

{\bf Step 3.}
We show 
$\eqref{smoothness_Pi}_\beta$ together with 
$\eqref{estPi}_{\preccurlyeq\beta}$ \& $\eqref{estGamma}_\beta$ imply 
$\eqref{boundedness_Pi}_\beta$. 
We fix $\n$ with $1\leq|\n|\leq4$ and rewrite 
\begin{equation}
\partial^\n\Pi_{x\beta}(y) 
= \partial^\n\Pi_{x\beta\tau}(y) 
+ \int \dx{z}\, \psi_\tau(y-z) \big( \partial^\n\Pi_{x\beta}(y) 
- \partial^\n\Pi_{x\beta}(z)\big) \, .
\end{equation}
Since $\n\neq\0$ and the integral over derivatives of $\psi$ vanish, 
the first right hand side term equals 
\begin{equation}
\int \dx{z}\, \partial^\n\psi_\tau(y-z) \big(\Pi_{x\beta}(z) - \Pi_{x\beta}(y) \big) \, ;
\end{equation}
its $\E^\frac{1}{p}|\cdot|^p$-norm is by the recentering \eqref{recenter}, 
the estimate $\eqref{estGamma}_\beta$ on $\G_{xy}$ 
and the estimate $\eqref{estPi}_{\preccurlyeq\beta}$ on $\Pi_x$ 
estimated by 
\begin{equation}
\sum_{|\gamma|\in \A\cap[\alpha,|\beta|]}
\int \dx{z}\, |\partial^\n\psi_\tau(y-z)| |x-y|_\s^{|\beta|-|\gamma|} 
|y-z|_\s^{|\gamma|} \, ,
\end{equation}
which by the moment bound \eqref{eq:kerbound} is estimated by the right hand side of \eqref{boundedness_Pi}. 
For the second right hand side term we appeal to $\eqref{smoothness_Pi}_\beta$
to bound its $\E^\frac{1}{p}|\cdot|^p$-norm by 
\begin{equation}
\int \dx{z}\, |\psi_\tau(y-z)| 
(\sqrt[8]{\tau})^{-|\n|}
(\sqrt[8]{\tau}+|x-y|_\s+|x-z|_\s)^{|\beta|-\alpha}
|y-z|_\s^{\alpha} \, ,
\end{equation}
which by the moment bound \eqref{eq:kerbound} is again bounded by the right hand side of \eqref{boundedness_Pi}. 

\medskip

{\bf Step 4.}
We show 
$\eqref{boundedness_c}_{\prec\beta-g_{\n^i}}$ \& 
$\eqref{boundedness_Pi}_{\prec\beta}$ together with 
$\eqref{estPi}_{\prec\beta}$ \&
$\eqref{estGamma}_\beta^{\gamma\neq\pp}$ \& 
$\eqref{estPiminus}_{\prec\beta}$ imply 
$\eqref{boundedness_c}_{\beta-g_{\n^i}}$. 
By \eqref{pop4} we can restrict to multiindices $\beta$ with $|\beta|<3$. 
Since for such multiindices $\E\Pi^-_{x\beta s}(x)\to0$ as $s\to\infty$ by the BPHZ-choice \eqref{eq:BPHZchoice}, 
we have 
\begin{equation}
c_{\beta-g_{\n^i}} \n^i 
= \int \dx{y}\, \psi_t(x-y) \E\big(\Pi^-_{x\beta}(y) + c_{\beta-g_{\n^i}}\n^i\big) 
+ \int_t^\infty \dx{s}\, \partial_s \E \Pi^-_{x\beta s}(x) \, ,
\end{equation}
where the choice $t=\tau$ will turn out to be convenient. 
The first term on the right hand side can be estimated by the same arguments as we estimated $\Pi^-_{x\beta}(y)-\Pi^-_{x\beta}(z)$ in Step~1, 
where now we are in the simpler setting of not dealing with increments, 
and were $\eqref{boundedness_c}_{\prec\beta-g_{\n^i}}$,  $\eqref{boundedness_Pi}_{\prec\beta}$ and $\eqref{estPi}_{\prec\beta}$ 
are sufficient due to \cref{lem:tri1}~(i). 
More precisely, the first right hand side term can be estimated by 
\begin{equation}
\int dy\, \psi_t(x-y) (\sqrt[8]{\tau})^{-3} (\sqrt[8]{\tau}+|x-y|_\s)^{|\beta|} \, ,
\end{equation}
which by the moment bound \eqref{eq:kerbound} and the choice $t=\tau$ is bounded by $(\sqrt[8]{\tau})^{|\beta|-3}$. 
To estimate the second right hand side term we appeal to \eqref{kernel} 
and the semigroup property \eqref{eq:semi} and rewrite 
\begin{equation}
\int_t^\infty \dx{s}\, \partial_s \E\Pi^-_{x\beta s}(x) 
= - \int_t^\infty \dx{s}\, \int \dx{y} \, LL^*\psi_{s/2}(x-y) \E\Pi^-_{x\beta s/2}(y) \, .
\end{equation}
By $(\G_{xy}\Pi^-_y)_\beta = \Pi^-_{x\beta}$, 
which is a consequence of \eqref{recenterPi-} and $|\beta|<3$, 
and since $\E\Pi_{y\beta s/2}(y)$ does not depend on $y$ and integrals over derivatives of $\psi$ vanish, we have furthermore 
\begin{equation}
\int_t^\infty \dx{s}\, \partial_s \E\Pi^-_{x\beta s}(x) 
= - \int_t^\infty \dx{s}\, \int \dx{y} \, LL^*\psi_{s/2}(x-y) 
\E\big( (\G_{xy}-\id) \Pi^-_{y s/2}(y) \big)_\beta \, .
\end{equation}
Using H\"older's inequality together with 
$\eqref{estGamma}_\beta^{\gamma\neq\pp}$ and 
$\eqref{estPiminus}_{\prec\beta}$, 
which is sufficient by the triangularity \eqref{tri3} of $\G_{xy}-\id$,
this expression is estimated by 
\begin{equation}
\int_t^\infty \dx{s}\, \int \dx{y}\, |LL^*\psi_{s/2}(x-y)| 
\sum_{|\gamma|\in \A\cap[\alpha,|\beta|)} 
|x-y|_\s^{|\beta|-|\gamma|} (\sqrt[8]{s})^{|\gamma|-3}
\lesssim 
\int_t^\infty \dx{s}\, (\sqrt[8]{s})^{|\beta|-3-8} \, 
\end{equation}
where we have used the moment bound \eqref{eq:kerbound} in the last inequality. 
Since $|\beta|-3<0$, this integral is convergent at $s=\infty$, 
and is bounded by $(\sqrt[8]{t})^{|\beta|-3}$. 
Again by the choice $t=\tau$ we obtain altogether
\begin{equation}
|c_{\beta-g_{\n^i}}| \lesssim (\sqrt[8]{\tau})^{|\beta|-3} \, .
\end{equation}
Relabelling $\tilde\beta=\beta-g_{\n^i}$ yields $|c_{\tilde\beta}|\lesssim (\sqrt[8]{\tau})^{|\tilde\beta+g_{\n^i}|}$, 
which by $|\tilde\beta+g_{\n^i}|=|\tilde\beta|+1-\alpha$ yields the desired \eqref{boundedness_c}.
\end{proof}

%%%%%%%%%%%%%%%%%%%%%%%%%%%%%%%%%%%%%%%%%%%%%%%%%%%%%%%%%%%%%%%%

\section{\texorpdfstring{\bf Proof of analyticity}{Proof of analyticity}}
\label{app:analytic}

\begin{proof}[Proof of \eqref{eq:analytic}]
First note that \cref{thm:main} still holds true in the $\hat\cdot\,$-setting as well as in the $\bar\cdot\,$-setting, 
and the estimates \eqref{estPi} and \eqref{estGamma} on $\hat\Pi_{x\hat\beta}, \bar\Pi_{x\beta}$ and $(\hat\Gamma^*_{xy})_{\hat\beta}^{\hat\gamma}, (\bar\Gamma^*_{xy})_\beta^\gamma$ as well as the estimates \eqref{boundedness_c} and \eqref{boundedness_Pi} on $\hat c_{\hat\beta}, \bar c_{\beta}$ and $\partial^\n\hat\Pi_{x\hat\beta}, \partial^\n \bar\Pi_{x\beta}$ hold locally uniformly in $a_0$ for $\mathrm{Re}(a_0)<1$. 
This is an immediate consequence of the fact that the ``heat'' kernel $\hat\psi$ associated to 
$(\partial_0+(1-a_0)\Delta^2)(\partial_0+(1-a_0)\Delta^2)^*
=-\partial_0^2+|1-a_0|^2\Delta^4$ 
satisfies 
\begin{equation}
\hat\psi_t(a_0, x)=\psi_t \Big(x_0, \frac{x_1}{\sqrt[4]{|1-a_0|}},\dots,\frac{x_d}{\sqrt[4]{|1-a_0|}}\Big),
\end{equation}
where $\psi$ is the ``heat'' kernel associated to $(\partial_0+\Delta^2)(\partial_0+\Delta^2)^*$ from \eqref{kernel}. 
Hence the moment bound \eqref{eq:kerbound} holds also for $\hat\psi$, 
locally uniformly for $\mathrm{Re}(a_0)<1$.

\medskip

The analyticity expressed by \eqref{eq:analytic} is clear for purely polynomial $\hat\beta$, 
and we proceed by induction with respect to ~$\prec$ in the remaining multiindices. 
We show in Step~1 that $\eqref{eq:analytic_c}_{\prec\hat\beta-g_\n}$ \& 
$\eqref{eq:analytic_Pi}_{\prec\hat\beta}$ imply 
$\eqref{eq:analytic_c}_{\hat\beta-g_\n}$, 
and in Step~2 that $\eqref{eq:analytic_c}_{\preccurlyeq\hat\beta-g_\n}$ \&
$\eqref{eq:analytic_Pi}_{\prec\hat\beta}$ imply 
$\eqref{eq:analytic_Pi}_{\hat\beta}$. 
The base case amounts to establishing $\eqref{eq:analytic_Pi}_{\hat\beta=f_0}$, 
which is covered by Step~2 as we argue now. 
The proof of $\eqref{eq:analytic_Pi}_{\hat\beta}$ in Step~2 makes only use of \eqref{mt03}, which is true for $\hat\beta=f_0$ as can be seen from the componentwise form \eqref{piminuscomponents} of $\Pi^-$:
for $\hat k=0$ we have $\hat\Pi^-_{f_0} = \xi_\tau = \bar\Pi^-_{f_0}$; 
for $\hat k\geq1$ we have $\partial_{a_0}^{\hat k}\hat\Pi^-_{f_0}=0$, 
as well as $\bar\Pi^-_{f_0+\hat ke_0} - \nabla\Delta\bar\Pi_{x\hat\beta+(\hat k-1)e_0} = 0$. 

\medskip

\textbf{Step 1.}
We show $\eqref{eq:analytic_c}_{\prec\hat\beta-g_\n}$ \& $\eqref{eq:analytic_Pi}_{\prec\hat\beta}$ imply 
$\eqref{eq:analytic_c}_{\hat\beta-g_\n}$.
By \eqref{pop2} we may assume $\hat\beta=\hat\beta'+g_\n$ where $\hat\beta'$ has no polynomial components.
Fur such $\hat\beta$, we define $\hat{\tilde\Pi}^-_{x\hat\beta}$ by 
\begin{equation}\label{def_Pi_hat_tilde_minus}
\hat\Pi^-_{x\hat\beta}=\hat{\tilde\Pi}^-_{x\hat\beta}-\hat c_{\hat\beta-g_\n} \n.
\end{equation}
By Leibniz rule and using the notation $\hat\partial^{k} := \tfrac{1}{k!}\partial_{a_0}^k$, we obtain 
\begin{align}
\hat\partial^{\hat k} \hat{\tilde\Pi}^-_{x\hat\beta} 
&= \sum_{k\geq1} \sum_{\substack{e_k+\beta_1+\cdots+\beta_{k+1}=\hat\beta\\ \kay_1+\cdots+\kay_{k + 1}=\hat k}}
\hat\partial^{\kay_1} \hat\Pi_{x\beta_1} \cdots 
\hat\partial^{\kay_k} \hat\Pi_{x\beta_k} 
\hat\partial^{\kay_{k+1}} \nabla \Delta \hat\Pi_{x\beta_{k+1}} \\
&\, + \sum_{\ell\geq0}\sum_{\substack{f_\ell+\beta_1+\cdots+\beta_\ell=\hat\beta \\ \kay_1+\cdots+\kay_\ell=\hat k}} 
\hat\partial^{\kay_1} \hat\Pi_{x\beta_1} \cdots 
\hat\partial^{\kay_\ell}\hat\Pi_{x\beta_\ell}\xi_\tau \\
&\, - \sum_{m\geq1} \tfrac{1}{m!}
\sum_{\substack{\beta_1+\cdots+\beta_{m+2}=\hat\beta\\ \kay_1+\cdots+\kay_{m+2}=\hat k}} \hspace{-1ex}
\hat\partial^{\kay_1}\hat\Pi_{x\beta_1} \cdots 
\hat\partial^{\kay_m}\hat\Pi_{x\beta_m} 
\hat\partial^{\kay_{m+1}}\nabla\hat\Pi_{x\beta_{m+1}}
\hat\partial^{\kay_{m+2}}((\hat D^{(\0)})^m \hat c)_{\beta_{m+2}} \\
&\, - \sum_{\substack{\beta_1+\beta_2=\hat\beta\\ \beta_1\neq g_\n\\ \kay_1+\kay_2=\hat k}} \hat\partial^{\kay_1}\nabla\hat\Pi_{x\beta_1} 
\hat\partial^{\kay_2}\hat c_{\beta_2} \, . 
\end{align}
Note that \eqref{eq:analytic_Pi} implies 
$\hat\partial^{\hat k} \partial^\n \hat\Pi_{x\hat\beta} = \partial^\n \Pi_{x\hat\beta + \hat k e_0}$ for $1\leq|\n|\leq4$ with respect to the norm 
\begin{equation}
\sup_{y,t} (\sqrt[8]{t})^{-(\alpha-|\n|)} (\sqrt[8]{t}+|x-y|_\s)^{-(|\hat\beta|-\alpha)} 
\E^\frac{1}{p} | \partial^\n\hat \Pi_{x\hat\beta t}(y) |^p, 
\end{equation}
which follows from the locally uniform (in $a_0$) \eqref{estGamma}.
Together with the triangularity properties \cref{prop:choosing} and \cref{lem:tri1}~(i), we obtain from 
$\eqref{eq:analytic_c}_{\prec\hat\beta-g_\n}$ \& $\eqref{eq:analytic_Pi}_{\prec\hat\beta}$ 
\begin{align}
&\hat\partial^{\hat k} \hat{\tilde\Pi}^-_{x\hat\beta} \\
&= \sum_{k\geq1} \sum_{\substack{e_k+\beta_1+\cdots+\beta_{k+1}=\hat\beta\\ \kay_1+\cdots+\kay_{k + 1}=\hat k}}
\bar\Pi_{x \beta_1+\kay_1 e_0} \cdots 
\bar\Pi_{x\beta_k+\kay_k e_0} 
\nabla \Delta \bar\Pi_{x\beta_{k+1}+\kay_{k+1}e_0} \\
&\, + \sum_{\ell\geq0}\sum_{\substack{f_\ell+\beta_1+\cdots+\beta_\ell=\hat\beta \\ \kay_1+\cdots+\kay_\ell=\hat k}} 
\bar\Pi_{x\beta_1+\kay_1e_0} \cdots 
\bar\Pi_{x\beta_\ell+\kay_\ell e_0}\xi_\tau \\
&\, - \sum_{m\geq1} \tfrac{1}{m!}\sum_{\substack{\beta_1+\cdots+\beta_{m+2}=\hat\beta\\ \kay_1+\cdots+\kay_{m+2}=\hat k}} 
\bar\Pi_{x\beta_1+\kay_1 e_0} \cdots 
\bar\Pi_{x\beta_m+\kay_m e_0} 
\nabla\bar\Pi_{x\beta_{m+1}+\kay_{m+1}e_0}
\hat\partial^{\kay_{m+2}}((\hat D^{(\0)})^m \hat c)_{\beta_{m+2}} \\
&\, - \sum_{\substack{\beta_1+\beta_2=\hat\beta\\ \beta_1\neq g_\n\\ \kay_1+\kay_2=\hat k}} \nabla\bar\Pi_{x\beta_1+\kay_1 e_0} 
\bar c_{\beta_2+\kay_2 e_0} \, , 
\end{align}
with respect to 
\begin{equation}\label{eq:norm_Pi-_smooth_analytic}
\sup_{y,t} (\sqrt[8]{t})^{-(\alpha-3)} (\sqrt[8]{t}+|x-y|_\s)^{-(|\hat\beta|-\alpha)} 
\E^\frac{1}{p} | \hat \Pi^-_{x\hat\beta t}(y) |^p. 
\end{equation}
This establishes 
\begin{equation}\label{mt02}
\hat\partial^{\hat k} \hat{\tilde\Pi}^-_{x\hat\beta}
= \bar\Pi^-_{x\,\hat\beta+\hat k e_0} 
- \nabla\Delta\bar\Pi_{x\,\hat\beta+(\hat k-1) e_0}
+ \bar c_{\hat\beta-g_\n + \hat k e_0} \n 
\end{equation}
with respect to \eqref{eq:norm_Pi-_smooth_analytic} and with the understanding that the second right hand side term vanishes for $\hat k=0$, 
provided we show that for all $\kay\geq0$ and $m\geq0$ 
\begin{equation}\label{mt01}
\hat\partial^\kay ((\hat D^{(\0)})^m \hat c)_{\beta_{m+2}} 
= ((D^{(\0)})^m \bar c)_{\beta_{m+2}+\kay e_0} \, ,
\end{equation}
which we shall establish now by induction in $m$ and for $\beta_{m+2}$ 
replaced by an arbitrary $\beta\prec\hat\beta-g_\n$ with $\beta_a(k=0)=0$. 
This captures $\beta_{m+2}$, 
since by $m\geq1$ and $\length{\cdot}\geq\lambda$ we have 
$|\beta_{m+2}|_\prec 
= \length{\hat\beta}-\length{\beta_1}-\cdots-\length{\beta_{m+1}}
\leq \length{\hat\beta} - 2\lambda 
< \length{\hat\beta-g_\n}$.
The base case $m=0$ follows from $\eqref{eq:analytic_c}_{\prec\hat\beta-g_\n}$. 
For the induction step $m\leadsto m+1$ we argue as follows.
On the one hand, we have 
\begin{align}
&\hat\partial^\kay ((\hat D^{(\0)})^{m+1} \hat c)_{\beta} \\
&= \hat\partial^\kay \sum_\gamma (\hat D^{(\0)})_\beta^\gamma \big((\hat D^{(\0)})^m \hat c\big)_\gamma \\
&= \hat\partial^\kay \sum_\gamma \Big( 
\partial_{a_0} \delta_{\beta}^{\gamma+e_1} 
\+ \sum_{k\geq1} (k\+1) \gamma_a(k) \delta_{\beta}^{\gamma-e_k+e_{k+1}}
\+ \sum_{\ell\geq0} (\ell\+1) \gamma_b(\ell) \delta_{\beta}^{\gamma-f_\ell+f_{\ell+1}}\Big) 
\big((\hat D^{(\0)})^m \hat c\big)_\gamma \\
&= (\kay +1)\hat\partial^{\kay+1} \big((\hat D^{(\0)})^m \hat c\big)_{\beta-e_1} \\
&\, + \sum_\gamma \Big( \sum_{k\geq1} (k+1) \gamma_a(k) \delta_{\beta}^{\gamma-e_k+e_{k+1}}
+ \sum_{\ell\geq0} (\ell+1) \gamma_b(\ell) \delta_{\beta}^{\gamma-f_\ell+f_{\ell+1}}\Big) 
\hat\partial^\kay \big((\hat D^{(\0)})^m \hat c\big)_\gamma \, , 
\end{align}
which by the induction hypothesis (note that 
$\beta-e_1\prec\beta\prec\hat\beta-g_\n$
and by \eqref{triD0}
$|\gamma|_\prec=|\beta|_\prec<|\hat\beta-g_\n|_\prec$) 
yields
\begin{align}
&\hat\partial^\kay ((\hat D^{(\0)})^{m+1} \hat c)_{\beta} \\
&= (\kay+1) \big((D^{(\0)})^m \bar c\big)_{\beta+(\kay+1)e_0-e_1} \\
&\, + \sum_\gamma \Big( \sum_{k\geq1} (k+1)\gamma_a(k) 
\delta_{\beta}^{\gamma-e_k+e_{k+1}} 
+ \sum_{\ell\geq0} (\ell+1)\gamma_b(\ell) \delta_{\beta}^{\gamma-f_\ell+f_{\ell+1}} \Big) \big((D^{(\0)})^m \bar c\big)_{\gamma+\kay e_0} \, .
\end{align}
On the other hand, 
\begin{align}
&((D^{(\0)})^{m+1} \bar c)_{\beta+\kay e_0} \\
&= \sum_\gamma (D^{(\0)})_{\beta+\kay e_0}^\gamma \big( (D^{(\0)})^m \bar c\big)_\gamma \\
&= \sum_\gamma \Big(\sum_{k\geq0} (k+1)\gamma_a(k)\delta_{\beta+\kay e_0}^{\gamma-e_k+e_{k+1}} 
+\sum_{\ell\geq0} (\ell+1)\gamma_b(\ell)\delta_{\beta+\kay e_0}^{\gamma-f_\ell+f_{\ell+1}} \Big)
\big((D^{(\0)})^m \bar c\big)_\gamma \\
&= (\kay+1) \big((D^{(\0)})^m \bar c\big)_{\beta+(\kay+1)e_0-e_1} \\
&\, + \sum_\gamma \Big(\sum_{k\geq1} (k+1)\gamma_a(k)\delta_{\beta+\kay e_0}^{\gamma-e_k+e_{k+1}} 
+\sum_{\ell\geq0} (\ell+1)\gamma_b(\ell)\delta_{\beta+\kay e_0}^{\gamma-f_\ell+f_{\ell+1}} \Big)
\big((D^{(\0)})^m \bar c\big)_\gamma, 
\end{align}
where we used in the last equality that $\beta$ doesn't contain $e_0$ components, i.e.~$\beta_a(k=0)=0$. 
Since the last sum over $\gamma$ vanishes if $\gamma$ does not contain at least $\kay e_0$, 
and for $k\geq1$ and $\ell\geq0$ we have $(\gamma+\kay e_0)(k)=\gamma_a(k)$ and $(\gamma+\kay e_0)(\ell)=\gamma_b(\ell)$, 
we obtain by resummation
\begin{align}
&((D^{(\0)})^{m+1} \bar c)_{\beta+\kay e_0} \\
&= (\kay+1) \big((D^{(\0)})^m \bar c\big)_{\beta+(\kay+1)e_0-e_1} \\
&\, + \sum_\gamma \Big(\sum_{k\geq1} (k+1)\gamma_a(k)\delta_{\beta}^{\gamma-e_k+e_{k+1}} 
+\sum_{\ell\geq0} (\ell+1)\gamma_b(\ell)\delta_{\beta}^{\gamma-f_\ell+f_{\ell+1}} \Big)
\big((D^{(\0)})^m \bar c\big)_{\gamma+\kay e_0} \, ,
\end{align}
which finishes the argument for \eqref{mt01} and hence \eqref{mt02}.

\medskip

In the following, we pass from \eqref{mt02} to $\eqref{eq:analytic_c}_{\hat\beta-g_\n}$. 
Recall from \eqref{eq:BPHZchoice} that $\hat c_{\hat \beta-g_\n}$ is only non-vanishing, 
if $|\hat\beta|<3$. 
We therefore restrict to such multiindices $\hat\beta$. 
By the locally uniform (in $a_0$) estimate \eqref{estPiminus} of $\hat\Pi^-_{x\hat\beta}$ together with $|\hat\beta|<3$ and the definition \eqref{def_Pi_hat_tilde_minus} of $\hat{\tilde\Pi}^-_{x\hat\beta}$, 
we know that 
$\hat c_{\hat\beta-g_\n} \n = \lim_{t\to\infty} \E \hat{\tilde \Pi}^-_{x\hat\beta t}(x)$,
locally uniformly in $a_0$. 
Since the analyticity \eqref{mt02} of $\hat{\tilde\Pi}^-_{x\hat\beta}$ with respect to ~\eqref{eq:norm_Pi-_smooth_analytic} implies analyticity of $\E\hat{\tilde\Pi}^-_{x\hat\beta t}(x)$, 
this yields analyticity of $\hat c_{\hat\beta-g_\n}$.
Hence we have by \eqref{mt02}
\begin{equation}
\hat\partial^{\hat k} \hat c_{\hat\beta-g_\n} \n 
= \lim_{t\to\infty} \Big( 
\E\bar\Pi^-_{x\,\hat\beta+\hat k e_0 \, t}(x) 
- \E\nabla\Delta\bar\Pi_{x\,\hat\beta+(\hat k-1) e_0 \, t}(x)
+ \bar c_{\hat\beta-g_\n + \hat k e_0} \n \Big).
\end{equation}
Since $|\cdot|$ is degenerate in $e_0$ and $|\hat\beta|<3 $, we obtain from the estimates \eqref{estPiminus} of $\bar\Pi^-_x$ and \eqref{estPi} of $\bar\Pi_x$ that 
$\lim_{t\to\infty}\E\bar\Pi^-_{x\,\hat\beta+\hat ke_0\,t}(x) = 0$
and $\lim_{t\to\infty}\E\nabla\Delta\bar\Pi_{x\,\hat\beta+(\hat k-1)e_0\,t}(x)=0$, 
which implies $\eqref{eq:analytic_c}_{\hat\beta-g_\n}$. 

\medskip

\textbf{Step 2.}
We show $\eqref{eq:analytic_c}_{\preccurlyeq\hat\beta-g_\n}$ \&
$\eqref{eq:analytic_Pi}_{\prec\hat\beta}$ imply 
$\eqref{eq:analytic_Pi}_{\hat\beta}$.
By definition of $\hat{\tilde\Pi}^-_{x\hat\beta}$ and the just established \eqref{mt02}, we have
\begin{equation}
\hat\partial^{\hat k} \hat\Pi^-_{x\hat\beta} 
= \hat\partial^{\hat k} \big( \hat{\tilde\Pi}^-_{x\hat\beta} 
- \hat c_{\hat\beta-g_\n} \n \big)
= \bar\Pi^-_{x\hat\beta+\hat ke_0}
- \nabla\Delta\bar\Pi_{x\hat\beta+(\hat k-1)e_0}
+ \bar c_{\hat\beta-g_\n+\hat ke_0} \n 
- \hat\partial^{\hat k}\hat c_{\hat\beta-g_\n} \n,
\end{equation}
which by $\eqref{eq:analytic_c}_{\hat\beta-g_\n}$ yields
\begin{equation}\label{mt03}
\hat\partial^{\hat k} \hat\Pi^-_{x\hat\beta} 
= \bar\Pi^-_{x\hat\beta+\hat ke_0}
- \nabla\Delta\bar\Pi_{x\hat\beta+(\hat k-1)e_0} \, ,
\end{equation}
with respect to \eqref{eq:norm_Pi-_smooth_analytic} 
and again with the understanding that the second right hand side term vanishes if $\hat k=0$. 
We now perform an integration argument to pass from \eqref{mt03} to $\eqref{eq:analytic_Pi}_{\hat\beta}$.
We do so by induction in $\hat k\geq0$, and start with the base case $\hat k=0$. 
We define 
\begin{align}
R^-_{x\hat\beta} 
&:= \hat\Pi^-_{x\hat\beta}(a_0') - \hat\Pi^-_{x\hat\beta}(a_0), \\
R_{x\hat\beta} 
&:= \hat\Pi_{x\hat\beta}(a_0') - \bar\Pi_{x\hat\beta}(a_0). 
\end{align}
Then by the equations \eqref{eq:pibetaanalytic} and \eqref{eq:barPi} for $\hat\Pi$ and $\bar\Pi$, we obtain 
\begin{equation}
(\partial_0+(1-a_0)\Delta^2)R_{x\hat\beta}
= \nabla\cdot \hat\Pi^-_{x\hat\beta}(a_0') 
- (1-a_0')\Delta^2\hat\Pi_{x\hat\beta}(a_0')
+ (1-a_0)\Delta^2\hat\Pi_{x\hat\beta}(a_0')
- \nabla\cdot\bar\Pi^-_{x\hat\beta}(a_0) .
\end{equation}
Using $\bar\Pi^-_{x\hat\beta} = \hat\Pi^-_{x\hat\beta}$ from \eqref{mt03}, 
we obtain 
\begin{equation}
(\partial_0+(1-a_0)\Delta^2)R_{x\hat\beta}
= \nabla\cdot \big( R^-_{x\hat\beta} 
+ (a_0'-a_0) \nabla\Delta\hat\Pi_{x\hat\beta}(a_0') \big) .
\end{equation}
Since $R_{x\hat\beta}$ inherits from $\hat\Pi_{x\hat\beta}$ and $\bar\Pi_{x\hat\beta}$ the vanishing and growth conditions, 
we obtain an integral representation of $R_{x\hat\beta}$ in terms of 
$R^-_{x\hat\beta}+(a_0'-a_0)\nabla\Delta\hat\Pi_{x\hat\beta}(a_0')$, 
analogous to the proof of Lemma~\ref{lem:int1}. 
By the exact same argumentation as in the proof of Lemma~\ref{lem:int1}, 
we therefore obtain 
\begin{equation}
\|R_{x\hat\beta}\|_{\eqref{eq:norm_Pi_analytic}} 
\lesssim \|R^-_{x\hat\beta} \|_{\eqref{eq:norm_Pi-_smooth_analytic}}
+ |a_0'-a_0| \|\hat\Pi_{x\hat\beta}(a_0') \|_{\eqref{eq:norm_Pi_analytic}}.
\end{equation}
As the right hand side of this expression vanishes for $a_0'\to a_0$, 
we obtain as desired $\hat\Pi_{x\hat\beta}(a_0) = \bar\Pi_{x\hat\beta}(a_0)$.

\medskip

In the induction step $0,\dots,\hat k\leadsto \hat k+1$ we proceed similarly.
We define 
\begin{align}
R^-_{x\hat\beta} &:= \hat\Pi^-_{x\hat\beta}(a_0')
- \sum_{j=0}^{\hat k+1} (a_0'-a_0)^j \hat\partial^j \hat\Pi^-_{x\hat\beta}(a_0), \\
R_{x\hat\beta} &:= \hat\Pi_{x\hat\beta}(a_0')
- \sum_{j=0}^{\hat k} (a_0'-a_0)^j \hat\partial^j \hat\Pi_{x\hat\beta}(a_0) 
- (a_0'-a_0)^{\hat k+1} \bar\Pi_{x\hat\beta+(\hat k+1)e_0}(a_0). 
\end{align}
Then by the equations \eqref{eq:pibetaanalytic} and \eqref{eq:barPi} for $\hat\Pi$ and $\bar\Pi$, 
and by the induction hypothesis $\hat\partial^j\hat\Pi_{x\hat\beta} = \bar\Pi_{x\hat\beta+je_0}$ for $j=0,\dots,\hat k$, 
we obtain 
\begin{align}
(\partial_0+(1-a_0)\Delta^2)R_{x\hat\beta}
&= \nabla\cdot \hat\Pi^-_{x\hat\beta}(a_0') 
- (1-a_0')\Delta^2\hat\Pi_{x\hat\beta}(a_0')
+ (1-a_0)\Delta^2\hat\Pi_{x\hat\beta}(a_0') \\
&\,- \sum_{j=0}^{\hat k+1} (a_0'-a_0)^j \nabla\cdot\bar\Pi^-_{x\hat\beta+je_0}(a_0) .
\end{align}
Using \eqref{mt03} to rewrite $\bar\Pi^-_{x\hat\beta+je_0} = \hat\partial^j\hat\Pi^-_{x\hat\beta} + \nabla\Delta\bar\Pi_{x\hat\beta+(j-1)e_0}$, 
and using once more the induction hypothesis in form of 
$\hat\partial^{j-1}\hat\Pi_{x\hat\beta} = \bar\Pi_{x\hat\beta+(j-1)e_0}$ for $j-1=0,\dots,\hat k$, 
we obtain 
\begin{equation}
(\partial_0+(1-a_0)\Delta^2)R_{x\hat\beta}
= \nabla\cdot R^-_{x\hat\beta} 
+ (a_0'-a_0) \Delta^2 \big( \hat\Pi_{x\hat\beta}(a_0')
- \sum_{j=0}^{\hat k} (a_0'-a_0)^j \hat\partial^j \hat\Pi_{x\hat\beta}(a_0) \big).
\end{equation}
By definition of $R_{x\hat\beta}$, this yields
\begin{equation}
(\partial_0+(1-a_0)\Delta^2)R_{x\hat\beta}
= \nabla\cdot \Big( R^-_{x\hat\beta} 
+ (a_0'-a_0) \nabla\Delta \big(R_{x\hat\beta} + (a_0'-a_0)^{\hat k+1} 
\bar\Pi_{x\hat\beta+(\hat k+1)e_0}(a_0) \big) \Big). 
\end{equation}
Again, since $R_{x\hat\beta}$ inherits growth and vanishing conditions from $\hat\Pi_{x\hat\beta}$ and $\bar\Pi_{x\hat\beta}$, 
the same argumentation as in the proof of Lemma~\ref{lem:int1} yields
\begin{equation}
\|R_{x\hat\beta}\|_{\eqref{eq:norm_Pi_analytic}}
\lesssim \|R^-_{x\hat\beta}\|_{\eqref{eq:norm_Pi-_smooth_analytic}}
+ |a_0'-a_0| \| R_{x\hat\beta}\|_{\eqref{eq:norm_Pi_analytic}}
+ |a_0'-a_0|^{\hat k+2} \|\bar\Pi_{x\hat\beta+(\hat k+1)e_0}(a_0) \|_{\eqref{eq:norm_Pi_analytic}}.
\end{equation}
For $|a_0'-a_0|$ sufficiently small, the right hand side term $|a_0'-a_0| \|R_{x\hat\beta}\|_{\eqref{eq:norm_Pi_analytic}}$ can be absorbed in the left hand side, 
establishing that $\|R_{x\hat\beta}\|_{\eqref{eq:norm_Pi_analytic}}=o(|a_0'-a_0|^{\hat k+1})$. 
Hence $\hat\partial^{\hat k+1}\hat\Pi_{x\hat\beta}(a_0) = \bar\Pi_{x\hat\beta+(\hat k +1)e_0}(a_0)$, 
which finishes the induction step 
and therefore the proof of $\eqref{eq:analytic_Pi}_{\hat\beta}$. 
\end{proof}

%%%%%%%%%%%%%%%%%%%%%%%%%%%%%%%%%%%%%%%%%%%%%%%%%%%%%%%%%%%%%%%%
%
\section*{Declarations}

\paragraph{\bf Funding and/or Conflicts of interests/Competing interests}
The authors have no competing interests to declare that are relevant to the content of this article.

\bibliographystyle{alphaurl}
\bibliography{ref}
\end{document}